\pgfplotsset{compat=1.10}
\newcommand{\RN}[1]{%
  (\textup{\uppercase\expandafter{\romannumeral#1}})%
}
\newcolumntype{H}{>{\setbox0=\hbox\bgroup}c<{\egroup}@{}}
\theoremstyle{plain}
\newtheorem{theorem}{Theorem}
\newtheorem{lemma}[theorem]{Lemma}
\newtheorem{assumption}{Assumption}
\newtheorem{proposition}[theorem]{Proposition}
\newtheorem{corollary}[theorem]{Corollary}
\theoremstyle{definition}
\newtheorem{example}{Example}
\newtheorem{remark}{Remark}
\DeclareMathOperator*{\argmax}{arg\,max}
\DeclareMathOperator*{\argmin}{arg\,min}
\newcommand*\diff{\mathop{}\!\mathrm{d}}
\title{Rate Optimality and Phase Transition for \\
User-Level Local Differential Privacy}
\author{Alexander Kent}
\author{Thomas B.~Berrett}
\author{Yi Yu}
\affil{Department of Statistics, University of Warwick}
\date{\today}
\begin{document}

\maketitle

\begin{abstract}
Most of the literature on differential privacy considers the \textit{item-level} case where each user has a single observation, but a growing field of interest is that of \textit{user-level} privacy where each of the $n$ users holds $T$ observations and wishes to maintain the privacy of their entire collection.

In this paper, we derive a general minimax lower bound, which shows that, for locally private user-level estimation problems, the risk cannot, in general, be made to vanish for a fixed number of users even when each user holds an arbitrarily large number of observations. We then derive matching, up to logarithmic factors, lower and upper bounds for univariate and multidimensional mean estimation, sparse mean estimation and non-parametric density estimation. In particular, with other model parameters held fixed, we observe phase transition phenomena in the minimax rates as~$T$ the number of observations each user holds varies.

In the case of (non-sparse) mean estimation and density estimation, we see that, for~$T$ below a phase transition boundary, the rate is the same as having $nT$ users in the item-level setting. Different behaviour is however observed in the case of $s$-sparse $d$-dimensional mean estimation, wherein consistent estimation is impossible when $d$ exceeds the number of observations in the item-level setting, but is possible in the user-level setting when $T \gtrsim s \log (d)$, up to logarithmic factors. This may be of independent interest for applications as an example of a high-dimensional problem that is feasible under local privacy constraints.

\medskip
\textbf{Keywords}: User-level differential privacy; Local differential privacy; Minimax optimality.
\end{abstract}

\section{Introduction}\label{sec1}

Advances in data science and machine learning have demonstrated the immense statistical utility of large data sets when used as training data for a range of problems. Growing user concerns and regulatory demands \citep[e.g.][]{RaziehNokhbeh:2020}, however, have raised issues of protecting the privacy of individuals whose data are collected and analysed. The desirable outcome of protecting an individual's privacy in a quantifiable manner whilst maintaining the statistical utility of the data has led to the development of formal frameworks for the notion of privacy, with \emph{differential privacy} \citep{Dwork:2006} emerging as the ``gold standard''. These ideas have since been developed and implemented by technology companies such as Google \citep{Erlingsson:2014} and Meta \citep{yousefpour:2022}, and public bodies such as the United States Census Bureau \citep{USCensus}.

First formalised in what is called the \emph{central model} of differential privacy, where there is a trusted data aggregator who has access to the original data before producing a private output statistic, there is also the \emph{local model}, where data are privatised before being sent to a data aggregator. This is a more stringent requirement but results in greater privacy protection. The local model, whilst at the time not yet formalised under the differential privacy framework, is one of the older instances of privacy, dating back to the randomised response mechanism of \cite{Warner:1965}, designed to avoid the issue of evasive responses in surveys where an individual lies when answering a potentially incriminating question.

Originating from computer science and cryptography, differential privacy has more recently attracted interest from the statistics community. A rich field of questions can be generated by aiming to introduce optimal private protocols for familiar statistical problems and thus to quantify the cost of privacy; see, for example, work on mean estimation \citep[e.g.][]{Duchi:2018}, density estimation \citep[e.g.][]{Butucea:2020,Sart:2023}, hypothesis testing \citep[e.g.][]{Canonne:2019, Gopi:2020, Berrett:2020,Lam-Weil:2022,Pensia:2023} and change point analysis \cite[e.g.][]{Berrett:2021}. There is also a growing appreciation of the connections between differential privacy and robust statistics, both in the central~\citep{Dwork:2009, liu2022differential, Asi:2023, hopkins2023robustness} and local models~\citep{Li:2022b,chhor2023robust}, and techniques from the robust statistics literature can often be leveraged to construct private estimators. Questions of optimality are usually answered by deriving information-theoretic lower bounds, for which there are now general theoretical tools developed for both the central~\citep[e.g.][]{ Wasserman:2010, Bun:2018,Cai:2021, Cai:2023} and local models \citep[e.g.][]{Duchi:2018, Acharya:2022}.

Whilst traditionally both the central and local models of differential privacy assume each user possesses a single observation, this is not always the case in practice. This can be the case when, for example, training a language model on a corpus of text each user holds, which may contain sensitive information \citep[e.g.][]{McMahan:2018}, or training a recommender system where users have rated a large collection of films \citep[e.g.][]{McSherry:2009}. More generally, such situations arise in federated learning settings \citep[e.g.][]{Wang:2019}. When a user possesses multiple observations, two frameworks are possible, the event-level setting and the stronger user-level setting. In the event-level setting, each user is subject to a level of privacy which protects against an inference attempt for any one item of the sample. In the user-level setting, each user's data must be sufficiently privatised to protect against an inference attempt on the entire sample at once. Naively applying existing techniques used in the event-level setting to the user-level setting under the local model either leads to a reduction in the privacy guarantee proportional to the number of data points a user holds, or results in estimators with greater error introduced for privatisation than the benefits provided by the additional data. As such, new methods are in demand to protect the user's entire collection of data, whilst simultaneously leveraging the additional data to provide improved performance over using only a single observation per user.

The user-level setup has been explored in the central model, where each user provides their entire collection, before privatisation, to the data aggregator \citep[e.g.][]{Yuhan:2020, Levy:2021, Ghazi:2023, Ghazi:2023:user, Bun:2023}. There also exists a line of work on user-level guarantees for data observed only sequentially \citep[e.g.][]{Dong:2023, George:2024}. There are comparatively fewer studies on the local model of user-level privacy, particularly in a statistical context. The local model of user-level privacy has, so far, been considered for a few problems such as mean estimation \citep{Girgis:2022, Bassily:2023}, stochastic convex optimisation \citep{Bassily:2023}, empirical risk minimisation \citep{Girgis:2022}, discrete density estimation \citep{Acharya:2023} and recently, beyond estimation, in testing problems \citep{Canonne:2025}. To the best of our knowledge, however, minimax optimality of existing estimators for a problem as fundamental as univariate mean estimation is unproven, and the question of what happens as the number of observations a single user holds grows to infinity has not been addressed in detail. We, in this paper, establish the minimax rates in a range of canonical statistical problems and discover interesting phase transition phenomena in these rates. After the initial version of this preprint was made available on arXiv, the works of \cite{Zhao:2024} and \cite{Ma:2024} appeared online, considering similar estimation problems to ours with some overlapping findings.

Lastly, we remark on connections between user-level local differential privacy and other decentralised learning problems. One can view local privacy as having each user's data pass through a channel that injects noise to privatise the data. Statistical inference under local information constraints \citep[see e.g.][and the references therein]{Cai:2024}, for instance, can be viewed as having each user's data pass through a channel which compresses their data into a specified number of binary digits before being passed to the data aggregator, and there exist connections in how minimax lower bounds can be obtained in both these frameworks \citep[e.g.][]{Acharya:2022}. Another similar topic is that of federated learning, introduced in \cite{McMahan:2017} (see e.g.~\citealt[]{Kairouz:2021} for a more general overview), where a central server aims to fit a model using the data held by a collection of clients, without the central server having access to all the data, due to privacy requirements and storage constraints, among other reasons. In particular, we note the similarity of our setting to that of \emph{federated differential privacy}, sharing similar motives and possible applications. User-level local differential privacy as we consider is an application of differential privacy to a distributed data setting. Federated differential privacy considers this distributed data setting also, albeit with different privacy constraints, and has seen use in problems such as non-parametric regression \citep{Cai:2024b}, transfer learning \citep{Li:2024} and bandit problems \cite{Zhou:2023}.

\subsection{General Setup}\label{sec-general-setup}
We first introduce the framework of differential privacy (DP). Given data $\{X^{(i)}\}_{i = 1}^n \subset \mathcal{Y}$, we consider a conditional distribution $Q: \sigma(\mathcal{Z}) \times \mathcal{Y}^n \to [0,1]$ induced via a random mapping from~$\mathcal{Y}^n$ to a (potentially different) separable metric space $\mathcal{Z}$, where $\sigma(\mathcal{Z})$ denotes the Borel $\sigma$-algebra on $\mathcal{Z}$. Writing $X^{(k:l)} = (X^{(k)}, X^{(k+1)}, \ldots, X^{(l)})$ and fixing $\alpha \geq 0$, the mechanism $Q$ is said to be $\alpha$-DP if
\begin{equation*}
    \sup_{S \in \sigma(\mathcal{Z})} \frac{Q(Z \in S \mid X^{(1:n)} = x^{(1:n)})}{Q(Z \in S \mid X^{(1:n)} = x'^{(1:n)})} \leq e^{\alpha}, \quad \forall x^{(1:n)}, \, x'^{(1:n)} \in \mathcal{Y}^n \quad \mbox{with} \quad \sum_{i = 1}^n \mathbbm{1}\{x^{(i)} \neq x'^{(i)}\} \leq 1.
\end{equation*} 

The value of $\alpha$ controls the strength of the privacy constraint, with smaller values imposing more restrictive conditions. In this work, we primarily consider the case $\alpha \lesssim 1$, known as the high-privacy regime. We exclude the degenerate case of $\alpha = 0$ in which no inference is possible.

For the more stringent local differential privacy (LDP) condition, we consider a family of conditional distributions $\{Q_i\}_{i = 1}^n$ where $Q_1:\sigma(\mathcal{Z}) \times \mathcal{Y} \rightarrow [0,1]$ and $Q_i:\sigma(\mathcal{Z}) \times \mathcal{Y} \times \mathcal{Z}^{i-1} \rightarrow [0,1]$ for $i \in \{2, \ldots, n\}$. Here the $i$-th individual privatises their data $X^{(i)}$ according to $Q_i$ to produce a private observation~$Z^{(i)}$. For $\alpha \geq 0$, we say that such a collection of conditional distributions $\{Q_i\}_{i=1}^n$ satisfies $\alpha$-LDP if, for all $i \in \{1, 2, \ldots, n\}$,
\begin{equation}\label{sec1:eq:alphaLDP}
    \sup_{S \in \sigma(\mathcal{Z})} \frac{Q_i(Z^{(i)} \in S \mid X^{(i)} = x^{(i)}, Z^{(1:i-1)} = z^{(1:i-1)})}{Q_i(Z^{(i)} \in S \mid X^{(i)} = x'^{(i)}, Z^{(1:i-1)} = z^{(1:i-1)})} \leq e^{\alpha}, \quad \forall x^{(i)}, \, x'^{(i)} \in \mathcal{Y} \mbox{ and } \forall z^{(1:i-1)} \in \mathcal{Z}^{i-1}.
\end{equation}
Such a collection $\{Q_i\}_{i=1}^n$ is \emph{sequentially interactive}. In the case that, for each $i \in \{1, 2, \ldots, n\}$, $Z^{(i)}$ only depends on $X^{(i)}$, we say that the collection is instead \emph{non-interactive}, and the conditional distributions take a simpler form $Q_i(Z^{(i)} \in S \mid X^{(i)} = x^{(i)})$.

We suppose that each user holds $T \geq 1$ independent and identically distributed (i.i.d.)~data points, denoted by $X_{1:T}^{(i)}$ and taking values in $\mathcal{X} = \mathcal{Y}^T$. Most of the existing differential privacy literature assumes $T=1$ whereas in this work we will consider general $T$. Let $\mathcal{Z}$ denote the output space, which may depend on $T$. We say that, for $\alpha \geq 0$, a collection of conditional distributions $\{Q_i\}_{i=1}^n$ constitutes an $\alpha$-LDP mechanism at the \emph{user-level} if, for all $i \in \{1, \ldots, n\}$, the following inequalities hold
\begin{equation}\label{sec1:eq:alphaLDP_user}
    \sup_{S \in \sigma(\mathcal{Z})} \frac{Q_i(Z^{(i)} \in S \mid X^{(i)}_{1:T} = x^{(i)}_{1:T}, Z^{(1:i-1)} = z^{(1:i-1)})}{Q_i(Z^{(i)} \in S \mid X'^{(i)}_{1:T} = x'^{(i)}_{1:T}, Z^{(1:i-1)} = z^{(1:i-1)})} \leq e^{\alpha}, \quad \forall x_{1:T}^{(i)}, x_{1:T}'^{(i)} \in \mathcal{X}, \quad z^{(1:i-1)} \in \mathcal{Z}^{i-1}.
\end{equation}
If we relax this requirement so that the inequalities only need to holds when $x^{(i)}_t = x'^{(i)}_t$ for all except at most one $t \in \{1, \ldots, T\}$, then we obtain the definition of LDP at the \emph{event-level}. Note, however, that this relaxation would not prevent privacy leakage due to repeated measures of the same individual's data. Writing $Q = \{Q_i\}_{i=1}^n$ when there is no ambiguity, we refer to any conditional distribution $Q$ satisfying~\eqref{sec1:eq:alphaLDP_user} as a user-level $\alpha$-LDP mechanism, and denote the set of all such mechanisms by $\mathcal{Q}_\alpha$. As we focus on this model of privacy in this work, we refer to user-level local differential privacy as \emph{user-level privacy} for brevity, specifying the distinction between the local and central models only when relevant.

For the minimax framework, we denote by $\mathcal{P}$ the family of distributions that an observation $X_t^{(i)}$ is generated from. To quantify the best possible performance of estimators based on the privatised data generated by mechanisms $Q$ satisfying~\eqref{sec1:eq:alphaLDP_user}, we consider the user-level $\alpha$-LDP minimax risk, that is,
\begin{equation}\label{sec2:eq:UserMinimax}
    \mathcal{R}_{n,T,\alpha}(\theta(\mathcal{P}), \Phi \circ \rho) = \inf_{Q \in \mathcal{Q}_\alpha} \inf_{\hat{\theta}} \sup_{P \in \mathcal{P}} \mathbb{E}_{P,Q}\left\{\Phi \circ \rho\left(\hat{\theta}, \, \theta\left(P\right)\right)\right\},
\end{equation}
where
\begin{itemize}
    \item for a given $P \in \mathcal{P}$, the $n$ users draw data as $X_{1:T}^{(1)}, \hdots, X_{1:T}^{(n)} \overset{\mathrm{i.i.d.}}{\sim} P^{\otimes T}$;
    \item the quantity of interest to be estimated is $\theta(P) \in \theta(\mathcal{P})$, denoting a functional on the space $\mathcal{P}$;
    \item the function $\rho$ is a metric on the space $\theta(\mathcal{P})$ and $\Phi: \mathbb{R}_{\geq 0} \rightarrow \mathbb{R}_{\geq 0}$ is a non-decreasing function with $\Phi(0) = 0$;
    \item the outermost infimum is taken over all user-level $\alpha$-LDP privacy mechanisms generating the privatised data $Z^{(i)}$;
    \item the inner infimum is taken over all measurable functions $\hat{\theta} = \hat{\theta}(Z^{(1)}, \dotsc, Z^{(n)})$ of the privatised data generated by the privacy mechanism $Q$, conditional on the raw data generated from $P \in \mathcal{P}$; and
    \item the risk is the expectation with respect to both the distribution $P$ of the data and the privacy mechanism $Q$.
\end{itemize}

We note that the usual $\alpha$-LDP risk where each user has a single item, as considered in, for example, \cite{Duchi:2018}, can be recovered as a special case of \eqref{sec2:eq:UserMinimax} by considering $\mathcal{R}_{n,1,\alpha}(\theta(\mathcal{P}), \Phi \circ \rho)$. In an abuse of notation, denote the minimax risk without privacy by $\mathcal{R}_{n,1,\infty}(\theta(\mathcal{P}), \Phi \circ \rho)$, with $\alpha = \infty$.

\begin{remark}\label{sec1:rem:Equivalence}
    As each observation a user holds is distributed according to a distribution $P$, we see that user-level privacy is the natural extension of \eqref{sec1:eq:alphaLDP} where we view each user's single data point as from the product distribution $P' = P^{\otimes T}$ and where the quantity being estimated $\theta(P') = \theta(P)$ regardless of $T$.
\end{remark}

Being a stronger notion of privacy, user-level privacy provides a stronger guarantee than event-level privacy at the cost of reduced statistical utility of the privatised output. What is not so clear however, is whether the user-level framework with $n$ users each possessing an independent sample of size $T$ is preferable in terms of statistical utility to the traditional LDP framework having $nT$ many users each with one observation. On the one hand, under user-level privacy a greater amount of noise may be needed to privatise the user's sample, suggesting the user-level setup is at a disadvantage. On the other hand, it is possible that under the user-level setup, a user can discern useful information locally from their collection of items before submitting a privatised value, potentially providing an advantage. To distinguish between these two setups, we refer to the case where each user holds a single data point as the item-level case, so we will be interested in comparing the performance of item-level $\alpha$-LDP with $nT$ users against user-level $\alpha$-LDP with $n$ users, each with a sample of size $T$. Note, however, that with the same~$\alpha$, this results in a stricter privacy protection for the user-level. This is because in the user-level setting, every $T$ observations of a particular user share $\alpha$ privacy budget, while every one observation enjoys $\alpha$ privacy budget in the item-level setting. Lacking a widely-accepted strategy for a fairer comparison, we stick to this comparison while acknowledging the stricter privacy protection imposed in user-level privacy.

\subsection{Contributions}\label{sec1:contributions}

In this paper, we aim to investigate the aforementioned question. The list of contributions is summarised below.

\begin{itemize}
    \item In Section \ref{sec2}, we discuss the case where the number of items a user holds diverges whilst the number of users is kept fixed. As a user would be able to estimate any functional of the distribution arbitrarily well given an infinite and independent sample, we consider the minimax risk of estimating a quantity where each user knows the quantity exactly, but cannot report this value without randomising it to satisfy $\alpha$-LDP. We develop general upper and lower bounds on the minimax risk for any estimation problem. The lower bound in particular has relevance beyond this thought experiment as it also applies to the user-level setting that we consider in specific problems, and will be used to characterise when phase transitions occur in the fundamental limits.
    
    \item In Section \ref{sec3}, for $d \in \mathbb{N}$, we consider $d$-dimensional mean estimation for distributions supported on unit $\ell_2$- and $\ell_\infty$-balls. Mean estimation procedures and proofs of their minimax optimality in the item-level case have been previously developed in \cite{Duchi:2018}. The problem has also been considered in the central user-level setting in \cite{Levy:2021}, and in the local setting, without showing minimax optimality, for estimators for distributions supported on the $\ell_\infty$-ball with some further assumptions in \cite{Girgis:2022} and \cite{Bassily:2023}. We build upon this prior work by only imposing a restriction on the support of data without further assumptions, considering both $\ell_2$- and $\ell_\infty$-balls, and deriving minimax rates. In particular, we show that a phase transition occurs when $T$ is larger than a quantity exponential in $n\alpha^2$ and further increases in $T$ no longer improve estimation.
    
    \item In Section \ref{sec5}, we consider sparse mean estimation. Unlike the mean estimation problems in \Cref{sec3} where, for small $T$ - up until the phase transition, the optimal minimax rate is the same (up to logarithmic factors) as having $nT$ users in the item-level setup, in \Cref{sec5} we see a prominent difference between minimax risks of the user- and item-level setups. For comparison, without privacy restrictions, assuming that the mean has at most one non-zero entry enables consistent estimators as long as the dimension is not exponential in the number of users. \cite{Duchi:2018}, however, shows that under item-level LDP, no consistent estimator exists if $n\alpha^2 = O(d)$. We show that in the user-level setting, consistent estimators exist even if $d$ is polynomial in $n\alpha^2$ as long as $\log(d) = O(T)$. This shows that there exist estimation problems that are significantly easier in the user-level setting compared to the item-level setting.
    
    \item In Section \ref{sec4}, we consider non-parametric density estimation. The estimation of discrete densities under user-level local privacy has been considered in \cite{Acharya:2023} where an estimator is developed and its minimax optimality established in a regime where the value of $T$ is not too large. We consider the case of continuous densities lying in a function family defined through a Sobolev ellipsoid condition on the coefficients of its basis expansion. As well as constructing an estimator and proving its near-minimax optimality in all regimes of $T$, we will see that unlike the two previously considered parametric problems where the phase transition boundary is exponential in $n\alpha^2$, the phase transition boundary for this non-parametric problem is polynomial in $n\alpha^2$.

    \item In Section \ref{sec:sim} we provide numerical simulations on synthetic and real-world data where the user-level setting naturally arises. We empirically validate our theoretical results and show the effectiveness of our methods, before demonstrating the use of our methods on real-world data.

    \item In the appendices, we extend the results of Section \ref{sec2} to the setting of approximate LDP. We further consider the problem of mean estimation under a generalisation of the user-level setting, where users draw their sample from differing components of a mixture distribution, instead of all being identically distributed. Finally, further numerical simulations are presented, including validating the phase transition phenomena and carrying out sensitivity analyses to inform the implementation of the user-level techniques in practice.
\end{itemize}

\subsection{Notation}
For $n \in \mathbb{N}$, we write $[n] = \{1, \hdots, n\}$. For a collection of vectors $\{x^{(i)}\}_{i \in [n]} \subset \mathbb{R}^d$, denote $x^{(1:n)} = \{x^{(1)}, \hdots, x^{(n)}\}$. For $a,b \in \mathbb{R}$, let $a \wedge b = \min\{a, b\}$ and $a\vee b = \max\{a, b\}$. For non-negative real sequences $\{a_n\}_{n \in \mathbb{N}}$ and $\{b_n\}_{n \in \mathbb{N}}$, $a_n \lesssim b_n$ denotes the existence of a constant $C>0$ such that $\limsup_{n \rightarrow \infty} a_n/b_n \leq C$, $a_n \gtrsim b_n$ denotes that $b_n \lesssim a_n$ and $a_n \asymp b_n$ denotes that $a_n \lesssim b_n \lesssim a_n$. For $x > 0$, we denote $\omega_x = e^x/(1+e^x)$. We say a random variable $X$ is $\sigma^2$-sub-Gaussian if $\mathbb{P}(|X|>\varepsilon) \leq 2e^{-\varepsilon^2/(2\sigma^2)}$ for any $\varepsilon \geq 0$. For a distribution $P$, denote the $n$-fold product distribution arising from $n$ i.i.d.~observations as $P^{\otimes n}$. For two distributions $P,Q$, let $D_{\mathrm{KL}}(P \| Q)$, $D_{\mathrm{H}}(P,Q)$ and $D_{\mathrm{TV}}(P,Q)$ denote the Kullback--Leibler divergence, Hellinger distance and total variation distance between $P$ and $Q$ respectively. Given a convex set $\mathcal{C} \subset \mathbb{R}^d$, denote the projection of $x \in \mathbb{R}^d$ onto $\mathcal{C}$ as $\Pi_\mathcal{C}(x)$. We define the Laplace distribution with scale $\lambda > 0$ to be the distribution with the the density $(2\lambda)^{-1}\exp(-|x|/\lambda)$, and refer to the case where $\lambda = 1$ as the standard Laplace distribution. For a vector $v \in \mathbb{R}^d$ denote the $j \in [d]$ co-ordinate of $v$ as $v_j$, and write $\|v\|_0 = \sum_{i=1}^d \mathbbm{1} \{ v_i \neq 0 \}$, $\|v\|_2^2 = \sum_{i=1}^d v_i^2$ and $\|v\|_\infty = \max_{i \in [d]}\{|v_i|\}$. For two vectors $v,v' \in \mathbb{R}^d$, denote the Hamming distance between $v$ and $v'$ by $H(v,v') = \|v-v'\|_0$. For any set $S$, let $|S|$ be its cardinality. For a given radius $r > 0$ and dimension $d \in \mathbb{N}$, we write $\mathbb{B}_2(r) = \{x \in \mathbb{R}^d : \|x\|_2 \leq r\}$ and $\mathbb{B}_\infty(r) = \{x \in \mathbb{R}^d : \|x\|_\infty \leq r\}$. Throughout this work, for simplicity we may omit the taking the floor and/or ceiling of quantities for which an integer value is expected. 

\section{Infinite Observations per User\texorpdfstring{: $T = \infty$}{}}\label{sec2}

In this section, we will consider the limiting behaviour as $T$ diverges to infinity. Whilst without any privacy constraints the error typically vanishes as $T \rightarrow \infty$, with privacy constraints, we will show that this is no longer the case. To understand what happens as $T$ diverges, we consider the case where each user knows exactly the functional to estimate. Given a family of distributions $\mathcal{P}$, we consider the family
\begin{equation}
    \mathcal{P}^{\infty} = \{\delta_\theta: \, \theta \in \theta(\mathcal{P})\}, \quad \text{where } \theta(\mathcal{P}) = \{\theta(P):\, P \in \mathcal{P}\}, \label{sec2:eq:infFamily}
\end{equation}
and $\delta_\theta$ is the point mass at $\theta$. The sample space is therefore $\theta(\mathcal{P})$. For any $\theta \in \theta(\mathcal{P})$, the corresponding functional to estimate for $\delta_\theta \in \mathcal{P}^\infty$ is $\theta(\delta_\theta) = \theta$. 

Our motivation is that if each user has an infinite sample size, then they would be able to estimate the functional perfectly. The problem is therefore equivalent to the case where the data distribution is a point mass on the value of the functional. With some abuse of notation and the user-level $\alpha$-LDP minimax risk defined in \eqref{sec2:eq:UserMinimax}, we define
\begin{equation}
    \mathcal{R}_{n, \infty, \alpha}(\theta(\mathcal{P}), \Phi \circ \rho) = \mathcal{R}_{n, 1, \alpha}(\theta(\mathcal{P}^{\infty}), \Phi \circ \rho), \label{sec2:eq:InfMinimaxDef}
\end{equation}
and refer to this estimation problem as the \emph{infinite-$T$} problem giving rise to the infinite-$T$ minimax rate. Of particular interest is whether $\mathcal{R}_{n, T, \alpha}(\theta(\mathcal{P}), \Phi \circ \rho)$ is of the same rate as $\mathcal{R}_{n, \infty, \alpha}(\theta(\mathcal{P}), \Phi \circ \rho)$ as $T \rightarrow \infty$, and how the user-level rate $\mathcal{R}_{n, T, \alpha}(\theta(\mathcal{P}), \Phi \circ \rho)$ compares to $\mathcal{R}_{nT, 1, \alpha}(\theta(\mathcal{P}), \Phi \circ \rho)$ - the item-level rate with an equivalent number of observations.

We first demonstrate in \Cref{sec2:ex:example} that it is wrong to take the lower bounds from the item-level case and to assume that $\mathcal{R}_{n, T, \alpha}(\theta(\mathcal{P}), \Phi \circ \rho)$ and $\mathcal{R}_{nT, 1, \alpha}(\theta(\mathcal{P}), \Phi \circ \rho)$ are of the same order. In fact, as our results in~\Cref{sec5} show, it is generally not even true that $\mathcal{R}_{n,T,\alpha}$ is lower bounded by a constant multiple of $\mathcal{R}_{nT,1,\alpha}$, so these two risks are generally incomparable.

\begin{example} \label{sec2:ex:example}
    Estimating the mean of a distribution from the family $\mathcal{P} = \{P : \mathbb{E}_P(X) \in [-1,1]\}$, we have that the user-level LDP minimax risk with respect to the squared error loss is lower bounded as
    \begin{equation}
        \mathcal{R}_{n,T,\alpha}(\theta(\mathcal{P}), (\cdot)^2) \gtrsim \min\left\{1, \frac{1}{nT\alpha^2} \right\}. \label{sec2:eq:examplefinite}
    \end{equation}
    When $T=1$, \eqref{sec2:eq:examplefinite} coincides with the item-level lower bound on $\mathcal{R}_{n, 1, \alpha}(\theta(\mathcal{P}), (\cdot)^2)$ \citep[Corollary~1 in][]{Duchi:2018}. We will show in Section \ref{sec3} that, for a certain range of $T$, \eqref{sec2:eq:examplefinite} is tight up to logarithmic factors, but not for the whole range of $T$. Indeed, we will also show that
    \begin{equation}
        \mathcal{R}_{n,T,\alpha}(\theta(\mathcal{P}), (\cdot)^2) \gtrsim e^{-12n\alpha^2}. \label{sec2:eq:exampleinfinite}          
    \end{equation}
    This second lower bound, independent of $T$, demonstrates that for a fixed $n$, even as $T \rightarrow \infty$, the risk is bounded away from $0$. The proofs of \eqref{sec2:eq:examplefinite} and \eqref{sec2:eq:exampleinfinite} are provided in \Cref{Appendix_sec2}.
\end{example}

In the rest of this section, we will show that this non-vanishing phenomenon is not specific to the estimation problem in \Cref{sec2:ex:example}, but general for user-level LDP problems.

\subsection{General Minimax Rates} 
We now introduce general upper and lower bounds for infinite-$T$ LDP estimation problems, covering both the high- and low-privacy regimes of $\alpha \lesssim 1$ and $\alpha \gtrsim 1$ respectively. The lower bound developed here will also apply to the finite-$T$ case. We also construct a general method, providing tight upper bounds for the infinite-$T$ case for a range of illustrative problems.

\begin{theorem}[General infinite-$T$ rates] \label{sec2:thm:GeneralBound}
    Given a family of distributions $\mathcal{P}$, let $N(\Delta)$ be the $\Delta$-covering number of the metric space $(\Theta, \rho)$ with $\Theta = \theta(\mathcal{P})$. The LDP minimax risk satisfies
    \begin{align}
        & \sup_{\Delta : N(2 \Delta)>1}\bigg\{ \frac{\Phi(\Delta)}{2}\bigg(1 - \frac{12 n \min\{\alpha, \alpha^2\} + \log(2)}{\log(N(2\Delta))} \bigg) \bigg\}
        \leq \mathcal{R}_{n, \infty, \alpha}(\theta(\mathcal{P}), \Phi \circ \rho) \nonumber \\
        & \hspace{5cm} \leq \inf_{\Delta : N(2 \Delta)>0} \Big\{ \Phi(\Delta) + \Phi \bigl( \mathrm{diam}(\Theta) \bigr) N(\Delta) e^{-n\min\{\alpha, \alpha^2\}/20} \Big\}, \label{sec2:eq:GeneralLB}
    \end{align}
    where $\mathrm{diam}(\Theta) = \sup_{\theta, \theta' \in \Theta} \rho(\theta, \theta')$.
\end{theorem}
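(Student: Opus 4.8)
I would prove the two inequalities by separate, essentially standard, routes: a privatised Fano argument over a maximal packing of $\Theta$ for the lower bound, and an explicit non-interactive $\alpha$-LDP mechanism based on a $\delta$-net plus a one-bit randomised response for the upper bound. The key structural fact, from \eqref{sec2:eq:InfMinimaxDef}, is that the infinite-$T$ problem is the ordinary item-level $\alpha$-LDP problem in which all $n$ users share the data distribution $\delta_\theta$ --- equivalently, all of them hold the target $\theta \in \Theta = \theta(\mathcal{P})$ exactly.

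For the lower bound, I would take a maximal $2\delta$-packing $\{\theta_1,\dots,\theta_M\}$ of $(\Theta,\rho)$, so that $M \ge N(2\delta) > 1$ since a maximal packing is a covering. Put a uniform prior $V \sim \mathrm{Unif}([M])$ on the index and let the data be $\delta_{\theta_V}$. For any $Q \in \mathcal{Q}_\alpha$ and estimator $\hat\theta(Z^{(1:n)})$, rounding to the nearest packing point $\hat V = \argmin_k \rho(\hat\theta,\theta_k)$ gives $\{\hat V \ne V\} \subseteq \{\rho(\hat\theta,\theta_V) \ge \delta\}$ by $2\delta$-separation, so Fano's inequality yields $\mathbb{P}(\hat V \ne V) \ge 1 - (I(V;Z^{(1:n)}) + \log 2)/\log N(2\delta)$. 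The one non-elementary step is bounding the mutual information: writing $M_k^n$ for the law of $Z^{(1:n)}$ under $\delta_{\theta_k}$, convexity of $D_{\mathrm{KL}}$ gives $I(V;Z^{(1:n)}) \le M^{-2}\sum_{k,l} D_{\mathrm{KL}}(M_k^n \| M_l^n)$, and the strong data-processing inequality for sequentially interactive $\alpha$-LDP channels \citep[see e.g.][]{Duchi:2018, Acharya:2022} bounds each term by $4n(e^\alpha-1)^2 D_{\mathrm{TV}}(\delta_{\theta_k},\delta_{\theta_l})^2 \le 4(e-1)^2 n\alpha^2 \le 12n\alpha^2$, using $D_{\mathrm{TV}} \le 1$ and $e^\alpha - 1 \le (e-1)\alpha$ on $[0,1]$. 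Hence $I(V;Z^{(1:n)}) \le 12n\alpha^2$; since $\Phi$ is non-decreasing with $\Phi(0) = 0$, the worst-case risk over $\{\delta_{\theta_k}\}$ is at least $\Phi(\delta)\{1 - (12n\alpha^2 + \log 2)/\log N(2\delta)\}$, which is at least the stated expression (the factor $\tfrac12$ is slack).

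For the upper bound, I would fix a $\delta$-covering $\{\theta^*_1,\dots,\theta^*_N\} \subseteq \Theta$, $N = N(\delta)$ (the bound is vacuous if $N(\delta)$ or $\mathrm{diam}(\Theta)$ is infinite). Under $\delta_\theta$ every user has the same nearest-net index $j^* = \argmin_k \rho(\theta,\theta^*_k)$, with $\rho(\theta,\theta^*_{j^*}) \le \delta$. The mechanism: user $i$ draws an auxiliary data-free vector $R^{(i)} \in \{-1,1\}^N$ with i.i.d.\ Rademacher entries and outputs $(R^{(i)}, B^{(i)})$, where $B^{(i)} = R^{(i)}_{j^*}$ with probability $e^\alpha/(1+e^\alpha)$ and $B^{(i)} = -R^{(i)}_{j^*}$ otherwise; since $R^{(i)}$ is data-independent and, given it, $B^{(i)}$ is a binary randomised response, the likelihood ratio between any two data values lies in $[e^{-\alpha}, e^\alpha]$, so this is (non-interactively) $\alpha$-LDP. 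The aggregator forms $W_k = \sum_i B^{(i)} R^{(i)}_k$, takes $\hat\jmath = \argmax_k W_k$, and returns $\hat\theta = \theta^*_{\hat\jmath}$. Since $\mathbb{E}[B^{(i)} R^{(i)}_k] = 0$ for $k \ne j^*$ and equals $\gamma := (e^\alpha-1)/(e^\alpha+1) \asymp \alpha$ for $k = j^*$, a Bernstein/Hoeffding bound applied to each $W_k$ (a sum of $n$ i.i.d.\ terms in $[-1,1]$) together with a union bound over the $N$ candidates gives $\mathbb{P}(\hat\jmath \ne j^*) \le N e^{-cn\gamma^2} \le N(\delta) e^{-n\alpha^2/20}$ after tracking constants. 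Finally, on $\{\hat\jmath = j^*\}$ we have $\rho(\hat\theta,\theta) \le \delta$ and always $\rho(\hat\theta,\theta) \le \mathrm{diam}(\Theta)$, so monotonicity of $\Phi$ gives $\mathbb{E}_{P,Q}[\Phi\circ\rho(\hat\theta,\theta)] \le \Phi(\delta) + \Phi(\mathrm{diam}(\Theta)) N(\delta) e^{-n\alpha^2/20}$, uniformly in $\theta$.

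The main obstacle is the choice of mechanism in the upper bound. Naively privatising the index $j^*$ by $N$-ary randomised response gives a per-user ``signal'' of order $\alpha/N$, hence an error of order $N \exp(-cn\alpha^2/N)$, which would require $n\alpha^2 \gtrsim N\log N$ --- far worse than what is claimed. Privatising instead a single random $\pm 1$ hash $R^{(i)}_{j^*}$ of the index keeps the signal $\gamma \asymp \alpha$ free of $N$, at the cost of only a union bound over $N$ candidates, and this is exactly what turns the requirement into $n\alpha^2 \gtrsim \log N$ and produces the factor $N(\delta) e^{-n\alpha^2/20}$. On the lower-bound side there is no real obstacle beyond invoking the LDP mutual-information contraction; the rest is the textbook Fano reduction.
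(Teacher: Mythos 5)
Your lower bound is essentially the paper's own argument: a $2\delta$-packing of $(\Theta,\rho)$, the standard Fano reduction via rounding to the nearest packing point, and the Duchi--Jordan--Wainwright/Acharya-type contraction $D_{\mathrm{KL}}(M_k^n\|M_l^n)\le 4n(e^\alpha-1)^2 D_{\mathrm{TV}}^2\le 12n\alpha^2$ (the paper uses $(e^\alpha-1)^2\le 3\alpha^2$, you use $e^\alpha-1\le(e-1)\alpha$; both give $12$). Your observation that the factor $\tfrac12$ is slack is correct, and in the regime where the bracket is negative the stated bound is trivially true, so this direction is fine.

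For the upper bound you take a genuinely different mechanism. The paper has each user release the full one-hot membership vector of length $N(\delta)$, privatised coordinate-wise by randomised response at level $\alpha/2$ (unary encoding); the vote counts are then independent Binomials with success probabilities $\pi_{\alpha/2}$ versus $1-\pi_{\alpha/2}$, and a direct Hoeffding bound gives $\exp\{-4n(\pi_{\alpha/2}-1/2)^2\}\le e^{-n\alpha^2/20}$ per wrong cell. You instead release a single randomised-response bit of a Rademacher hash of the index (the same trick the paper itself uses in Section~\ref{sec5}); your privacy argument is correct since the hash is data-independent, and the scheme is more communication-efficient. The one step that does not go through as written is the constant: with your scheme the per-comparison signal is $\gamma=\tanh(\alpha/2)$ but each summand $B^{(i)}(R^{(i)}_{j^*}-R^{(i)}_k)$ has range $4$ (or, with a threshold at $n\gamma/2$, range $2$ per $W_k$), so a plain Hoeffding/union-bound argument yields an exponent of order $n\gamma^2/8$, i.e.\ roughly $e^{-n\alpha^2/37}$ at $\alpha=1$, which is strictly weaker than the stated $e^{-n\alpha^2/20}$; ``tracking constants'' does not rescue this without a sharper argument. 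Two fixes are available: either condition on the hash collisions, writing $W_{j^*}-W_k=2\sum_{i\in I}\epsilon_i$ with $|I|\sim\mathrm{Bin}(n,1/2)$ and bounding the exact moment generating function, which gives a per-user exponent of roughly $\tanh^2(\alpha/2)/4$ up to lower-order terms and (just barely) clears $\alpha^2/20$ on all of $(0,1]$; or simply adopt the paper's coordinate-wise randomised-response vote, for which the Binomial comparison delivers $1/20$ immediately. Everything else in your upper bound (error $\le\Phi(\delta)$ on the correct-selection event, $\le\Phi(\mathrm{diam}(\Theta))$ otherwise, uniformity in $\theta$) matches the paper's proof.
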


The proof of \Cref{sec2:thm:GeneralBound} is provided in \Cref{Appendix_sec2}. We note the same bounds hold, up to constants, in the case of \emph{approximate} $(\alpha, \delta)$-privacy, provided $\delta \lesssim \alpha/\{n\log(n)\}$. We note that in, for example, \cite{Bassily:2015} and \cite{Bun:2019}, it is shown that once $\delta$ is less than the reciprocal of the sample size, up to logarithmic factors, approximate privacy provides no improvement in the minimax rates. In the context of user-level LDP, it is not immediately apparent whether ``sample size'' should be interpreted as $nT$ or $n$ in this context (or something entirely different). If it were the former, it would suggest that one could bypass the limiting behaviour as $T \rightarrow \infty$ by utilising approximate privacy. Our results for approximate privacy answer this in the negative. The definition and results for approximate privacy may be found in \Cref{app:sec:approxLDP}.

The upper bound in \Cref{sec2:thm:GeneralBound} is due to a non-interactive procedure to be presented and the lower bound follows from an application of Fano's inequality along with a bound on the private Kullback--Leibler divergence \citep[][Corollary~3]{Duchi:2018}. Of importance is the fact that the proof of the lower bound uses no properties of the family of distributions other than the metric entropy of the space $\theta(\mathcal{P})$. As the parameter space $\theta(\mathcal{P})$ remains the same across the item-level, user-level and infinite-$T$ setups, the lower bound in \Cref{sec2:thm:GeneralBound} is applicable across all three, leading immediately to the following result.

\begin{corollary} \label{sec2:thm:TIndepLB}
With the same conditions and notation as \Cref{sec2:thm:GeneralBound}, we have that, for all $T \geq 1$, the user-level private minimax risk is lower bounded as
    \begin{equation}
        \mathcal{R}_{n, T, \alpha}(\theta(\mathcal{P}), \Phi \circ \rho)
        \geq \sup_{\Delta : N(2 \Delta)>1}\bigg\{ \frac{\Phi(\Delta)}{2}\bigg(1 - \frac{12 n \min\{\alpha, \alpha^2\} + \log(2)}{\log(N(2\Delta))} \bigg)\bigg\}. \label{sec2:eq:TIndepLB}
    \end{equation}
\end{corollary}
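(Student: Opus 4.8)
The plan is to observe that Corollary~\ref{sec2:thm:TIndepLB} is an immediate consequence of the lower bound already established in Theorem~\ref{sec2:thm:GeneralBound}, together with the key structural fact that the parameter space $\Theta = \theta(\mathcal{P})$ is shared across the item-level, user-level, and infinite-$T$ setups. The only thing that needs to be argued is that the $T=\infty$ lower bound transfers to arbitrary $T \geq 1$.

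First, I would recall that for any fixed $T \geq 1$, the infinite-$T$ family $\mathcal{P}^\infty = \{\delta_\theta : \theta \in \theta(\mathcal{P})\}$ can be embedded into the collection of $T$-fold product distributions arising from $\mathcal{P}$: for each $\theta \in \theta(\mathcal{P})$ pick $P_\theta \in \mathcal{P}$ with $\theta(P_\theta) = \theta$, and note that $\delta_\theta$ corresponds (as in Remark~\ref{sec1:rem:Equivalence}) to a point mass whose functional value is $\theta$, while $P_\theta^{\otimes T}$ also has functional value $\theta$. The subfamily $\{P_\theta : \theta \in \theta(\mathcal{P})\} \subseteq \mathcal{P}$ indexes exactly the same parameter set $\theta(\mathcal{P})$, so the minimax risk over $\mathcal{P}$ with $n$ users each holding $T$ observations is at least the minimax risk over this subfamily, which by the same two-point / Fano argument used in Theorem~\ref{sec2:thm:GeneralBound} is lower bounded using only the $\delta$-packing structure of $(\Theta, \rho)$. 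Concretely, the lower-bound half of Theorem~\ref{sec2:thm:GeneralBound} proceeds via Fano's inequality applied to a $2\delta$-separated subset of $\Theta$ of size $N(2\delta)$, combined with Corollary~3 of \cite{Duchi:2018} to control the privatised mutual information by $12 n\alpha^2$; none of these steps uses any property of the data-generating distributions beyond the fact that distinct parameter values are realised, and the privacy contraction bound $12n\alpha^2$ holds verbatim for user-level $\alpha$-LDP mechanisms since \eqref{sec1:eq:alphaLDP_user} is exactly \eqref{sec1:eq:alphaLDP} applied to the product observation $X_{1:T}^{(i)}$ viewed as a single data point in $\mathcal{X} = \mathcal{Y}^T$.

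Hence the same chain of inequalities yields
\[
    \mathcal{R}_{n, T, \alpha}(\theta(\mathcal{P}), \Phi \circ \rho)
    \geq \frac{\Phi(\delta)}{2}\left\{1 - \frac{12 n \alpha^2 + \log(2)}{\log(N(2\delta))} \right\}
\]
for every $T \geq 1$, which is the claim. The only mild subtlety — and the one step I would be careful to spell out — is that in the user-level setting the "observation'' privatised by $Q_i$ is the whole block $X_{1:T}^{(i)}$, so I must confirm that the mutual-information bound from \cite{Duchi:2018} is stated at the level of a generic observation space and therefore applies with $\mathcal{Y}$ replaced by $\mathcal{X} = \mathcal{Y}^T$; this is exactly the content of Remark~\ref{sec1:rem:Equivalence}, so no new work is needed. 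I do not expect any genuine obstacle here: the corollary is a reindexing of Theorem~\ref{sec2:thm:GeneralBound}, and the proof is essentially the sentence "apply the lower bound in Theorem~\ref{sec2:thm:GeneralBound} with $\mathcal{P}$ there taken to be the $T$-fold product family over $\theta(\mathcal{P})$, noting the parameter space is unchanged.''
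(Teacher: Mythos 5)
Your proposal is correct and matches the paper's own reasoning: the paper obtains the corollary immediately from \Cref{sec2:thm:GeneralBound} by noting that the Fano argument plus the bound of \citet[Corollary~3]{Duchi:2018} use only the metric entropy of $\theta(\mathcal{P})$ and the trivial bound $D_{\mathrm{TV}}\leq 1$, which applies equally to the $T$-fold product distributions privatised as single blocks. Your extra "embedding" framing in the first paragraph is unnecessary but harmless; the operative step (run the Theorem~\ref{sec2:thm:GeneralBound} lower-bound argument on $\{P_\theta^{\otimes T}\}$ with the same parameter set) is exactly what the paper does.
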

In particular, letting $\Delta_{\mathrm{LB}}$ and $\Delta_{\mathrm{UB}}$ satisfy
\begin{equation*}
    N(2\Delta_{\mathrm{LB}}) \geq \exp(\lceil 24n \min\{\alpha, \alpha^2\} + 2\log(2) \rceil) \quad \mbox{and} \quad \Phi(\Delta_{\mathrm{UB}}) \geq \Phi\{\mathrm{diam}(\Theta)\} N(\Delta_{\mathrm{UB}}) e^{-n\min\{\alpha, \alpha^2\}/20},
\end{equation*}
we have that
\begin{equation*}
    \Phi(\Delta_{\mathrm{LB}})
    \lesssim \mathcal{R}_{n, \infty, \alpha}(\theta(\mathcal{P}), \Phi \circ \rho)
    \lesssim \Phi(\Delta_{\mathrm{UB}}). 
\end{equation*}

\begin{remark} \label{sec2:rem:lower}
    As the lower bound in \eqref{sec2:eq:TIndepLB} is independent of $T$, an important consequence of \Cref{sec2:thm:TIndepLB} is that for any user-level LDP estimation problem it is impossible for the minimax risk to vanish as $T$ diverges, provided that there exists $\Delta>0$ such that $12 n \min\{\alpha, \alpha^2\} + \log(2) < \log(N(2\Delta))$.
\end{remark}

We now construct a general estimator which attains the upper bound in \eqref{sec2:eq:GeneralLB}. The estimator involves constructing a $\Delta$-covering of the $\rho$-metric space of interest, and uses a private voting procedure to select the ball believed to be closest to the unknown parameter. We note that this voting-based strategy is commonly used in the privacy literature, sometimes under the name of unary encoding. See e.g.~\cite{Wang:2017} for an overview of this technique, and e.g.~\cite{Girgis:2022} for an application in user-level privacy. The procedure is formalised in \Cref{alg:CoverSelect}. Note that the privatisation in \eqref{eq-general-upper-bound-privacy} is non-interactive and satisfies $\alpha$-LDP by \Cref{app:lem:RR}.

\begin{algorithm}
    \caption{\textsc{CoverSelect}\,$(\{X^{(i)}\}_{i\in[n]},\,n,\,\alpha,\,\Delta)$}
    \begin{algorithmic}[1]
        \State Denote $N(\Delta)$ covering number, let $\Delta$-cover $\mathcal{B}=\{B_j\}_{j=1}^{N(\Delta)}$ with centres $\theta_j$.
        \State Initialise $B'_1 = B_1$ and let $B'_j = B_j \setminus \bigcup_{k=1}^{j-1} B_k$ for $j \geq 1$. \refstepcounter{equation}\label{sec2:eq:nonoverlapping} \hfill(\theequation)
        
        \For{$i=1,\dots,n$}
          \For{$j=1,\dots,N(\Delta)$}
            \State Let $V^{(i)}_j = \mathbbm{1}\{X^{(i)}\in B'_j\}$ and sample $U_{i,j}\sim\mathrm{Unif}[0,1]$.
            \State Let $\widetilde{V}^{(i)}_j = V^{(i)}_j \mathbbm{1}\{U_{i,j} \leq \omega_{\alpha/2}\} + (1 - V^{(i)}_j)\mathbbm{1}\{U_{i,j} > \omega_{\alpha/2}\}$ .\refstepcounter{equation}\label{eq-general-upper-bound-privacy} \hfill(\theequation)
          \EndFor
        \EndFor
        
        \State Denote $j^\ast = \min\arg\max_{j\in[N(\Delta)]}\left\{ \sum_{i=1}^n \widetilde{V}^{(i)}_j \right\}$, set $\hat{\theta} = \theta_{j^\ast}$ and \Return $\hat{\theta}$.
    \end{algorithmic}
    \label{alg:CoverSelect}
\end{algorithm}
\noindent

With this algorithm in hand, we focus on the form of the upper bound in \eqref{sec2:eq:GeneralLB}, consisting of two terms. The term $\Phi(\Delta)$ is the error that arises when the correct ball is chosen. In this case, the error is at most~$\Phi(\Delta)$ by the definition of the constructed covering of the space. The other term is the error incurred when a different ball is chosen, in which case the error varies depending on the distance between the parameter and the centre of the chosen ball. This error is therefore upper bounded by the worst case distance $\Phi(\mathrm{diam}(\Theta))$. This adverse outcome occurs when, after privatising the votes, the number of votes for the correct ball is less than that of some other ball, and the probability of this adverse outcome is upper bounded by $N(\Delta) e^{-n\min\{\alpha, \alpha^2\}/20}$. The $\alpha^2$ term can be obtained by Hoeffding's inequality when considering the privatised votes in the case $\alpha \lesssim 1$. For $\alpha \gtrsim 1$ however, Hoeffding's inequality is loose, resulting in a term independent in $\alpha$, whereas using the stronger Chernoff--Hoeffding bound \citep[Theorem~1]{Hoeffding:1963} recovers a term of $\alpha$. In particular, we see the error vanishes as $\alpha \rightarrow \infty$, provided $\Delta$ is taken to zero suitably.

There is a trade-off between taking $\Delta$ smaller to obtain a finer covering of $\Theta$, which would result in a smaller $\Phi(\Delta)$, and the difficulty in identifying the ball the users are voting for after privatisation as~$N(\Delta)$ increases. Loosely speaking, as $\Delta$ decreases, there are more candidates to vote for, and so more opportunities for an incorrect ball to obtain more votes after privatisation by random chance. When obtaining upper bounds, we set $\Delta$ to balance these two terms to obtain a sharp upper bound.

\subsection{Application of Theorem \ref{sec2:thm:GeneralBound}} \label{sec2:sec:applicationofgeneralbound}

In this subsection we apply \Cref{sec2:thm:GeneralBound} in a range of problems. For suitably chosen values of $\Delta$ we obtain minimax rates up to constants, and constants in the exponent in the case of exponential bounds. The proofs are contained in \Cref{Appendix_sec2}.

\subsubsection{Mean Estimation}
Consider the mean estimation problems for the classes of distributions
\begin{equation}
    \mathcal{P}_d = \left\{ P : \mathrm{supp}(P) \subseteq \mathbb{B}_\infty(1) \subset \mathbb{R}^d  \right\} \quad \mbox{and} \quad \mathcal{P}_d' = \left\{ P : \mathrm{supp}(P) \subseteq \mathbb{B}_2(1) \subset \mathbb{R}^d  \right\}, \label{sec2:eq:MeanClasses}
\end{equation}
with the mean of a distribution $P$ denoted as $\theta(P) = \mathbb{E}_P(X)$.

\begin{proposition}\label{sec2:thm:Means} Let $\mathcal{R}_{n, \infty, \alpha}$ be the infinite-$T$ minimax risk rate defined in \eqref{sec2:eq:InfMinimaxDef}.

    (i) Assuming that $n\min\{\alpha, \alpha^2\} > 60d\log(6d)$, it holds that
    \begin{equation}
        e^{-C n \min\{\alpha, \alpha^2\} / d}
        \lesssim \mathcal{R}_{n, \infty, \alpha}(\theta(\mathcal{P}_d), \|\cdot\|_2^2)
        \lesssim e^{-c n \min\{\alpha, \alpha^2\} / d}, \label{sec2:eq:linfMeanStatement}
    \end{equation}
    where $\mathcal{P}_d$ is denoted in \eqref{sec2:eq:MeanClasses}, and $C > c > 0$ are absolute constants.

    (ii) Assuming that $n\min\{\alpha, \alpha^2\} > 60d$, it holds that
    \begin{equation}
        e^{-C' n \min\{\alpha, \alpha^2\} / d}
        \lesssim \mathcal{R}_{n, \infty, \alpha}(\theta(\mathcal{P}_d'), \|\cdot\|_2^2)
        \lesssim e^{-c' n \min\{\alpha, \alpha^2\} / d}, \label{sec2:eq:l2MeanStatement}
    \end{equation}
    where $\mathcal{P}'_d$ is denoted in \eqref{sec2:eq:MeanClasses}, and $C' > c' > 0$ are absolute constants.

\end{proposition}

The primary component of the proof of \Cref{sec2:thm:Means} is to upper and lower bound the covering numbers of the spaces $\theta(\mathcal{P}_d)$ and $\theta(\mathcal{P}_d')$, before applying the general results \Cref{sec2:thm:GeneralBound}. 

For $\alpha \lesssim 1$, the item-level counterpart of \Cref{sec2:thm:Means} is shown in \cite{Duchi:2018}, where the minimax rates are
\begin{equation}\label{eq-duchi-mean-est-rates}
    \mathcal{R}_{n, 1, \alpha}(\theta(\mathcal{P}_d), \|\cdot\|_2^2) \asymp d^2/(n\alpha^2) \quad \mbox{and} \quad \mathcal{R}_{n, 1, \alpha}(\theta(\mathcal{P}'_d), \|\cdot\|_2^2) \asymp d/(n\alpha^2).
\end{equation}
Compared to these polynomial rates, \Cref{sec2:thm:Means} exhibits exponentially decaying rates. In order to achieve vanishing minimax rates, in the $\ell_2$-ball scenarios ($\mathcal{P}'_d$), the same dependence on $d$ is observed in both the infinite-$T$ and item-level cases, i.e.~$d/(n\alpha^2) \to 0$. On the other hand, such dependence is substantially weakened in the infinite-$T$ case when considering the $\ell_{\infty}$-ball scenario ($\mathcal{P}_d$). While in both setups the $\ell_{\infty}$-ball scenario suffers from a worse dependence on the dimension $d$ compared to the $\ell_{2}$-ball one, the infinite-$T$ setup only requires $n\alpha^2 \gtrsim d\log(d)$, as opposed to $n\alpha^2 \gtrsim d^2$ for the item-level rate.

\subsubsection{Sparse Mean Estimation}
Consider the  high-dimensional sparse mean estimation problem where the family of distributions is
\begin{equation}
    \mathcal{P}_{d, s} = \left\{ P : \mathrm{supp}(P) \subseteq \mathbb{B}_\infty(1) \subset \mathbb{R}^d,\ \|\mathbb{E}_P(X)\|_0 \leq s \right\}, \label{sec2:eq:SparseFamily}
\end{equation}
with the mean of distribution $P$ denoted as $\theta(P) = \mathbb{E}_P(X)$.
\begin{proposition} \label{sec2:thm:Sparse}
    Assuming that $n\min\{\alpha, \alpha^2\} > 120s\log(6d)$, it holds that
    \begin{equation}
        e^{-C n \min\{\alpha, \alpha^2\}/s}
        \lesssim \mathcal{R}_{n, \infty, \alpha}(\theta(\mathcal{P}_{d, s}), \|\cdot\|_2^2)
        \lesssim e^{-cn \min\{\alpha, \alpha^2\} / s}, \label{sec2:eq:sparseMeanStatement}
    \end{equation}
    where $\mathcal{P}_{d,s}$ is denoted in \eqref{sec2:eq:SparseFamily}, $\mathcal{R}_{n, \infty, \alpha}$ is the infinite-T minimax risk rate defined in \eqref{sec2:eq:InfMinimaxDef} and $C > c > 0$ are absolute constants.
\end{proposition}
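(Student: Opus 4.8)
The plan is to apply Theorem~\ref{sec2:thm:GeneralBound} directly, with $\Phi(t)=t^2$ and $\rho=\|\cdot\|_2$, so that the entire argument reduces to two-sided estimates on the $\ell_2$-covering number $N(\delta)$ of the parameter space $\Theta=\theta(\mathcal{P}_{d,s})$, together with its diameter. First I would identify $\Theta$: since the mean of any distribution supported in $\mathbb{B}_\infty(1)$ lies in $\mathbb{B}_\infty(1)$ while every $s$-sparse $v\in\mathbb{B}_\infty(1)$ is the mean of $\delta_v$, one has $\Theta=\{v\in\mathbb{R}^d:\|v\|_\infty\le 1,\ \|v\|_0\le s\}$, a union of $\binom ds$ axis-aligned $s$-dimensional cubes with $\mathrm{diam}(\Theta)=2\sqrt s$, hence $\Phi(\mathrm{diam}(\Theta))=4s$. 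We may assume $s\le d$, the sparsity constraint being vacuous otherwise.

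Next I would bound $N(\delta)$ from both sides. For the upper bound, for each of the $\binom ds$ supports $S$ I would cover $[-1,1]^S$ by the nodes of a coordinate grid of mesh $\delta/\sqrt s$; these nodes lie in $\Theta$ and the cells have $\ell_2$-diameter at most $\delta$, giving $N(\delta)\le\binom ds(3\sqrt s/\delta)^s$ for $0<\delta\le\sqrt s$. For the lower bound I would restrict to a single coordinate support and compare $s$-dimensional volumes: an $\ell_2$-ball of radius $\delta$ meets the corresponding $s$-dimensional coordinate subspace in a set of $s$-volume at most $V_s\delta^s$ (with $V_s$ the volume of the unit Euclidean ball in $\mathbb{R}^s$, and $\sup_sV_s^{1/s}<\infty$), whereas $[-1,1]^S$ has $s$-volume $2^s$, so $N(\delta)\ge 2^s/(V_s\delta^s)\ge(c_0/\delta)^s$ for an absolute constant $c_0>0$. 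In particular $\log N(\delta)\ge s\log(c_0/\delta)$ always, while $\log N(\delta)\le s\log(1/\delta)+C_0\,s\log(6d)$ for an absolute constant $C_0$ (absorbing $\binom ds\le(ed/s)^s$ and $(3\sqrt s)^s$ using $s\le d$).

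It then remains to choose $\delta$ and feed these into \eqref{sec2:eq:GeneralLB}. For the risk lower bound I would take $\delta_{\mathrm{LB}}$ with $\delta_{\mathrm{LB}}^2\asymp e^{-Cn\alpha^2/s}$ for a suitably large absolute constant $C$; then $\log N(2\delta_{\mathrm{LB}})\gtrsim s\log(1/\delta_{\mathrm{LB}})\gtrsim n\alpha^2$, which exceeds $2\{12n\alpha^2+\log2\}$ for $C$ large, so the bracket in \eqref{sec2:eq:GeneralLB} is at least $\tfrac12$ and $\mathcal{R}_{n,\infty,\alpha}(\theta(\mathcal{P}_{d,s}),\|\cdot\|_2^2)\ge\tfrac14\delta_{\mathrm{LB}}^2\gtrsim e^{-Cn\alpha^2/s}$. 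For the upper bound I would take $\delta_{\mathrm{UB}}$ with $\delta_{\mathrm{UB}}^2=e^{-cn\alpha^2/s}$ for a small absolute constant $c>0$; then, since $\Phi(\mathrm{diam}(\Theta))=4s$, the second term in the upper bound of \eqref{sec2:eq:GeneralLB} is at most $4s\binom ds(3\sqrt s/\delta_{\mathrm{UB}})^se^{-n\alpha^2/20}\le\exp\{C_0'\,s\log(6d)+\tfrac c2 n\alpha^2-\tfrac1{20}n\alpha^2\}$ for an absolute constant $C_0'$, and invoking $n\alpha^2>120\,s\log(6d)$ to absorb $C_0'\,s\log(6d)$ into a small fraction of $n\alpha^2$, this is at most $\delta_{\mathrm{UB}}^2$ once $c$ is small enough, so $\mathcal{R}_{n,\infty,\alpha}(\theta(\mathcal{P}_{d,s}),\|\cdot\|_2^2)\le2\delta_{\mathrm{UB}}^2\lesssim e^{-cn\alpha^2/s}$.

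The step I expect to be the main obstacle is this last piece of bookkeeping for the upper bound: one has to verify that the ``wrong-ball'' error $\Phi(\mathrm{diam}(\Theta))\,N(\delta)\,e^{-n\alpha^2/20}$, which carries both the combinatorial factor $\binom ds$ and the mesh factor $\delta^{-s}$, can be made negligible against the ``right-ball'' error $\delta^2$ at the chosen exponentially small scale. This is exactly where the hypothesis $n\alpha^2\gtrsim s\log d$ enters --- it lets $e^{-n\alpha^2/20}$ beat $\binom ds$ with exponential room to spare, leaving enough slack in the exponent to also absorb $\delta^{-s}$ --- and pinning down how small $c$ (and how large $C$) must be relative to the constant $120$ is the only delicate calculation. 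The covering-number estimates and the lower-bound side are otherwise routine.
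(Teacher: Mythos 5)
Your proposal is correct and follows essentially the same route as the paper: both reduce the proposition to Theorem~\ref{sec2:thm:GeneralBound} via two-sided bounds on the $\ell_2$-covering number of the $s$-sparse parameter set (with $\mathrm{diam}(\Theta)=2\sqrt{s}$ and $\delta$ chosen exponentially small in $n\alpha^2/s$), and the constant-absorption step you flag for the upper bound is handled exactly as you describe, using $n\alpha^2>120\,s\log(6d)$ to dominate the $\binom{d}{s}\delta^{-s}$ factor by $e^{-n\alpha^2/20}$. The only divergence is the covering-number lower bound: you use a single-support volumetric argument giving $N(\delta)\gtrsim(c_0/\delta)^s$, whereas the paper's explicit packing (Lemma~\ref{app:lem:sparsecoverlower}) carries an extra combinatorial factor $(d/s)^s$ and hence a slightly sharper risk lower bound of order $(d^2/s)e^{-Cn\alpha^2/s}$; for the statement as claimed your simpler bound suffices.
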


\Cref{sec2:thm:Sparse} is shown by obtaining suitable bounds on the covering number of the metric space $(\theta(\mathcal{P}_{d, s}), \|\cdot\|_2)$, and by applying \Cref{sec2:thm:GeneralBound}. The upper bound on the covering number of $\theta(\mathcal{P}_{d, s})$ comes from considering a union of $s$-dimensional subspaces which cover the space $\theta(\mathcal{P}_{d,r})$. For the lower bound we explicitly construct a packing of $\theta(\mathcal{P}_{d, s})$, described in the proof of \Cref{app:lem:sparsecoverlower} in \Cref{app:sec:misc}. This is an interesting by-product on its own and gives a lower bound on the covering number.

Without privacy constraints, one can show that the minimax risk for $s$-sparse $d$-dimensional Gaussian distributions (with $O(1)$ variance) is of order $s\log(ed/s)/n$ \citep[e.g.][Theorem~8.2.6]{hanfang}, showing that the dimension $d$ can grow exponentially large in $n$. On the other hand, for $\alpha \lesssim 1$, it is shown in Theorem 3 in \cite{Acharya:2022} that for $s > 4\log(d)$, $\mathcal{R}_{n, 1, \alpha}(\theta(\mathcal{P}_{d, s}), \|\cdot\|_2^2) \asymp sd/(n\alpha^2)$.\footnote{For $s = 1$ and $\alpha \lesssim 1$, \cite{Duchi:2018} shows that the non-interactive minimax risk, as opposed to the interactive risks considered throughout this work and in \cite{Acharya:2022}, is of order $d\log(2d)/(n\alpha^2)$.}

We see that, in the high privacy setting $\alpha \lesssim 1$, item-level LDP constraints are disastrous for sparse mean estimation, where the dimension cannot be linear in $n\alpha^2$ for consistent estimation, let alone exponential. On the other hand, we see that in the infinite-$T$ case, the rates in \Cref{sec2:thm:Sparse} and \eqref{sec2:eq:linfMeanStatement} match when replacing $d$ by $s$. 
This shows that in the infinite-$T$ case, the fundamental limits of the $s$-dimensional mean and $s$-sparse $d$-dimensional mean problems are the same, up to constants in exponential. This hints at potential benefits of user-level LDP for finite-$T$ in the context of sparse mean estimation to be explored further in \Cref{sec5}.

\subsubsection{Non-Parametric Density Estimation}
For the non-parametric density estimation problem we consider the class of distributions whose densities satisfy a Sobolev regularity condition, in particular that given by an ellipsoid in a Sobolev
space \citep[e.g.~equation~(1.91) in][]{Tsybakov:2009} defined below. We define the trigonometric basis
\begin{equation}
    \varphi_1(x) = 1, \quad \varphi_{2j}(x) = \sqrt{2}\cos(2 \pi j x) \quad \mbox{and} \quad \varphi_{2j + 1}(x) = \sqrt{2}\sin(2 \pi j x), \quad j \in \mathbb{N}, \label{sec2:eq:TrigBasis}
\end{equation}
Given a smoothness parameter $\beta \in \mathbb{N}$ and a radius $r > 0$, the Sobolev class of functions of smoothness~$\beta$ and radius $r$ is given by
\begin{equation} \label{sec2:eq:Sobolev}
    \mathcal{S}_{\beta, r} = \bigg\{ f \in L^2([0,1]) : \, f = \sum_{j=1}^{\infty} \theta_j \varphi_j,\, \sum_{j=1}^{\infty} j^{2\beta} \theta_j^2 \leq r^2 \bigg\}.
\end{equation}
This leads to the definition of the sub-class of Sobolev densities
\begin{align} \label{sec2:eq:SobolevDensity}
    \mathcal{F}_{\beta, r} = \bigg\{ f \in \mathcal{S}_{\beta, r} : \, f > 0, \int f(x) \diff x = 1 \bigg\}.
\end{align}
We denote by $P_f$ the distribution induced by the density $f$ and define the class of all such distributions
\begin{equation}
    \mathcal{P}_{\beta, r} = \left\{ P_f : f \in \mathcal{F}_{\beta, r} \right\}, \label{sec2:eq:InducedDensityFamily}
\end{equation}
where the functional to estimate is $\theta(P_f) = f$ and the error is measured by the squared $L^2([0,1])$ norm denoted by $\|\cdot\|_2^2$ where
\begin{equation}
    \|f\|_2^2 = \int\{f(x)\}^2\,\diff{x}. \label{sec2:eq:L2FuncNormDef}
\end{equation}

\begin{proposition} \label{sec2:thm:Density}
There exists a constant $C_\beta'>0$ depending only on $\beta$ such that, whenever $n\min\{\alpha, \alpha^2\} \geq C_\beta'$, it holds that
    \begin{equation}
        \frac{c_\beta}{(n \min\{\alpha, \alpha^2\})^{2 \beta}}
        \lesssim \mathcal{R}_{n, \infty, \alpha}(\theta(\mathcal{P}_{\beta, 1}), \|\cdot\|_2^2)
        \lesssim \frac{C_\beta}{(n \min\{\alpha, \alpha^2\})^{2 \beta}}, \label{sec2:eq:densityStatement}
    \end{equation}
    where $\mathcal{P}_{\beta, 1}$ is denoted in \eqref{sec2:eq:InducedDensityFamily}, $\mathcal{R}_{n, \infty, \alpha}$ is the infinite-$T$ minimax risk rate defined in \eqref{sec2:eq:InfMinimaxDef} and $C_\beta > c_\beta > 0$ are constants depending only on $\beta$.
\end{proposition}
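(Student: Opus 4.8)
\emph{Approach.} The statement reduces to a metric-entropy computation followed by an application of \Cref{sec2:thm:GeneralBound}. Writing $\Theta = \theta(\mathcal{P}_{\beta,1}) \subset L^2([0,1])$ for the Sobolev density class under the $L^2$ metric, the plan is to establish the sharp two-sided estimate
\[
  c_\beta' \, \delta^{-1/\beta} \;\le\; \log N(\delta) \;\le\; C_\beta' \, \delta^{-1/\beta}
\]
for all sufficiently small $\delta > 0$, and then to optimise $\delta$ in \eqref{sec2:eq:GeneralLB}. Since $\Phi(x) = x^2$ here, choosing $\delta$ so that $\log N(2\delta) \asymp n\alpha^2$, i.e.\ $\delta \asymp (n\alpha^2)^{-\beta}$ with a suitable $\beta$-dependent proportionality constant, makes the left-hand side of \eqref{sec2:eq:GeneralLB} of order $\Phi(\delta) \asymp (n\alpha^2)^{-2\beta}$; for the same $\delta$ the second term $\Phi(\mathrm{diam}(\Theta)) \, N(\delta)\, e^{-n\alpha^2/20}$ on the right-hand side is at most $e^{-n\alpha^2/40}$, which is negligible against $\Phi(\delta)$ once $n\alpha^2 \ge C_\beta'$, so the upper bound is also of order $(n\alpha^2)^{-2\beta}$. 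Here $\mathrm{diam}(\Theta)$ is bounded by an absolute constant because $\|f\|_2^2 = \sum_j \theta_j^2$ is controlled by the Sobolev constraint defining $\mathcal{F}_{\beta,1}$, and the hypothesis $n\alpha^2 \ge C_\beta'$ is precisely what guarantees both that a valid $\delta$ in the regime where the entropy estimate applies exists and that $N(2\delta) > 1$.

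\emph{Upper bound on $N(\delta)$.} Working in coefficient space via \eqref{sec2:eq:TrigBasis}, so that $\|f-g\|_2^2 = \sum_j (\theta_j - \theta_j')^2$ with $(\theta_j)$ ranging over a Sobolev ellipsoid, truncate the expansion at level $k \asymp \delta^{-1/\beta}$: the ellipsoid constraint bounds the discarded tail by $\sum_{j>k}\theta_j^2 \le k^{-2\beta}\sum_{j>k} j^{2\beta}\theta_j^2 \lesssim \delta^2$, so it suffices to $\delta$-cover the projection onto the first $k$ coordinates. A bare volumetric bound gives $\log N(\delta) \lesssim k\log(1/\delta) \asymp \delta^{-1/\beta}\log(1/\delta)$, but this extraneous logarithm must be removed, because $N(\delta)$ enters \eqref{sec2:eq:GeneralLB} through $N(\delta)e^{-n\alpha^2/20}$ and a logarithmic inflation of $\delta$ would cost a logarithmic factor in the final rate. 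This is achieved by the standard dyadic peeling: split the coordinates into blocks $\{2^{m-1}k+1, \dots, 2^m k\}$, on which the ellipsoid projects into a Euclidean ball of radius $\asymp (2^{m-1}k)^{-\beta}$ in dimension $\asymp 2^m k$, and cover the $m$-th block at scale $\asymp \delta\,2^{-\gamma m}$ for a fixed $\gamma \in (0,\beta)$; only a number of blocks depending on $\beta$ alone require more than a single point, and each such block contributes $O(2^m k)$ to the log-cardinality, so that $\log N(\delta) \le C_\beta'\delta^{-1/\beta}$ (alternatively, cite the classical entropy bound for polynomially-decaying ellipsoids). Finally, since every $f \in \mathcal{F}_{\beta,1}$ lies within $L^2$-distance $O(1)$ of the uniform density, the covering centres may be taken inside $\mathcal{F}_{\beta,1}$ at the cost of a constant factor in $\delta$.

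\emph{Lower bound on $N(\delta)$ and the main obstacle.} For the matching lower bound, construct a packing inside the density class by perturbing the uniform density: for $k \asymp \delta^{-1/\beta}$ and $\omega \in \{0,1\}^k$, set $f_\omega = 1 + \varepsilon\sum_{j=k+1}^{2k} \omega_{j-k}\varphi_j$, where the amplitude $\varepsilon$ is taken as large as the Sobolev constraint permits (of order $k^{-\beta-1/2}$). Then $\int f_\omega = 1$ since the perturbation is orthogonal to $\varphi_1 \equiv 1$, $f_\omega > 0$ since its sup-norm deviation from $1$ is $O(k^{-\beta}) \to 0$, and $f_\omega \in \mathcal{F}_{\beta,1}$; moreover $\|f_\omega - f_{\omega'}\|_2^2 = \varepsilon^2 H(\omega,\omega')$. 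By the Varshamov--Gilbert bound there is a set $W \subseteq \{0,1\}^k$ with $|W| \ge 2^{k/8}$ and pairwise Hamming distance at least $k/4$, yielding a $c\delta$-separated family of densities of size $2^{k/8}$ and hence $\log N(c'\delta) \gtrsim k \asymp \delta^{-1/\beta}$. Combining the two entropy bounds and optimising $\delta$ in \eqref{sec2:eq:GeneralLB} as above yields \eqref{sec2:eq:densityStatement}. The one genuinely delicate step is the log-free upper bound on $N(\delta)$: the naive truncation estimate is off by a $\log(1/\delta)$ factor that would degrade the rate by a power of a logarithm, so the dyadic-peeling refinement (or an appeal to the classical ellipsoid entropy) is the crux; the Varshamov--Gilbert packing and the final optimisation of $\delta$ are routine.
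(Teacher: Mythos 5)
Your proposal is correct and follows the same skeleton as the paper's proof: establish the two-sided entropy estimate $\log N(\delta) \asymp \delta^{-1/\beta}$ for $(\theta(\mathcal{P}_{\beta,1}), \|\cdot\|_2)$ and then apply \Cref{sec2:thm:GeneralBound} with $\delta \asymp (n\alpha^2)^{-\beta}$, the exponential term $N(\delta)e^{-n\alpha^2/20}$ being absorbed once $n\alpha^2 \geq C_\beta'$. The genuine difference is the packing underlying the entropy lower bound: the paper (\Cref{app:lem:densitycoverlower}) perturbs the uniform density by $k$ spatially localised bump functions with disjoint supports and passes to the ellipsoid \eqref{sec2:eq:Sobolev} via a H\"older-to-Sobolev embedding, whereas you perturb directly in coefficient space, putting amplitude $\varepsilon \asymp k^{-\beta-1/2}$ on the frequency band $\{k+1,\dots,2k\}$; both routes finish with Varshamov--Gilbert. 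Your variant is arguably the more natural one for a class defined by an ellipsoid condition, since orthonormality gives the exact identity $\|f_\omega - f_{\omega'}\|_2^2 = \varepsilon^2 H(\omega,\omega')$ with no derivative bookkeeping, while the paper's bump construction has the advantage of being reused verbatim in the finite-$T$ Assouad argument of \Cref{app:sec:densityLB}, which is why it is isolated as a lemma. On the covering side you correctly identify the crux — the naive truncation bound carries a spurious $\log(1/\delta)$, and the log-free ellipsoid entropy is what is needed; the paper simply cites this (Wainwright, Example 5.12), and your dyadic-peeling sketch is essentially the standard proof of that citation, so either suffices. Two small remarks: the worst-case sup-norm of your perturbation is of order $k\varepsilon \asymp k^{1/2-\beta}$ (take all $\omega_j = 1$ and evaluate at $x=0$), not $k^{-\beta}$ as you claim — harmless, since it still vanishes for every integer $\beta \geq 1$, so positivity holds for $\delta$ small; and, exactly as in the paper's own construction, your budget check implicitly charges the perturbation against the non-constant coefficients only (every density has $\theta_1 = \int f = 1$, which under a literal reading of \eqref{sec2:eq:Sobolev} with $r = 1$ already exhausts the ellipsoid), so on this definitional wrinkle your construction is no worse off than the paper's.
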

The upper and lower bounds of \Cref{sec2:thm:Density} are derived by obtaining suitable upper and lower bounds on the covering number of the space $\mathcal{F}_{\beta, 1}$ and by applying \Cref{sec2:thm:GeneralBound}. For the upper bound on the complexity of $\mathcal{F}_{\beta, 1}$, it suffices to use the standard upper bound on the covering number of $\mathcal{S}_{\beta, r}$ \citep[e.g.][Example 5.12]{Wainwright:2019}; for the lower bound, we explicitly construct a packing of $\mathcal{F}_{\beta, 1}$, described in the proof of \Cref{app:lem:densitycoverlower} in \Cref{app:sec:misc}, which is again a by-product of its own interest.

Compared to \Cref{sec2:thm:Means}, we see that the minimax risk is polynomial in $n\min\{\alpha, \alpha^2\}$ rather than exponential. This occurs due to the fact that the covering number of the $\mathcal{F}_{\beta, 1}$ is exponential in $1/\Delta$, rather than polynomial as is the case for the spaces arising from the classes in \eqref{sec2:eq:MeanClasses} and \eqref{sec2:eq:SparseFamily}. This also shows a significant difference between parametric and non-parametric rates in user-level LDP settings.

\subsection{Discussion}
In this section we have discussed the minimax rates for the infinite-$T$ case, where the data are drawn from distributions in $\mathcal{P}^{\infty}$, defined in \eqref{sec2:eq:infFamily}. Comparisons of the minimax rates corresponding to $\mathcal{P}^{\infty}$ and those to $\mathcal{P}$, in the three problems we discussed in this section are collected in \Cref{tab-1}, where we suppress multiplicative constants and constants in exponents.

\begin{table}[H]
\centering
\begin{tabular}{lccc}
\hline\hline
& $d$-dim.~mean ($\ell_2$-ball) & Sparse mean & Density (Sobolev $\beta$-smooth) \\ \hline
Distribution class $\mathcal{P}$   & $ d/(n\alpha^2)$  & $sd/(n\alpha^2)$  & $(n\alpha^2)^{-2\beta/(2\beta + 2)}$     \\ 
Distribution class $\mathcal{P}^\infty$ &  $e^{-n\alpha^2/d}$ & $e^{-n\alpha^2/s}$ & $(n\alpha^2)^{-2\beta}$ \\ \hline
\end{tabular}
\caption{Comparison, for $\alpha \lesssim 1$, of infinite-$T$ user-level rates, and item-level mean estimation and density rates derived in \cite{Duchi:2018}, and sparse mean estimation rate derived in \cite{Acharya:2022}. Multiplicative constants and constants in exponents are suppressed for illustration convenience.} \label{tab-1}
\end{table}

We note the significant difference between the two cases, for example in the user-level setting with the exponential rate in mean estimation, or the lack of dependence on $d$ in the sparse rate except through the required assumption $n\min\{\alpha, \alpha^2\} \gtrsim 120\log(6d)$, which is itself no stronger (up to constants) than what is assumed in the non-private analogue. We also note that the rate is not always exponential, as seen in the non-parametric density estimation, where a polynomial rate is still observed, but with a different exponent from that in the item-level case.

We observe similar behaviour to \Cref{tab-1} in the low-privacy regime. Comparing to known rates from \citet[Theorem~3]{Acharya:2022}, for $1 \lesssim \alpha \lesssim d$, the lower bound for mean estimation takes the form $d^2/(n\alpha)$. Hence, the effective sample size is $n\alpha$, which matches that observed in the exponential of our $T$-independent lower bound for the $\alpha \gtrsim 1$ setting.

With these results in hand for the infinite-$T$ case we will next consider the finite-$T$ user-level LDP case. By virtue of \Cref{sec2:thm:TIndepLB}, we have that all the lower bounds developed thus far also hold for the minimax risk for finite-$T$. We note the parallels to \cite{Levy:2021}, where it was shown that for the problems considered therein, the minimax risk for the central user-level framework does not vanish even as $T \rightarrow \infty$, answering a conjecture of \cite{Amin:2019}. We thus resolve the local model analogue of this conjecture where one asks what these $T$-independent lower bounds will be in the local model. Interestingly, though the $T$-independent lower bounds differ in the high-privacy setting where $\alpha \lesssim 1$, being $e^{-n\alpha^2}$ and $e^{-n\alpha}$ in the local and central settings respectively for mean estimation, in the low-privacy setting of $\alpha \gtrsim 1$, the $T$-independent lower bounds coincide, both taking the form $e^{-n\alpha}$. We recall we also obtain generalisations of the results of this section under the setting of approximate privacy which may be found in \Cref{app:sec:approxLDP}.

Lastly, intuition suggests that we might expect that as $T \rightarrow \infty$ with other quantities held fixed, the minimax risk for user-level LDP will demonstrate similar behaviour in the limit as in the infinite-$T$ cases above as each user can estimate the functional increasingly well. We will shortly see that this is indeed the case, with the user-level rates demonstrating a phase transition once $T$ is sufficiently large, matching the rates we obtained in this section.

\section{Mean Estimation}\label{sec3}
Recalling the families~$\mathcal{P}_{d}$ and $\mathcal{P}_{d}'$ defined in \eqref{sec2:eq:MeanClasses}, we estimate the mean of a distribution $P$ so that $\theta(P) = \mathbb{E}_P(X)$ and measure the error by the squared $\ell_2$-loss in both the high- and low-privacy regimes. In what follows, denote $z_{\alpha, d} = \min\{\alpha, \alpha^2, d\}$. 

\begin{theorem}\label{sec3:thm:main} Let $\mathcal{R}_{n, T, \alpha}$ be the user-level minimax risk rate defined in \eqref{sec2:eq:UserMinimax}.

    (i) Assuming that $n\min\{\alpha, \alpha^2\} > \widetilde{C}d\log(ed)$, where $\widetilde{C} > 0$ is an absolute constant, we have that
    \begin{equation}
        \frac{d^2}{nTz_{\alpha, d}} \vee e^{-Cn\min\{\alpha, \alpha^2\}/d}
        \lesssim \mathcal{R}_{n, T, \alpha}(\theta(\mathcal{P}_{d}), \|\cdot\|_2^2)
        \lesssim
        \frac{d^2 \log(nTz_{\alpha, d}/d)}{nTz_{\alpha, d}} + e^{-cn\min\{\alpha, \alpha^2\}/d}, \label{sec3:eq:linfMeanStatement} 
    \end{equation}
    where $\mathcal{P}_d$ is denoted in \eqref{sec2:eq:MeanClasses} for the $\ell_{\infty}$ case,  and $C > c > 0$ are absolute constants.

    (ii) Assuming that $n\min\{\alpha, \alpha^2\} > \widetilde{C}'d\log(ed)$, and $n\alpha^2 > \widetilde{C}'d^2 \mathbbm{1}\{\alpha > 1\}$, where $\widetilde{C}' > 0$ is an absolute constant, we have that
    \begin{equation*}
        \frac{d}{nTz_{\alpha, d}} \vee e^{-C'n\min\{\alpha, \alpha^2\}/d}
        \lesssim \mathcal{R}_{n, T, \alpha}(\theta(\mathcal{P}_{d}'), \|\cdot\|_2^2)
        \lesssim
        \min\left\{1, \frac{d \{\log(nTz_{\alpha, d})\}^2}{nTz_{\alpha, d}} + e^{-c'n\min\{\alpha, \alpha^2\}/d} \right\},
    \end{equation*}
    where $\mathcal{P}'_d$ is denoted in \eqref{sec2:eq:MeanClasses} for the $\ell_2$ case, and $C' > c' > 0$ are absolute constants.
\end{theorem}
In both cases, we see that lower bounds are the maxima of two parts, the polynomial term analogous to the standard item-level rate \cite[Corollary 4]{Duchi:2018}, and the exponential term inherited from \Cref{sec2:thm:TIndepLB}.

The estimation methods in deriving upper bounds are similar to that in \cite{Girgis:2022} and \cite{Bassily:2023}, and repeat standard user-level estimation techniques of first localising to a small region where the mean is believed to lie, and then obtaining a refined estimator on the chosen region. 

The exponential-decay minimax rate once $T$ is sufficiently large is a phenomenon new to the literature, and confirms the intuition of \Cref{sec2} that the infinite-$T$ rate, for this problem given by~\Cref{sec2:thm:Means}, should coincide with the finite-$T$ rate for large $T$. 

Regarding the low-privacy regime, the condition $n\alpha^2 \gtrsim d^2 \mathbbm{1}\{\alpha > 1\}$ is necessary for our lower bounds to be valid when $\alpha \gtrsim 1$, but the upper bounds can hold without it. The necessity of this condition for valid lower bounds when $\alpha \gtrsim 1$ has similarly been observed in discrete density estimation under user-level LDP in \cite{Acharya:2023}.

In the rest of this section, we present the estimation methods for the univariate case in \Cref{sec3:univariate}, the multivariate case with the $\ell_{\infty}$-ball in \Cref{sec3:linfball} and the multivariate case with the $\ell_2$-ball in \Cref{sec-multi-ell2}, with discussion in \Cref{sec-3.4}. The upper bounds follow from the analyses, in \Cref{app:sec:Upper}, of the constructed estimators, with the lower bounds proven in \Cref{app:sec:Lower}. We discuss the computational complexity of these procedures in \Cref{app:sec:CompComplex}. For brevity, we provide the algorithms for the high-privacy case $\alpha \lesssim 1$, with the modifications requires to handle the low-privacy setting detailed in \Cref{app:sec:meanlowpriv}

\begin{remark}
    The special case of Bernoulli mean estimation is important due to its ubiquity, and it is common to focus on this setting \citep[e.g.][]{Cummings:2022, George:2024}. The constructions used to obtain the lower bounds in \Cref{sec3:thm:main} are in fact (rescaled) Bernoulli distributions, and so the rates are tight even when restricting to this class. 
\end{remark}

\subsection{Univariate Procedure} \label{sec3:univariate}
The estimation procedure developed for the univariate case will be used as a building block for both the $\ell_2$- and $\ell_\infty$-ball cases of the multivariate problem. The intuition for the univariate case is as follows. As each user has an i.i.d.~sample of size $T$, each user is able to estimate the mean within $O(1/T)$ mean-squared-error accuracy. For $\alpha \lesssim 1$, if we were to naively privatise this procedure via, e.g.~the Laplace mechanism, to ensure $\alpha$-LDP the added noise would need to have $O(1/\alpha^2)$ variance, dominating $O(1/T)$.
    
To avoid this problem, the procedure splits the users into two halves. Without loss of generality, assume that the number of the users $n$ is even. We partition $[-1, 1]$ into non-overlapping sub-intervals of width $2\Delta$, with $\Delta$ to be specified. The first half of users calculate their own sample means, identify which sub-intervals contain their sample means and publicise indicators. The central statistician then produces an interval of width $O(\Delta)$ that will contain the mean with high probability.

The second half calculate their own sample means, project the sample means onto the interval output from the first half and apply the Laplace mechanism. As the width of the initially estimated interval is $O(\Delta)$ rather than $O(1)$, it suffices to perturb with noise with variance (for $\alpha \lesssim 1$) of order $O(\Delta^2/\alpha^2)$ rather than $O(1/\alpha^2)$. By choosing $\Delta = O(1/\sqrt{T})$, up to logarithmic factors, we are able to take advantage of the $T$ observations each user holds. 

We note that for large values of $T$ we do not make use of all $T$ observations, but instead cap the value of $T$ at $T^\ast$, a value exponential in $n\alpha^2$, making users discard excess data. This step is taken to prevent the value of $\Delta$ becoming too small, which would result in the total number of sub-intervals becoming too large, and hence impossible to reliably select the correct interval from the private votes. Further, discarding excess observations does not reduce the utility of our final estimator. To be specific, using only $T^*$ observations in calculating sample means in \eqref{app:eq:unilocalisationestimator} and \eqref{sec3:eq:UniRefinedEstimator} does not reduce the accuracy of the final estimator as if all $T$ observations were used. This is because the estimation error rate is dominated by either $\Delta$ or the added Laplace noise required for privacy. As a by-product, this also results in a lower storage and computational cost. 

We first introduce our main sub-routine in \Cref{alg:UserLevelMean}, followed by the univariate mean estimation procedure in \Cref{alg:PrivateUniMean}. The privatisation in \eqref{sec3:eq:uniGRR} satisfies $\alpha$-LDP by \Cref{app:lem:RR}, as does the estimator $\hat{\theta}^{(i)}$ in \eqref{sec3:eq:UniRefinedEstimator} due to being an instance of the Laplace mechanism.

\begin{algorithm}
    \caption{\textsc{UserLevelMean}\,$(\{X^{(i)}_{t}\}_{i\in[n],t\in[T]},\,n,\,T,\,\alpha,\,\Delta)$}
    \begin{algorithmic}[1]
        \State Let $N(\Delta) = \lceil 1/\Delta \rceil$;\; $I_j = [-1 + 2(j-1)\Delta,\; -1 + 2j\Delta)$ for all $j \in [N(\Delta)]$;\; and $I_{N(\Delta)} = [-1 + 2\{N(\Delta)-1\}\Delta,\; -1 + 2N(\Delta)\Delta]$.
        \For{$i=1,\dots,n/2$}
          \State Calculate $\hat{\theta}^{(i)} = (T^{\ast})^{-1}\sum_{t=1}^{T^{\ast}} X^{(i)}_{t}$.
          \refstepcounter{equation}\label{app:eq:unilocalisationestimator} \hfill(\theequation)
          \For{$j=1,\dots,N(\Delta)$}
            \State Let $V^{(i)}_{j} = \mathbbm{1}\{\hat{\theta}^{(i)}\in I_j\}$ and sample $U_{i,j}\sim\mathrm{Unif}[0,1]$.
            \State $\widetilde{V}^{(i)}_j = V^{(i)}_j \mathbbm{1}\{U_{i,j} \leq \omega_{\alpha/2}\} + (1 - V^{(i)}_j)\mathbbm{1}\{U_{i,j} > \omega_{\alpha/2}\}$.
            \refstepcounter{equation}\label{sec3:eq:uniGRR} \hfill(\theequation)
          \EndFor
        \EndFor
        \State Let $j^{\ast} = \min\arg\max_{j\in[N(\Delta)]}\sum_{i=1}^{n/2}\widetilde{V}^{(i)}_{j}$. \refstepcounter{equation}\label{sec3:eq:unichosenindex} \hfill(\theequation) 
        \State Denote $L,U$ the lower/upper endpoints of $I_{j^{\ast}}$ and let $\tilde{L} = L-2\Delta,  \tilde{U} = U+2\Delta$, $\tilde{I}_{j^{\ast}} = [\tilde{L},\tilde{U}]$.
        \refstepcounter{equation}\label{sec3:eq:uniMeanIntervalExpand}\hfill(\theequation)

        \For{$i=n/2+1,\dots,n$}
            \State Sample $\ell_i \sim \mathrm{Laplace}(1)$ and let $\hat{\theta}^{(i)} = \Pi_{\tilde{I}_{j^{\ast}}}\{(T^\ast)^{-1}\sum_{t=1}^{T^{\ast}} X^{(i)}_{t}\} + (6\Delta/\alpha)\,\ell_i$. \refstepcounter{equation}\label{sec3:eq:UniRefinedEstimator}\hfill(\theequation)
        \EndFor
        \State Let $\hat{\theta} = (n/2)^{-1}\sum_{i=n/2+1}^{n}\hat{\theta}^{(i)}$ and
        \Return $\hat{\theta}$.
    \end{algorithmic}
    \label{alg:UserLevelMean}
\end{algorithm}

\begin{algorithm}
    \caption{\textsc{PrivateUniMean}\,$(\{X^{(i)}_{t}\}_{i\in[n],t\in[T]},\,n,\,T,\,\alpha,\,K)$}
    \begin{algorithmic}[1]
        \State Let $T^{\ast} = T \wedge \exp(n\alpha^{2}/K)$ and $\Delta = \{2\log(nT^{\ast}\alpha^{2})/T^{\ast}\}^{1/2}$ \refstepcounter{equation}\label{sec3:eq:deltaval}\hfill(\theequation)
        \State Let $\hat{\theta} = \textsc{UserLevelMean}(\{X_t^{(i)}\}_{i \in [n], t \in [T^\ast]}, n, T^\ast, \alpha, \Delta)$ and
        \Return $\hat{\theta}$.
    \end{algorithmic}
    \label{alg:PrivateUniMean}
\end{algorithm}

With this estimator for the univariate case, we now consider the multivariate generalisations, where the univariate procedure is applied co-ordinate-wise. For the $\ell_\infty$-ball this can be done to the original data, but for the $\ell_2$-ball we first apply a transformation.

\subsection{Multivariate Procedure for \texorpdfstring{$\ell_\infty$}{Sup-Norm}-ball} \label{sec3:linfball}

For the multivariate procedure for the $\ell_\infty$-ball, we split the $n$ users into $d$ many equally-sized folds, each of which is assigned to estimate a co-ordinate using the univariate procedure detailed in \Cref{sec3:univariate}.  Since we assume that $n \alpha^2 \gtrsim d \log(ed)$, we may assume without loss of generality that $n$ is a multiple of $2d$. The procedure is formalised in \Cref{alg:PrivateMultiDimMean}, and the procedure is $\alpha$-LDP by virtue of the privacy of the univariate sub-procedure.
\begin{algorithm}
    \caption{\textsc{PrivateMultiDimMean}\,$(\{X^{(i)}_{t}\}_{i\in[n],t\in[T]},\,n,\,T,\,\alpha,\,K)$}
    \begin{algorithmic}[1]
        \State Let $T^\ast = T \wedge \exp\{n\alpha^2/Kd\}$ and $\Delta = \{2\log(nT^{\ast}\alpha^{2}/d)/T^{\ast}\}^{1/2}$. Let $N_j = N_{j, 1} \cup N_{j, 2}$ where
        \newline
        $N_{j,1} = \left\{(j-1)\frac{n}{2d} + 1, \hdots, j\frac{n}{2d} \right\}$, \,
        $N_{j,2} = \left\{(d+j-1)\frac{n}{2d} + 1, \hdots, (d+j)\frac{n}{2d} \right\}$, for $j \in [d]$.
        \refstepcounter{equation}\label{sec3:eq:linfGroups}\hfill(\theequation)
        \For{$j = 1, \hdots, d$}
          \State Calculate $\hat{\theta}_j = \textsc{UserLevelMean}(\{X^{(i)}_{t}\}_{i \in N_j, t \in [T^\ast]},\, n,\, T^\ast,\,\alpha,\,\Delta)$.
        \EndFor
    
        \State Let $\hat{\theta} = \sum_{j=1}^d e_j \hat{\theta}_{j}$ where $e_j$ is the $j$-th standard basis vector of $\mathbb{R}^d$, and
        \Return $\hat{\theta}$. \refstepcounter{equation}\label{sec3:eq:linfFinalEstimator}\hfill(\theequation)
    \end{algorithmic}
    \label{alg:PrivateMultiDimMean}
\end{algorithm}

\subsection{Multivariate Procedure for \texorpdfstring{$\ell_2$}{Euclidean}-ball}
\label{sec-multi-ell2}  

For the $\ell_\infty$-ball, splitting the sample and applying the univariate estimator to each co-ordinate attains the optimal rate.  This procedure however does not attain the optimal rate for estimation on the $\ell_2$-ball. As seen in the lower bound in \Cref{sec3:thm:main}, for estimation on the $\ell_2$-ball, the dependence on the dimension is linear rather than quadratic, but applying the same sample splitting method with no modification yields a quadratic rate.  We instead carry out an encoding step which, loosely speaking, improves the concentration of data by a factor of $O(1/\sqrt{d})$, resulting in a linear rate in~$d$. Such encoding has been implemented in the context of differential privacy, including Kashin's representation (see e.g.~\citealt{Lyubarskii:2010} for an introduction and e.g.~\citealt{Chen:2023} for an application to differential privacy) and random rotations. The latter method has been used for private mean estimation in both central \citep[e.g.][]{Levy:2021} and local \citep[e.g.][]{Girgis:2022} frameworks.
    
Without loss of generality, we assume that the dimension is a power of $2$, as we can append auxiliary values of $0$ to all the data. This dyadic design does not affect the final rate up to constants.

We first define the Hadamard matrices, denoting $H_{k}$ for $k$ a power of $2$, as the family of matrices which can be constructed inductively from
    \begin{align*}
        H_1 = 
            \begin{pmatrix}
                1 
            \end{pmatrix}
        \quad
        \text{and}
        \quad
        H_{k} =
            \begin{pmatrix}
                H_{k-1} & H_{k-1} \\
                H_{k-1} & -H_{k-1} 
            \end{pmatrix},
            \quad
        k > 1.
    \end{align*}
    We then denote by $W_k$ a diagonal matrix with diagonal entries being i.i.d.~Rademacher random variables.  Construct the rotation matrix 
    \begin{equation}\label{eq-rotation-matrix}
        R_k = H_k W_k/\sqrt{k}.    
    \end{equation}
    Rotating data using the rotation matrix $R_k$, with high probability, ensures that the resulting data concentrate better in $\ell_\infty$-norm. This is made precise in \Cref{sec3:lem:RandomRotation}, derived from the proof of Lemma~2 in \cite{Levy:2021}.

    Applying this random transformation to the data before using the univariate estimation procedure to each co-ordinate yields \Cref{alg:PrivateMultiDimMean2}.

    \begin{algorithm}
        \caption{\textsc{PrivateMultiDimMean2}\,$(\{X^{(i)}_{t}\}_{i\in[n],t\in[T]},\,n,\,T,\,\alpha,\,K,\,C)$}
        \begin{algorithmic}[1]
            \State Let $T^\ast = T \wedge \exp\{n\alpha^2/Kd\}$ and $\Delta = [C\{\log(n T^\ast \alpha^2)\}^2/(dT^\ast)]^{1/2}$. \refstepcounter{equation}\label{sec3:eq:l2deltaval}\hfill(\theequation)
            \State Let $N_j$ for $j \in [d]$ as defined in \eqref{sec3:eq:linfGroups} and sample random rotation matrix $R_d$ as in \eqref{eq-rotation-matrix}.
            \For{$j = 1, \hdots, d$}
              \State Calculate $\hat{\theta}_j = \textsc{UserLevelMean}(\{(R_d X^{(i)}_{t})_j\}_{i \in N_j, t \in [T^\ast]},\, n,\, T^\ast,\, \alpha,\, \Delta)$
            \EndFor
        
            \State Let $\hat{\theta} = R_d^{-1}\sum_{j=1}^d e_j \hat{\theta}_{j}$ where $e_j$ is the $j$-th standard basis vector of $\mathbb{R}^d$, and
            \Return $\hat{\theta}$. \refstepcounter{equation}\label{sec3:eq:Finall2Estimator}\hfill(\theequation)
        \end{algorithmic}
        \label{alg:PrivateMultiDimMean2}
    \end{algorithm}

    In addition to the random rotation technique that is used explicitly in the above algorithm, we also use the fact that the $\ell_2$-norm of an average of random variables that are supported on an $\ell_2$-ball concentrates better than only using the fact that the random variables are sub-Gaussian due to having bounded support. This is because, due to being supported on an $\ell_2$-ball, the $\ell_2$-norms of the random variables are themselves bounded random variables, and hence satisfy the sub-Gaussian tail bound; see \cite{Jin:2019} for a detailed discussion.

\subsection{Discussion}\label{sec-3.4}

In this section we derived the user-level LDP minimax rates for mean estimation problems.  One interesting observation is that the user-level rate for $n$ users each with sample size $T$ is equivalent to the item-level rate with $nT$ users, until $T$ is sufficiently large that the infinite-$T$ rate from \Cref{sec2:thm:Means} dominates. Similar limiting behaviour is observed in the central user-level privacy setting in \cite{Levy:2021} where the minimax risk of mean estimation (on both the $\ell_2$-~and $\ell_\infty$-balls) is lower bounded independently of $T$.

We consider \Cref{tab-2} which compares the minimax rates for a range of differential privacy settings with $\alpha \lesssim 1$, and the non-private case as a baseline, all with a total sample size of $nT$. We note that despite using the strictest notion of privacy of all the settings, our rates still compare favourably, matching those of item-level LDP with an equivalent number of samples when $T$ is smaller than some quantity exponential in $n\alpha^2$, and with an exponentially decaying rate otherwise.

\begin{table}[H]
\centering
    \begin{tabular}{lcc}
    \hline\hline
    Setting & Minimax rate & Reference\\ \hline
    Non-Private   & $d/(nT)$ & e.g.~\citet{Wu:2016} \\
    Local Item-Level   & $d/(nT\alpha^2)$ & \citet{Duchi:2018} \\ 
    Local User-Level ($T$-small)   & $d/(nT\alpha^2)$ & \Cref{sec3:thm:main} \\ 
    Local User-Level ($T$-large)   & $e^{-n\alpha^2/d}$ & \Cref{sec3:thm:main} \\ 
    Central Item-level   & $ d/(nT) + d/(n^2T^2\alpha^2)$ & e.g.~\citet{Levy:2021}\\ 
    Central User-level ($T$-small) &  $d/(nT) + d/(n^2T\alpha^2)$ & \citet{Levy:2021}\\ \hline
    \end{tabular}
    \caption{Comparison of minimax optimal rates for $d$-dimensional mean estimation of distributions supported on the unit $\ell_2$-ball with total sample size of $nT$ in all. Dependence on multiplicative constants, constants in exponents, and logarithmic factors are suppressed for brevity.} \label{tab-2}
\end{table}

One notable difference between the central and local settings however, taking estimation on the $\ell_2$-ball as an example, is the linear rather than quadratic factor of $1/T$ in the second term for the central user-level rate in \Cref{tab-2}.  In particular, this shows that under central privacy constraints it is not the case that the user-level rate with $n$ users and $T$ independent observations is the same as the item-level rate with $nT$ users. Hence, a key difference between the behaviour of central and local user-level privacy for mean estimation is that in the local case, as long as $T$ is not too large, there is no difference in the minimax error rate between the item-level and user-level problems with an equivalent total sample size, whereas in the central case the user-level problem is strictly harder for all values of $T$.

Comparing our results to existing results on mean estimation under user-level local differential privacy in \cite{Girgis:2022} and \cite{Bassily:2023}, these previous works only consider the case of the $\ell_\infty$-ball, that is, the class $\mathcal{P}_{d}$, and impose further sub-Gaussianity assumptions beyond that which is already implied by being supported on an $\ell_\infty$-ball. On the other hand, in \Cref{sec3:thm:main}, we make no such assumptions. This leads to a different optimal rate for the family $\mathcal{P}_{d}$, where a quadratic rather than linear dependence on the dimension is observed. We further consider the case of distributions supported on $\ell_2$-balls in the family $\mathcal{P}'_{d}$. To the best of our knowledge, this is the first consideration of distributions compactly supported on $\ell_2$-balls under user-level LDP constraints as opposed to $\ell_\infty$-balls, and the resulting different dependence on the dimension $d$.  We consider the full range of possible values of $T$ whereas these prior works only consider the case that $T$ is not larger than a quantity exponential in $n\alpha^2/d$.

Lastly, regarding the assumptions on the support of the distributions in \Cref{sec3:thm:main}, we remark that it would be sufficient to assume that the data are sub-Gaussian rather than bounded. This is because the localisation step only requires the parameter space to be compact, though some minor adjustments would be needed for data lying outside of candidate sub-intervals, and the compact support is used only through its implication that the data are sub-Gaussian.

\section{Sparse Mean Estimation}\label{sec5}
In this section, we consider the problem of user-level LDP sparse mean estimation. Recalling the family~$\mathcal{P}_{d, s}$ defined in \eqref{sec2:eq:SparseFamily}, we estimate the mean of a distribution $P \in \mathcal{P}_{d, s}$ so that $\theta(P) = \mathbb{E}_P(X)$ and measure the error by the squared $\ell_2$-loss.

\begin{theorem}\label{sec5:thm:main}
For $s$ such that $16\log(ed)/3 \leq s \leq d$, assuming that  $n\alpha^2 > \widetilde{C}s\log(ed)$, where $\widetilde{C} > 0$ is an absolute constant, we then have that
    \begin{equation} \label{sec5:eq:SparseMeanStatement}
        \begin{aligned}
            & \left[ \frac{s}{T} \wedge s\left\{ \left( 1 + \frac{d}{n\alpha^2} \right)^{1/T} - 1\right\}\right] \vee e^{-Cn\alpha^2/s}
            \lesssim \mathcal{R}_{n, T, \alpha}(\theta(\mathcal{P}_{d, s}), \|\cdot\|_2^2) \\
            & \hspace{100pt} \lesssim 
            \min \left\{ s , \frac{s\log(dnT\alpha^2)}{T} + e^{-cn\alpha^2/s}, \, \frac{sd\{\log(dnT\alpha^2)\}^2}{nT\alpha^2} + e^{-cn\alpha^2/d }\right\},
        \end{aligned}
    \end{equation}
    where $\mathcal{P}_{d,s}$ is the family of distributions as in \eqref{sec2:eq:SparseFamily}, $\mathcal{R}_{n, T, \alpha}$ is the user-level minimax risk rate defined in~\eqref{sec2:eq:UserMinimax} and $C > c > 0$ are absolute constants.
\end{theorem}

The above result, whilst involved, simplifies greatly in two primary regimes of interest, with mild extra conditions controlling the logarithmic terms, though we briefly note that in the case that $T = 1$ we obtain the same item-level lower bound to that in \cite{Acharya:2022}. The first corresponds to the usual regime of interest for sparse problems, where the effective sample size is smaller than the ambient dimension. The second occurs when the effective sample size is greater, up to some logarithmic term.

\begin{corollary}\label{sec5:cor:mainthmsimplified}
    Follow the notation and setting of \Cref{sec5:thm:main}.

    \noindent (i) Assume $n\alpha^2 \lesssim d^{\gamma}$ for some $0 < \gamma \leq 1$, and $s\log(dnT\alpha^2)/T + e^{-cn\alpha^2/s} \lesssim s$. It holds that
    \begin{equation*}
        \frac{s}{T} \vee e^{-Cn\alpha^2/s}
        \lesssim \mathcal{R}_{n, T, \alpha}(\theta(\mathcal{P}_{d, s}), \|\cdot\|_2^2)
        \lesssim \frac{s\log(dnT\alpha^2)}{T} + e^{-Cn\alpha^2/s}.
    \end{equation*}

    \noindent (ii) Assume $n\alpha^2 \geq \widetilde{C}d\log(dnT\alpha^2)$ where $\widetilde{C}$ is as in \Cref{sec5:thm:main}, and $d\{\log(dnT\alpha^2)\}^2/(nT\alpha^2) \lesssim 1$. It holds that
    \begin{equation*}
        \frac{sd}{nT\alpha^2}
        \lesssim \mathcal{R}_{n, T, \alpha}(\theta(\mathcal{P}_{d, s}), \|\cdot\|_2^2)
        \lesssim \frac{sd\{\log(dnT\alpha^2)\}^2}{nT\alpha^2}.
    \end{equation*}
\end{corollary}

There are two terms in the lower bound in \Cref{sec5:thm:main}. The second is the infinite-$T$ rate inherited from \Cref{sec2:thm:TIndepLB}, and the first is by an application of a strengthened form of Assouad's lemma. These lower bounds are proven in \Cref{app:sec:Lower} and the upper bounds follow from the analysis, in \Cref{app:sec:Upper}, of two constructed estimators to be introduced shortly. The proof of the simplified bounds of \Cref{sec5:cor:mainthmsimplified} can be found in \Cref{app:sec:misc}, and we discuss the computational complexity of these procedures in \Cref{app:sec:CompComplex}.

The upper bound in \Cref{sec5:thm:main} is obtained by a two-component procedure detailed in \Cref{sec-sparse-mean-upper-bound}.  We note that the procedure requires the knowledge of the true sparsity level $s$, the consequences of the misspecification will be discussed in \Cref{sec5:sec:disc}.

\begin{remark}
The estimation procedures constructed to derive the upper bounds hold for any $s \in [d]$.  The assumption that $s \geq 16\log(ed)/3$ is only required for the lower bound.  For the regime that $s < 16\log(ed)/3$ or the case that $16\log(ed)/3 > d$, more refined analysis might be required and we will leave this for future work.  We also remark that in $s\log(dnT\alpha^2)/T$, the term $d$ is unavoidable based on the current proof.  The term $sd\{\log(dnT\alpha^2)\}^2/(nT\alpha^2)$, however, can be sharpened to $sd\{\log(nT\alpha^2)\}^2/(nT\alpha^2)$, which in fact is the upper bound we derived in the proof. We keep the current presentation to facilitate comparison between the two terms.
\end{remark}

\subsection{Estimation Procedures}\label{sec-sparse-mean-upper-bound}
Depending on the value of $T$, we employ one of the following two estimation procedures. The first is used when $T$ is sufficiently large so that each user can locally identify the $s$-many non-zero entries of the mean to reduce the problem to an $s$-dimensional mean estimation problem. When $T$ is small, we threshold the estimator obtained by naively applying the $\ell_\infty$-ball estimator to improve its performance under the sparsity assumption.  

In particular, the first procedure is conducted when $n\alpha^2 < \widetilde{C}d\log(dnT\alpha^2)$ for $\widetilde{C}$ as in \Cref{sec5:thm:main}, and the second procedure otherwise. It is easy to verify that with this choice we pick the estimator with the smaller error of the two where the first and second procedures have errors corresponding to the second and third terms respectively in the minimum in the upper bound in \eqref{sec5:eq:SparseMeanStatement}.

\subsubsection{Variable Selection-Based Procedure} \label{sec:sparse:variableselection}
For the first procedure, deployed when $n\alpha^2 \leq \widetilde{C}d\log(dnT\alpha^2)$, the first half of users carry out variable selection locally with their own data to identify the $s$-many non-zero entries of the mean. Then, using a hashing-based voting method inspired by \cite{Zhou:2022}, the users vote for the co-ordinates they believe containing the non-zero entries. The other half of users then conduct $s$-dimensional mean estimation on the resulting $\ell_\infty$-ball. The procedure is formalised in \Cref{alg:PrivateSparseSelection}.

\begin{algorithm}
    \caption{\textsc{PrivateSparseSelection}\,$(\{X^{(i)}_{t}\}_{i\in[n],t\in[T]},\,n,\,T,\,\alpha,\,K)$}
    \begin{algorithmic}[1]
        \State Let $T^\ast = T \wedge \exp\{n\alpha^2/(Ks)\}$, $\varepsilon = \left\{2\log(dnT^\ast\alpha^2)/T^\ast \right\}^{1/2}$, and $\eta = (30s)^{1/2}$. \refstepcounter{equation}\label{sec5:eq:thresholdboundary} \hfill(\theequation)
        \For{$i = 1, \hdots, n/2$}
            \State Construct vector $\hat{\bm{\omega}}^{(i)}$ where $\hat{\omega}_j^{(i)} = \mathbbm{1}\{ | (T^\ast)^{-1} \sum_{t=1}^{T^\ast} X_{t,j}^{(i)} | \geq \varepsilon \}$ for $j \in [d]$. \refstepcounter{equation}\label{sec5:eq:HTestimator} \hfill(\theequation)
            \State Sample $r_{i, 1}, \hdots, r_{i, d}$ i.i.d.~Rademacher random variables. Sample $\ell_i \sim \mathrm{Laplace}(1)$.
            \State Let $Z_i = \Pi_{[-\eta, \eta]} \left( \sum_{j = 1}^d r_{i, j} \hat{\omega}_j^{(i)} \right) + 2\eta\ell_i/\alpha$. \refstepcounter{equation}\label{sec5:eq:truncwidtheq} \hfill(\theequation)
        \EndFor
        \State Denote $\mathcal{I} = \{ j \in [d] : (2/n)\sum_{i = 1}^{n/2} r_{i, j} Z_i \geq 1/2 \}$. \refstepcounter{equation}\label{eq-hashing-voting} \hfill(\theequation)
        \State Let $\mathcal I' = \{\iota_1,\dots,\iota_{s'}\}$ where $(\iota_1<\cdots<\iota_{s'})$ are the first $s' = \min\{|\mathcal I|, s\}$ elements of $\mathcal{I}$
        \State Denote $\bm{Y}_t^{(i)} = (X_{t, \iota_1}^{(i)}, \hdots, X_{t, \iota_{s'}}^{(i)})$ for $i \in [n] \setminus [n/2]$ and $t \in [T]$, 
        \State Calculate $\tilde{\bm{\theta}} = \textsc{PrivateMultiDimMean}(\{\bm{Y}_t^{(i)}\}_{i \in [n]\setminus[n/2], t \in [T]}, n, T, \alpha, K) \in \mathbb{R}^{s'}$
        \State Let $\hat{\bm{\theta}} \in \mathbb{R}^d$ defined via its co-ordinates $\hat{\theta}_j = \tilde{\theta}_j \mathbbm{1}\{j \in \mathcal{I}_{s'}\}$ for $j \in [d]$ and \Return $\hat{\bm{\theta}}$. \refstepcounter{equation}\label{sec5:eq:sparseestimator1} \hfill(\theequation)
    \end{algorithmic}
    \label{alg:PrivateSparseSelection}
\end{algorithm}

The strength of \Cref{alg:PrivateSparseSelection} comes from the effectiveness of the hashing-based voting method to identify the non-zero entries of the mean vector.  To identify these non-zero entries, one could naively use unary-encoding as in, for example, \Cref{alg:CoverSelect}, which involves either (1) splitting users to estimate each co-ordinate separately or (2) privatising a $d$-dimensional binary vector with $s$-many non-zero entries (c.f.~\Cref{app:lem:RR}).  These two strategies result in factors $d$ and $s^2$ in the final rate, respectively.  Instead, the hashing-based method compresses the votes of the users to a one-dimensional quantity without splitting users, by exploiting Rademacher random variables.  The votes for a specific co-ordinate can then be extracted from this univariate quantity and the final rate is merely inflated by a factor of $\log(d)$.

\subsubsection{Thresholding-Based Procedure}
\label{sec:sparse:thresholding}
For the second procedure, conducted when $n\alpha^2 > \widetilde{C}d\log(nT\alpha^2)$, we apply the $\ell_\infty$-ball procedure of \Cref{alg:PrivateMultiDimMean} with an additional thresholding step applied to the estimator thus obtained. This reduces the $d^2$ factor in the polynomial term of the error for the $\ell_\infty$-ball procedure in \eqref{sec3:eq:linfMeanStatement} to a factor of~$sd$. The procedure is formalised in \Cref{alg:PrivateSparseThreshold}

\begin{algorithm}
    \caption{\textsc{PrivateSparseThreshold}\,$(\{X^{(i)}_{t}\}_{i\in[n],t\in[T]},\,n,\,T,\,\alpha,\,K,\,C')$}
    \begin{algorithmic}[1]
        \State Let $N_j$ for $j \in [d]$ as in \eqref{sec3:eq:linfGroups} and $T^\ast = T \wedge \exp\{n\alpha^2/(Kd)\}$.
        \State Let $\varepsilon = C'\Delta\left\{d\log(nT^\ast\alpha^2)/(n\alpha^2) \right\}^{1/2}$, and $\Delta = \left\{ 4\log(n T^\ast\alpha^2)/{T^\ast} \right\}^{1/2}$  .\refstepcounter{equation}\label{sec5:eq:Deltaval2nd} \hfill(\theequation)
        \For{$j = 1, \hdots, d$}
          \State Calculate $\hat{\theta}_j = \textsc{UserLevelMean}(\{X_{t, j}^{(i)}\}_{i \in [N_j], t \in [T^\ast]}, n, T^\ast, \alpha, \Delta)$.
          \State Let $\tilde{\theta}^{\mathrm{thresh}}_j = \tilde{\theta}_j \mathbbm{1}\{ |\tilde{\theta}_j| > 3\varepsilon \}$. \refstepcounter{equation}\label{sec5:eq:thresholding} \hfill(\theequation)
        \EndFor
        \State Denote $s$-many largest co-ordinates $\mathcal{I} = \{\iota_1, \hdots, \iota_s\}$ where $|\tilde{\theta}^{\mathrm{thresh}}_{\iota_j}| \geq |\tilde{\theta}^{\mathrm{thresh}}_{\iota_{j-1}}|$ for $1 < j \leq s$.
        \State Let $\hat{\bm{\theta}} = \sum_{j=1}^d (\bm{e}_j \tilde{\theta}^{\mathrm{thresh}}_j\mathbbm{1}\{j \in \mathcal{I}\})$ for $\bm{e}_j$ the $j$-th standard basis vector of $\mathbb{R}^d$, and
        \Return $\hat{\bm{\theta}}$. \refstepcounter{equation}\label{sec5:eq:finalThresholdedEstimator} \hfill(\theequation)
    \end{algorithmic}
    \label{alg:PrivateSparseThreshold}
\end{algorithm}

\subsection{Discussion} \label{sec5:sec:disc}

This section demonstrates, through sparse mean estimation, an important example where in some regimes the user-level framework provides a decidedly superior minimax rate over the item-level framework with an equivalent number of total data points, in contrast to the findings of \Cref{sec3}. In particular, even as $d/(n\alpha^2) \rightarrow \infty$ we saw that, as long as $\log\{d/(n\alpha^2)\}/T \rightarrow 0$, the rate is improved to $s\log(d)/T$ up to further logarithmic factors compared to the item-level rate with $nT$ observations of $sd/(nT\alpha^2)$ \citep{Acharya:2022}. The enabling factor for this is that with more than $\log\{d/(n\alpha^2)\}$-many observations a user can, with high probability, successfully identify the relevant non-zero co-ordinates locally, without the need for any communication with the data aggregator. Independent work by \cite{Ma:2024} consider a subset of this setting, and obtain similar findings. This suggests that similar phenomena of superior user-level performance may exist in other problems where there is again potential for localisation. For example, \cite{Zhao:2024} note that the user-level rate for heavy-tailed mean estimation similarly improves over the corresponding item-level rate.

The following table includes comparisons of the minimax rates for a range of differential privacy settings, and the non-private case as a baseline, all with an effective sample size of $nT$.   We note, similarly to \Cref{sec3}, our rates still compare favourably, in fact out-performing local item-level in some regimes, despite using the strictest notion of privacy.

\begin{table}[H]
\centering
    \begin{tabular}{lcc}
    \hline\hline
    Setting & Minimax rate & Reference\\ \hline
    Non-Private   & $s\log(ed/s)/(nT)$ & e.g.~\citet{hanfang} \\
    Local Item-Level   & $sd/(nT\alpha^2)$ & \citet{Acharya:2022} \\ 
    Local User-Level ($n,T$-small)   & $s/T$ & \Cref{sec5:thm:main} \\ 
    Local User-Level ($n$-large)   & $sd/(nT\alpha^2)$ & \Cref{sec5:thm:main} \\
    Local User-Level ($T$-large)   & $e^{-n\alpha^2/s}$ & \Cref{sec5:thm:main} \\
    Central Item-level\tablefootnote{In the absence of existing results for central item-level privacy for estimation of sparse means on the $\ell_\infty$-ball, we compare to results obtained under Gaussian distribution assumptions.} & $ s\log(d)/(nT\alpha) + \{s\log(d)\}^2/(n^2T^2\alpha^2)$ & \cite{Cai:2021}\\  \hline
    \end{tabular}
    \caption{Comparison of minimax optimal rates for $d$-dimensional $s$-sparse mean estimation of distributions supported on the unit $\ell_\infty$-ball, all with effective sample size of $nT$. Dependence on multiplicative constants, constants in exponents, and logarithmic factors are suppressed for brevity.} \label{tab-20}
\end{table}

\Cref{fig} illustrates the difference in performance between the user-level setup, and that of item-level with an equivalent number of samples, depending on the relationship between the values of $n\alpha^2/d$ and $T$. We note again that this is a compromised comparison for the user-level framework as the $\alpha$ privacy budget has to simultaneously protect across all $T$ observations a user holds, as opposed to the item-level setting where each observation enjoys the entire $\alpha$ budget. In \Cref{fig:sub1}, we see that for estimation on an $\ell_2$-ball, user-level is never better than item-level, with equivalent performance as long as $T$ is not too large. In contrast, in \Cref{fig:sub2}, the behaviour where user-level outperforms item-level in certain regimes is apparent.

\begin{figure}[H]
\centering
\begin{subfigure}{.45\textwidth}
    \centering
    \begin{tikzpicture}[scale=0.8,transform shape]
        \begin{axis}[axis lines=middle,
                    xlabel=$T$,
                    ylabel=$\frac{n\alpha^2}{d}$,
                    ytick={1},
                    hide obscured x ticks=false,
                    xtick={0},
                    xticklabels={$1$},
                    xmin=0,
                    xmax=5,
                    ymin=0,
                    ymax=5,
                    xlabel style={xshift=-100pt, yshift=-10pt, anchor=north},
                    ylabel style={yshift=-80pt, xshift=-10pt, anchor=east}]
    
            \addplot[name path=F,black,domain={0:5}] {1/(x+1)} node[pos=0.7, above]{$T = \frac{d}{n\alpha^2}$};
            
            \addplot[name path=G,black,domain={0:5}] {ln(e*(x+1)}node[pos=0.9, below]{$T = e^\frac{n\alpha^2}{d}$};
    
            \path[name path=baxis] (axis cs:0,0) -- (axis cs:5,0);
            \path[name path=taxis] (axis cs:0,5) -- (axis cs:5,5);
            \addplot[pattern=north west lines, pattern color=red!30] fill between[of=F and baxis];
            \addplot[pattern=grid, pattern color=blue!30]fill between[of=F and G];
            \addplot[pattern=north east lines, pattern color=gray!30]fill between[of=taxis and G];
    
            \node at (axis cs:1,0.21) {Both impossible};
            \node at (axis cs:2,1) {User-level harder};
            \node[align=center] at (axis cs:2.5,3.5) {$nT$-item level equiv.\\to user-level};
        \end{axis}
    \end{tikzpicture}
    \caption{Estimation on $\ell_2$-ball}
    \label{fig:sub1}
    \end{subfigure}
    \hfill
    \begin{subfigure}{.45\textwidth}
    \centering
    \begin{tikzpicture}[scale=0.8,transform shape]
        \begin{axis}[axis lines=middle,
                    xlabel=$T$,
                    ylabel=$\frac{n\alpha^2}{d}$,
                    ytick={1},
                    hide obscured x ticks=false,
                    xtick={0},
                    xticklabels={$1$},
                    xmin=0,
                    xmax=5,
                    ymin=0,
                    ymax=5,
                    xlabel style={xshift=-100pt, yshift=-10pt, anchor=north},
                    ylabel style={yshift=-80pt, xshift=-10pt, anchor=east}]
    
            \addplot[name path=F,black,domain={0:5}] {e^(-(x)} node[pos=0.7, above]{$T = \log\left(\frac{d}{n\alpha^2}\right)$};
        
            \addplot[name path=L,black,domain={0:3}] {1}node[pos=0.9, below]{};
        
            \addplot[name path=C,black,domain={3:5}] {4/(x+1)}node[pos=0.7, above]{$T = \frac{d}{n\alpha^2} e^{n\alpha^2}$};
            
            \addplot[name path=G,black,domain={0:5}] {ln(e*(x+1)}node[pos=0.9, below]{$T = e^\frac{n\alpha^2}{d}$};
        
            \path[name path=baxis] (axis cs:0,0) -- (axis cs:5,0);
            \path[name path=taxis] (axis cs:0,5) -- (axis cs:5,5);
            
            \addplot[pattern=north west lines, pattern color=red!30] fill between[of=F and baxis];
            \addplot[pattern=crosshatch, pattern color=green!30]fill between[of=F and L, soft clip = {domain = 0:3}];
            \addplot[pattern=crosshatch, pattern color=green!30]fill between[of=F and C, soft clip = {domain = 3:5}];
            \addplot[pattern=north east lines, pattern color=gray!30]fill between[of=taxis and G];
            \addplot[pattern=grid, pattern color=blue!30]fill between[of=G and L, soft clip = {domain = 0:3}];
            \addplot[pattern=grid, pattern color=blue!30]fill between[of=G and C, soft clip = {domain = 3:5}];
        
            \node[align=left] at (axis cs:0.62,0.29) {Both\\impossible};
            \node at (axis cs:2.5,0.6) {User-level easier};
            \node at (axis cs:2.5,1.5) {User-level harder};
            \node[align=center] at (axis cs:2.5,3.5) {$nT$-item level equiv.\\to user-level};
        \end{axis}
    \end{tikzpicture}
    \caption{Sparse mean estimation $(s=1)$}
    \label{fig:sub2}
\end{subfigure}
\caption{Comparison of performance of user-level LDP and $nT$-item level LDP estimator performance. A point in a region of the graph indicates the regime for a value of the pair $(T, n\alpha^2/d)$. Constants, constants in exponents, and logarithmic factors are omitted for ease of comparison.}
\label{fig}
\end{figure}

\medskip
\noindent\textbf{Misspecification of $s$.}  Recalling that both of procedures introduced in \Cref{sec-sparse-mean-upper-bound} require the true sparsity level $s$ as an input, we remark on the consequences of misspecification. If underestimated, i.e.~the input $s_1 < s$, then each method suffers an additional bias of at most $s-s_1 \leq s$. In the selection-based procedure, this arises due to failing to select a non-zero co-ordinate. For the thresholding-based procedure, this arises at the final step when only the largest $s_1 < s$ co-ordinates of the estimator are kept. This gives an upper bound on the risk of the estimator of
\begin{equation*}
    \mathbb{E}\left[\| \hat{\theta}-\theta \|_2^2 \right]
    \lesssim  s - s_1 + \min\left\{ s , \frac{s_1\log(dnT\alpha^2)}{T} + e^{-cn\alpha^2/s_1}, \, \frac{s_1d\{\log(dnT\alpha^2)\}^2}{dnT\alpha^2} + e^{-cn\alpha^2/d } \right\}.
\end{equation*}

If overestimated, i.e.~$s_2 > s$, then no extra bias will be introduced but larger variance will be due to estimating more co-ordinates.  The risk of both estimation procedures see $s$ replaced with $s_2$ accordingly, giving the upper bound
\begin{equation*}
    \mathbb{E}\left[\| \hat{\theta}-\theta \|_2^2 \right]
    \lesssim \min\left\{ s , \frac{s_2\log(dnT\alpha^2)}{T} + e^{-cn\alpha^2/s_2}, \, \frac{s_2d\{\log(dnT\alpha^2)\}^2}{dnT\alpha^2} + e^{-cn\alpha^2/d } \right\}.
\end{equation*}

As such, we see that in practice, it is generally safer to choose a relatively large $s$ when the truth is unknown.

\medskip
\noindent\textbf{Choice of the Estimation Procedure.}  In \Cref{sec-sparse-mean-upper-bound} we recommended to use the first procedure -- the variable selection-based method when $n\alpha^2 \leq \widetilde{C}d\log(dnT\alpha^2)$, and the second procedure -- the thresholding-based method otherwise.  We now discuss the estimation error upper bound if an estimator is chosen against recommendation.

As we mentioned at the beginning of \Cref{sec-sparse-mean-upper-bound}, the error control of the first procedure corresponds to the second term in the upper bound in \eqref{sec5:eq:SparseMeanStatement}, and the second procedure corresponds to the third term.  When the choice is made against our recommendation, i.e.~chosen when  $n\alpha^2 > \widetilde{C}d\log(dnT\alpha^2)$, the error control of the first procedure still holds.   Whereas for the second procedure, the error control will change.  To be specific, when $n\alpha^2 \asymp d$, the corresponding error control will always be bounded below from a positive constant.  This is due to the term $e^{-n\alpha^2/(Kd)}$ arising from an application of the $d$-dimensional $\ell_\infty$-ball estimator of \Cref{sec3}. From this, we see that if $n\alpha^2 \asymp d$, the error contains a constant term bounding the error away from zero.

\medskip
\noindent\textbf{Previous Work With the Same Terminology.}  \cite{Zhou:2022} share the term of user-level privacy for sparse mean estimation with us, but the setups considered differ. In \cite{Zhou:2022}, it is assumed that each user holds a single, possibly high-dimensional, vector. The item-level privacy in \cite{Zhou:2022} is to protect against a perturbation of a single entry of this vector, whereas user-level privacy protects against any two different vectors the user may hold. In our setup, we assume that each user holds multiple, possibly high-dimensional, vectors. The item-level considered in our paper corresponds to protecting against the perturbation of one observation from a user's collection, regardless of how many entries are perturbed, whilst user-level corresponds to the entire collection, perturbing both every vector and every co-ordinate of any vector. Further, in \cite{Zhou:2022} sparsity is defined to mean that at most $s$-many entries of a user's vector will be non-zero, whereas in our work, the sparsity assumption is applied to the mean vector, whilst all the entries of a user's data may be non-zero.

We also note that whilst we implement the same hashing method as \cite{Zhou:2022}, our analysis of its performance is different.  In terms of the inference goal, \cite{Zhou:2022} aim at deriving a  high-probability $\ell_\infty$-upper bound on the estimation error, whilst we conduct variable selection based on binary vectors.  In terms of the technical details, the truncation intervals in our paper have width $O(s^{1/2})$, compared to $O(s^{1/2}\log(n/\beta))$ in \cite{Zhou:2022}.  This is achieved through a tighter analysis and different inference goals, and allows us to avoid further logarithmic factors in the error controls.

\section{Non-Parametric Density Estimation}\label{sec4}
    In this section, we consider the user-level LDP non-parametric density estimation problem.  Recalling the Sobolev class of densities $\mathcal{F}_{\beta, r}$ defined in \eqref{sec2:eq:SobolevDensity}, for a density $f \in \mathcal{F}_{\beta, r}$, we denote by $P_f$ the induced distribution the data are drawn from. Our aim is to estimate the density $\theta(P_f) = f$ and we measure error through the $L_2$-norm as defined in \eqref{sec2:eq:L2FuncNormDef}.  We have the following result.

    \begin{theorem}\label{sec4:thm:main}
        Assuming that $n\alpha^2 \geq \widetilde{C}_\beta$, where $\widetilde{C}_\beta > 0$ is an absolute constant depending only on $\beta$, we then have that
        \begin{align*}
            c_\beta \left\{(nT\alpha^2)^{-\frac{2\beta}{2\beta + 2}} \vee  (n\alpha^2)^{-2\beta} \right\}
            \lesssim \mathcal{R}_{n, T, \alpha}&(\theta(\mathcal{P}_{\beta, 1}), \|\cdot\|_2^2)
            \lesssim C_\beta \Big\{ (nT\alpha^2)^{-\frac{2\beta}{2\beta + 2}} \log(nT\alpha^2)  \\
            & \hspace{100pt} + (n\alpha^2)^{-2\beta} \{\log(n\alpha^2)\}^{2\beta + 3}\Big\},
        \end{align*}
        where $\mathcal{P}_{\beta, 1}$ is the family of distributions as in \eqref{sec2:eq:InducedDensityFamily}, $\mathcal{R}_{n, T, \alpha}$ is the user-level minimax risk rate defined in~\eqref{sec2:eq:UserMinimax}, and $C_\beta > c_\beta > 0$ are absolute constants depending only on $\beta$.
    \end{theorem}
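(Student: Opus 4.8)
The plan is to prove the upper and lower bounds separately, in both cases recycling the apparatus already set up in Sections~\ref{sec2} and~\ref{sec3}. For the \emph{lower bound}, the term $(n\alpha^2)^{-2\beta}$ requires no new work: \Cref{sec2:thm:TIndepLB} shows that for every $T\ge1$ the user-level risk dominates the infinite-$T$ quantity $\tfrac{\Phi(\delta)}{2}\{1-(12n\alpha^2+\log2)/\log N(2\delta)\}$, and inserting the metric-entropy bounds for $\mathcal F_{\beta,1}$ used in \Cref{sec2:thm:Density} (the standard covering-number bound of the Sobolev ellipsoid for the upper estimate and the explicit packing of \Cref{app:lem:densitycoverlower} for the lower one), then optimising $\delta$, reproduces $(n\alpha^2)^{-2\beta}$ up to $\beta$-dependent constants. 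For the term $(nT\alpha^2)^{-2\beta/(2\beta+2)}$ — the item-level LDP density rate with $nT$ observations — I would run an Assouad-type argument over a finite sub-family of $\mathcal F_{\beta,1}$: take $\bar f\equiv1$ perturbed by $f_v=\bar f+\sum_{k=1}^{1/h}v_k\phi_k$, $v\in\{-1,1\}^{1/h}$, where $\phi_k$ is a fixed $C^\beta$ bump of height $\asymp h^\beta$ supported on the $k$-th cell of width $h$ with $\int\phi_k=0$, so that $f_v\in\mathcal F_{\beta,1}$, the $\phi_k$ are orthogonal, and $\|f_v-f_{v'}\|_2^2\asymp h^{2\beta+1}H(v,v')$. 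Each user's block is drawn from $P_{f_v}^{\otimes T}$; bounding the mutual information of the privatised transcript with $v$ through the local-privacy total-variation contraction \citep[][Corollary~3]{Duchi:2018} and tensorising carefully over the $T$ i.i.d.\ observations held by each user should produce an effective sample size of order $nT$, after which balancing the information constraint on $h$ against the separation $\|f_v-f_{v'}\|_2^2$ gives $h\asymp(nT\alpha^2)^{-1/(2\beta+2)}$ and the claimed bound. The final lower bound is the maximum of the two terms.

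For the \emph{upper bound}, I would use an orthogonal-series estimator on the trigonometric basis \eqref{sec2:eq:TrigBasis}: truncate $f=\sum_j\theta_j\varphi_j$ at a level $m$, estimate $\theta_1,\dots,\theta_m$ privately, and set the remaining coefficients to zero, so that the truncation bias is $\sum_{j>m}\theta_j^2\le m^{-2\beta}$ by the Sobolev constraint. Each retained coefficient is a bounded mean $\theta_j=\mathbb E_P[\varphi_j(X)]$ with $|\varphi_j|\le\sqrt2$ and $|\theta_j|\lesssim j^{-\beta}$, so I would estimate $(\theta_1,\dots,\theta_m)$ by the co-ordinatewise localisation-then-refinement procedure of \Cref{sec3:univariate} (equivalently, an adaptation of the $\ell_\infty$-ball procedure of \Cref{sec3:linfball} in dimension $m$): each user forms $\bar\varphi_j^{(i)}=(T^\ast)^{-1}\sum_{t\le T^\ast}\varphi_j(X_t^{(i)})$, which has variance $\lesssim1/T^\ast$; one half of the users assigned to co-ordinate $j$ localise $\theta_j$ to an interval of width $\delta\asymp\{\log(nT^\ast\alpha^2)/T^\ast\}^{1/2}$ by private voting, and the other half report a Laplace-perturbed projection onto that interval with noise scale $\asymp\delta/\alpha$. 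Aggregating gives per-co-ordinate error $\asymp m\delta^2/(n\alpha^2)\asymp m\log(nT^\ast\alpha^2)/(nT^\ast\alpha^2)$, hence total $L^2$-risk $\lesssim m^2\log(nT^\ast\alpha^2)/(nT^\ast\alpha^2)+m^{-2\beta}$, optimised at $m\asymp\{nT^\ast\alpha^2/\log(nT^\ast\alpha^2)\}^{1/(2\beta+2)}$ to give $(nT^\ast\alpha^2)^{-2\beta/(2\beta+2)}$ up to logarithmic factors. As in \Cref{sec3}, reliable voting among the $N(\delta)\asymp(T^\ast)^{1/2}$ sub-intervals by the $\asymp n/m$ users per co-ordinate forces the cap $T^\ast=T\wedge\{(n\alpha^2)^{2\beta+1}/\mathrm{polylog}(n\alpha^2)\}$; for $T\le T^\ast$ the cap is inactive and yields the first term of the bound, while for $T>T^\ast$ the rate saturates at $(n\alpha^2)^{-2\beta}$ up to the stated $\{\log(n\alpha^2)\}^{2\beta+3}$ factor. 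Summing the two contributions completes the upper bound; these steps are the content of Appendices~\ref{app:sec:Upper} and~\ref{app:sec:Lower}.

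The main obstacle is, in the lower bound, arranging the information-theoretic computation so that the $T$ samples per user contribute \emph{linearly} to the effective sample size: a naive tensorisation of the item-level bump construction degrades the dependence on $T$, and one must exploit that a locally private channel contracts the total-variation distance between its inputs — not their Kullback--Leibler or $\chi^2$ divergence — which is exactly the feature responsible for the $2\beta+2$ (rather than $2\beta+1$) exponent in the item-level rate. A secondary difficulty is, in the upper bound, tracking the interaction between the truncation level $m$, the per-co-ordinate voting reliability, the cap $T^\ast$, and the decay $|\theta_j|\lesssim j^{-\beta}$, so that all the logarithmic factors collapse into the powers claimed; the constraints $f>0$ and $\|f\|_\infty$ bounded (the latter needed to control $\mathrm{Var}(\varphi_j(X))$) follow routinely from the Sobolev embedding $\mathcal S_{\beta,1}\hookrightarrow L^\infty$ and from projecting the final estimator onto the set of densities.
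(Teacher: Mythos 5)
Your upper bound follows the paper's route essentially verbatim: project onto the trigonometric basis, estimate the first $M$ coefficients with the user-level $\ell_\infty$-ball procedure of \Cref{sec3:linfball}, and balance the mean-estimation error against the truncation bias $M^{-2\beta}$ with $M$ chosen as in \eqref{sec4:eq:Mval}; likewise the $(n\alpha^2)^{-2\beta}$ part of the lower bound, obtained from \Cref{sec2:thm:TIndepLB} together with the entropy bounds behind \Cref{sec2:thm:Density} and \Cref{app:lem:densitycoverlower}, is exactly the paper's argument. (A minor remark: no $\|f\|_\infty$ control is needed for the variance, since $|\varphi_j|\le\sqrt2$ already makes the transformed observations bounded.)

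The genuine gap is in the finite-$T$ term $(nT\alpha^2)^{-2\beta/(2\beta+2)}$ of the lower bound. You propose to control the information about $v$ via the contraction bound of \citet[Corollary~3]{Duchi:2018} and to ``tensorise carefully'' over the $T$ observations, but that corollary applies to the channel fed with \emph{product} inputs $P_1^{\otimes n}$, $P_2^{\otimes n}$, whereas the Assouad objects $M_{\pm j}^{n,T}$ are privatisations of mixtures of products (not products of mixtures), so it cannot be invoked directly. Worse, if one ignores this and applies a per-coordinate pair bound of that type, one gets $D_{\mathrm{TV}}(M_{+j}^{n,T},M_{-j}^{n,T})\lesssim\{n\alpha^2 T h^{2\beta+1}\}^{1/2}$, and Assouad would then ``prove'' a lower bound of order $(nT\alpha^2)^{-2\beta/(2\beta+1)}$ — the non-private rate — contradicting your own upper bound; this signals that the step fails and that the missing ingredient is the averaging over the $k=1/h$ coordinates, which is precisely what turns the exponent $2\beta+1$ into $2\beta+2$. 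The paper supplies this through the framework of \citet{Acharya:2022}: because the bumps in \eqref{app:eq:DensityFamily} have disjoint supports, the likelihood-ratio perturbations of the $T$-fold products satisfy Assumptions~\ref{app:ass:1} and~\ref{app:ass:2} with $\eta^2=(1+2r^2\pi^{2\beta}/k^{2\beta+1})^T-1$, and \Cref{app:lem:Acharya} yields $k^{-1}\sum_j D_{\mathrm{TV}}(M_{+j}^{n,T},M_{-j}^{n,T})\le(21n\alpha^2\eta^2/k)^{1/2}$, which combined with \eqref{app:eq:AcharyaAssouad} gives the rate. In particular, the linear-in-$T$ ``effective sample size'' does not come from a total-variation contraction, as your closing paragraph suggests, but from linearising this $\chi^2$-type quantity of the product, $(1+ck^{-(2\beta+1)})^T-1\asymp Tk^{-(2\beta+1)}$, which is only legitimate when $T\lesssim(n\alpha^2)^{2\beta+1}$ — exactly the restriction \eqref{app:eq:phaseTransition}, outside of which the infinite-$T$ bound dominates anyway. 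To complete your proof you would need to replace the Duchi-style step with this coordinate-averaged argument (or an equivalent average-information bound for mixtures of products) and make the regime restriction on $T$ explicit.
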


    We see that the lower bound in \Cref{sec4:thm:main} consists of two parts, the first analogous to the standard item-level rate \cite[Corollary 4]{Duchi:2018}, and the second inherited from \Cref{sec2:thm:TIndepLB}. Unlike in the mean estimation problems we considered, the rate in the large $T$ regime is polynomial rather than exponential, appearing to be a non-parametric rate. Nevertheless, this again demonstrates the intuition of \Cref{sec2} wherein we see that the rate for sufficiently large $T$ coincides with the infinite-$T$ rate in \Cref{sec2:thm:Density}. The lower bound is proven in \Cref{app:sec:Lower}.  An estimation procedure achieving the upper bound is presented below, the performance of which is analysed in \Cref{app:sec:Upper} and the computational complexity discussed \Cref{app:sec:CompComplex}.

\subsection{Estimation Procedure} \label{sec4:sec:procedure}

    The trigonometric basis \eqref{sec2:eq:TrigBasis} allows us to translate a density estimation problem into mean estimation, where we estimate a truncation of the sequence $\{\theta_j\}_{j \geq 1}$ of basis coefficients given in \eqref{sec2:eq:Sobolev}. In particular, we use the fact that for any $f \in \mathcal{F}_{\beta, r}$ and $j \in \mathbb{N}$, we have $\int f(x) \varphi_j(x) \diff x = \theta_j$ by the orthonormality of the trigonometric basis. Hence, if the data $\{X_i\}_{i \in [n]}$ are i.i.d.~from the distribution~$P_f$, then the sample mean of $\{\varphi_j(X_i)\}_{i \in [n]}$ is a consistent estimator of $\theta_j$. Importantly, for all $j \in \mathbb{N}$, we have that $\sup_{x}|\varphi_j(x)| \leq \sqrt{2}$, allowing us (after rescaling) to invoke the results of \Cref{sec3} for estimation on a bounded $\ell_\infty$-ball. The procedure is formalised in \Cref{alg:PrivateDensity}.
    \begin{algorithm}
        \caption{\textsc{PrivateDensity}\,$(\{X^{(i)}_{t}\}_{i\in[n],t\in[T]},\,n,\,T,\,\alpha,\,K, \beta, \,C_\beta)$}
        \begin{algorithmic}[1]
            \State Let $T_0=(n\alpha^2)^{2\beta+1}/\{C_\beta\log(n\alpha^2)\}^{2\beta + 2}$ and $M=\{n \alpha^2 \min(T,T_0)\}^{1/(2\beta+2)}$. \refstepcounter{equation}\label{sec4:eq:Mval}\hfill(\theequation)
            \State Calculate $\bm{Y}_t^{(i)} = (\varphi_j(X_t^{(i)}), \, j \in [M])^{\top}$.
            \State Calculate $\hat{\bm{\theta}} = \textsc{PrivateMultiDimMean}(\{\bm{Y}_t^{(i)}\}_{i \in [n], t \in [T]}, n, T, \alpha, K)$.
            \State \Return $\hat{f} = \sum_{j=1}^M \hat{\theta}_{j} \varphi_j$. \refstepcounter{equation}\label{sec4:eq:finalestimator}\hfill(\theequation)
        \end{algorithmic}
        \label{alg:PrivateDensity}
    \end{algorithm}
    
    In a standard non-parametric projection estimation procedure, one estimates the density by obtaining estimates of some finite number of coefficients through calculating the sample mean of the basis functions applied to data \cite[e.g.][Chapter 1]{Tsybakov:2009}. The item-level LDP non-parametric estimation procedure in \cite{Duchi:2018} is a private analogue, where the mean estimation step is carried out using a suitable (item-level) LDP mean estimation technique. We see that in the user-level setting, our estimator $\hat{f}$ in \eqref{sec4:eq:finalestimator} operates in a similar fashion. The only adjustment is to use a user-level LDP mean estimation technique.

    As for the cut-off level $M$, the larger it is, the less approximation bias is introduced but the greater the variance associated with a higher dimensional mean estimation problem is. As a sketch, one can see that for $T$ sufficiently large, the error introduced by the mean estimation component is $O\{\exp(n\alpha^2/M)\}$ and the error from truncation is $O(1/M^{2\beta})$. Balancing these two terms yields the choice of $M$ and consequently the upper bound in \Cref{sec4:thm:main}.

\subsection{Discussion}
In this section, we saw that the user-level rate for $n$ users with sample size $T$ is equivalent up to the item-level rate with $nT$ users, until $T$ is sufficiently large so that the infinite-$T$ rate dominates, as was the case with the mean estimation problems of \Cref{sec3} and unlike sparse mean estimation as in \Cref{sec5}.

The comparisons of the minimax rate for a range of differential privacy settings, and the non-private case as a baseline, all with a total sample size of $nT$, are presented in \Cref{tab-5}.  We start by noting that, the central item-level rate consists of the sum of two terms, the non-private rate $(nT)^{-2\beta/(2\beta+1)}$ and the cost for privacy $(nT\alpha)^{-2\beta/(\beta+1)}$.  As is usual, provided $\alpha$ is not too small, the privacy cost is dominated by the non-private rate. 

On the other hand, for the local item-level setting, we see that the privacy cost dominates the non-private rate. This behaviour is again seen in the local user-level setting when $T$ is small. Further, the rate for $T$ sufficiently large is polynomial, albeit smaller than the small-$T$ case, owing to the larger metric entropy of the function space and the result of \Cref{sec2:thm:TIndepLB}.

\begin{table}[H]
\centering
    \begin{tabular}{lcc}
    \hline\hline
    Setting & Minimax rate & Reference\\ \hline
    Non-Private   & $(nT)^{-2\beta/(2\beta+1)}$ & e.g.~\citet{Tsybakov:2009} \\
    Local Item-Level   & $(nT\alpha^2)^{-2\beta/(2\beta+2)}$ & \citet{Duchi:2018} \\ 
    Local User-Level ($T$-small)   & $(nT\alpha^2)^{-2\beta/(2\beta+2)}$ & \Cref{sec4:thm:main} \\ 
    Local User-Level ($T$-large)   & $(n\alpha^2)^{-2\beta}$ & \Cref{sec4:thm:main} \\
    Central Item-Level   & $(nT)^{-2\beta/(2\beta+1)} + (nT\alpha)^{-2\beta/(\beta+1)}$ & \cite{Cai:2021} \\
    \hline
    \end{tabular}
    \caption{Comparison of minimax optimal rates for non-parametric density estimation for $\beta$-Sobolev smooth densities supported on $[0,1]$ with effective sample size of $nT$ in all. Dependence on multiplicative constants, constants in exponents, and logarithmic factors are suppressed for brevity.} \label{tab-5}
\end{table}

Regarding existing results on user-level privacy, \cite{Acharya:2023} show that the minimax risk of discrete density estimation (under the high privacy regime $\alpha \leq 1$) for a distribution with $N$ outcomes is $O\{N/(nT\alpha^2)\}$. However, this holds only for the regime $n\alpha^2 \leq CN\log(T)$ for some constant $C > 0$, and what happens outside of this regime is not considered. In light of our results on mean estimation, this regime suggests a phase transition may occur once $T$ is exponential in $n\alpha^2/N$, akin to $d$-dimensional mean estimation on the $\ell_2$-ball. This is perhaps to be expected as discrete density estimation is similarly a parametric problem. For a sketch for the lower bound, suppose that $\Delta > 0$ is sufficiently small and is such that $1/\Delta$ is an integer. One can construct a $2\Delta$-separated packing with respect to the $L_2$-norm for densities (or equivalently, the $\ell_2$-norm for the vector of probabilities of outcomes) of the space of discrete densities on $N$ outcomes, indexed by the probability vector $p \in [0,1]^N$ denoting the probability of each outcome, as
\begin{align*}
    \mathcal{V} &= \{p \in \{0, \Delta, \hdots, (1/\Delta - 1)\Delta, 1\}^N : p_1 + \hdots + p_N = 1\} \\
    &= \{\Delta x: x \in \{0, 1, \hdots, 1/\Delta\}^N, \, \Delta x_1 + \hdots + \Delta x_N = 1\}.
\end{align*}
In particular, by a standard stars-and-bars counting method \citep[e.g.][Chapter~3]{Feller:1950}, it can be shown that
\begin{align*}
    |\mathcal{V}| = \binom{1/\Delta - 1}{N - 1} \geq \left( \frac{1/\Delta - 1}{N - 1} \right)^{N-1}.
\end{align*}
Assuming that $n\alpha^2 \geq C(N-1)\log(e(N-1))$ for some sufficiently large absolute constant $C > 0$ and setting 
\begin{equation*}
    \Delta =  \left\{ 1 + (N-1)\exp\left( \frac{24n\alpha^2 + 2\log(2)}{N-1} \right)\right\}^{-1},
\end{equation*}
we have that
\begin{align*}
    \mathcal{R}_{n, \infty, \alpha} \big(\theta(\mathcal{P}), \| \cdot \|_2^2 \big)
    &\gtrsim \left\{ 1 + (N-1) \exp\left( \frac{24n\alpha^2 + 2\log(2)}{N-1} \right)\right\}^{-2} \\
    &\gtrsim \frac{1}{(N-1)^2} \exp\left( \frac{-24n\alpha^2 - 2\log(2)}{N-1} \right) 
    \gtrsim e^{-cn\alpha^2/(N-1)}
\end{align*}
where $c > 0$ is an absolute constant, the first inequality is by an application of \Cref{sec2:thm:GeneralBound}, and the last two both use the assumption $n\alpha^2 \geq C(N-1)\log(e(N-1))$.

Hence, we see that the infinite-$T$ minimax lower bound for discrete density estimation is exponential, matching that for mean estimation rather than the polynomial rate for non-parametric density estimation.

\section{Numerical Experiments} \label{sec:sim}
    In this section, we implement our developed procedures to demonstrate their feasibility, and validate the theoretical rates. We consider synthetic and real data in Sections~\ref{sec:sim:synth} and \ref{sec:sim:real} respectively. Details regarding tuning parameters are deferred to \Cref{sec-sens}. Finally, we consider further simulations investigating the phase transition phenomena in \Cref{sec:phasetransition}, and simulations in \Cref{sec:sim:mixture} for mean estimation under the data heterogeneity setting we consider in \Cref{secmix}.
    
    \subsection{Synthetic Data} \label{sec:sim:synth}
        Throughout, we consider the values $\alpha \in \{0.5, 1, 2, 4\}$. These values are commonly chosen in practice \citep[e.g.][]{Apple:2023}. We use the notation $\mathrm{Rad}(p)$ to denote the distribution where, for $X \sim \mathrm{Rad}(p)$, $\mathbb{P}(X = 1) = 1 - \mathbb{P}(X = -1) = p$, and refer to $\mathrm{Rad}(1/2)$ as the Rademacher distribution.

        \subsubsection{Mean Estimation} \label{sec:meanestsim}
            To investigate the performance of our mean estimation procedures, we start with the univariate procedure. Briefly, we note the localisation stage of our developed procedures involves dividing the interval into sub-intervals. In particular, if the mean of a distribution lies close to the boundary of two sub-intervals as opposed to being close to the centre, the performance will decrease slightly. Without adjustments, this may result in unstable numerical performances. Hence, we will specify random ``shifts'' within our experiments to average out this behaviour as follows. Sample a shift $U \sim \mathrm{Unif}[-0.3, 0.3]$ and generate $X_{1:T}^{(1)}, \hdots, X_{1:T}^{(n)} \mid U \overset{\mathrm{i.i.d}}{\sim} \mathrm{Unif}[0,1]^{\otimes T} + U\mathbf{1}$ for $\mathbf{1}$ being the $T$-dimensional vector of ones. Sample $\{\ell_{i,t}\}_{i \in [n], t \in [T]}$ i.i.d.~standard Laplace random variables. We fix $n = 500$ and vary $T \in \{\lfloor 10^m \rfloor: m = 2, 2.1, \hdots, 4\}$. We estimate the mean-squared-error across 500 repetitions using the following four estimation schemes for the mean $\theta$:
            
            \noindent
            \textbf{Full Item Level}: This scheme corresponds to item-level with a sample size of $nT$, which serves as a lower bound on the performance of our estimator. Here, $\hat{\theta} = (nT)^{-1}\sum_{i = 1}^n \sum_{t = 1}^T \{X_t^{(i)} + (2/\alpha) \ell_{i,t}\}$.
            
            \noindent
            \textbf{Semi User Level}: This scheme corresponds to each user locally calculating the sample mean of their data, and publishing this after privatisation. Hence, a user attempts to utilise their entire sample of $T$-many observations, but too naively to sufficiently take advantage of the user-level framework. Here, $\hat{\theta} = n^{-1}\sum_{i = 1}^n \{(T^{-1}\sum_{t = 1}^T X_t^{(i)}) + (2/\alpha) \ell_{i,1}\}$.

            \noindent
            \textbf{User Level}: This is the user-level mean estimation procedure of \Cref{sec3:univariate} with $\hat{\theta}$ therein.

            \noindent
            \textbf{Split User}: This scheme arises from naively splitting the privacy budget across a user's $T$-many observations in the user-level setting, yielding $\hat{\theta} = (nT)^{-1}\sum_{i = 1}^n \sum_{t = 1}^T \{X_t^{(i)} + (2T/\alpha) \ell_{i,t}\}$. 

            The results of the comparison of these these four estimation schemes are given in \Cref{fig:synthmean}. Plotting the ratio of the mean-squared-error of the full item level scheme and the remaining three, we see that the relative error grows in $T$ for the semi user level and split user schemes, showing that such naive approaches cannot take advantage of the user-level framework. On the other hand, the relative error remains constant in $T$ for the user-level scheme for all values of $\alpha$ except $0.5$, where the privacy constraint is too strict for the localisation step to succeed and the phase-transition phenomenon occurs. This demonstrates the ability of our developed procedure to take advantage of the multiple observations each user has in the user-level framework, and validates the theoretical results of \Cref{sec3:thm:main}. Lastly, we emphasise that these comparisons are harsh to the user-level setting, as user-level privacy provides as stronger privacy guarantee, sharing $\alpha$ budget for all $T$ observations, whereas in the item level settings, the entire $\alpha$ budget is enjoyed by each data point.

            \begin{figure}[H]
                \centering
                \includegraphics[height=0.162\textheight]{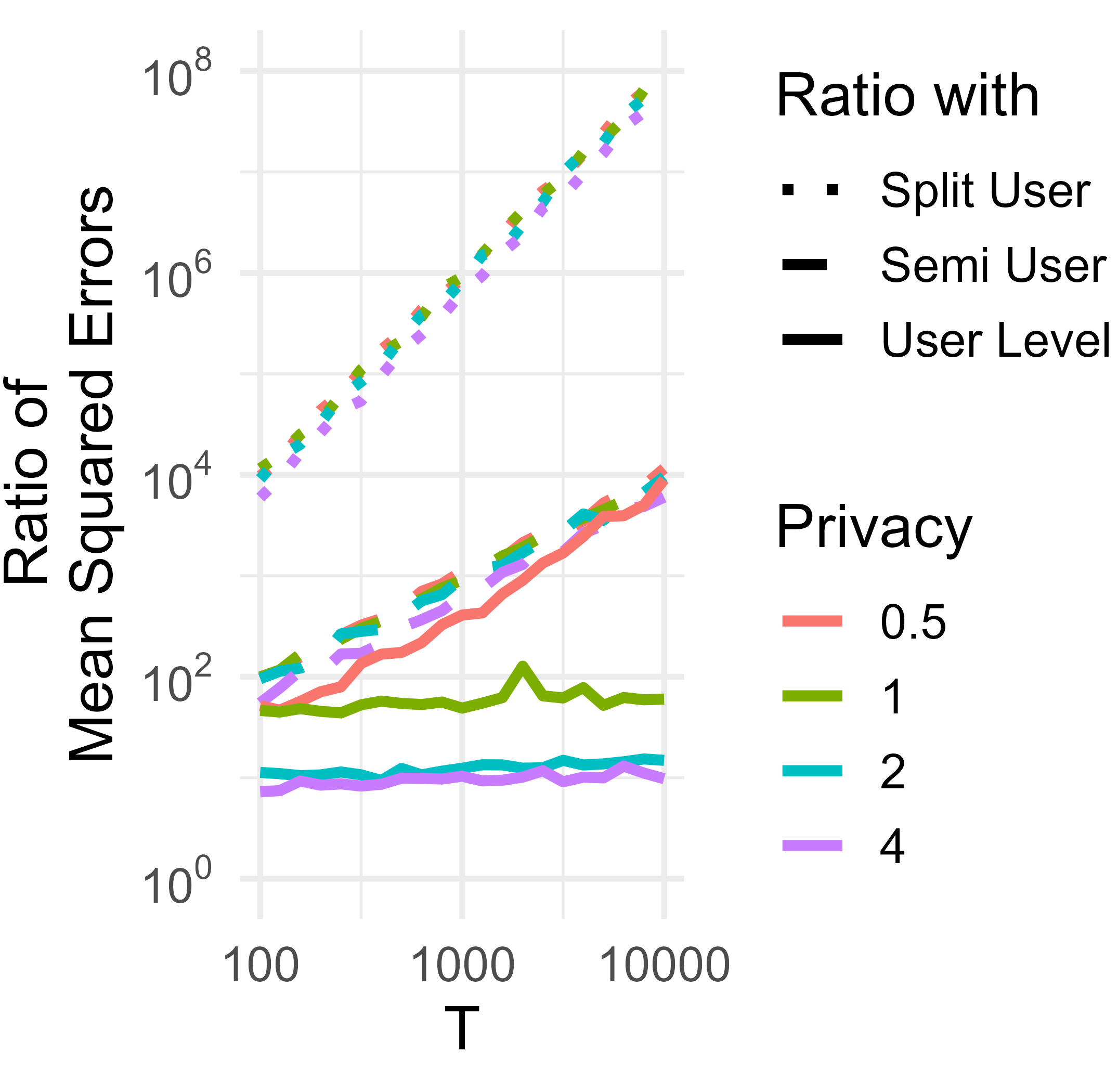}
                \includegraphics[height=0.162\textheight]{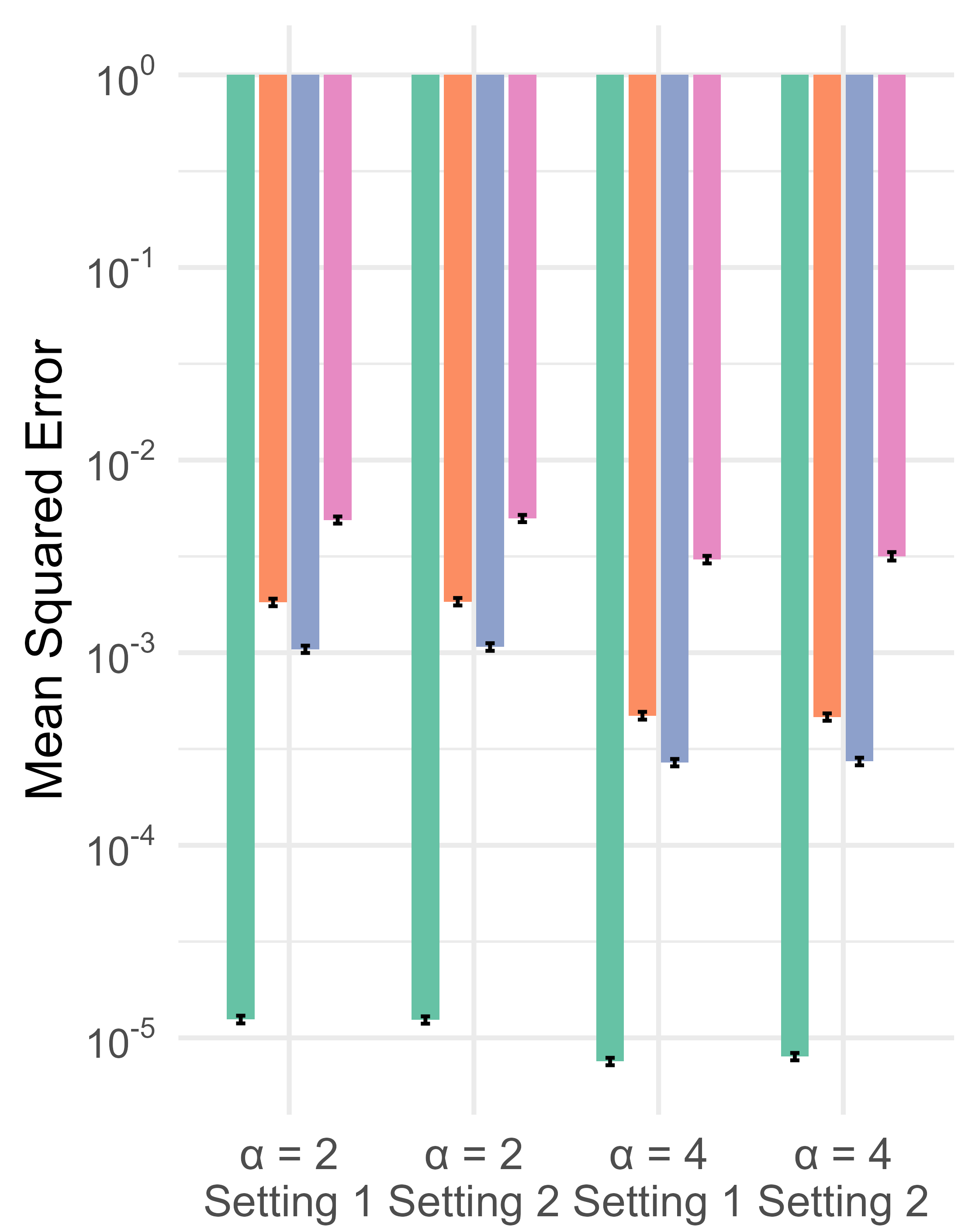}
                \includegraphics[height=0.162\textheight]{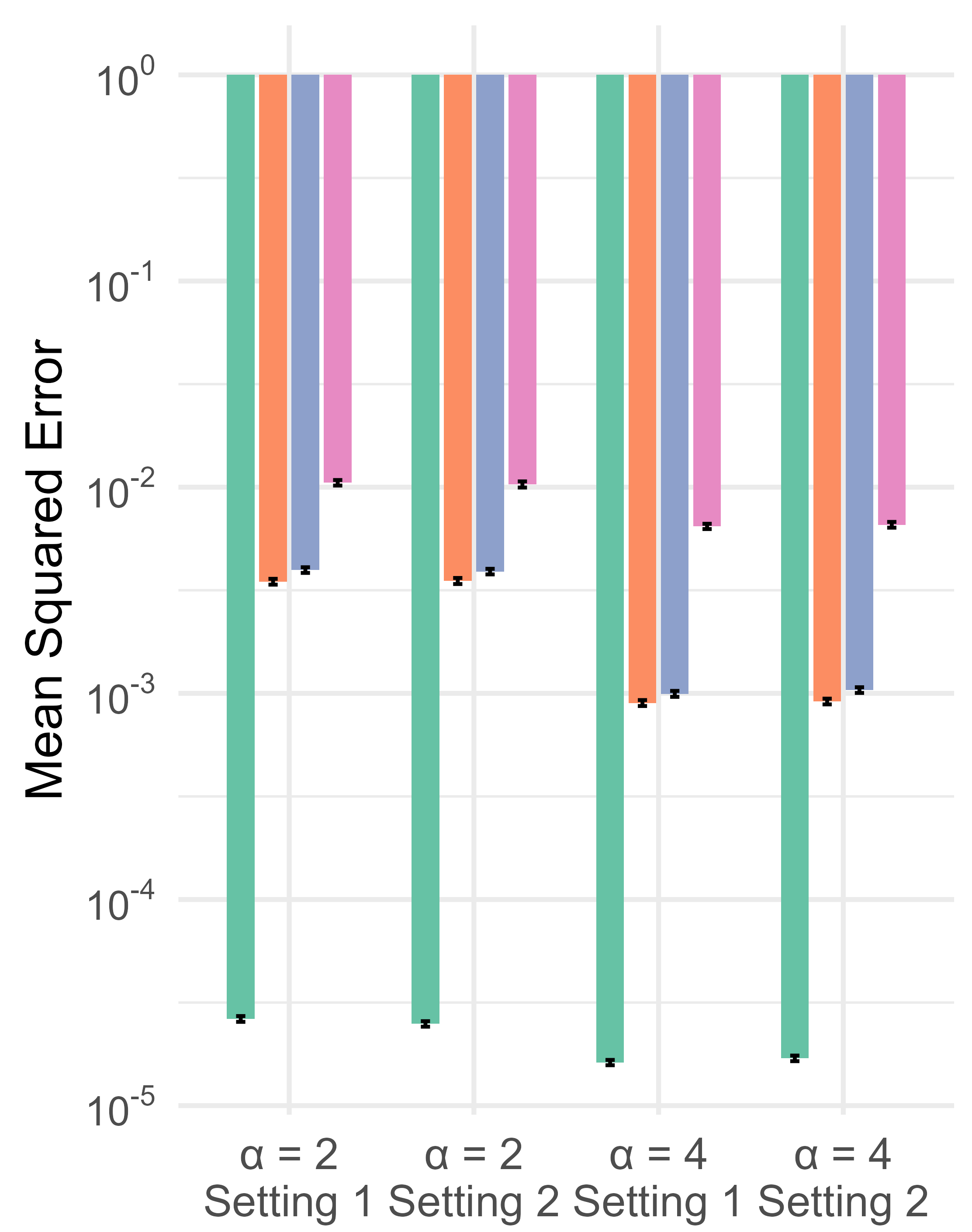}
                \includegraphics[height=0.162\textheight]{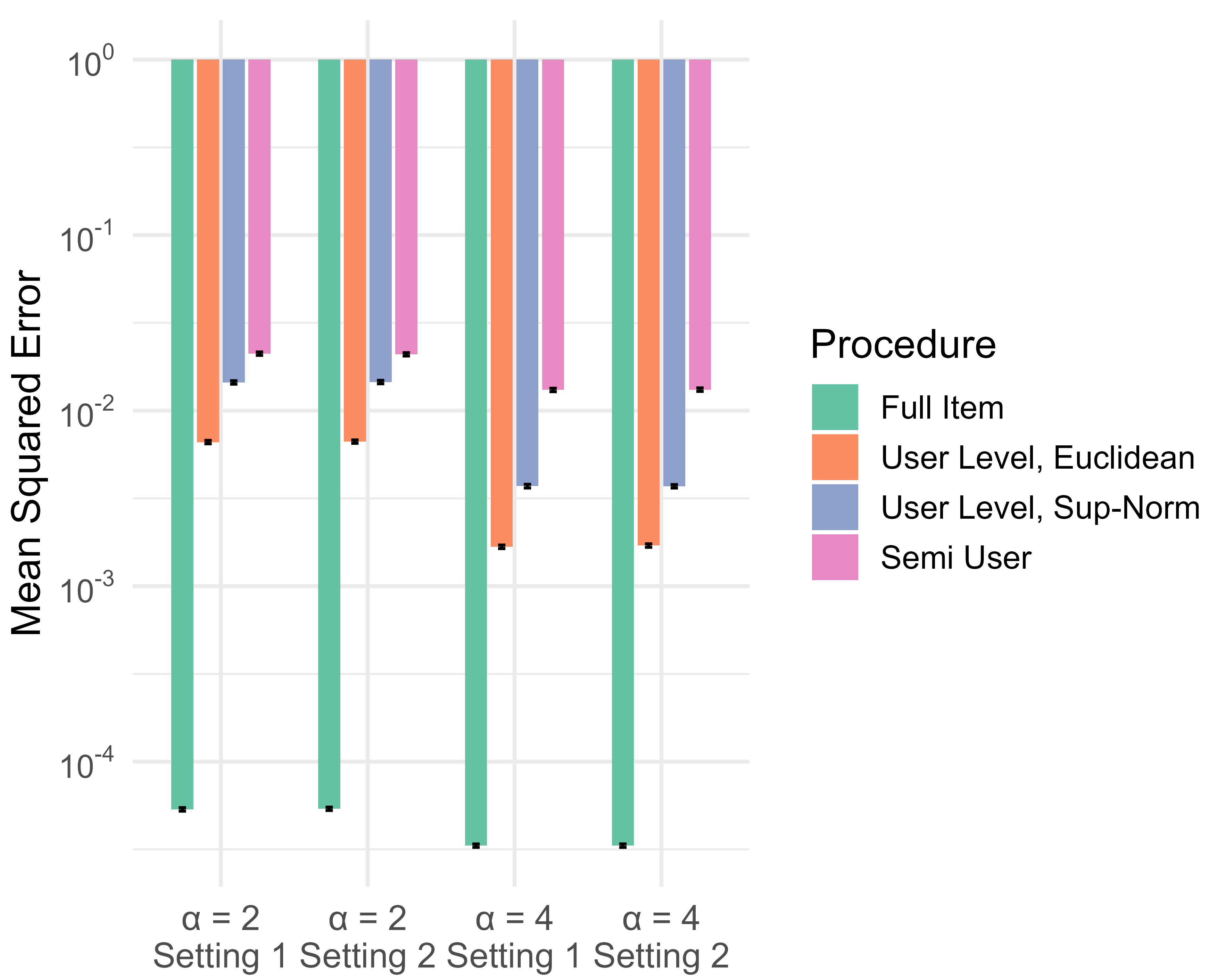}
                \caption{Ratio of mean-squared-error between full item level and other estimation schemes in univariate setting (left), and mean-squared-error of estimator in multivariate setting with $d \in \{8, 16, 32\}$ (centre-left, centre-right, right). Error bars denote two standard errors across 500 repetitions.}
                \label{fig:synthmean}
            \end{figure}
            
            We now consider the multivariate problem in the following two settings:
            
            \noindent
            \textbf{Setting 1}: Sample $X_{1:T}^{(1)}, \hdots, X_{1:T}^{(n)} \overset{\mathrm{i.i.d.}}{\sim} \mathrm{Rad}(0.9) \otimes \delta_0^{\otimes(d-1)}$. The data lie on the corner of an $\ell_1$-ball.
            
            \noindent
            \textbf{Setting 2}: Sample $\widetilde{X}_{1:T}^{(1)}, \hdots, \widetilde{X}_{1:T}^{(n)} \overset{\mathrm{i.i.d.}}{\sim} \mathcal{N}(0, 1)^{\otimes d}$, and let $X_{1:T}^{(i)} = \widetilde{X}_{1:T}^{(i)}/\|\widetilde{X}_{1:T}^{(i)}\|_2$ for $i \in [n]$. The data are uniformly distributed over the unit sphere in $d$ dimensions.

            In both settings, we fix $n = 4000$, $T = 400$, varying $\alpha \in \{2,4\}$ and $d \in \{8, 16, 32\}$ and calculate the mean-squared-error over 500 repetitions. As both settings satisfy that the data are supported on an $\ell_2$-ball, we are able to apply both our $\ell_2$- and $\ell_\infty$-ball estimation procedures and compare their performance. We also apply an optimal item-level procedure for the $\ell_2$-ball \cite[][Equation~25]{Duchi:2018} applied to each observation in full item level, and the locally calculated sample mean of each user's sample for the semi-user-level setting. The results of the simulations are in \Cref{fig:synthmean}. Firstly, comparing our two user-level procedures, we see the $\ell_2$-ball procedure outperforms the $\ell_\infty$-ball for sufficiently large $d$, validating the improvement we expect from \Cref{sec3:thm:main}. Comparing user-level to the other procedures, as expected full-item-level performs best, and semi-user-level worst, with the user-level methods always outperforming semi-user-level, at times by an order of magnitude. This again demonstrates the improvements attainable when taking advantage of the user-level setting. In fact, the user-level $\ell_\infty$-ball procedure, though not specialized for the $\ell_2$-ball setting, still outperforms the semi-user-level.
                
            \subsubsection{Sparse Mean Estimation}
               To investigate the performance of the sparse mean estimation procedure of \Cref{sec5}. Consider the following $s$-sparse distribution family
                \begin{equation} \label{secsim:eq:sparsefamily}
                    P_{d,s,p}^{\mathrm{sparse}} = \mathrm{Rad}(p)^{\otimes s} \otimes \mathrm{Rad}(0.5)^{\otimes (d-s)}.
                \end{equation}

                We compare with the item-level estimator of \cite{Duchi:2018}, the results of which state that under (non-interactive) item-level constraints, the minimax risk is $O(d\log(d)/(n\alpha^2))$. We vary the dimension $d$, setting $n = T= \lfloor \{10d\log(d)\}^{1/2} \rfloor$, and calculate the mean-squared-error over 500 repetitions. As the total number of samples $nT = O(d\log(d))$, consistent estimation should be impossible under the item-level setting. From the results in \Cref{fig:sparseMSE}, as $d$ grows the error of the item-level procedure stays bounded away from zero, whereas the user-level procedure estimator exhibits decreasing error, demonstrating that the user-level procedure can surpass the fundamental limitations of the item level setting. On the logarithmic scale, we see approximately linear decay for large $\alpha$, validating the minimax rate of $O(s/T)$ up to logarithmic factors in \eqref{sec5:eq:SparseMeanStatement}.

                \begin{figure}[htbp]
                    \centering
                    
                  \begin{subfigure}[t]{0.235\textwidth}
                    \centering
                    \includegraphics[height=0.135\textheight]{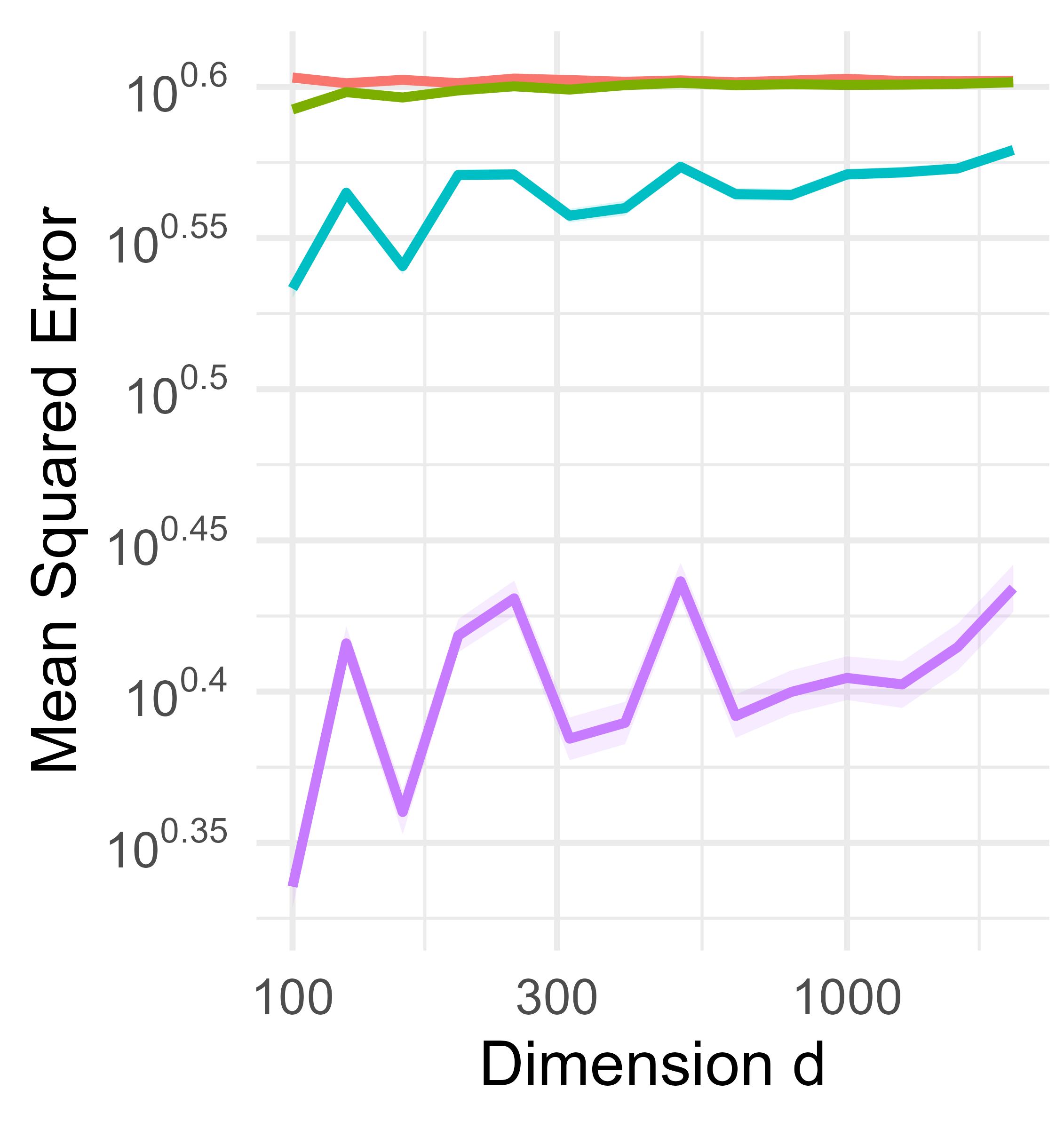}
                    \caption{\mbox{Item-level, $P_{100,4,0.75}^{\mathrm{sparse}}$.}}
                    \label{fig:sparseMSE:item:075}
                  \end{subfigure}%
                  \begin{subfigure}[t]{0.235\textwidth}
                    \centering
                    \includegraphics[height=0.135\textheight]{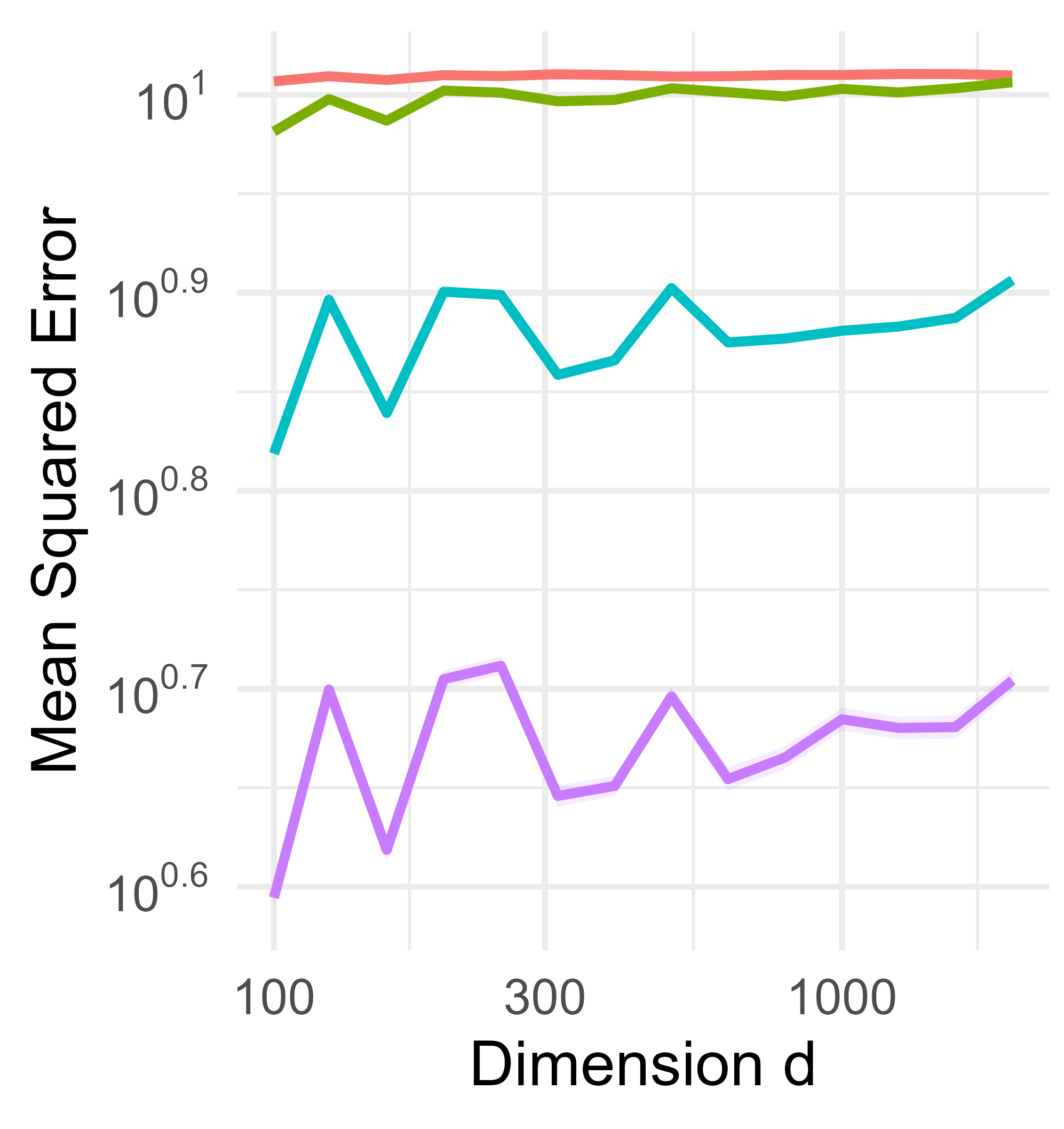}
                    \caption{\mbox{Item-level, $P_{100,4,0.9}^{\mathrm{sparse}}$.}}
                    \label{fig:sparseMSE:item:09}
                  \end{subfigure}%
                  \begin{subfigure}[t]{0.235\textwidth}
                    \centering
                    \includegraphics[height=0.135\textheight]{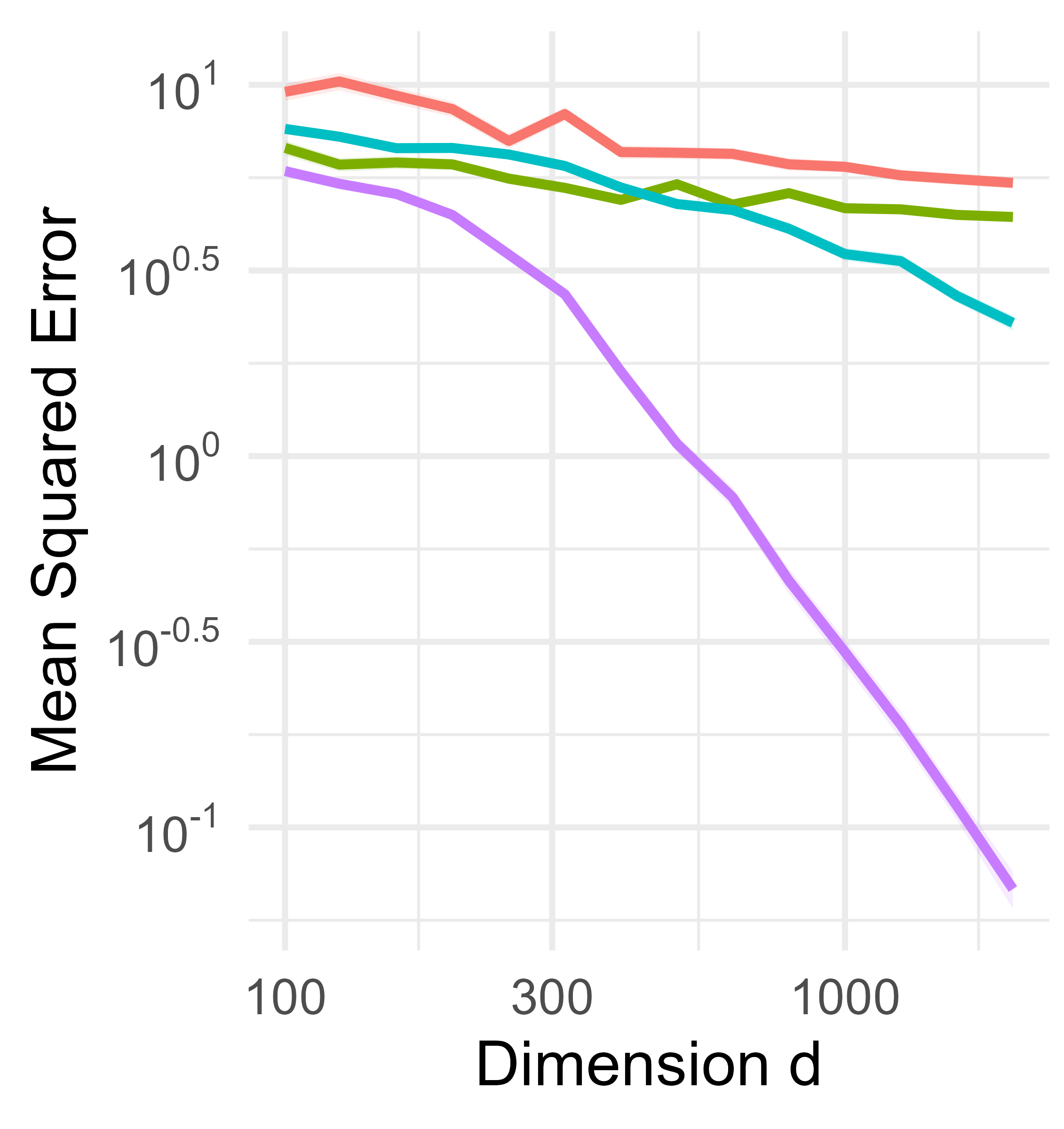}
                    \caption{\mbox{User-level, $P_{100,4,0.75}^{\mathrm{sparse}}$.}}
                    \label{fig:sparseMSE:user:075}
                  \end{subfigure}%
                  \begin{subfigure}[t]{0.235\textwidth}
                    \centering
                    \includegraphics[height=0.135\textheight]{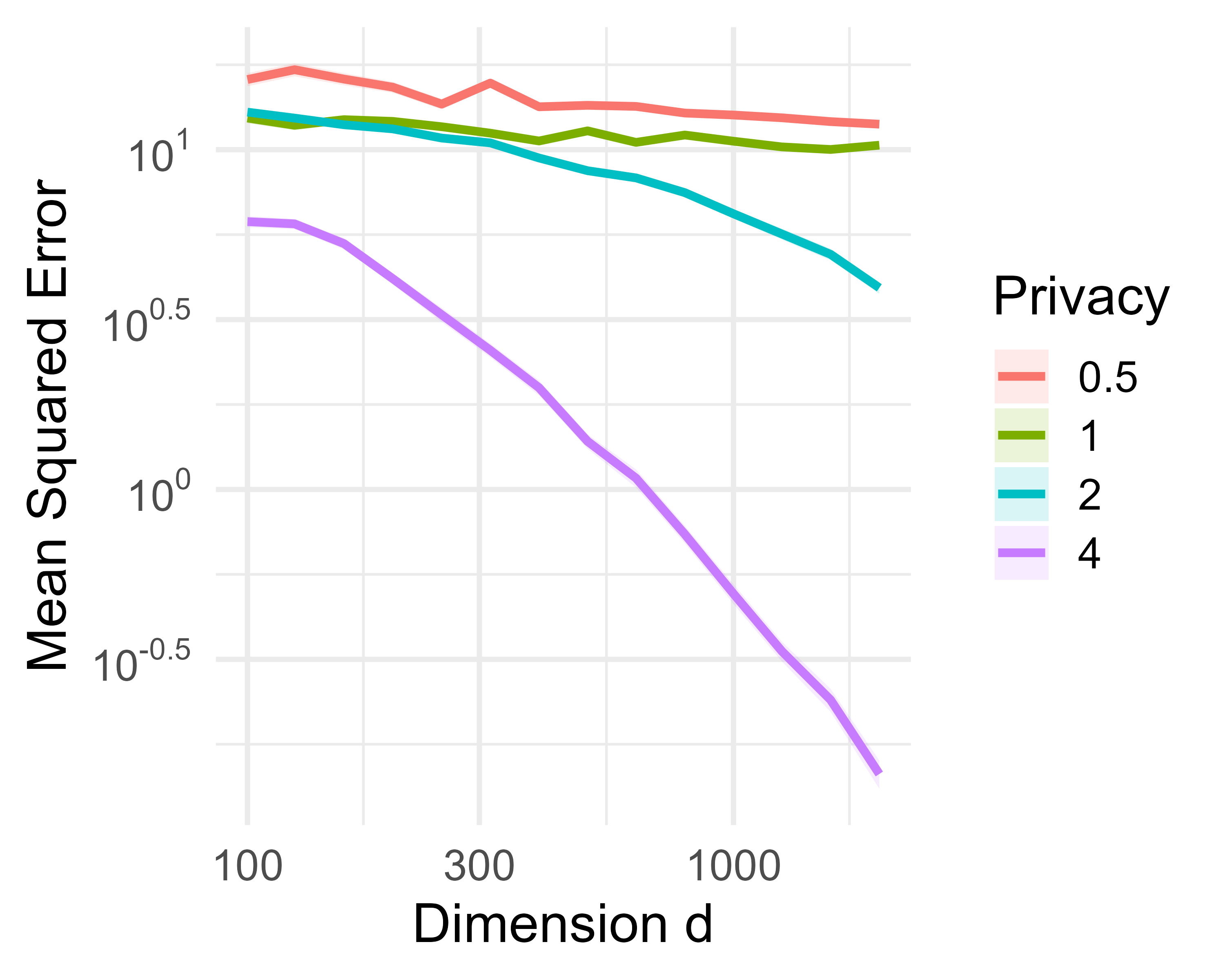}
                    \caption{\mbox{User-level, $P_{100,4,0.9}^{\mathrm{sparse}}$.}}
                    \label{fig:sparseMSE:user:09}
                  \end{subfigure}

                    \caption{MSE for the sparse mean-estimation procedures. Ribbons indicate one standard error across 500 repetitions.}
                    \label{fig:sparseMSE}
                \end{figure}
                
        \subsection{Real-World Data} \label{sec:sim:real}
            We now verify the applicability of our methods on real world data where the framework of user-level privacy naturally arises. We consider the Stanford Open Policing Project dataset \citep{Pierson:2020}, which contains information on traffic stops by police in the United States. In particular, we focus on the Texas State Patrol dataset which we denote \texttt{TX-Statewide} and which covers a total of 27,426,840 traffic stops across the state of Texas. Each row has an officer hash entry, allowing us to restrict to traffic stops conducted by an individual officer. We suppose a situation where each officer is obligated to provide information of the stops they have conducted, but are allowed to privatise the data before sending it. This naturally fits the user-level locally private setting as each officer will conduct multiple stops over a sufficiently long period of time and will have a local sample of observations.

            First, we filter out stops with missing values, then restrict data to the years $2015$ and $2016$ and for data recorded in district \texttt{A} yielding 1,153,346 traffic stops before grouping by officer hash. We then filter out officers with fewer than $T = 300$ observations, resulting in a total of $n = 822$ officers. We permute the data within each officer to minimise temporal dependency between stops by the same officer which are adjacent in the dataset, before finally truncating each officer's sample to be exactly $T = 300$ observations.

            To test the univariate mean estimation procedure, we focus on the \texttt{search\_conducted} variable, which indicates the binary outcome of whether a traffic stop led to a search of, for example, the vehicle or person. We then seek to estimate the proportion of stops that resulted in a search using the univariate mean estimation procedure of \Cref{sec3:univariate}. For semi-user-level, we calculate the sample mean within a user and add suitably scaled Laplace noise to privatise. We similarly use Laplace noise for the full-item-level setting. Lastly, we also consider a setting termed one-item-level, where each user contributes only a single observation privatised via the randomised-response mechanism. As this method only uses one observation per user, regardless of $T$, it will not demonstrate improved performance as $T$ grows.

            Taking the truth to be the proportion of stops over all officers, the mean-squared-error, scaled by $\alpha^2$, where we use an increasing share of each officer's sample is given in \Cref{fig:police}. Here, we vary $T \in \{100, 125, \hdots, 300\}$ and $\alpha \in \{0.5, 1, 2, 4\}$. We observe that, as $T$ grows, the performance of the full-item-level and user-level method improve, whilst that for one-item-level and semi-user-level remain constant, as expected. For $\alpha \geq 1$, the user-level method exhibits superior performance excepting the full-item-level baseline as expected.
            
            \begin{figure}[htbp]
                \centering
                \includegraphics[width=0.45\linewidth]{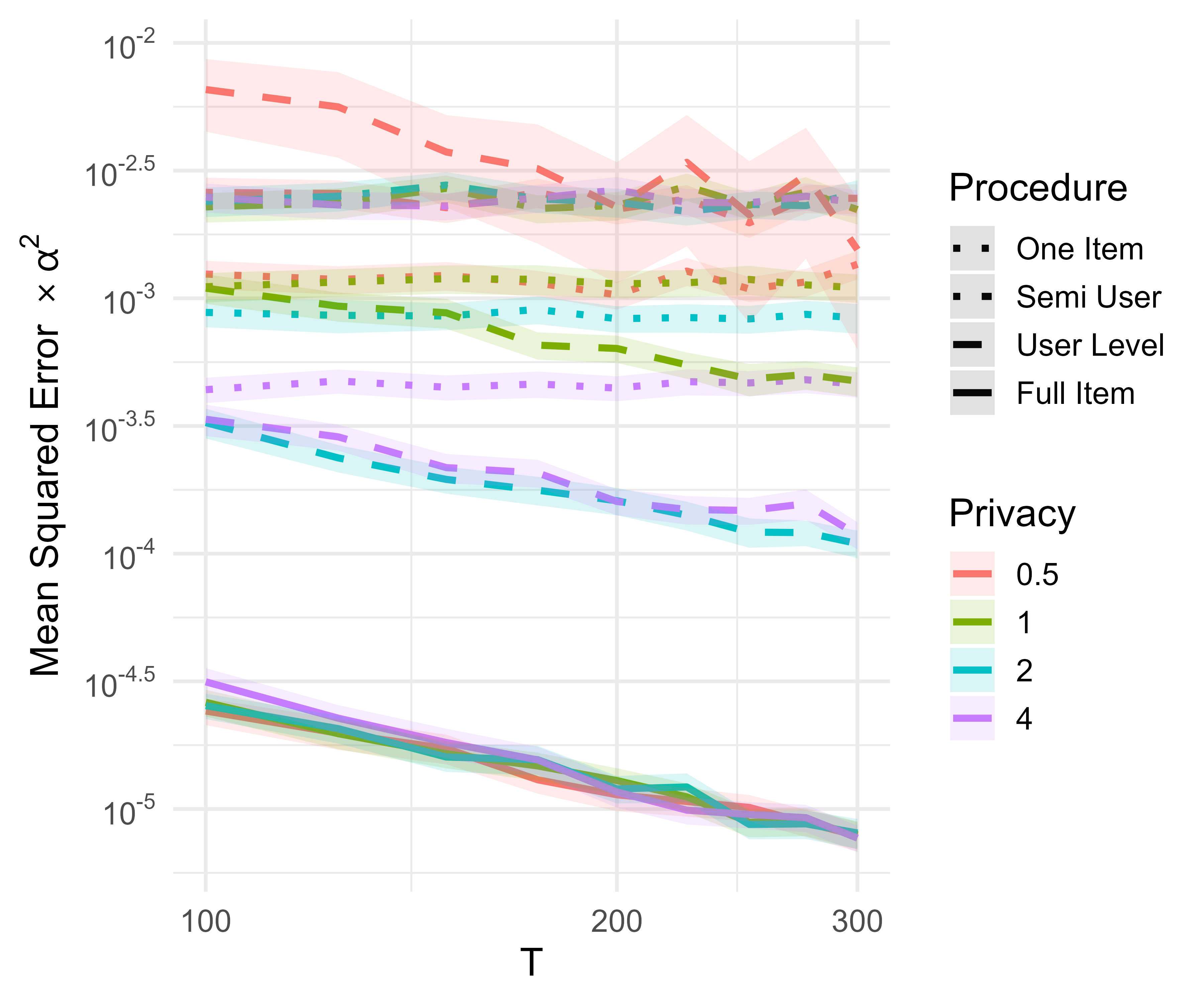}
                \includegraphics[width=0.45\linewidth]{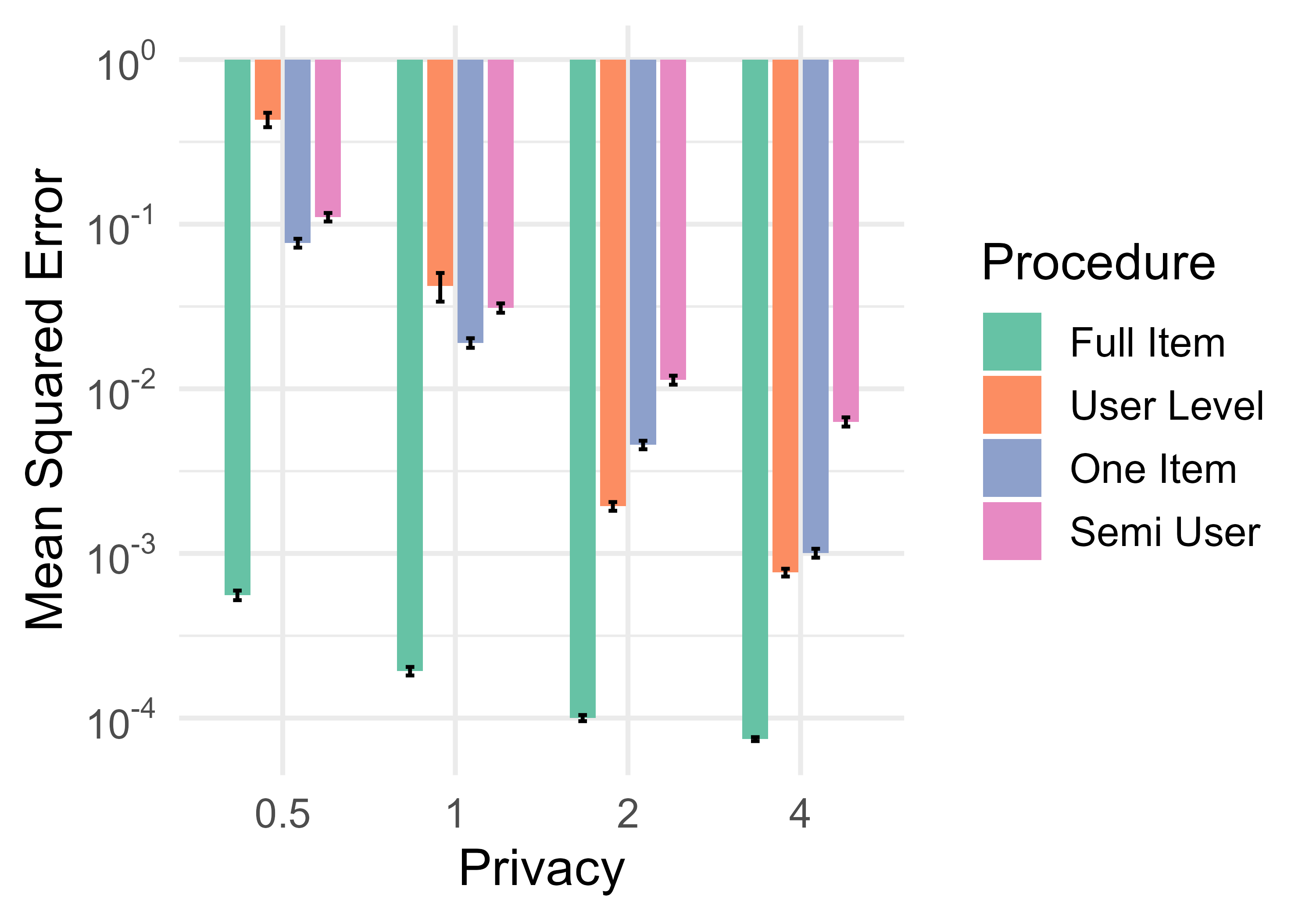}
                \caption{Mean-squared-error of estimator of proportion of traffic stops in the \emph{TX-Statewide} dataset leading to a search (left), and estimator of proportion of races of subjects of stops (right), with privacy parameters $\alpha \in \{0.5, 1, 2, 4\}$. Mean-squared-error in univariate setting is scaled by $\alpha^2$ to aid comparison across privacy levels. Ribbons and error bars denote two standard errors across 1,000 repetitions.}
                \label{fig:police}
            \end{figure}

            We next consider a multivariate problem for the same dataset. The \texttt{subject\_race} variable denotes the race of the traffic stop subject and takes one of the values \{\texttt{white, black, hispanic, asian/pacific islander, other, unknown}\}. We group the latter three outcomes into \texttt{other}, leaving four possible values. The category is then encoded as a four-dimensional one-hot vector, and we seek to estimate the proportion of stops that each group constituted. We consider the following estimation schemes. For full-item-level, the users are split evenly between the groups use the Laplace mechanism to privatise. We employ our $\ell_\infty$-ball method, \Cref{alg:PrivateMultiDimMean} for user-level, the $\ell_2$-ball method of (Duchi et al., 2018, Equation 25) applied to the locally calculated sample mean for semi-user-level, and lastly use the unary-encoding mechanism for one-item-level. The results are in \Cref{fig:police}, where we again see full-item-level performs best, with our user-level method outperforming one-item and semi-user-level for $\alpha \geq 2$.
    
\section{Conclusions}\label{sec-conclusion}

In this paper, we considered a range of canonical statistical problems under the user-level local differential privacy framework. We provided, up to poly-logarithmic factors and constants in exponents when exponential rates occur, matching upper and lower bounds on the minimax rates in all regimes of $T$ for non-sparse mean estimation and non-parametric density estimation, and similarly for sparse mean estimation up to a logarithmic gap in the regime. We further characterise phase transition phenomena as the number of samples held by each user varies.

\begin{table}[H]
    \centering
    \begin{tabular}{lccc}
    \hline\hline
    & $d$-dim.~mean ($\ell_2$-ball) & $s$-sparse mean & Density \\ \hline
    Small-$T$ rate   & $ d/(nT\alpha^2)$ & $\min\{s/T, sd/(nT\alpha^2)\}$ & $(nT\alpha^2)^{-2\beta/(2\beta + 2)}$       \\ 
    Large-$T$ rate &  $e^{-n\alpha^2/d}$& $e^{-n\alpha^2/s}$ & $(n\alpha^2)^{-2\beta}$\\
    Phase transition $(T = \cdot)$ &  $O\big(e^{n\alpha^2/d}\big)$& $\begin{cases}
        O\big(e^{n\alpha^2/s}\big), & d/(n\alpha^2) \gtrsim 1\\
        O\big(e^{n\alpha^2/d}\big), & d/(n\alpha^2) \lesssim 1
    \end{cases}$& $O\left((n\alpha^2)^{2\beta+1}\right)$\\ \hline
    \end{tabular}
    \caption{Comparison of user-level rates and phase transitions. Dependence on multiplicative constants, constants in exponents, and logarithmic factors are suppressed for brevity.}  \label{tab-3}
\end{table}
 
Minimax optimal rates for user-level LDP problems, in particular proofs of lower bounds, have not been studied systematically before. \cite{Acharya:2023} prove matching upper and lower bounds, but restricts to a certain range of $T$, not considering the behaviour as $T$ diverges. We on the other hand consider all regimes of $T$. This provides insight on the fundamental limits of these problems in a way that appears to be intrinsically linked to the metric entropy of the parameter space. As observed in \cite{Acharya:2023}, the user-level rate matches the item-level rate with an equivalent number of users in the ``small-$T$'' regime in discrete distribution estimation problems. It was not known whether this was a special case or more general. We observe different behaviour for sparse mean estimation problems, bucking this trend and showing it is not universal behaviour for user-level LDP estimation. This is also particularly striking as the high-dimensional sparse mean estimation problem is intractable under item-level privacy, but becomes feasible under the user-level framework. This itself may have useful applications in practice, but could also motivate further exploration to identify other statistical problems where item-level privacy is pessimistic, especially where there is an indication of the ability for users to localise onto relevant factors of the data locally.

Considering the method used to derive the upper bound in \Cref{sec2:thm:GeneralBound}, whilst the method only works in the infinite-$T$ case, it suggests a general method that might be applied to the finite-$T$ case. For example, as can be seen from the univariate estimation method in \Cref{sec3}, one proceeds by covering the parameter space with intervals (resp.~balls) of a certain width (resp.~radius), using a private voting procedure to select a candidate, and then localising on this candidate to extract the benefits of each user having multiple samples. However, difficulties arise in the finite-$T$ case when trying to vote on a candidate as the error in the estimate of the functional can result in the votes being split over neighbouring candidates, with the number of neighbouring candidates increasing as the dimension increases. If some technique to circumvent these issues can be developed, it would provide a general procedure for finite-$T$ that would provide an upper bound that intrinsically depends on the metric entropy of the space. Combined with the general lower bound of Theorem \ref{sec2:eq:GeneralLB}, this would show that the fundamental limits of user-level LDP estimation are entirely dependent on, and can be deduced from, the metric entropy of the parameter space.

Lastly, we present further results in the appendices, including the algorithms for mean estimation under the low-privacy regime, where $\alpha \gtrsim 1$. As mean estimation is an important sub-routine in other algorithms, such as our non-parametric density estimation and sparse mean estimation procedures, this serves as a useful stepping stone for further work for user-level LDP in the low-privacy regime. We also obtain results for a generalisation of the user-level setting under data heterogeneity which can be found in the appendix. To the best of our knowledge, this is the first consideration of user-level LDP with data heterogeneity. Both of these are interesting settings, opening up new avenues for further study.

We end by noting additional problems, previously considered under item-level LDP, which could be extended to user-level LDP. The methods we analyse in this paper may serve as a foundation to tackle further interesting problems such as private quantile estimation \citep[e.g.][]{Liu:2023} and estimation of functionals of distributions \citep[e.g.][]{Butucea:2021, Butucea:2023}.

\section*{Acknowledgements}

The second author was supported by European Research Council Starting Grant 101163546. The third author was partially supported by the Philip Leverhulme Prize.

\bibliographystyle{plainnat}
\bibliography{bibliography}

\appendix

\section*{Appendices}
The appendices are organised as follows: In \Cref{app:sec:moresims}, we present further numerical simulations. The problem of estimating the mean of a mixture distributions under user-level LDP is considered in \Cref{secmix}. The proofs for results of \Cref{sec2} are contained in \Cref{Appendix_sec2}. We state and prove an extension of the results of \Cref{sec2} to the setting of approximate LDP in \Cref{app:sec:approxLDP}. We extend the results of \Cref{sec3} to the low-privacy setting in \Cref{app:sec:meanlowpriv}. We discuss the computational complexity of our estimation procedures in \Cref{app:sec:CompComplex}. For the user-level rates of Sections \ref{sec3}, \ref{sec5} and \ref{sec4}, and Appendices \ref{secmix} and \ref{app:sec:meanlowpriv}, we separate the proofs of the upper bounds and lower bounds into \Cref{app:sec:Upper} and \Cref{app:sec:Lower} respectively. 
Lastly, in \Cref{app:sec:misc} we collect auxiliary technical details.

\section{Additional Numerical Results} \label{app:sec:moresims}
    In this appendix we collect additional results of numerical simulations on synthetic data, including simulations pertaining to sensitivity analyses, and those of the mixture mean estimation setting considered in \Cref{secmix}.
    \subsection{Phase Transition Phenomena} \label{sec:phasetransition}
        We empirically validate the phase transition phenomena as $T \rightarrow \infty$. We focus on the case of univariate mean estimation, employing our estimation procedure developed in \Cref{sec3:univariate}, as the phase transition phenomena for the other estimation procedures developed ultimately arise from the univariate procedure being applied as a subroutine.
        
        We recall that the localisation stage of our developed procedures involve dividing the interval into sub-intervals. In particular, if the mean of a distribution lies close to the boundary of two sub-intervals as opposed to being close to the centre, the performance will decrease slightly. Without adjustments, this may cause the simulation results to appear noisy in a distribution dependent manner. Hence, where appropriate, we specify random ``shifts'' within our experiments to average out this behaviour.
        
        We generate data as follows. We sample a shift $U \sim \mathrm{Unif}[-0.3, 0.3]$ and generate $X_{1:T}^{(1)}, \hdots, X_{1:T}^{(n)} \mid U \overset{\mathrm{i.i.d}}{\sim} \mathrm{Rad}(1/2)^{\otimes T} + U\mathbf{1}$ for $\mathbf{1}$ the $T$-dimensional vector of ones. We vary $n \in \{100, 200\}$, $T \in \{\lfloor 10^m \rfloor: m = 2, 2.2, 2.4, \hdots, 7\}$ and $a \in \{0.5, 1, 2, 4\}$. Further, we manually fix the sub-interval width given in \eqref{sec3:eq:deltaval}, varying $\Delta \in \{0.0005, 0.0010, 0.0015, \hdots, 0.15\}$. We estimate the mean-squared-error for each fixed $\Delta$ across 20,000 repetitions, and for a given $(n, T, \alpha)$ triplet pick the minimum mean-squared-error across all $\Delta$.

        The results of the simulations are presented in \Cref{fig:phasetransition}. On the logarithmic scale, for $\alpha \in \{0.5, 1, 2\}$ we see varying rates at which the curves decay, before starting to flatten, showing for an optimally tuned $\Delta$, the mean-squared-error will not vanish as $T \rightarrow \infty$ for fixed $n$. For $\alpha = 4$, we note a consistent linear decay, even for $T = 10^7$, many orders of magnitude greater than the modest values of $n \in \{100, 200\}$, and for $n = 200$ we observe linear decay for $\alpha = 2$ until roughly $T = 10^5$. As $\alpha \geq 2$ is often taken in practice (for example $\alpha \in \{4, 8\}$ is considered in \citealt{Apple:2023}), this demonstrates that the phase transition phenomena are unlikely to cause significant issues with implementing local user-level methods in practice. In particular, in all our simulations we disregard the issues which would be caused by truncating $T$, and substitute $T$ in place of all occurrences of a corresponding $T^\ast$.

        \begin{figure}[h]
            \centering
            \includegraphics[height=0.2\textheight]{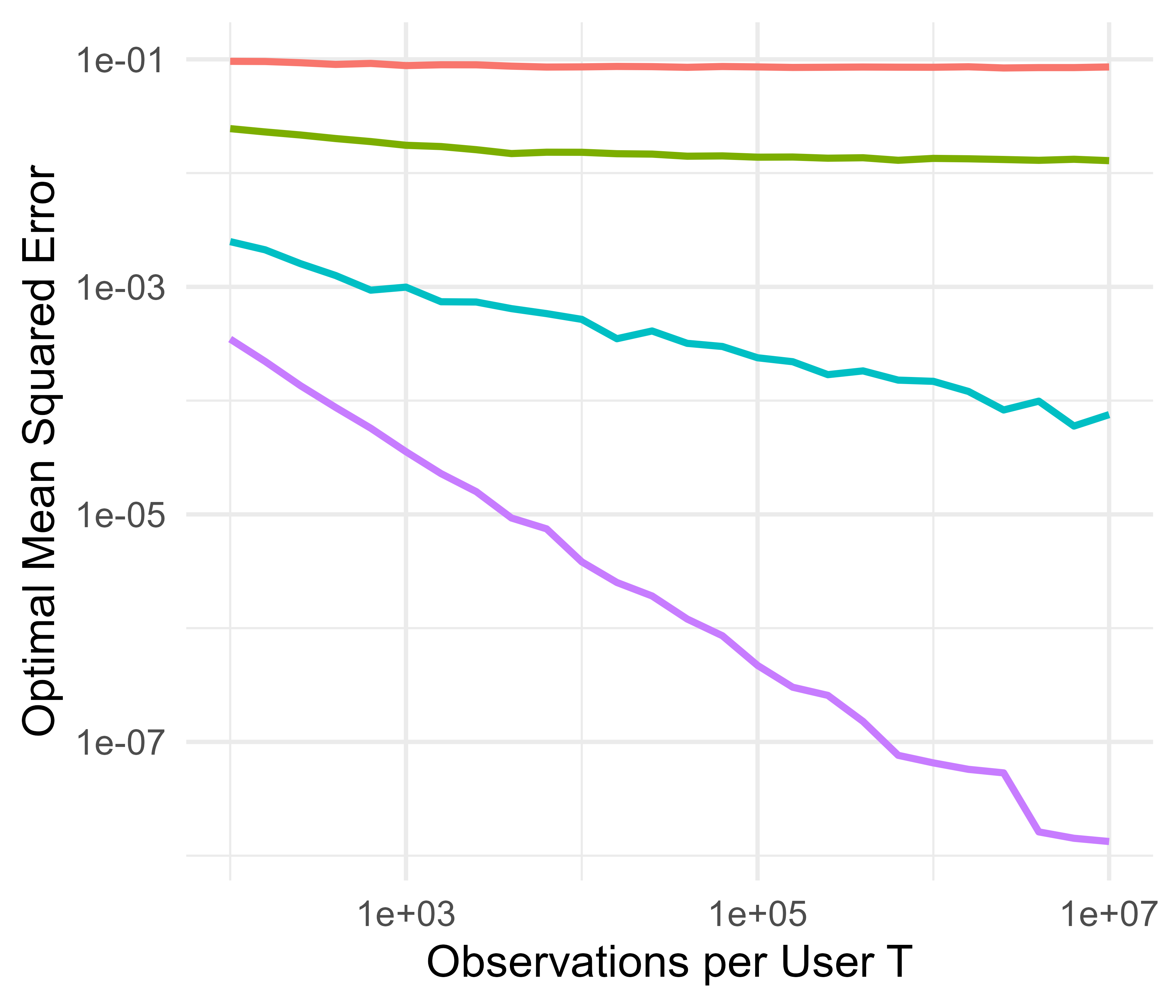}
            \includegraphics[height=0.2\textheight]{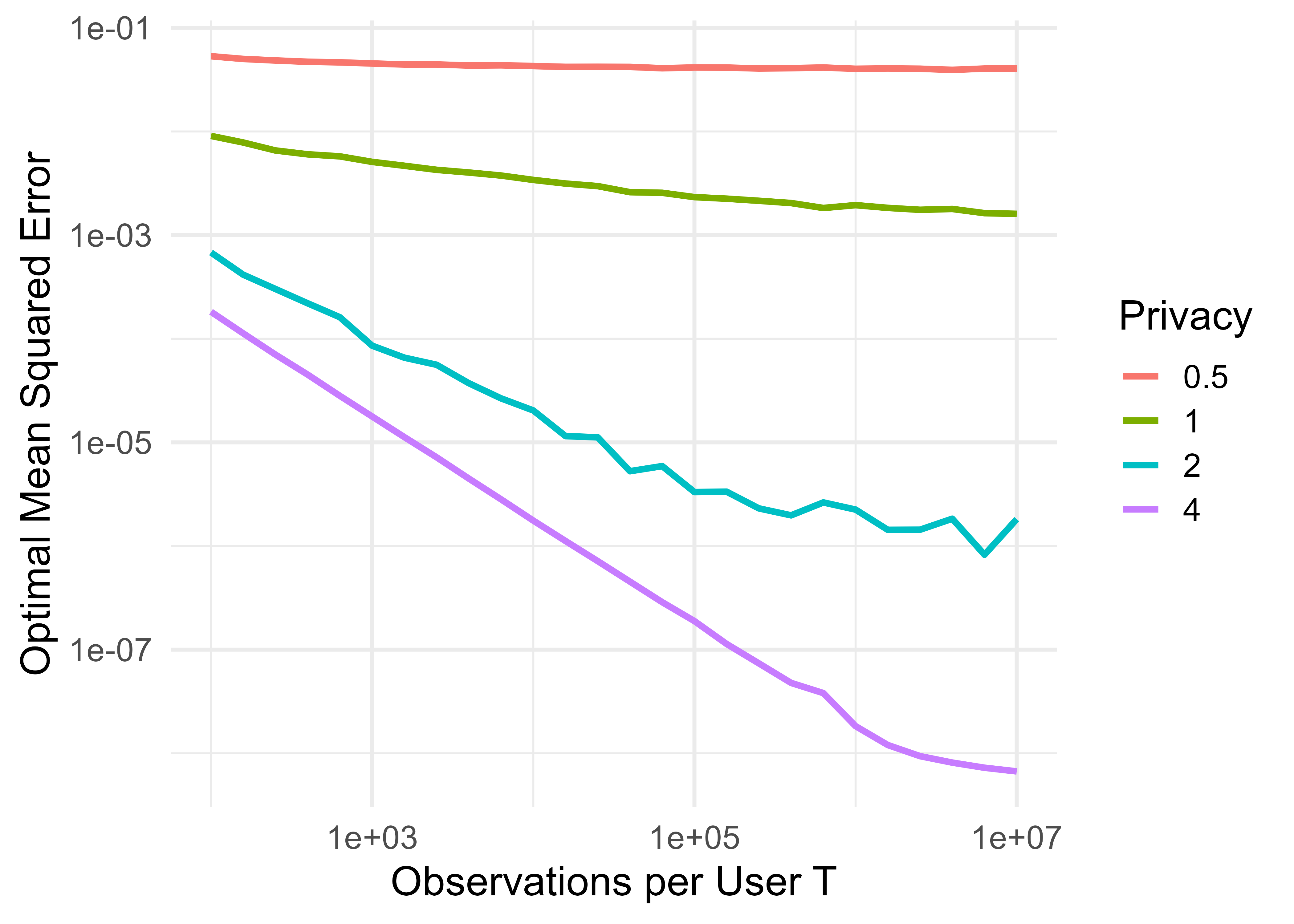}
            \caption{Minimum mean-squared-error across specified sub-interval widths $\Delta$ against $T$, the number of observations per user, across varying privacy levels, with $n = 100$ (left) and $n = 200$ (right).}
            \label{fig:phasetransition}
        \end{figure}

    \subsection{Sensitivity Analyses}\label{sec-sens}
        In this section, we consider the sensitivity of our methods to important tuning parameters. We focus on the role of three different parameters of interest which we highlight.

        \begin{itemize}
            \item The first parameter of interest arises when localising. The parameter of interest is the sub-interval width $\Delta$ defined in \eqref{sec3:eq:deltaval}. In particular, rewriting in terms of an arbitrary constant we have that
            \begin{equation*}
                \Delta_C = C\sqrt{\frac{\log(nT\alpha^2)}{T}}.
            \end{equation*}
            Based on the results of \Cref{sec:phasetransition}, we use $T$ in place of $T^\ast$ in the definition of \eqref{sec3:eq:deltaval} for simplicity. We measure the performance as we vary the value of $C$.

            \item The second parameter of interest regards the localisation step during sparse mean estimation. In \eqref{sec5:eq:truncwidtheq}, the user truncates the sum of the hashes to the interval $[-\eta, \eta]$ where $\eta = \sqrt{30s}$. The value $30$ is taken to aid the theoretical analysis but may not be the best in practice. Denoting $\eta_C = \sqrt{Cs}$, we inspect the sensitivity of the sparse mean estimation procedure as $C$ is varied.

            \item The last parameter we investigate is the assumed sparsity, again in sparse mean estimation. The results of \Cref{sec5} assume that the sparsity level $s$ is known in advance, but this may not always be the case in practice. We investigate the effect of misspecification of the true sparsity.
        \end{itemize}
        
        Starting with the first problem, we focus on the univariate mean estimation procedure as in the other procedures this tuning parameter only arises through the univariate sub-routine. Intuition suggests that for $C$ too small, the sub-intervals are harder to identify during the localisation stage. For $C$ too large, more noise needs to be added to privatise the estimates in the refinement stage. We consider two settings, the first to demonstrate what type of distribution is most sensitive to this parameter, and the second to find suitable values of the tuning parameter.

        \noindent
        \textbf{Setting 1}: Fix $n = 200, T = 100$ and consider, for $B_{p,p}$ the Beta distribution with shape parameters $(p,p)$, $X_{1:T}^{(1)}, \hdots, X_{1:T}^{(n)} \overset{\mathrm{i.i.d}}{\sim} (2B_{p,p} - 1)^{\otimes T}$ for $p \in \{0.1, 0.5, 1, 3, 5\}$, and estimate the mean-squared-error. We vary both $\alpha \in \{0.5, 1, 2\}$ and $C \in \{0.01, 0.02, \hdots, 1.00\}$. We estimate the mean-squared-error across 10,000 repetitions.
        
        \noindent
        \textbf{Setting 2}: Sample a shift $U \sim \mathrm{Unif}[-0.3, 0.3]$ and generate $X_{1:T}^{(1)}, \hdots, X_{1:T}^{(n)} \mid U \overset{\mathrm{i.i.d}}{\sim} \mathrm{Rad}(1/2)^{\otimes T} + U\mathbf{1}$ for $\mathbf{1}$ the $T$-dimensional vector of ones. We vary $n,T \in \{100, 150, \hdots, 500\}$, $\alpha \in \{0.5, 1, 2, 4\}$ and $C \in \{0.01, 0.02, \hdots, 1.00\}$. We estimate the mean-squared-error across 500 repetitions.

        The results of Setting~1 are in \Cref{fig:sensitivitysetting1}. In all cases of the value of $\alpha$, we see that picking $C$ too small results in a large mean-squared-error. As $C$ increases, it decays to some minimum, before gracefully increasing. In particular, the effects of small $C$ are more adverse for the distributions with larger variance, that is, for small shape parameter $p$, whereas the mean-squared-error of the distributions coincide for large $C$. This matches with the intuition that the reliability of a user's data landing in the correct sub-interval is distribution dependent, but the noise incurred from privatisation for large $C$ is distribution agnostic. In summary, it appears preferable to pick $C$ larger than optimal rather than smaller.
        \begin{figure}[htbp]
            \centering
            \includegraphics[height=0.15\textheight]{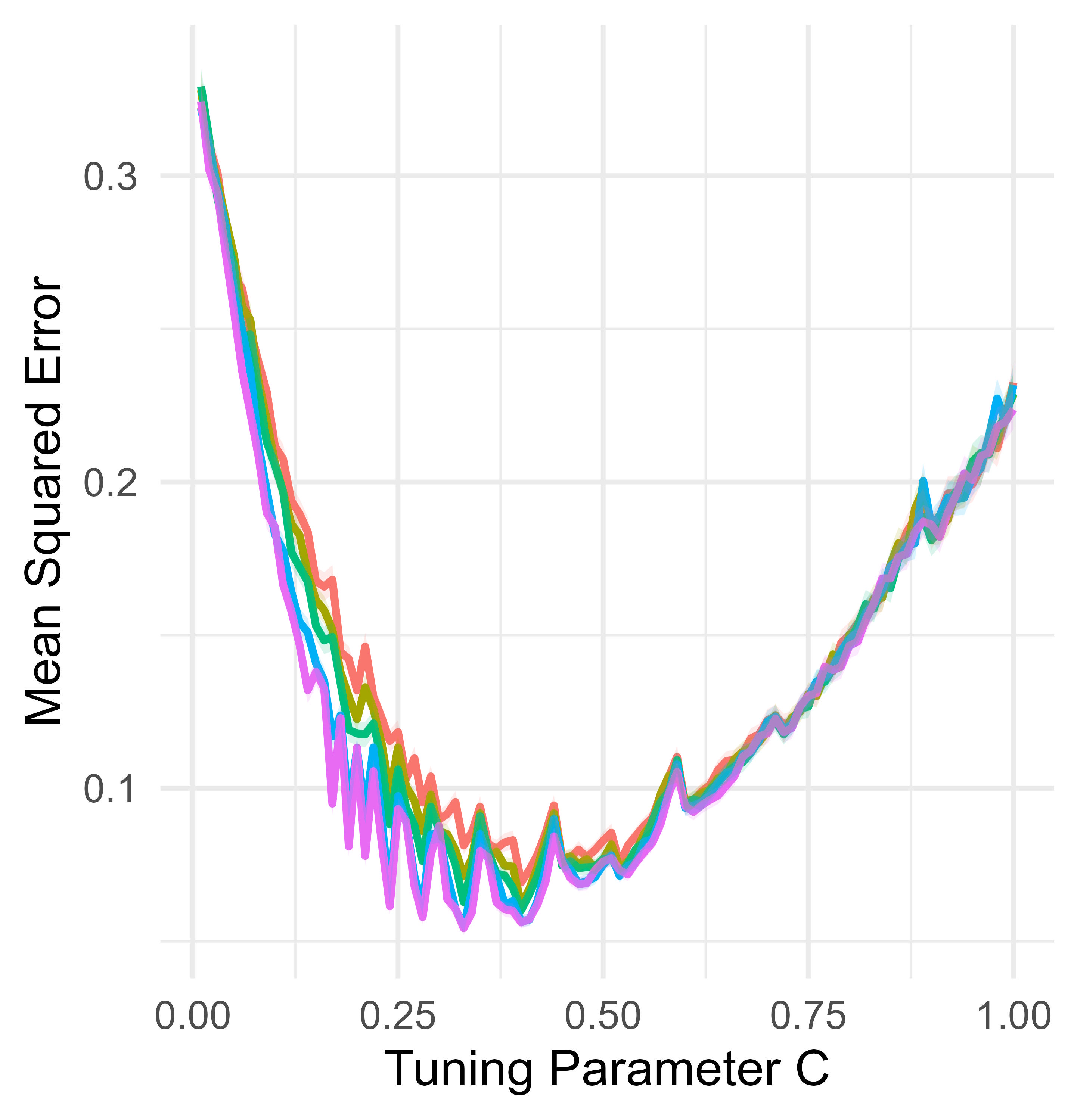}
            \includegraphics[height=0.15\textheight]{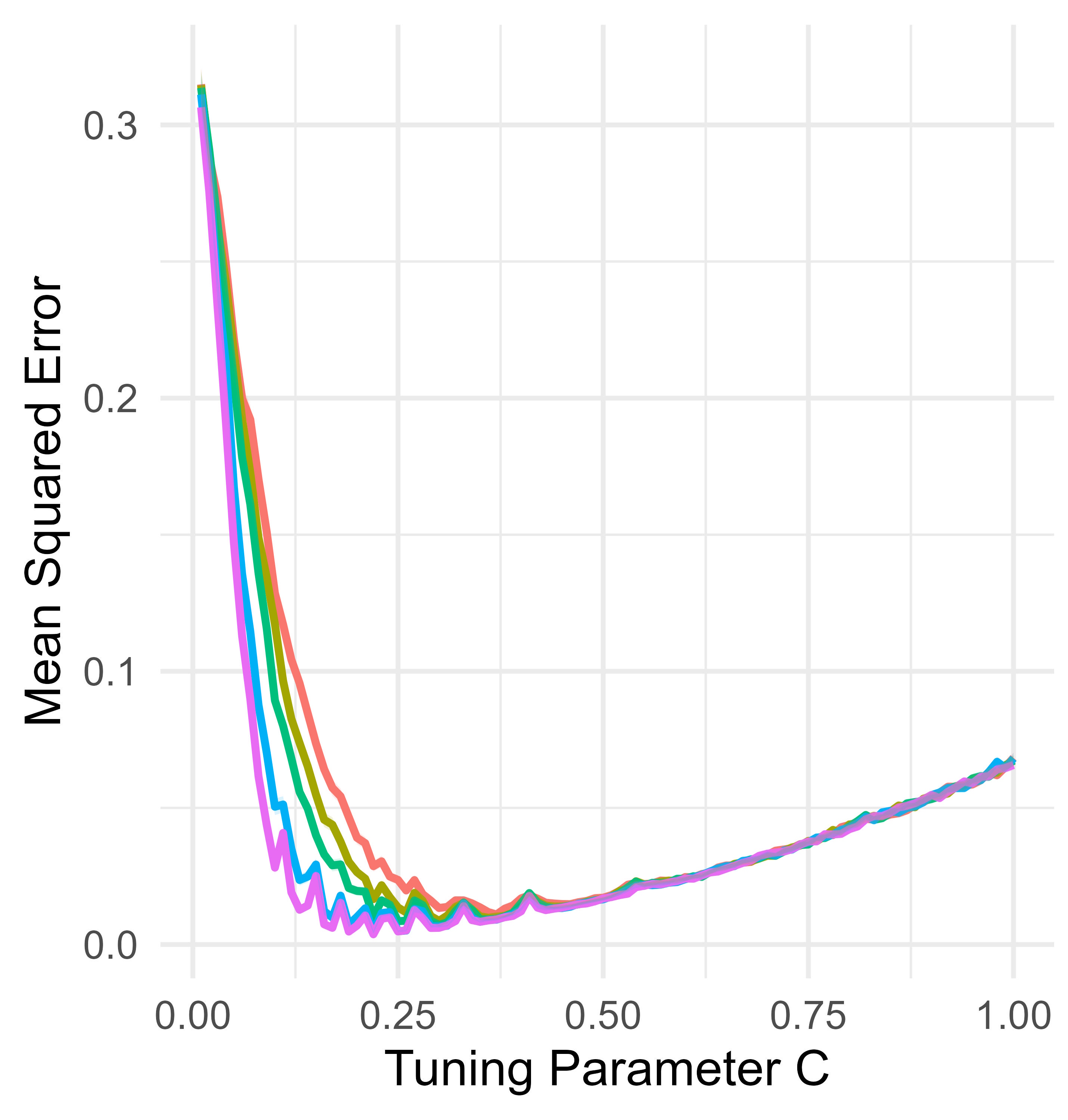}
            \includegraphics[height=0.15\textheight]{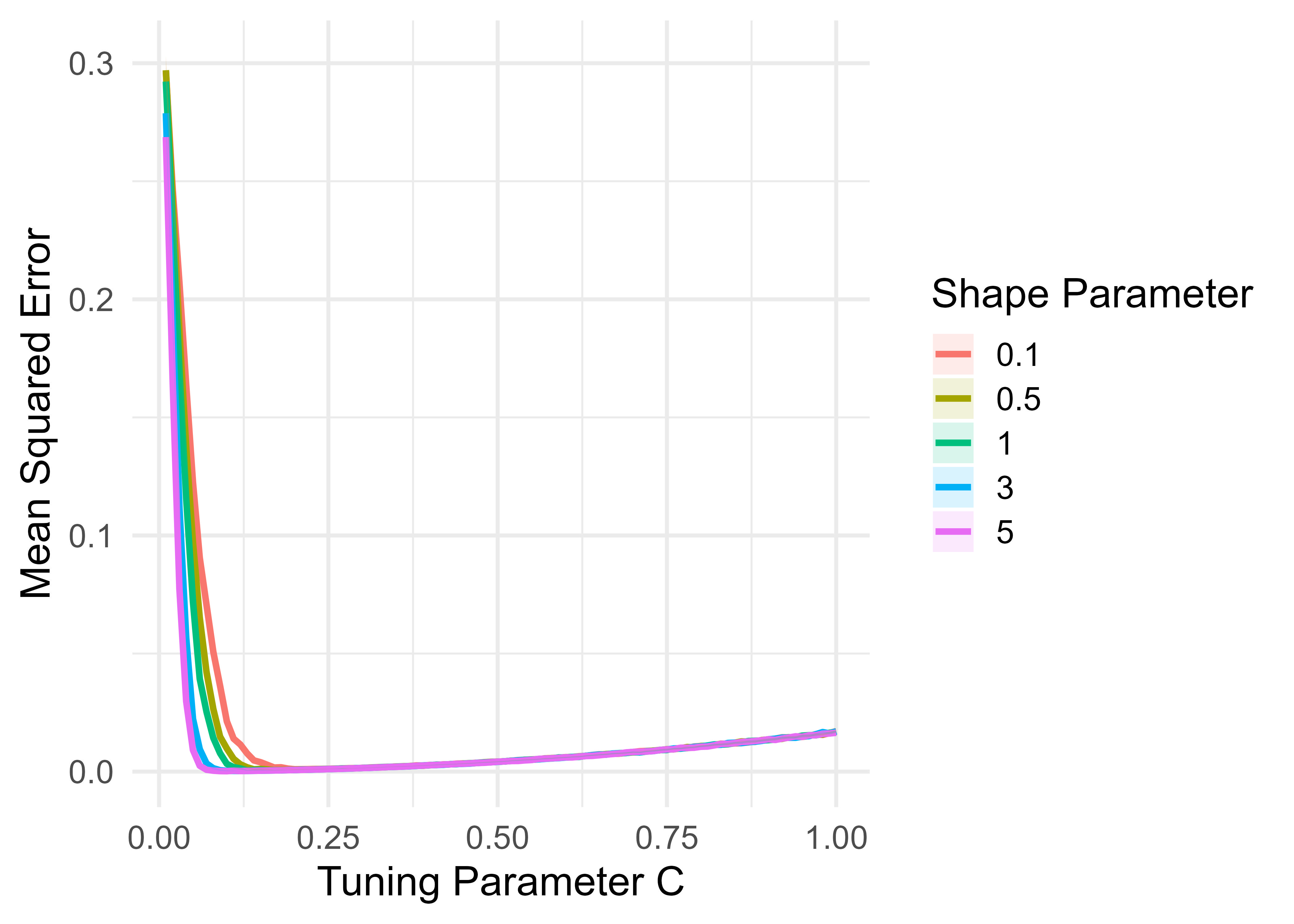}
            \caption{Estimated mean-squared-error against tuning constant for privacy parameter $\alpha = 0.5$ (left), $\alpha = 1$ (middle) and $\alpha = 2$ (right). Shape parameter of Beta distribution indicated by line colour.}
            \label{fig:sensitivitysetting1}
        \end{figure}

        Motivated by the results of Setting~1, where the distributions with greatest variance were the most adversely affected by small $C$, we consider the limiting case of Rademacher distributions as in Setting~2, which has maximal variance for a distribution supported on $[-1,1]$.

        The results of Setting~2 are contained in \Cref{fig:sensitivitysetting2a} and \Cref{fig:sensitivitysetting2b}, wherein we fix $T$, $n$ respectively and grow the other. In both figures, we see that the error decays as $C$ grows for all configurations of $n$ and $T$, before rising, with the case of $n = T = 100$ starting to rise the earliest in $C$.

        \begin{figure}[htbp]
            \centering
            \includegraphics[height=0.15\textheight]{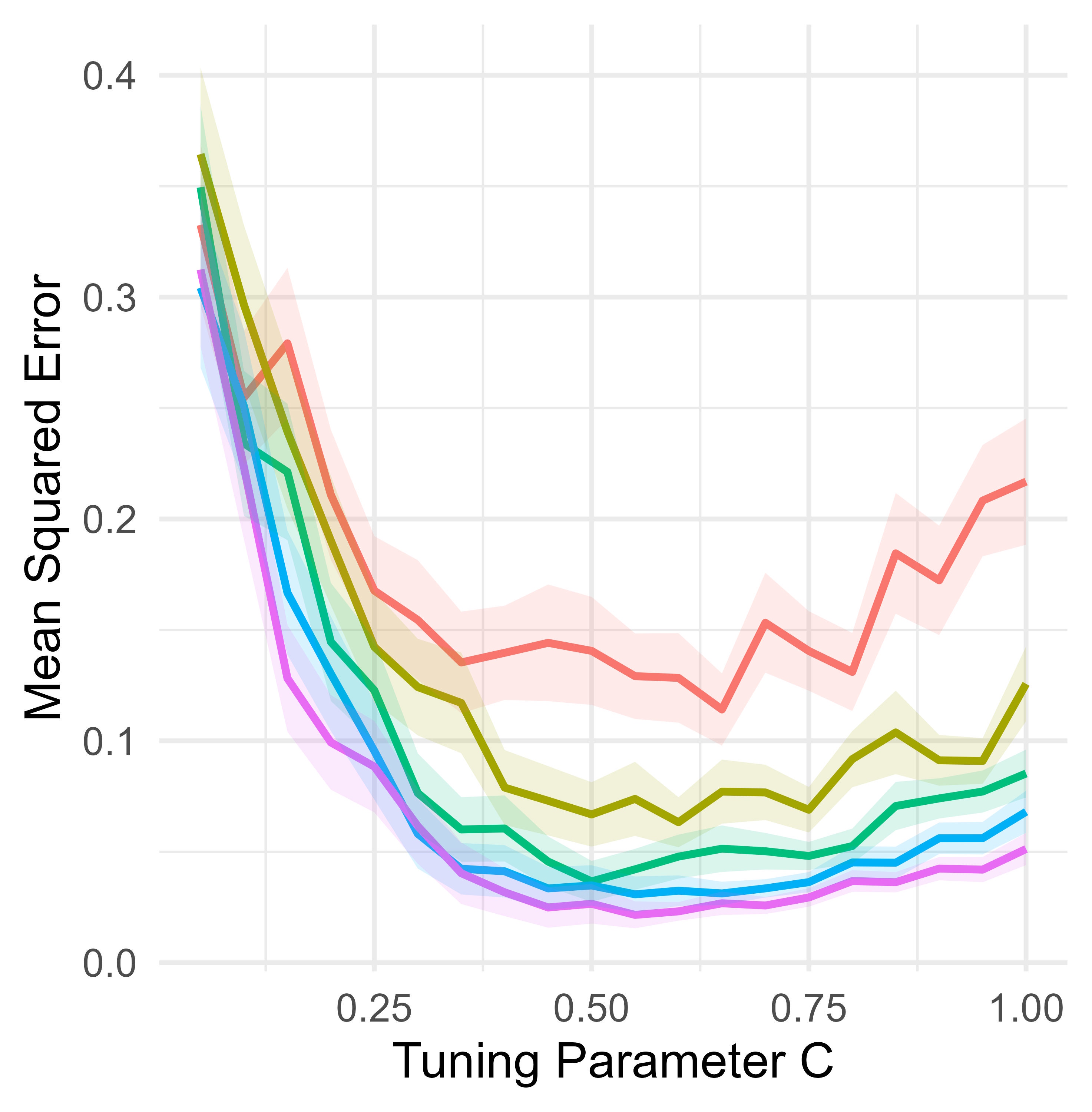}
            \includegraphics[height=0.15\textheight]{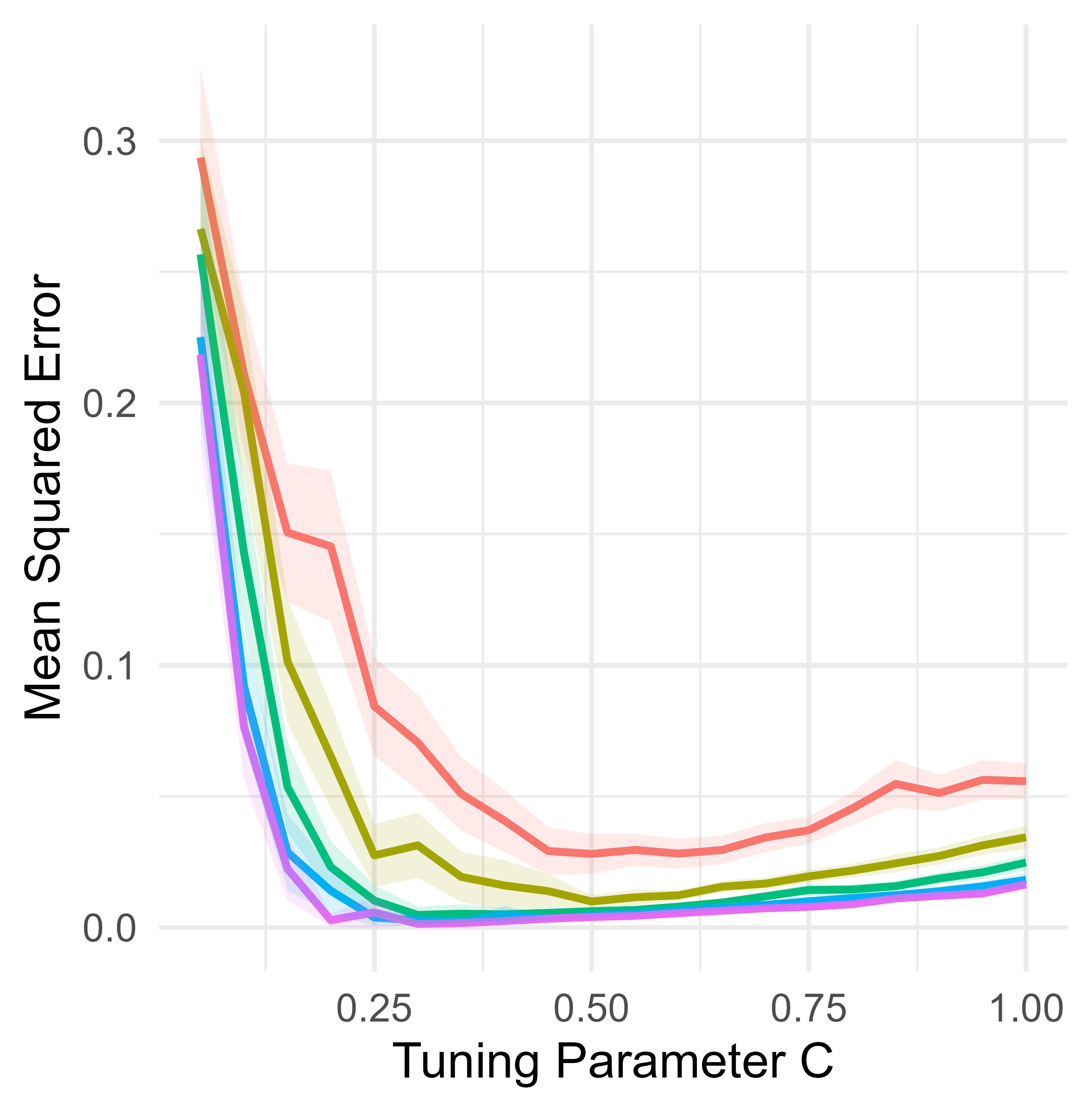}
            \includegraphics[height=0.15\textheight]{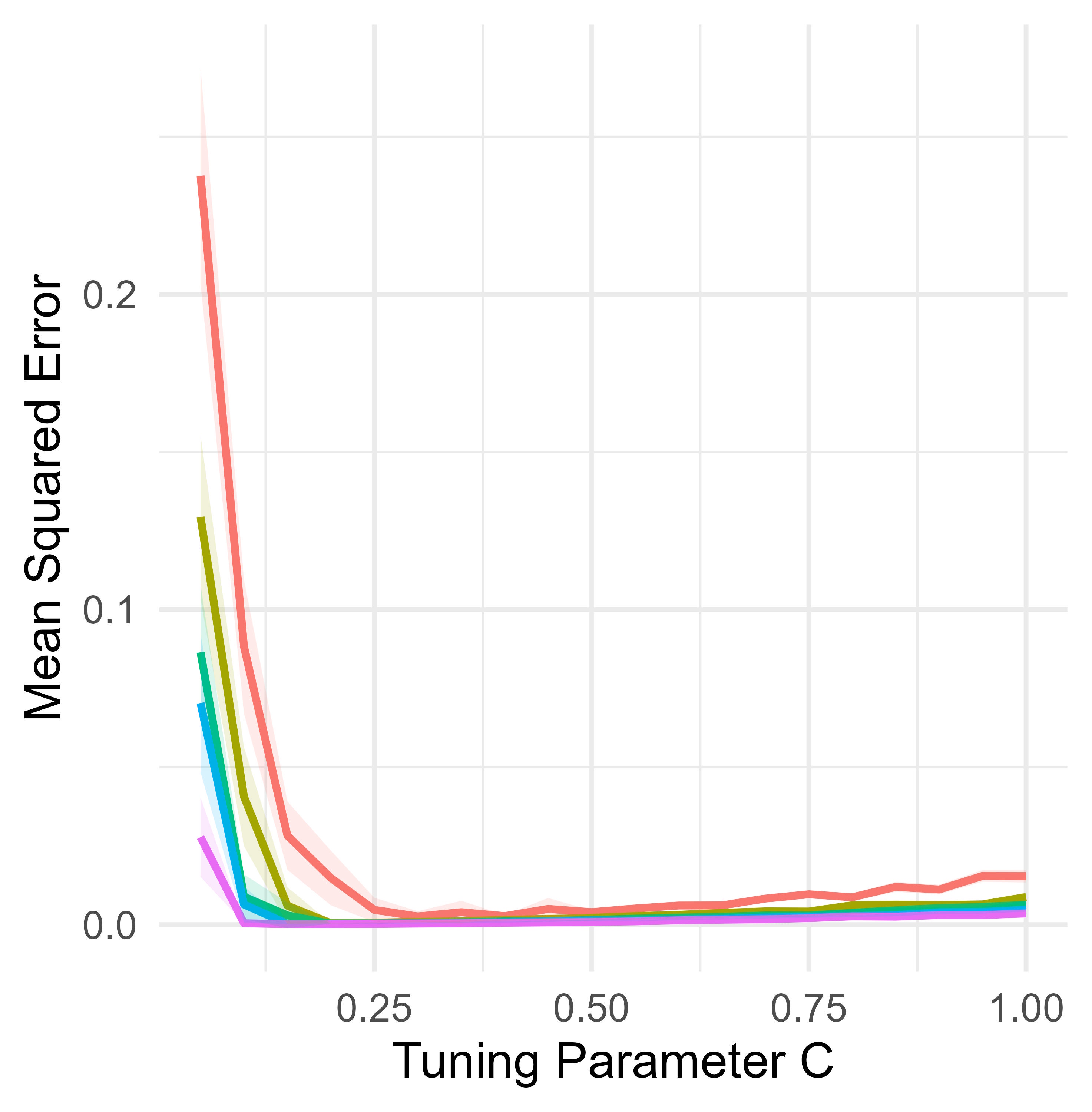}
            \includegraphics[height=0.15\textheight]{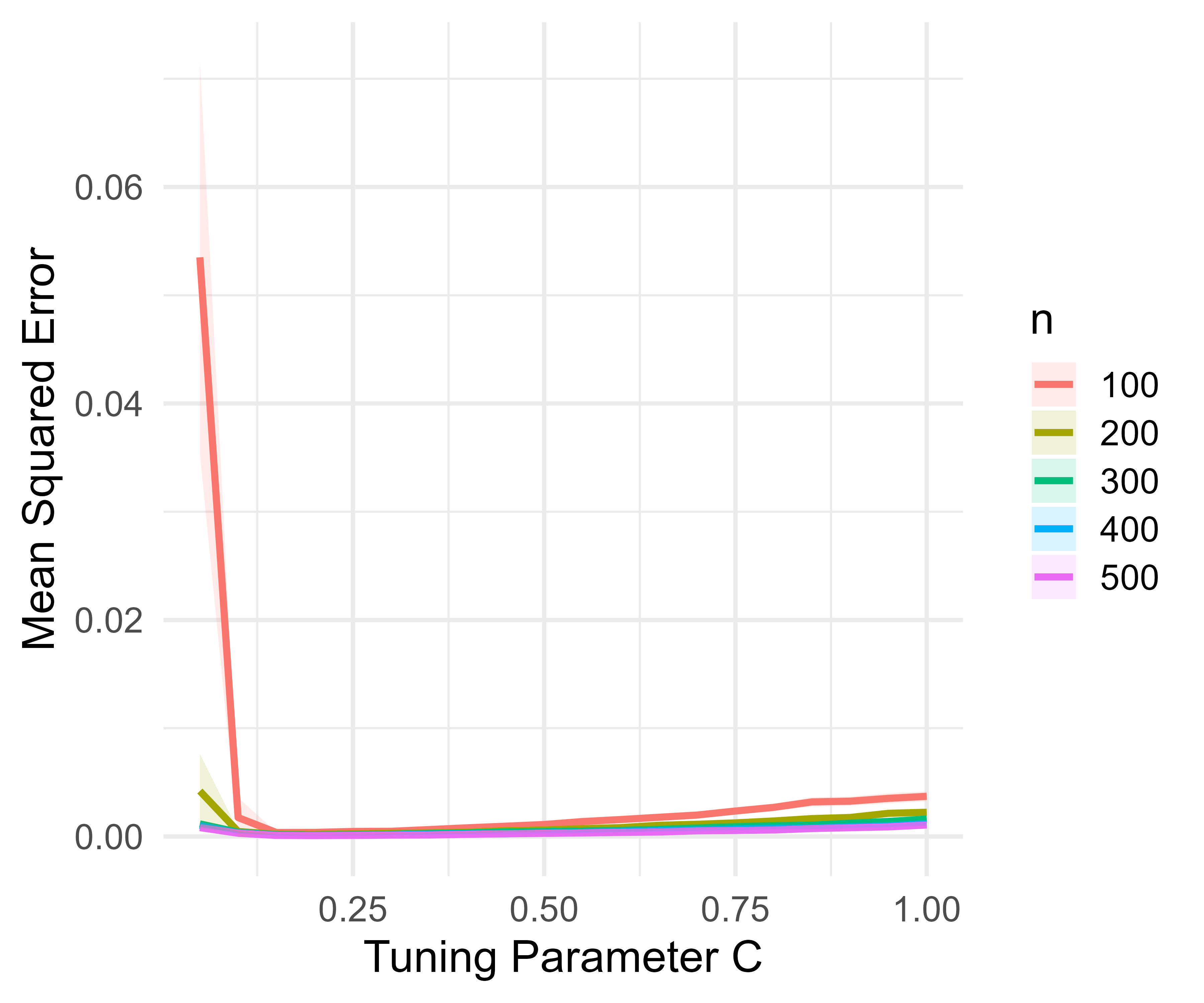}

            \caption{Estimate of mean-squared-error for $T = 100$ fixed and varying values of $n$ denoted by colour, and $\alpha \in \{0.5, 1, 2, 4\}$ (left-to-right). Ribbons denote two standard errors across 500 repetitions.}
            \label{fig:sensitivitysetting2a}
        \end{figure}

        \begin{figure}[htbp]
            \centering
            \includegraphics[height=0.15\textheight]{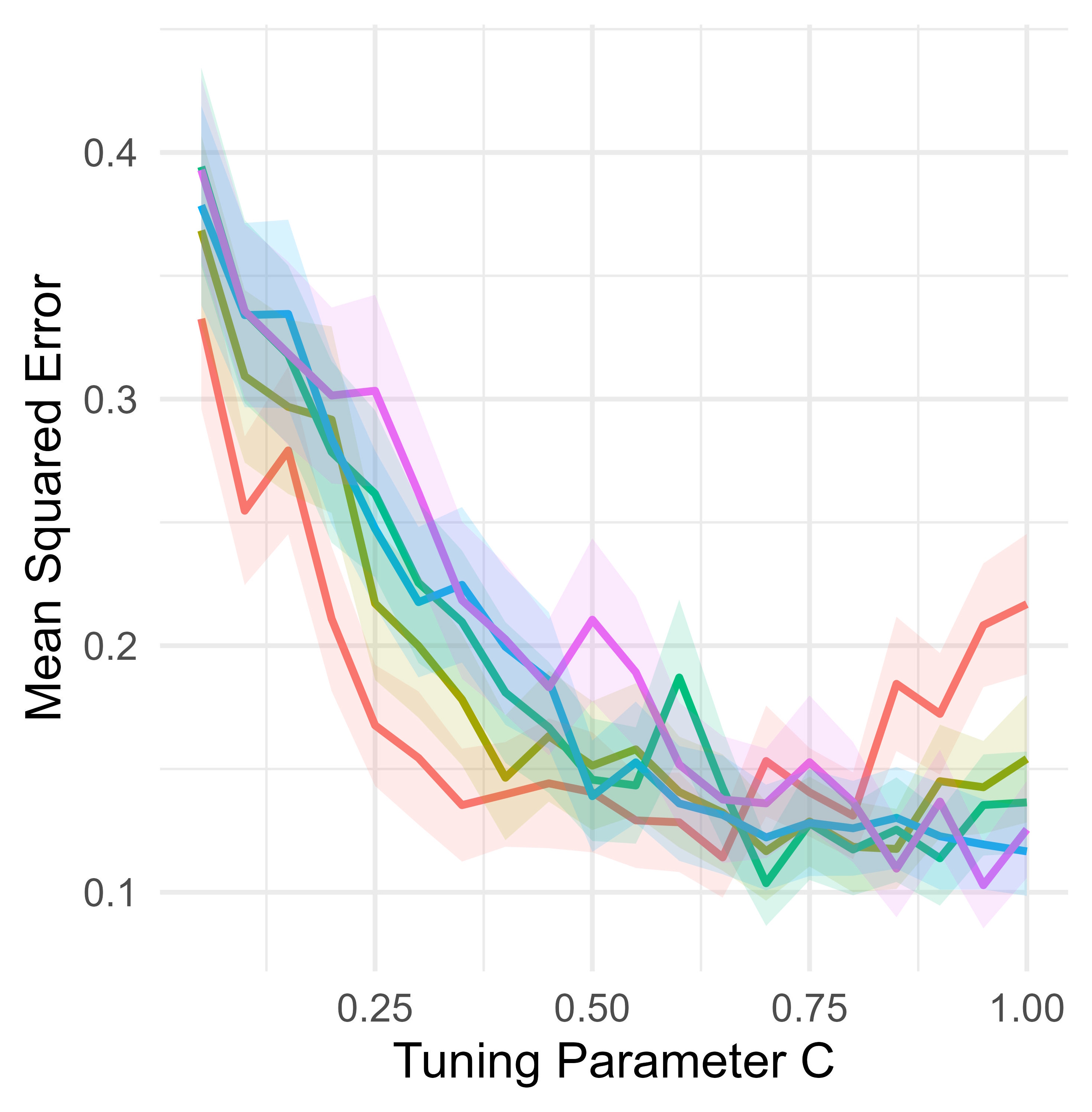}
            \includegraphics[height=0.15\textheight]{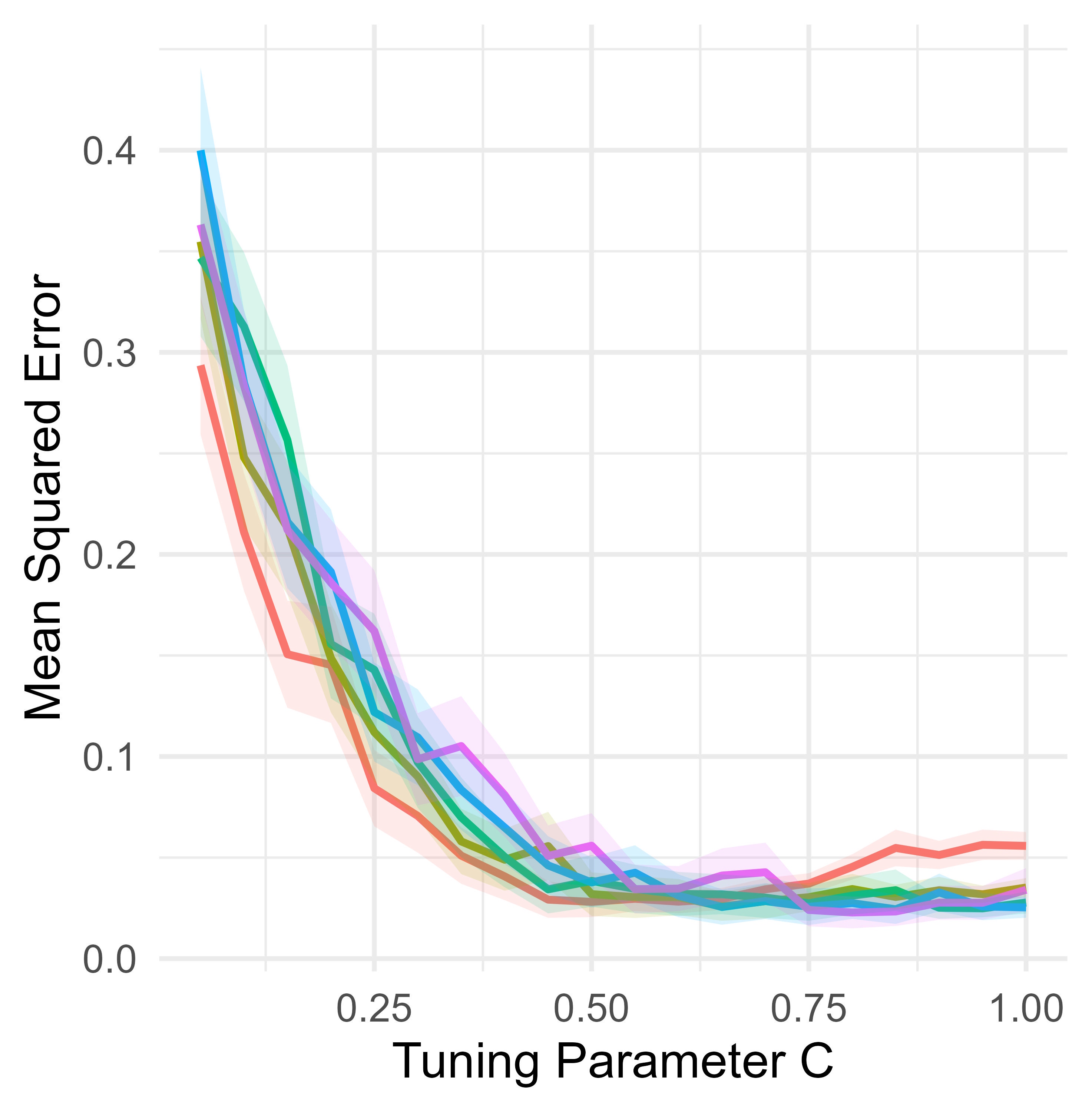}
            \includegraphics[height=0.15\textheight]{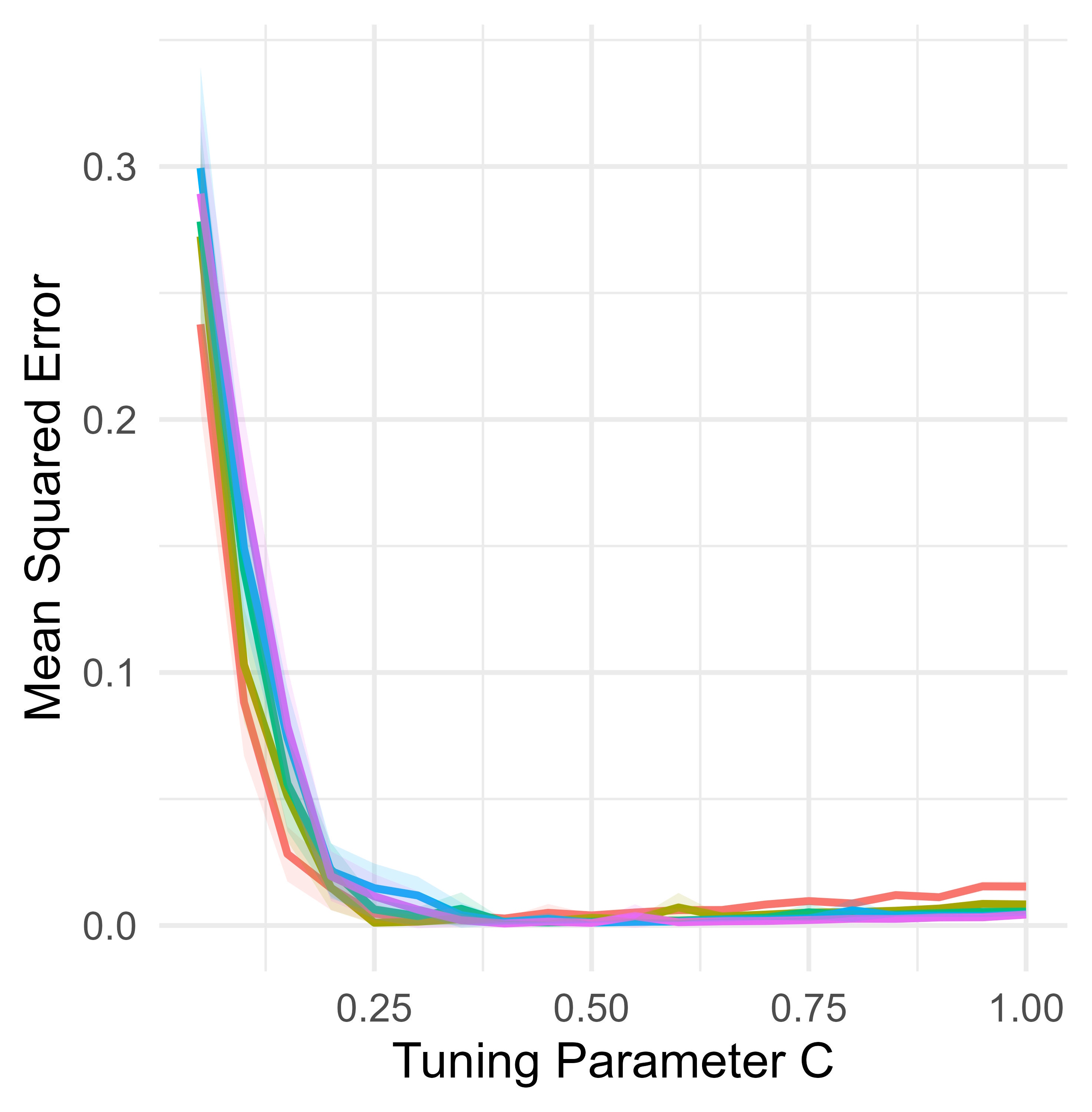}
            \includegraphics[height=0.15\textheight]{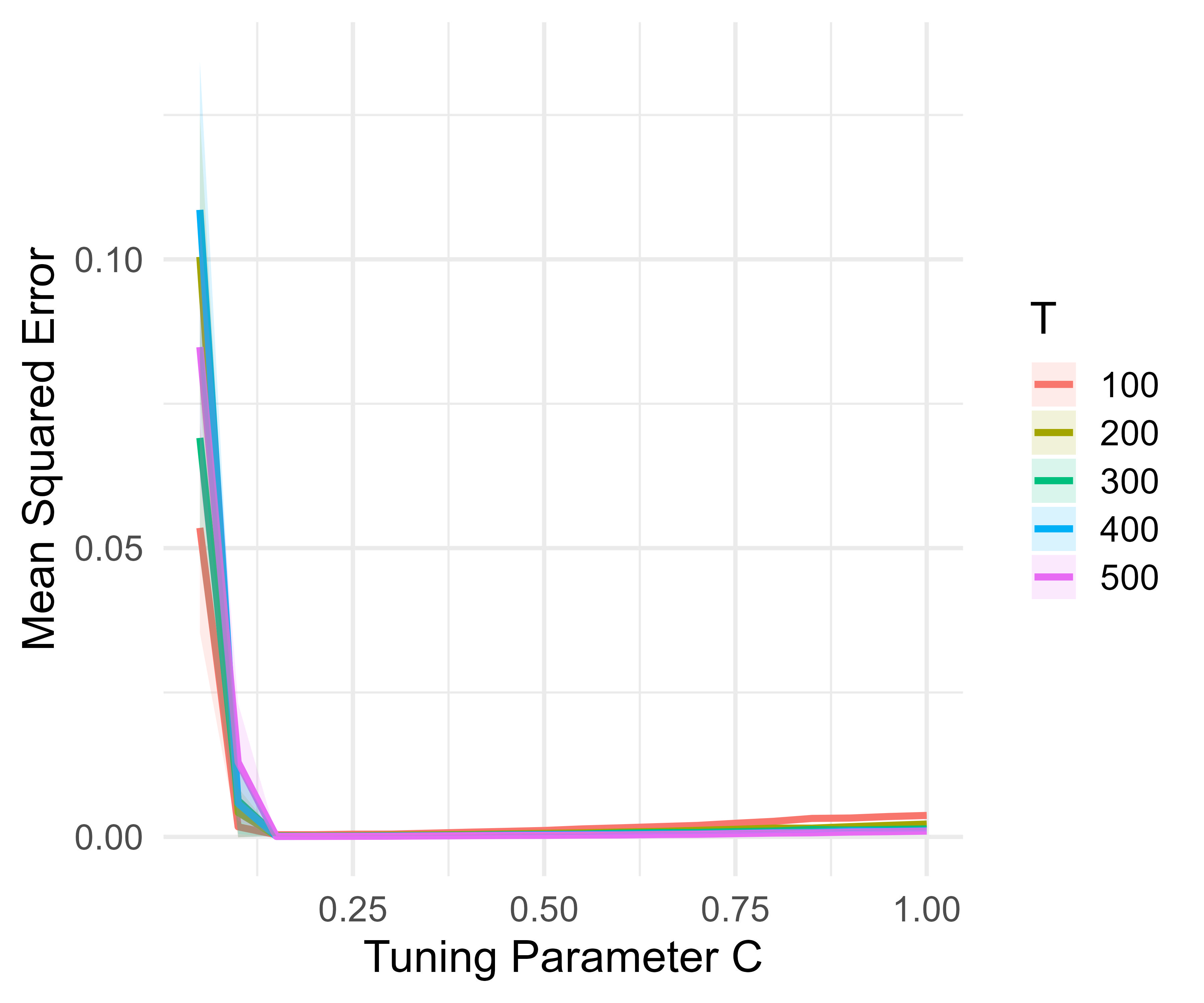}
            \caption{Estimate of mean-squared-error for $n = 100$ fixed and varying values of $T$ denoted by colour, and $\alpha \in \{0.5, 1, 2, 4\}$ (left-to-right). Ribbons denote two standard errors across 500 repetitions.}
            \label{fig:sensitivitysetting2b}
        \end{figure}
        
        In summary, our results suggest that for $C$ chosen too large, distributions are equally affected, whereas for $C$ chosen too small the Rademacher distribution is the most negatively affected. Further, as it appears better to over-estimate than under-estimate the optimal $C$, we calibrate our choice of $C$ based on the results of \Cref{fig:sensitivitysetting2a} and \Cref{fig:sensitivitysetting2b}. Hence, in all simulations that follow, in place of $\sqrt{2}$, we set the tuning constant $C_{n, T, \alpha} = C_{\alpha}$ as follows:
        \begin{equation*}
            C_{n, T, \alpha} =
            \begin{cases}
                0.5, &\text{when } \alpha \in \{0.5, 1\} \\
                0.25, &\text{when } \alpha \in \{2, 4\} 
            \end{cases}.
        \end{equation*}
        We caution for values of $n, T$ smaller that $100$ and of $\alpha$ not considered in this analysis, that we have no guarantee on the suitability of the value of $C_{n, T, \alpha}$. Nevertheless, one could carry out further simulations on synthetic data for values of $\alpha$ of interest to obtain a grid of performant values of $C_{n, T, \alpha}$.

        Inspecting the role of the second parameter of interest in sparse mean estimation, we recall the distributions $P_{d, s, p}^{\mathrm{sparse}}$ defined in \eqref{secsim:eq:sparsefamily}. The choice of $\eta_C$ only affects the variable selection sub-routine corresponding to Step~2 in \Cref{sec:sparse:variableselection}, and so we focus on the proportion of co-ordinates correctly identified. In \Cref{fig:sparseSensitvity}, we see that across all privacy levels, there is a clear increase in the ability of the procedure to identify the correct co-ordinates as $C$ is decreased, up to a limit as $C$ takes a value of approximately one. Hence, when we implement the sparse mean estimation procedure, we take $C = 1$ for simplicity and thus $\eta_C = \sqrt{s}$.
        \begin{figure}[htbp]
            \centering
            \includegraphics[height=0.25\textheight]{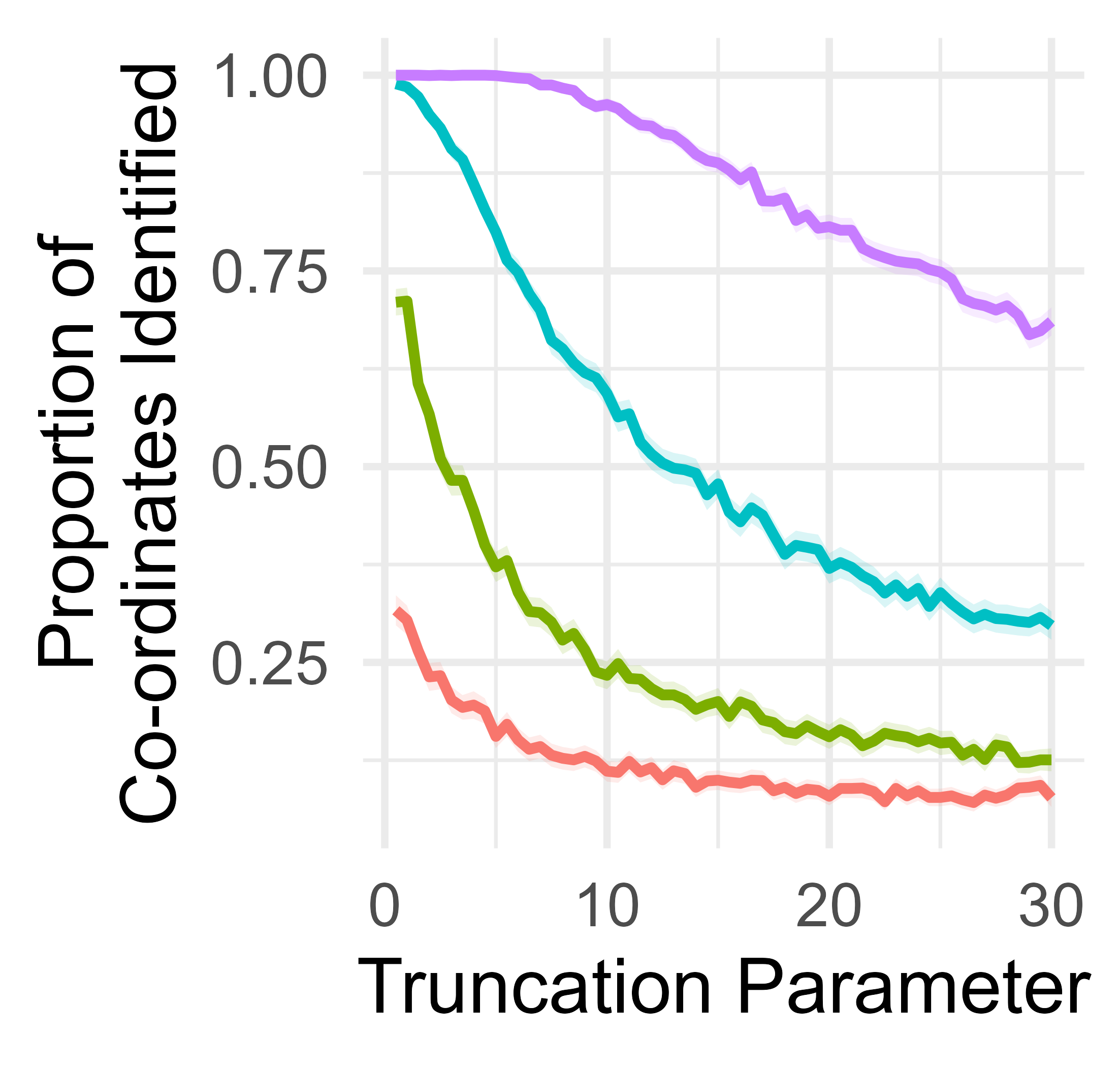}
            \includegraphics[height=0.25\textheight]{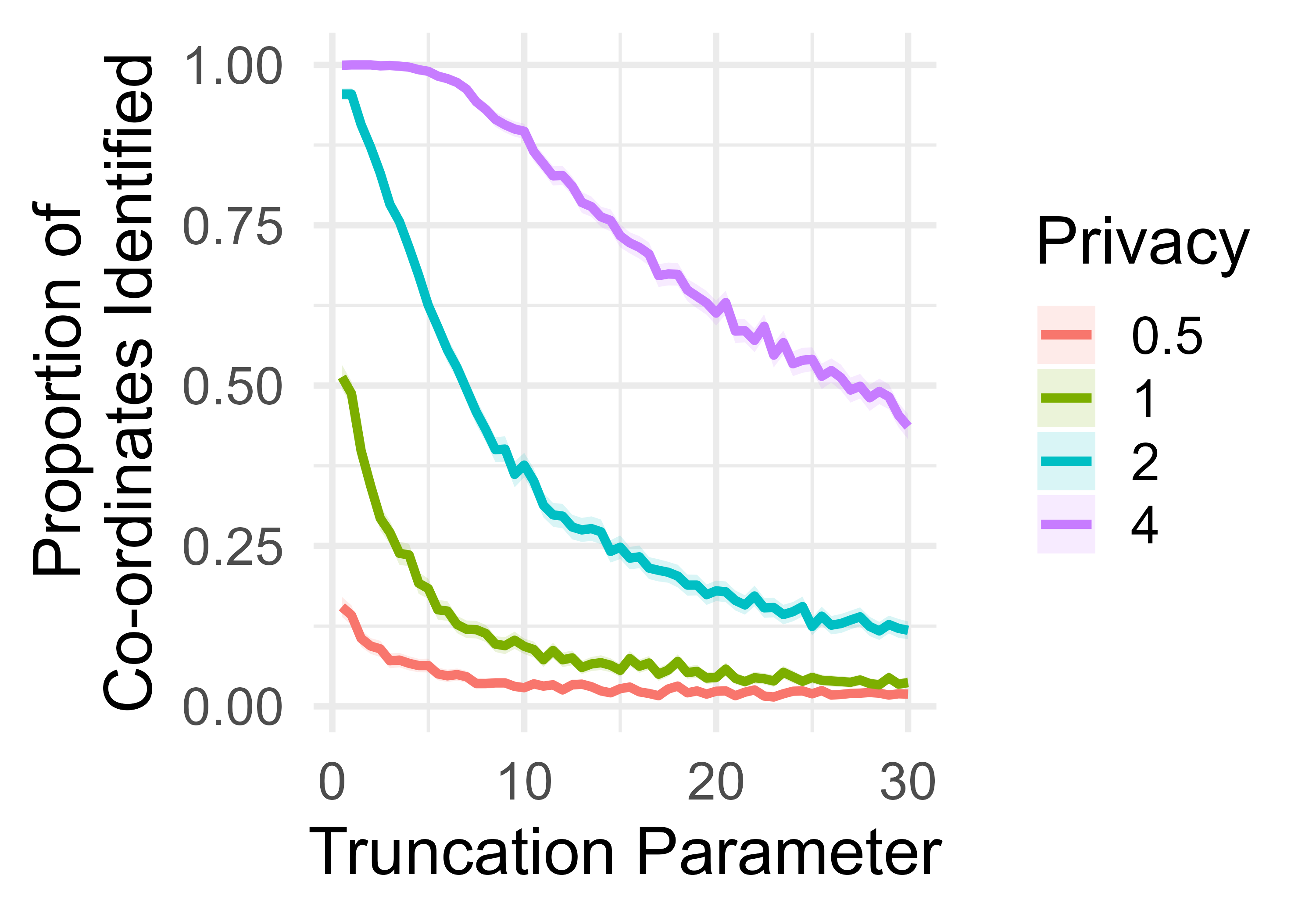}
            \caption{Proportion of correct co-ordinates identified for $P_{100, 4, 0.75}^{\mathrm{sparse}}$ (left) and $P_{500, 4, 0.75}^{\mathrm{sparse}}$ (right) for varying privacy levels with $n = 500$, $T = 100$. Ribbons denote two standard errors across 500 repetitions.}
            \label{fig:sparseSensitvity}
        \end{figure}

        Finally, we investigate the sensitivity of the sparse mean estimation procedure to misspecification of the true sparsity level. We estimate the mean-squared-error of our estimation procedure for the distributions $P_{100, 4, 0.75}^{\mathrm{sparse}}$ and $P_{100, 4, 1}^{\mathrm{sparse}}$. The latter distribution is chosen to exhibit greater bias than the former when one of the non-zero co-ordinates of the mean vector is missed. From the results in \Cref{fig:sparseMisspecify}, we see that the effect of misspecification is delicate, with the relative impact of under-estimating or over-estimating depending on the privacy level and the magnitude of the non-zero entries of the mean. Generally, for the setting we consider we see that for higher values of $\alpha$, over-estimating incurs a smaller penalty than under-estimating. This is because, for large $\alpha$, the extra variance incurred by trying to estimate another co-ordinate is smaller than the bias of missing a non-zero co-ordinate. However, as $\alpha$ decreases, or the magnitude of the non-zero co-ordinates is made smaller, this extra variance from over-estimating becomes greater relative to the bias from under-estimating. We also note that when the assumed sparsity is smaller than four, for the larger privacy levels, the mean-squared-error incurred is also exactly equal to the bias from missing the relevant co-ordinates.
        \begin{figure}[htbp]
            \centering
            \includegraphics[height=0.25\textheight]{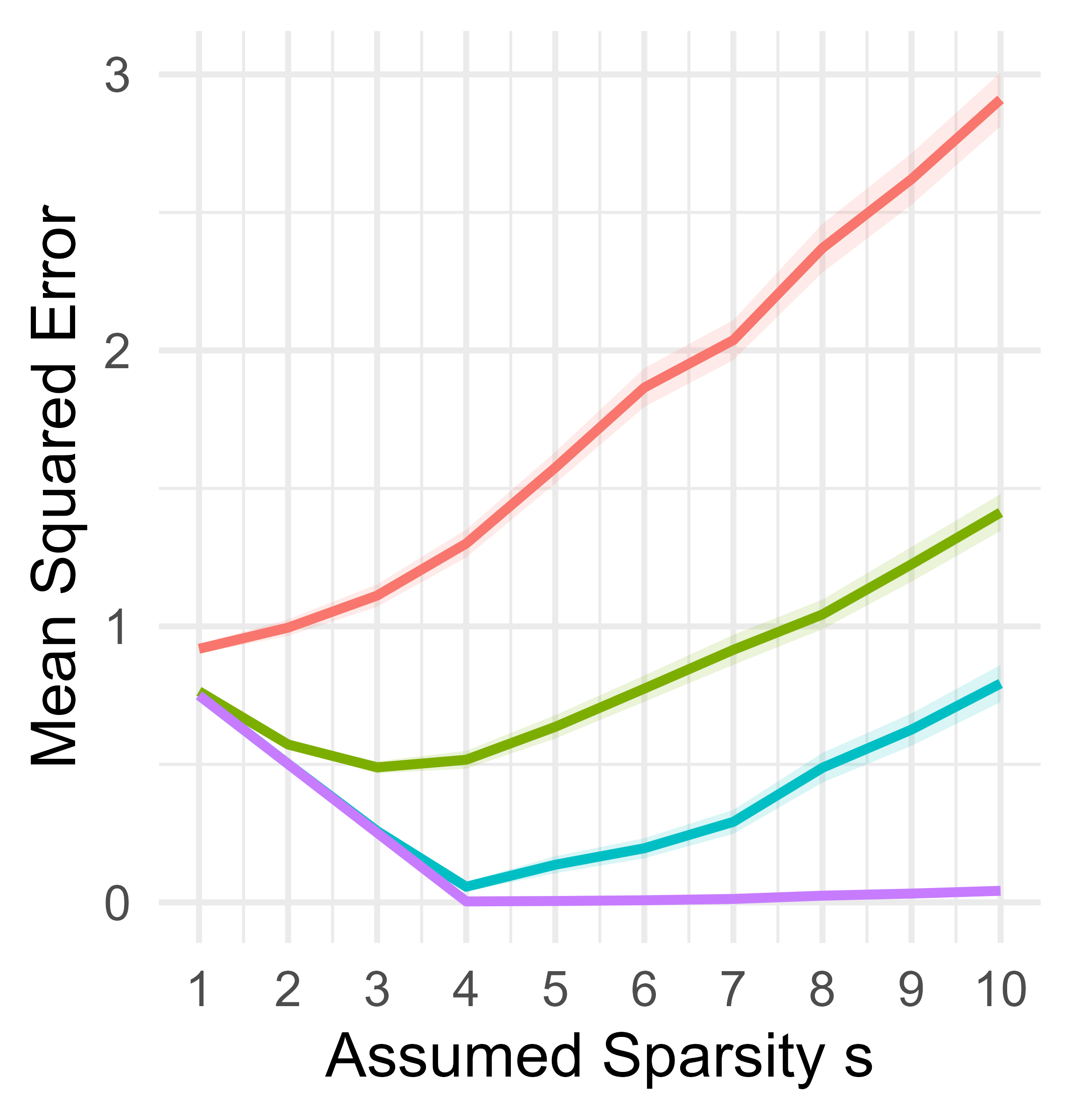}
            \includegraphics[height=0.25\textheight]{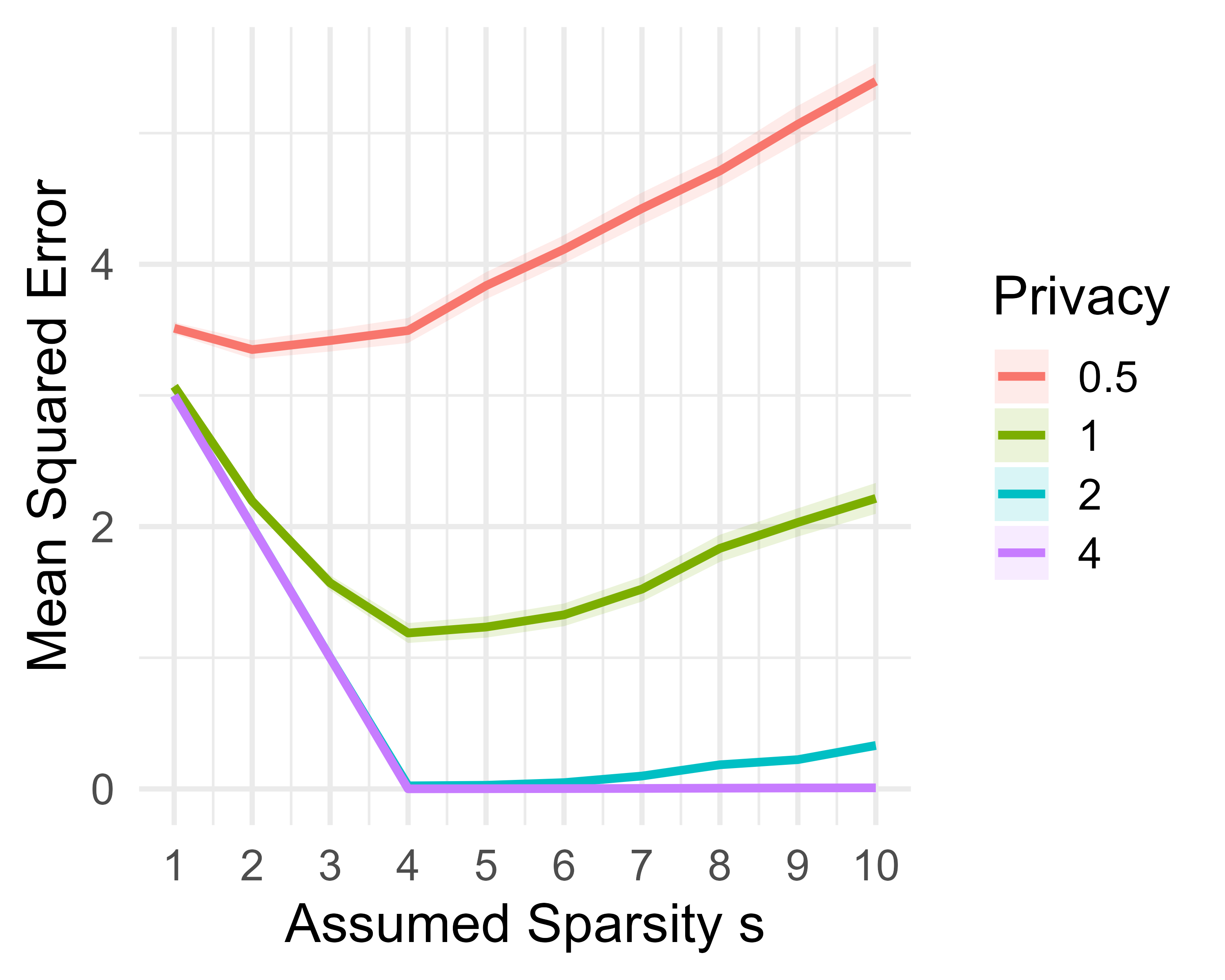}
            \caption{Mean-squared-error of the sparse mean estimation procedure as assumed sparsity is varied for $P_{100, 4, 0.75}^{\mathrm{sparse}}$ (left) and $P_{100, 4, 1}^{\mathrm{sparse}}$ (right) for varying privacy levels with $n = 1000$, $T = 200$. Ribbons denote two standard errors across 500 repetitions.}
            \label{fig:sparseMisspecify}
        \end{figure}

        \subsection{Mixture Distributions} \label{sec:sim:mixture}
            We now empirically validate our procedure in \Cref{secmix} for estimating the means of the components in a mixture model in the following two settings, one corresponding to a balanced collection of mixtures, and another where the mixture component probabilities are greatly imbalanced.

            \noindent
            \textbf{Setting 1}: We consider
            \begin{equation*}
                X_{1:T}^{(1)}, \hdots X_{1:T}^{(n)} \overset{\mathrm{i.i.d.}}{\sim}
                \frac{1}{4}\mathrm{Rad}\bigg(\frac{1}{8}\bigg)^{\otimes T}
                + \frac{1}{4}\mathrm{Rad}\bigg(\frac{3}{8}\bigg)^{\otimes T}
                + \frac{1}{4}\mathrm{Rad}\bigg(\frac{5}{8}\bigg)^{\otimes T}
                + \frac{1}{4}\mathrm{Rad}\bigg(\frac{7}{8}\bigg)^{\otimes T}.
            \end{equation*}
            We consider both (a) fixing $n = 1000$, and varying $T \in \{100, 200, \hdots, 2000\}$, $\alpha \in \{2, 4\}$; and (b) fixing $T = 1000$ and varying $n \in \{100, 200, \hdots, 2000\}$, $\alpha \in \{2, 4\}$. We calculate the mean across 1,000 repetitions.
            
            \noindent
            \textbf{Setting 2}: We consider
            \begin{equation*}
                X_{1:T}^{(1)}, \hdots X_{1:T}^{(n)} \overset{\mathrm{i.i.d.}}{\sim}
                \frac{1}{10}\mathrm{Rad}\bigg(\frac{1}{8}\bigg)^{\otimes T}
                + \frac{8}{10}\mathrm{Rad}\bigg(\frac{1}{2}\bigg)^{\otimes T}
                + \frac{1}{10}\mathrm{Rad}\bigg(\frac{7}{8}\bigg)^{\otimes T}.
            \end{equation*}
            We fix $T = 500$, varying $n \in \{100, 150, \hdots, 2000\}$ and $\alpha \in \{2, 4\}$. We calculate the mean across 1,000 repetitions.
            
            The results for Setting 1 and Setting 2 are contained in \Cref{fig:mixturebalanced} and \Cref{fig:mixtureimbalanced} respectively. In both settings, we see that with $\alpha = 4$ and either one of $n$ or $T$ increasing with the other held fixed, our estimates of the mean of each component are consistent with decreasing variance. In particular the highly imbalanced case of Setting~2, the procedure still successful identifies the components and the estimator is consistent. The case with $\alpha = 2$ is more delicate. For example, in Setting~1 in \Cref{fig:mixturebalanced}, the case of fixed $T$ with increasing $n$ shows decreasing bias and variance as $n$ grows. For fixed $n$, the estimators are consistent but reductions in variance are limited. We hypothesise that this is due to the phase transition phenomena. Indeed, as seen in the lower bound in \Cref{secmix:thm:main}, the $T$-independent lower-bound is stronger than those of previously considered problems, and so we expect larger values of $n$ and/or $\alpha$ to be required to overcome it compared to the non-mixture settings.

            Regarding the cut-off for very small estimates of the mixture proportions to avoid numerical instability as in \eqref{secmix:eq:FinalEstimatorComponent}, we set the indicator $\mathbbm{1}\{\hat{\pi}_k \geq \pi_0/2\}$ therein to be $\mathbbm{1}\{\hat{\pi}_k \geq 1/(100m)\}$ in our simulations.
            \begin{figure}[htbp]
                \centering
                \includegraphics[height=0.15\textheight]{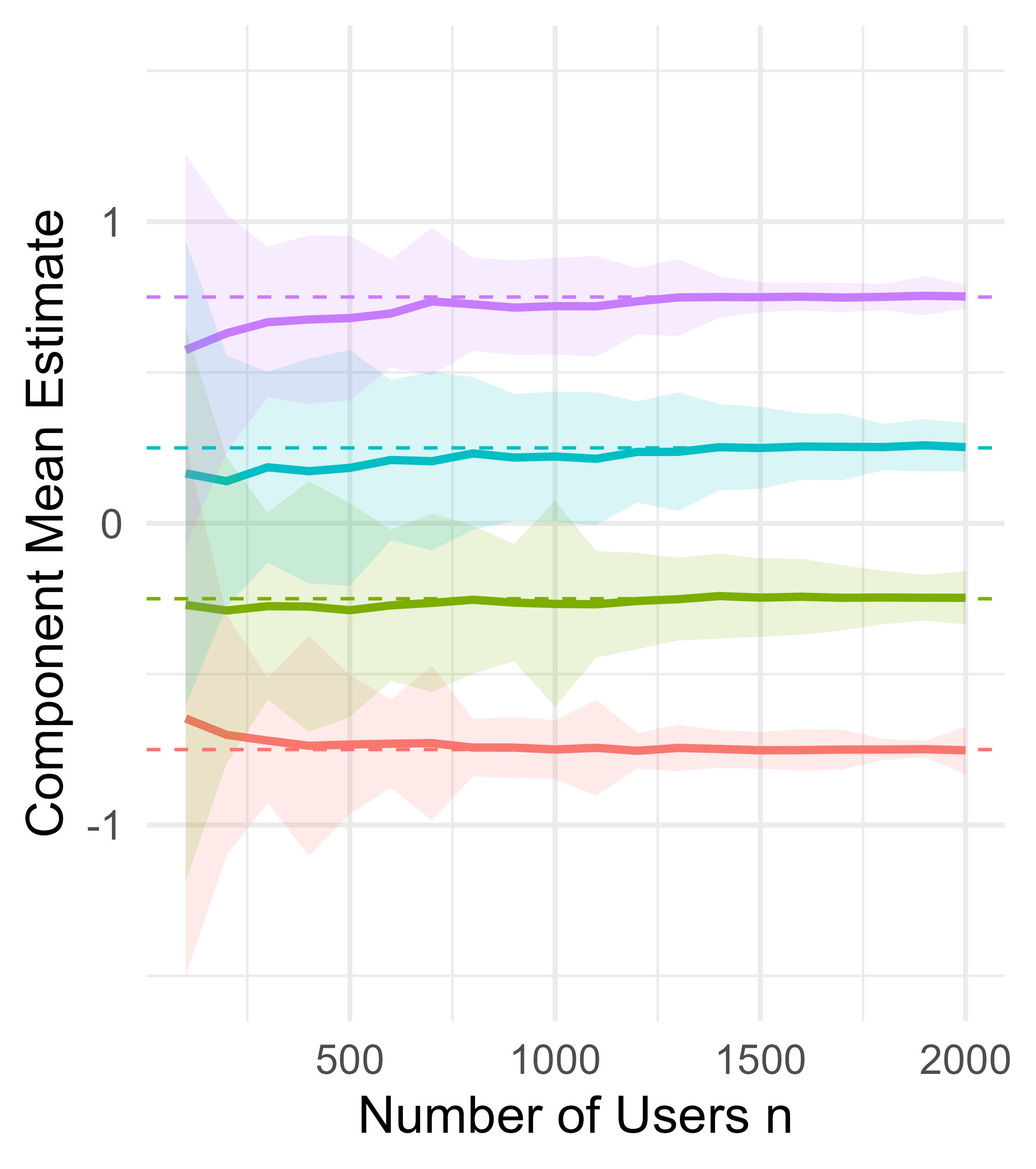}
                \includegraphics[height=0.15\textheight]{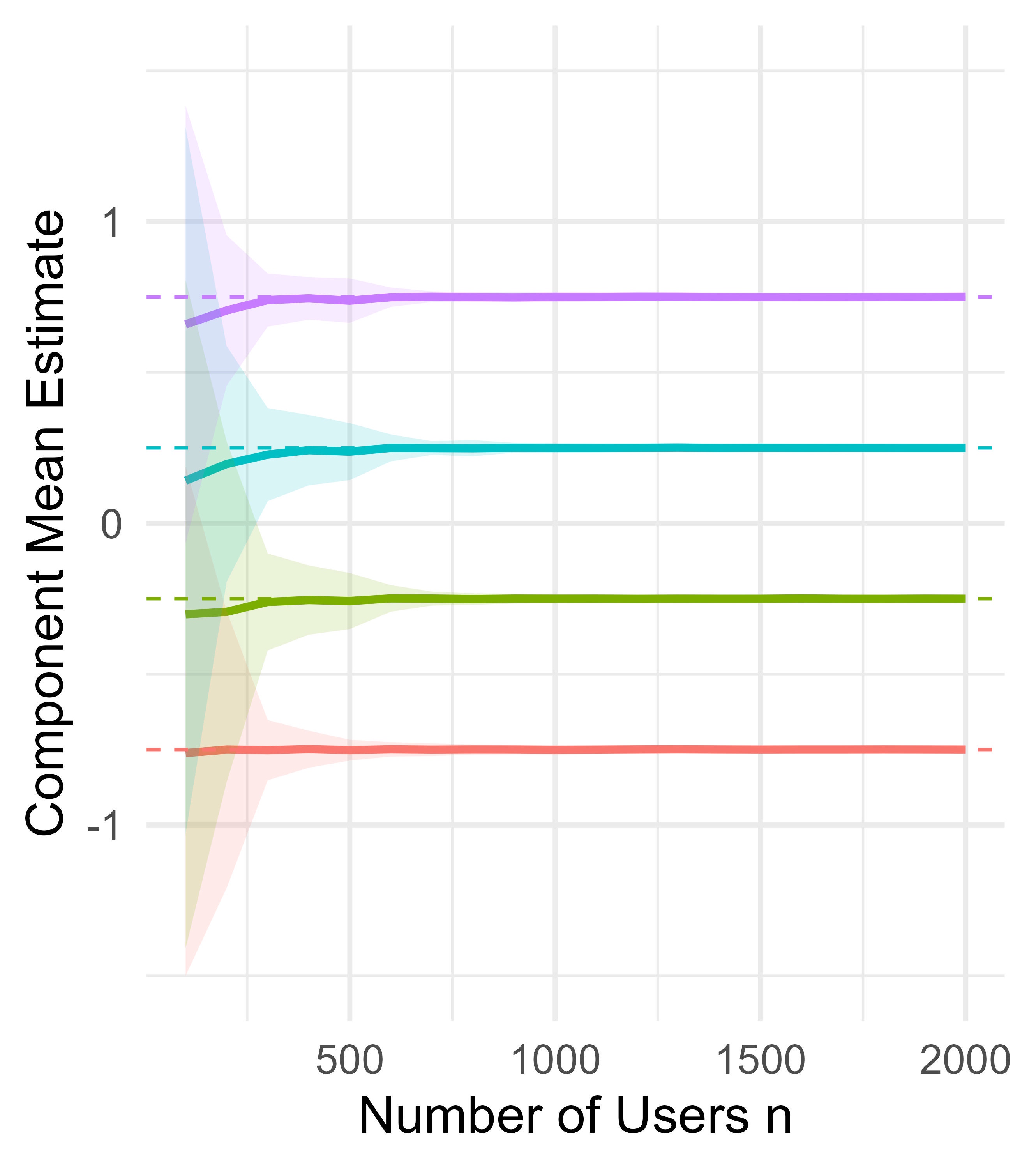}
                \includegraphics[height=0.15\textheight]{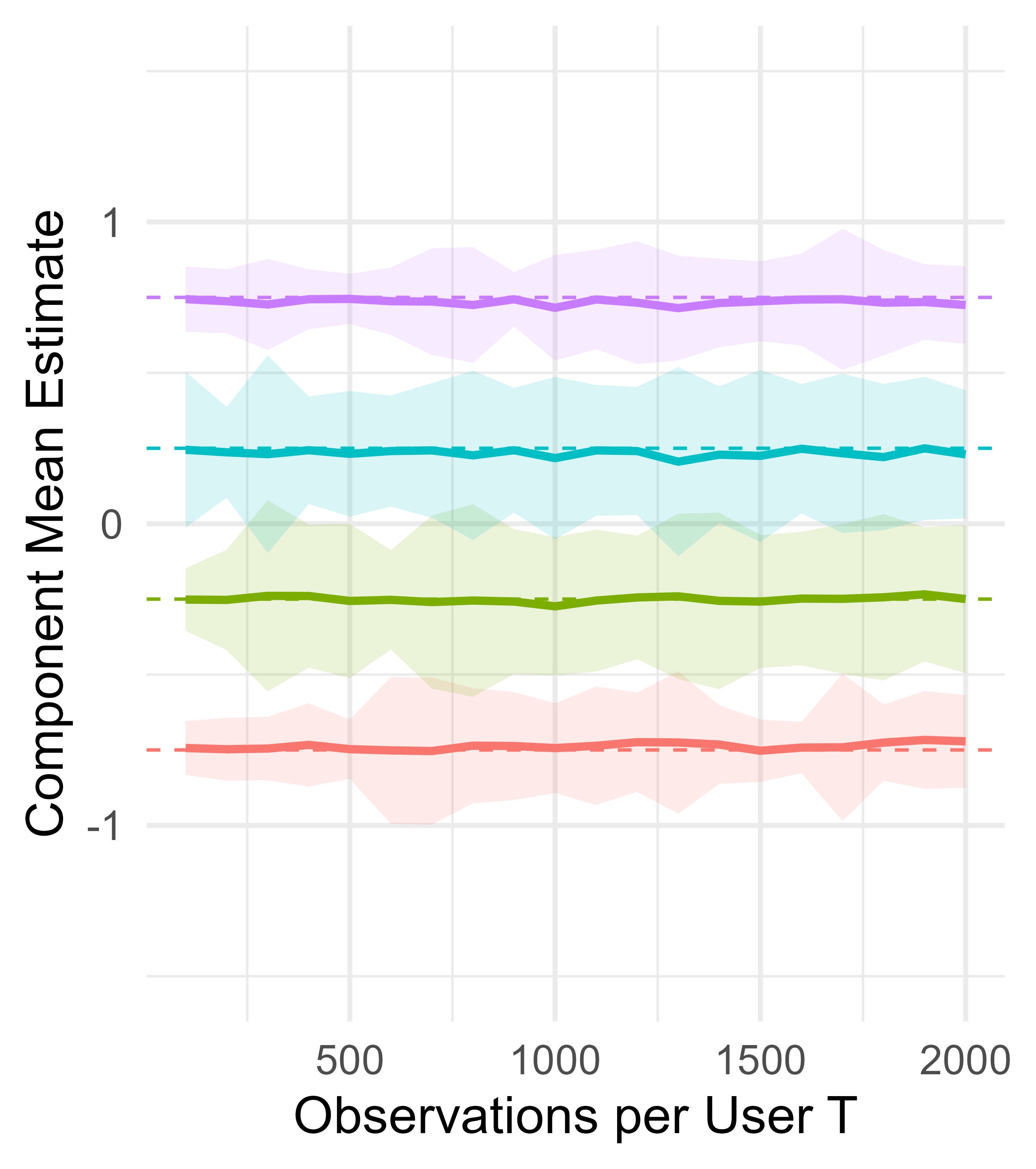}
                \includegraphics[height=0.15\textheight]{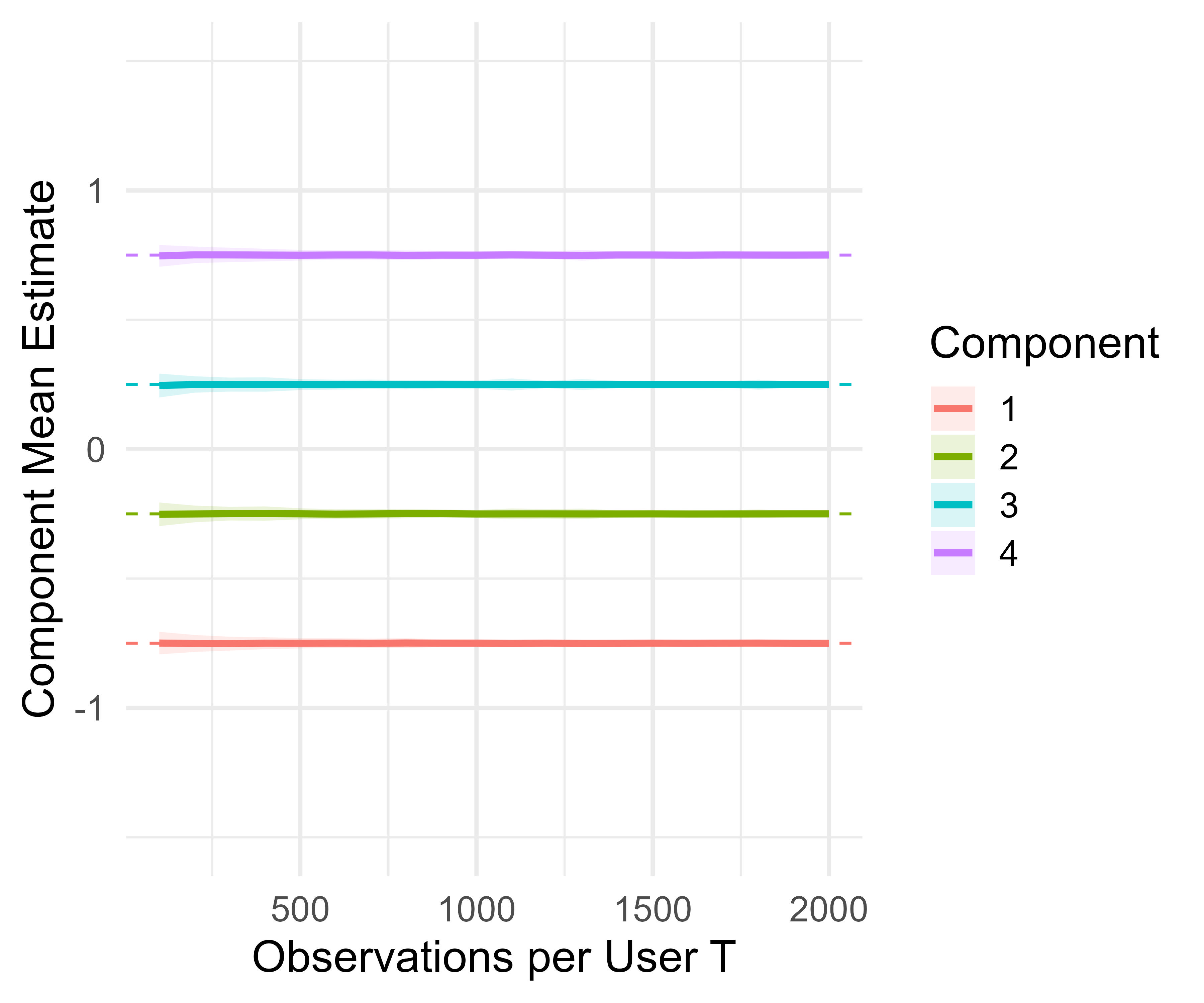}
                \caption{Estimate of component means in Setting~1 for fixing $T$ and varying $n$ (left, centre-left), and fixing $n$ and varying $T$ (centre-right, right), with privacy parameters $\alpha \in \{2, 4\}$ (left, centre-right and centre-left, right respectively). Estimate of each component is denoted by colour. Ribbons denote one standard deviation across 1,000 repetitions.}
                \label{fig:mixturebalanced}
            \end{figure}

            \begin{figure}[htbp]
                \centering

                \includegraphics[height=0.15\textheight]{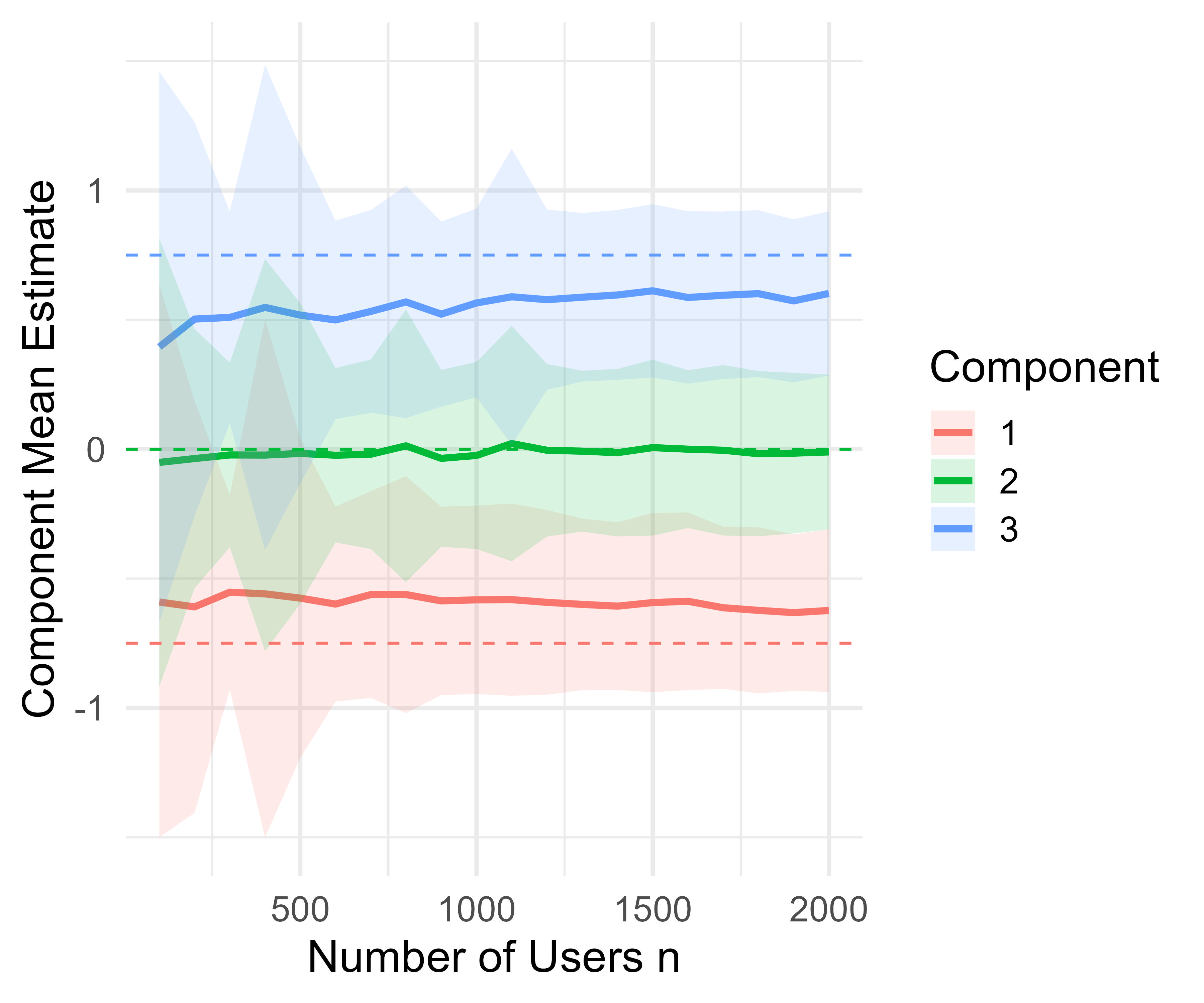}
                \includegraphics[height=0.15\textheight]{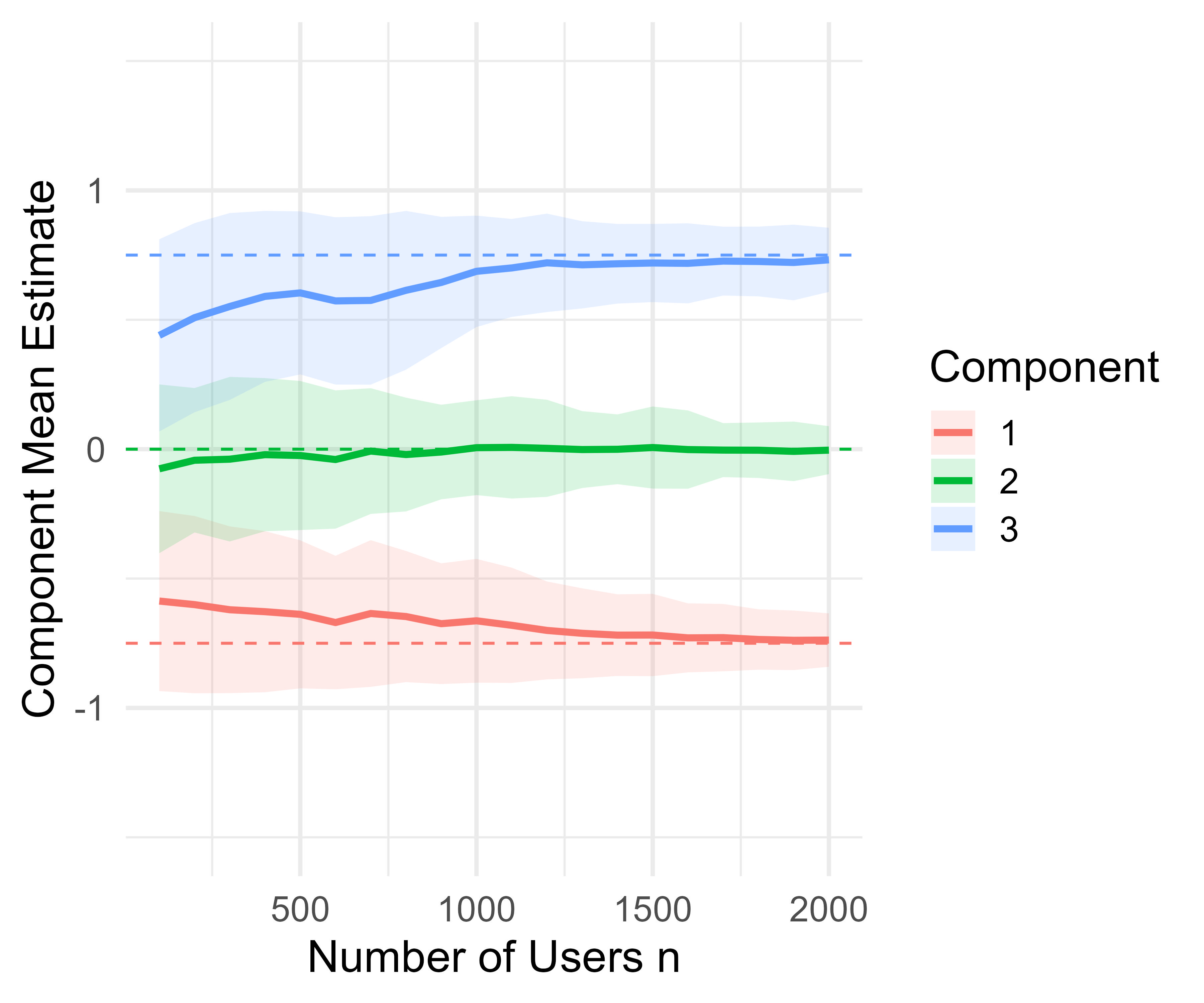}

                \caption{Estimate of component means in Setting~2 for fixing $T$ and varying $n$, with privacy parameters $\alpha \in \{2, 4\}$ (left and right respectively). Estimate of each component is denoted by colour. Ribbons denote one standard deviation across 1,000 repetitions.}
                \label{fig:mixtureimbalanced}
            \end{figure}

\section{Mixture Model Mean Estimation} \label{secmix}
    In this section we consider mean estimation under a generalisation of the user-level framework. Compared to the previous estimation problems where it is assumed that every user draws a sample from the same $T$-fold product distribution, we now consider a setting with data heterogeneity. To be precise, we now assume each user draws data from a component from a mixture distribution, and are interested in estimating the mean of each mixture component under user-level LDP.

    We consider mixture distributions of $m \in \mathbb{N}$ components, where each component is supported on $[-1,1]$ and satisfying suitable separation conditions, defined as
    \begin{equation}
        \begin{aligned}
            \mathcal{P}_{m, \pi_{0}, \theta_{\mathrm{sep}}} = \big\{ (P_1, \hdots, P_m, &\pi_1, \hdots, \pi_m) : \pi_1 + \dots + \pi_m = 1, \mathbb{E}_{P_1}[X] < \dots < \mathbb{E}_{P_m}[X], \\
            &\min_{i \neq j}|\mathbb{E}_{P_i}[X] - \mathbb{E}_{P_j}[X]| \geq \theta_{\mathrm{sep}},
            \forall i \in [m], \mathrm{supp}(P_i) \subseteq [-1,1], \pi_i > \pi_{0} \big\}.
        \end{aligned} \label{secmix:eq:MixClass}
    \end{equation}

    Unlike in the usual user-level setting where we denote by $\mathcal{R}_{n,T,\alpha}$ for the minimax risk where the user obtains $T$-many i.i.d.~copies from a distribution from the family of interest, in this setting we instead have a user obtain $T$-many i.i.d.~copies from a \emph{component} of a distribution from the family of interest. More formally, for $T \geq 1$ we write $X_{1:T} \sim P_{\mathrm{mix}}^{T}$ when $X_{1:T} \sim \pi_1 P_1^{\otimes T} + \dots + \pi_m P_m^{\otimes T}$. We denote the minimax risk in this setting as $\mathcal{R}_{n,T,\alpha}^{\mathrm{mix}}$, equivalent to the standard user-level definition given in~\eqref{sec2:eq:UserMinimax} except that each user draws data as $X_{1:T}^{(1)}, \hdots, X_{1:T}^{(n)} \overset{\mathrm{i.i.d.}}{\sim} P_{\mathrm{mix}}^{T}$ for some $P_{\mathrm{mix}} \in \mathcal{P}_{m, \pi_{0}, \theta_{\mathrm{sep}}}$

    Given $P_{\mathrm{mix}} \in \mathcal{P}_{m, \pi_{0}, \theta_{\mathrm{sep}}}$ with $P_{\mathrm{mix}} = (P_1, \hdots, P_m, \pi_1, \hdots, \pi_m)$, we estimate the means of the components so that $\theta(P_{\mathrm{mix}}) = (\mathbb{E}_{P_1}[X], \hdots, \mathbb{E}_{P_m}[X])^T$. Measuring the error of an estimate by the squared $\ell_2$-loss, we have the following result.

    \begin{theorem} \label{secmix:thm:main}
        Assuming that $n\alpha^2 \geq \widetilde{C}m^2\log(n\alpha^2)$, $T \geq \widetilde{C}' m^2 \log(nT\alpha^2)$, $\pi_{0} > \tilde{c}/m$ and $\theta_{\mathrm{sep}} \geq \max\{\widetilde{C}''\sqrt{\log(nT\alpha^2)/T}, e^{-\tilde{c}'n\alpha^2/m^2}\}$, where $\tilde{c}, \tilde{c}', \widetilde{C}, \widetilde{C}', \widetilde{C}'' > 0$ are absolute constants chosen so that $\theta_{\mathrm{sep}} \leq 2/m$, we have that
        \begin{align*}
            \frac{m^3}{nT\alpha^2} \vee e^{-Cn\alpha^2/m^2}
            \lesssim \mathcal{R}_{n, T, \alpha}^{\mathrm{mix}}(\theta(\mathcal{P}_{m, \pi_{0}, \theta_{\mathrm{sep}}}), \|\cdot\|_2^2)
            \lesssim \frac{m^3 \{\log(nT\alpha^2)\}^2}{nT\alpha^2} + e^{-cn\alpha^2/m^2},
        \end{align*}
        where $\mathcal{P}_{m, \pi_{0}, \theta_{\mathrm{sep}}}$ is the family of distributions as in \eqref{secmix:eq:MixClass} and $C > c > 0$ are absolute constants.
    \end{theorem}

    The lower bound is proved in \Cref{app:sec:Lower}.  An estimation procedure achieving the upper bound is presented below, the performance of which is analysed in \Cref{app:sec:Upper}. The computational complexity of this procedure is discussed in \Cref{app:sec:CompComplex}.

    We note that the $T$-independent lower bound in the above theorem \emph{does not} follow from our general infinite-$T$ lower bounds. This is because of the difference in setting, where due to the mixture structure, as $T \rightarrow \infty$ a user does not attain a perfect view of the entire functional $\theta(P_{\mathrm{mix}})$, but only for a component of the mixture. Applying the general lower bound via \Cref{sec2:thm:TIndepLB} is still possible, but it results in a sub-optimal bound of $e^{-n\alpha^2/m}$, as opposed to the tighter $e^{-n\alpha^2/m^2}$ we obtain through a more tailored approach.

\subsection{Estimation Procedure} \label{secmix:procedure}

    The estimation procedure we develop begins similarly to the univariate mean estimation procedure of \Cref{sec3}, but with three folds of the data, and so we assume without loss of generality that $n$ is a multiple of three. In the first stage, a localisation step is carried using one fold of the sample. Unlike in the case without mixtures, we instead expect there to be multiple candidate localisation regions, one for each of the $m$-many mixture components approximately centred on the mean value for that component.
        
    Once the $m$-many candidate regions are identified, another fold of users obtain estimates of the mixture proportions by releasing a private view of the indicator of which sub-interval their sample means are closest to via unary encoding.

    Finally, the last fold of users calculate the difference of their sample mean and the centre of the nearest bin, before truncating this quantity. However, unlike with the univariate mean estimation procedure of \Cref{sec3:univariate} where there is only a single sub-interval and hence the decision to truncate is data independent, centring around and truncating to the closest sub-interval when multiple are available is a data dependent action which poses a privacy leakage risk. To circumvent this issue, the users embed this information into a univariate quantity using a hashing technique, which is then privatised via Laplace noise. Finally, estimates of the means are obtained inverting the hashing procedure and de-biasing using the estimated mixture proportions. The procedure is formalised in \Cref{alg:Mixture}. The privatised observations $\widetilde{V}_j^{(i)}$ satisfy $\alpha$-LDP by \Cref{app:lem:RR}, as do the values in \eqref{secmix:eq:unicentredest} due to being an instance of the Laplace mechanism.
    
    \begin{algorithm}
        \caption{\textsc{PrivateMixtureMean}\,$(\{X^{(i)}_{t}\}_{i\in[n],t\in[T]},\,n,\,T,\,\alpha,\,m,\,K)$}
        \begin{algorithmic}[1]
            \State Denote $N_k = \{n(k-1)/3 + 1, \hdots, nk/3\}$ for $k \in \{1, 2, 3\}$.
            \State Let $T^\ast = T \wedge \exp\{n\alpha^2/(Km^2)\}$ and $\Delta = \{4\log(n T^\ast \alpha^2)/{T^\ast} \}^{1/2}$.
            \refstepcounter{equation}\label{secmix:eq:deltaval} \hfill(\theequation)
            \State Let $N(\Delta) = \lceil 1/\Delta \rceil$.
            \State Let $I_j = [-1 + 2(j-1)\Delta,\; -1 + 2j\Delta)$ for all $j \in [N(\Delta)]$.
            \State Let $I_{N(\Delta)} = [-1 + 2\{N(\Delta)-1\}\Delta,\; -1 + 2N(\Delta)\Delta]$.
            \For{$i \in N_1$}
              \State Calculate $\hat{\theta}^{(i)} = (T^{\ast})^{-1}\sum_{t=1}^{T^{\ast}} X^{(i)}_{t}$.
              \For{$j=1,\dots,N(\Delta)$}
                \State Let $V^{(i)}_{j} = \mathbbm{1}\{\hat{\theta}^{(i)}\in I_j\}$ and sample $U_{i,j}\sim\mathrm{Unif}[0,1]$.
                \State $\widetilde{V}^{(i)}_j = V^{(i)}_j \mathbbm{1}\{U_{i,j} \leq \omega_{\alpha/2}\} + (1 - V^{(i)}_j)\mathbbm{1}\{U_{i,j} > \omega_{\alpha/2}\}$. \refstepcounter{equation}\label{secmix:eq:uniGRR}\hfill(\theequation)
              \EndFor 
            \EndFor
            \State Let $\mathcal{I}_0 = [N(\Delta)]$.
            \For{$k=1,\dots,m$}
                \State Choose $j_k^\ast = \min\argmax_{a \in \mathcal{I}_{k-1}} \sum_{i \in N_1}\widetilde{V}_a^{(i)}$.
                \refstepcounter{equation}\label{secmix:eq:unichosenindex}\hfill(\theequation)
                \State Denote $L_k,U_k$ the lower/upper endpoints of $I_{j_k^{\ast}}$.
                \State Let $\tilde{L}_k = L_k-2\Delta,  \tilde{U}_k = U_k+2\Delta$ and $\tilde{I}_{j_k^{\ast}} = [\tilde{L}_k,\tilde{U}_k]$. Denote midpoint $M_k = (\tilde{U}_k + \tilde{L}_k)/2$ \refstepcounter{equation}\label{secmix:eq:uniMeanIntervalExpand}\hfill(\theequation)
                \State Update $\mathcal{I}_{k} = \mathcal{I}_{k-1} \setminus \{j_k^\ast - 1, j_k^\ast, j_k^\ast + 1\}$
            \EndFor
            \State Reorder indices to satisfy $M_1 < \dots < M_m$.
            \For{$i \in N_2$}
              \State Calculate $\hat{\theta}^{(i)} = (T^{\ast})^{-1}\sum_{t=1}^{T^{\ast}} X^{(i)}_{t}$.
              
              \For{$k=1,\dots,m$}
                \State Let $V^{(i)}_{k} = \mathbbm{1}\{\hat{\theta}^{(i)}\in I_{j_k^\ast}\}$ and sample $U_{i,k}\sim\mathrm{Unif}[0,1]$.
                \refstepcounter{equation}\label{secmix:eq:mixturebinestimator}  \hfill(\theequation)
                \State $\widetilde{V}^{(i)}_j = V^{(i)}_j \mathbbm{1}\{U_{i,j} \leq \omega_{\alpha/2}\} + (1 - V^{(i)}_j)\mathbbm{1}\{U_{i,j} > \omega_{\alpha/2}\}$.

              \EndFor
            \EndFor
            \For{$k=1,\dots,m$}
                \State Let $\hat{\pi}_k = \{|N_2|^{-1}\sum_{i \in N_2}\widetilde{V}_{k}^{(i)}-(1- \omega_{\alpha/2})\}/(2\omega_{\alpha/2} - 1)$ 
                \refstepcounter{equation}\label{secmix:eq:mixpropests}\hfill(\theequation)
            \EndFor
            \State Sample $r_{k}^{(i)}$ i.i.d.~Rademacher random variables for $i \in N_3$, $k \in [m]$.
            \For{$i \in N_3$}
                \State Let $k^{(i)} = \argmin_{k \in [m]}\{|(T^\ast)^{-1}\sum_{t = 1}^{T^\ast}X_t^{(i)} - M_k|\}$.
                \State Sample $\ell_i \sim \mathrm{Laplace}(1)$ and calculate $\tilde{\theta}^{(i)} = r_k^{(i)}\Pi_{[-3\Delta, 3\Delta]}\{(T^\ast)^{-1}\sum_{t=1}^{T^\ast} - M_{k^{(i)}}\} + 6\Delta\ell_i/\alpha$. \refstepcounter{equation}\label{secmix:eq:unicentredest}\hfill(\theequation)
            \EndFor
            \For{$k=1,\dots,m$}
                \State Let $\hat{\theta}_k = M_k + \{(N_k\hat{\pi}_k)^{-1}\sum_{i \in N_3}(r_k^{(i)}\tilde{\theta}^{(i)})\}\mathbbm{1}\{\hat{\pi}_k > \pi_0/2\}$
                \refstepcounter{equation}\label{secmix:eq:FinalEstimatorComponent}\hfill(\theequation)
            \EndFor
            \State Denote $\hat{\theta} = \sum_{k = 1}^m e_k \hat{\theta}_k$ for $e_k$ the $k$-th standard basis vector of $\mathbb{R}^m$, and \Return $\hat{\theta}$. 
        \end{algorithmic}
        \label{alg:Mixture}
    \end{algorithm}

    We briefly provide some intuition for the form of \eqref{secmix:eq:FinalEstimatorComponent}. Roughly speaking, by the construction of the privatised values $\tilde{\theta}^{(i)}$ for $i \in N_3$, we have that $\sum_{i \in N_3} r_k{(i)} \tilde{\theta}^{(i)}$ will in expectation be the sum of $|N_3| \pi_k$-many estimates of $\theta_k - M_k$. Hence, we use the estimator $\hat{\pi}_k$ for $\pi_k$ to de-bias in the obvious manner. We take the product with the indicator $\mathbbm{1}\{\hat{\pi}_k \geq \pi_{0}/2 \}$ to avoid the error exploding in the rare case that the estimator $\hat{\pi}_k$ takes a negative or near-zero value.

\subsection{Discussion}
    In this appendix we derived the minimax rates for mean estimation under a relaxation of the user-level setting, where each user draws data from a mixture of $T$-fold product distributions, rather than all users drawing from the same distribution. Interestingly, the $T$-independent lower bound is greater than the corresponding bound in Theorem~6. Indeed, viewing the number of mixture components as the ``dimension'' of the problem, the lower bound in the mixture setting is $e^{-n\alpha^2/m^2}$ compared to $e^{-n\alpha^2/d}$. Heuristically, this is to be expected as both problems have $m$ (or $d$, respectively) estimation targets, but in the mixture setting the effective sample size per target is $O(n/m)$ whereas in $d$-dimensional mean estimation the effective sample size is still $O(n)$.

    Regarding mean estimation with data heterogeneity under \emph{central} user-level privacy, \cite{Cummings:2022} consider the problem where users draw data from a Bernoulli distribution where the mean parameter is itself drawn from a global meta-distribution. Here, the estimation target is the mean (and variance) of the meta-distribution, and so there remains a single estimation target regardless of the extent of heterogeneity. On the other hand, we aim to estimate the mean of $m$ distinct mixture components, with greater levels of heterogeneity resulting in a greater number of distinct estimation targets. The difference between these settings is reflected in the rates, where we observe the dependence on $m$ as the number of estimation targets increases in our setting. We also note, in both our work and \cite{Cummings:2022}, the rates established reduce to the standard user-level rates when homogeneity is enforced \citep[][Supplement]{Cummings:2022}.
    
    More generally, tackling sources of heterogeneity is becoming increasingly popular in the differential privacy literature, for example \cite{Setlur:2024} consider data heterogeneity under the central user-level setting, carrying out distribution estimation using clustering to group similar users, tackling heterogeneity in a similar vein to our mixture setting. Regarding heterogeneity in privacy requirements, \cite{Chaudhuri:2024} consider central item-level mean estimation with heterogeneous privacy requirements; \cite{Canonne:2024} consider LDP two-sample tests where the privacy budgets of the two samples may differ; and \cite{Imola:2024} consider heterogeneous privacy requirements via metric differential privacy \citep{Chatzikokolakis:2013} in a range of settings, including user-level LDP. However, the problem of data heterogeneity under the local user-level setting is less understood, and to the best of our knowledge ours is the first attempt to consider this setting. 

    Without privacy constraints, there is a richer literature on estimation problems under mixture settings (see, for example, \citealt{Wu:2020} and the references therein). Due to identifiably issues, among others, restrictive assumptions are often made when obtaining theoretical guarantees in this setting, such as assuming an equal balance of two Gaussians \citep{Xu:2016, Daskalakis:2017}. The user-level setting in fact ameliorates this identifiability issue, as with a large local sample from a single component, it is possible to distinguish data from differing components.
    
    An alternative approach is to aim to estimate the mixture distribution directly, as opposed to parameters of the components, as this allows one to avoid separation assumptions and better handle misspecification \citep[e.g.][]{Heinrich:2018}. Tackling the problem from this direction under local user-level privacy constraints remains unexplored and an interesting direction for future research.

\section{Proofs of Results in Section \ref{sec2}} \label{Appendix_sec2}

\begin{proof}[Proof of \Cref{sec2:ex:example}]
In this proof, we derive the two results~\eqref{sec2:eq:examplefinite} and~\eqref{sec2:eq:exampleinfinite} separately.

\medskip
\noindent \textbf{Sample Size $T$-Dependent Lower Bound \eqref{sec2:eq:examplefinite}.}  We construct two distributions $P_0$ and $P_1$ where
\begin{align*}
    P_0(X = 0) = P_0(X = 1) = \frac{1}{2} \quad \mbox{and} \quad P_1(X = 0) = 1 - P_0(X = 1) = \frac{1}{2} + 2\Delta,
\end{align*}
where $\Delta = \min\{1/5, (192nT\alpha^2)^{-1/2}\}$. Noting that $|\mathbb{E}_{P_0}(X) - \mathbb{E}_{P_1}(X)| = 2\Delta$, it follows from a private version of Le Cam's lemma \citep[][Proposition 1]{Duchi:2018} that
\begin{align*}
    \mathcal{R}_{n,T,\alpha}(\theta(\mathcal{P}), (\cdot)^2) \geq & \frac{\Delta^2}{2} \Big\{1 - \sqrt{ 3n\alpha^2 D_{\text{KL}}(P_0^{\otimes T} \| P_1^{\otimes T}) } \Big\} = \frac{\Delta^2}{2} \Big\{1 - \sqrt{ 3nT\alpha^2 D_{\text{KL}}(P_0 \| P_1) } \Big\} \\
    \geq & \frac{\Delta^2}{2} \Big(1 - \sqrt{ 48nT\alpha^2 \Delta^2} \Big) \gtrsim \min\left\{ 1, \frac{1}{nT\alpha^2} \right\}, 
\end{align*}
where the first inequality is due to the fact that $(e^\alpha - 1)^2 \leq 3\alpha^2$ for $\alpha \in (0,1]$; the identity follows from the assumption that for each user the $T$ samples are independent and the additivity of the Kullback--Leibler divergence for product distributions; the second inequality holds by noticing that for $\Delta \in (0, 1/5)$,
\begin{equation*}
    D_{\text{KL}}(P_0 \| P_1) = -\frac{1}{2} \log \left( 1 + 4\Delta \right) - \frac{1}{2} \log \left( 1 - 4\Delta \right) \leq 16\Delta^2;    
\end{equation*}
and the final inequality holds by the choice of $\Delta$. This concludes the proof of the inequality \eqref{sec2:eq:examplefinite}.

\medskip
\noindent \textbf{Sample Size $T$-Independent Lower Bound \eqref{sec2:eq:exampleinfinite}.}  We consider two distributions $P_0$ and $P_1$ where
\begin{equation*}
    P_0(X = 0) = P_1(X = 1) = 1.
\end{equation*}
For any user-level $\alpha$-LDP privacy mechanism $Q$, let
\begin{equation*}
    M_j^n(\cdot) = \int Q(\cdot \mid x^{(1:n)}) \diff (P_j^{\otimes T})^{\otimes n}(x^{(1:n)}), \quad j \in \{0, 1\},
\end{equation*}
where each $x^{(i)} \in \{0, 1\}^T$ for each $i \in [n]$.
It then follows that 
\begin{align*}
    \mathcal{R}_{n,T,\alpha}(\theta(\mathcal{P}), (\cdot)^2)
    \geq& \frac{1}{2} \big\{1 - D_{\mathrm{TV}}(M_0^{n}, M_1^{n})\big\}
    \geq \frac{1}{4} e^{-D_{\mathrm{KL}}(M_0^{n}, M_1^{n})}  \\
    \geq& \frac{1}{4} e^{-12n\alpha^2 D_{\mathrm{TV}}(P_0^{\otimes T}, P_1^{\otimes T})^2}
    \geq \frac{1}{4} e^{-12n\alpha^2},
\end{align*}         
where the first inequality follows from Le Cam's lemma \citep[e.g.][Lemma~1]{Yu:1997}; the second is from the Bretagnolle–Huber inequality \citep[e.g.][Equation~(2.25)]{Tsybakov:2009}; the third is from Corollary~3 in \cite{Duchi:2018} where we note that $(e^\alpha-1)^2 \leq 3\alpha^2$ for $\alpha \in (0,1]$; and the last is due to the fact that the total variation distance is trivially upper bounded by $1$. Hence, inequality \eqref{sec2:eq:exampleinfinite} is proved which concludes the proof.
\end{proof}

\begin{proof}[Proof of \Cref{sec2:thm:GeneralBound}]

We prove the lower and upper bounds separately, and the claim follows consequently.

\medskip \noindent \textbf{The Lower Bound.}

This proof is an application of Fano's lemma \citep[e.g.][Lemma~3]{Yu:1997}.  For $\Delta > 0$, let $\{P_v: \, v \in \mathcal{V} \subset \Theta\} \subset \mathcal{P}$ satisfy the following: (i) For any $v, v' \in \mathcal{V}$ with $v \neq v'$, it holds that $\rho(\theta (P_v), \theta (P_{v'})) \geq 2 \Delta$; and (ii) with $M(2\Delta)$ being the $2\Delta$-packing number in the metric space~$(\Theta, \rho)$, let $|\mathcal{V}| = M(2\Delta)$. Denoting for $v \in \mathcal{V}$ the private marginals $M_v^n(\cdot) = \int Q(\cdot \mid x_{1:n}) \diff P_v^{\otimes n}(x_{1:n})$, it then holds that
\begin{align}
    \mathcal{R}_{n, \infty, \alpha} \big(\theta(\mathcal{P}), \Phi \circ \rho\big) \geq & \inf_{Q \in \mathcal{Q}_{\alpha}} \inf_{\hat{\theta}} \max_{v \in \mathcal{V}}\mathbb{E}_{P_v, Q} \left[\Phi \circ \rho \big(\hat{\theta}, \theta(P_v)\big)\right] \nonumber \\
    \geq & \inf_{Q \in \mathcal{Q}_{\alpha}} \frac{\Phi(\Delta)}{2}\left\{1 - \frac{K_n + \log(2)}{\log(|\mathcal{V}|)} \right\} \geq \frac{\Phi(\Delta)}{2} \inf_{Q \in \mathcal{Q}_{\alpha}} \left\{1 - \frac{K_n + \log(2)}{\log(N(2\Delta))}\right\}, \label{eq-theorem1-proof-1}
\end{align}
where the second inequality follows from Fano's inequality \citep[e.g.][]{Yu:1997}, with
\begin{equation*}
    K_n = \max_{v, v' \in \mathcal{V}} D_{\mathrm{KL}} (M^{n}_v \| M^{n}_{v'}),
\end{equation*}
and the final inequality follows from the fact that $M(2\Delta) \geq N(2\Delta)$ \citep[e.g.][Lemma 4.2.8]{Vershynin:2018}. We now provide two bounds on $K_n$ for small and large values of $\alpha$ separately.

For $\alpha \in (0,1]$, we note
\begin{equation} \label{eq-theorem1-proof-2}
    \sup_{Q \in \mathcal{Q}_{\alpha}} \max_{v, v' \in \mathcal{V}} D_{\mathrm{KL}} (M^{n}_v \| M^{n}_{v'}) \leq 12n \alpha^2 \max_{v, v' \in \mathcal{V}} D^2_{\mathrm{TV}}(P_v, P_{v'}) \leq 12n \alpha^2,
\end{equation}
where the second inequality follows from Corollary 3 in \cite{Duchi:2018} and the fact that $(e^\alpha -1 )^2 \leq 3\alpha^2$ for $\alpha \in (0, 1]$, and the final inequality follows from the fact that the total variation distance is upper bounded by $1$.

For $\alpha > 1$, we employ the following arguments which in fact hold for all $\alpha > 0$, but are potentially loose for $\alpha < 1$. Denote, for $i \in [n]$, $M^{(i)}_v(\cdot \mid Z_{1:i-1} = z_{1:i-1})$ the private marginal distribution for $Z_i$ with raw data $X_i \sim P_v$, conditional on $Z_{1:i-1} = z_{1:i-1}$. Then, for any $v, v' \in \mathcal{V}$, we have by the chain rule for the Kullback--Leibler divergence \cite[e.g.][Chapter~3]{Gray:2011} that
\begin{align} \label{eq-theorem1-proof-2-chainrule}
    D_{\mathrm{KL}} (M^{n}_v \| M^{n}_{v'})
    = \sum_{i = 1}^n D_{\mathrm{KL}} \big(M^{(i)}_v(\cdot \mid Z_{1:i-1}) \| M^{(i)}_{v'}(\cdot \mid Z_{1:i-1})\big).
\end{align}
Fixing an $i \in [n]$, we inspect an individual term of the sum. We first note by the LDP condition that the channels $Q(\cdot \mid x_i, z_{1:i-1})$ and $Q(\cdot \mid x_i', z_{1:i-1})$ are absolutely continuous with respect to each other for all $x_i, x_i' \in \mathcal{X}$. Hence, there exists a measure $\mu(\cdot \mid z_{1:i-1})$ such that they have densities $q(\cdot \mid x_i, z_{1:i-1})$. Further, $M^{(i)}_v$ is absolutely continuous with respect to $\mu$ for all $v \in \mathcal{V}$ and so we may write the density $m^{(i)}_v$ where $m^{(i)}_v(z_i \mid z_{1:i-1}) = \int_{\mathcal{X}} q(z_i \mid x_i, z_{1:i-1}) \diff P_v(x_i)$.

Hence, we obtain that 
\begin{align}
    D_{\mathrm{KL}}(M^{(i)}_v(\cdot \mid Z_{1:i-1}) &\| M^{(i)}_{v'}(\cdot \mid Z_{1:i-1})) \nonumber \\
    &= \int_{\mathcal{Z}^{i-1}} D_{\mathrm{KL}} (M^{(i)}_v(\cdot \mid z_{1:i-1}) \| M^{(i)}_{v'}(\cdot \mid z_{1:i-1})) \diff M_v^{i-1}(z_{1:i-1}) \nonumber \\
    &= \int_{\mathcal{Z}^{i-1}} \bigg\{ \int_{\mathcal{Z}} m^{(i)}_v(z_i \mid z_{1:i-1}) \log\bigg( \frac{m_v^{(i)}(z_i \mid z_{1:i-1})}{m_{v'}^{(i)}(z_i \mid z_{1:i-1})} \bigg) \bigg\} \diff M_v^{i-1}(z_{1:i-1}) \nonumber \\
    &\leq \int_{\mathcal{Z}^{i-1}} \bigg\{ \int_{\mathcal{Z}}  \alpha m^{(i)}_v(z_i \mid z_{1:i-1}) \diff \mu(z_i \mid z_{1:i-1}) \bigg\} \bigg\} \diff M_v^{i-1}(z_{1:i-1}) = \alpha, \label{eq-theorem1-proof-2-lowpriv}
\end{align}
where the inequality is by \Cref{app:lem:marginalratio}. Combining \eqref{eq-theorem1-proof-1}, \eqref{eq-theorem1-proof-2}, \eqref{eq-theorem1-proof-2-chainrule} and \eqref{eq-theorem1-proof-2-lowpriv} leads to
\begin{equation*}
    \mathcal{R}_{n, \infty, \alpha} \big(\theta(\mathcal{P}), \Phi \circ \rho\big) \geq \frac{\Phi(\Delta)}{2} \left\{1 - \frac{12n \min\{\alpha, \alpha^2\} + \log(2)}{\log(N(2\Delta))}\right\},
\end{equation*}
as claimed.

\bigskip
\noindent \textbf{The Upper Bound.}

\noindent\textbf{Step 1: Preparation.}
Denote the non-overlapping covering set constructed in \eqref{sec2:eq:nonoverlapping} by $\widetilde{\mathcal{B}} = \cup_{i = 1}^{N(\Delta)} \{B_i'\}$ satisfying $|\widetilde{\mathcal{B}}| = N_{\widetilde{\mathcal{B}}} \leq N(\Delta)$.  For $P \in \mathcal{P}$, denote by $l \in [N_{\widetilde{\mathcal{B}}}]$ the fixed but unknown value such that $\theta(P) \in B'_{l} \in \widetilde{\mathcal{B}}$. It follows from the construction that $V^{(i)}_{j} = \mathbbm{1}\{j = l\}$, for $i \in [n]$ and $j \in [N_{\widetilde{\mathcal{B}}}]$. By the construction \eqref{eq-general-upper-bound-privacy}, it holds, for $Z_j = \sum_{i =1}^n \widetilde{V}_{j}^{(i)}$ the total privatised votes for the $j$-th ball, that
\begin{equation*}
    Z_{j} \sim \mathrm{Bin}(n, \omega_\{\alpha/2\}) \mathbbm{1}\{j = l\} + \mathrm{Bin}(n, 1 - \omega_\{\alpha/2\}) \mathbbm{1}\{j \neq l\}, \quad j \in [N_{\widetilde{\mathcal{B}}}].
\end{equation*}
In addition, $\{Z_{j}\}_{j \in [N_{\widetilde{\mathcal{B}}}]}$ is a collection of mutually independent random variables. Define the event $A = \cap_{j \neq l} \{Z_{l} > Z_{j}\}$.  The rest of the proof is concerned with controlling the risks on the event $A$ and its complement $A^c$. 

\medskip
\noindent \textbf{Step 2: Probability Control.}
It holds that 
\begin{equation}\label{eq-proof-general-upper-2}
    \mathbb{P}(A^c) \leq \sum_{j \in [N_{\widetilde{\mathcal{B}}}] \setminus \{ l\}} \mathbb{P}(Z_{l} \leq Z_{j}) \leq N(\Delta) \mathbb{P}(Z_{l} \leq n - Z') = N(\Delta) \mathbb{P}(Z \leq n),
\end{equation}
where $Z' \sim \mathrm{Bin}(n, \omega_\{\alpha/2\})$ is independent of $\{Z_{j}\}_{j \in [N_{\widetilde{\mathcal{B}}}]}$, and $Z = Z_{l} + Z'$. By construction, $Z_{l}$ and $Z'$ are i.i.d.~and $Z \sim \mathrm{Bin}(2n, \omega_\{\alpha/2\})$. Therefore $Z$ has the same distribution as $\sum_{i = 1}^{2n} W_i$, where $\{W_i\}_{i = 1}^{2n}$ is a collection of independent $\mathrm{Ber}(\omega_\{\alpha/2\})$ random variables. 

For $\alpha > 0$, let $\{W_i'\}_{i = 1}^{2n}$ is a collection of independent $\mathrm{Ber}(1 - \omega_\{\alpha/2\})$ random variables. We then have by the Chernoff--Hoeffding bound \citep[e.g.~Theorem~1 in][]{Hoeffding:1963}
\begin{align}
    \mathbb{P}(Z \leq n)
    & = \mathbb{P}\bigg(\sum_{i = 1}^{2n} W_i' \geq n \bigg)
    = \mathbb{P}\bigg(\frac{1}{2n}\sum_{i = 1}^{2n} (W_i' - \mathbb{E}[W_i']) \geq \omega_\{\alpha/2\} - 1/2 \bigg) \nonumber \\
    & \leq \{4\omega_\{\alpha/2\}(1-\omega_\{\alpha/2\})\}^{n} = \bigg( \frac{4e^{\alpha/2}}{(1+e^{\alpha/2})^2}\bigg)^n \leq e^{-n\min\{\alpha, \alpha^2\}/20}, \label{app:eq:chernoffhoeffdingapp}
\end{align}
where the final inequality is by \Cref{app:lem:sechbound}. Hence, combining \eqref{eq-proof-general-upper-2} and \eqref{app:eq:chernoffhoeffdingapp}, we obtain
\begin{equation*}
    \mathbb{P}(A^c) \leq N(\Delta)e^{-n\min\{\alpha, \alpha^2\}/20}.
\end{equation*}

\medskip
\noindent \textbf{Step 3: Conclusion.}  
On the event $A$, due to the construction, we have that $\hat{\theta}, \theta(P) \in B_{l}$ and hence that $\rho(\hat{\theta}, \theta(P)) \leq \Delta$. On the event $A^c$, we can only guarantee that $\hat{\theta}, \theta(P) \in \Theta$ and hence that $\rho(\hat{\theta}, \theta(P)) \leq \text{diam}(\Theta)$. From this and the bound on $\mathbb{P}(A^c)$ we obtain that
\begin{equation*}
    \mathcal{R}_{n, \infty, \alpha}(\theta(\mathcal{P}), \Phi \circ \rho) \leq \Phi(\Delta) \mathbb{P}(A)  + \Phi\left(\mathrm{diam}(\Theta)\right) \mathbb{P}(A^c) \leq \Phi(\Delta) + \Phi(\mathrm{diam}(\Theta)) N(\Delta) e^{-n\min\{\alpha, \alpha^2\}/20},
\end{equation*}
as required.
\end{proof}

\begin{proof}[Proof of \Cref{sec2:thm:Means}]
We prove the $\ell_\infty$- and $\ell_2$-cases separately.  In each case, we first upper and lower bound the covering numbers of the spaces and then apply \Cref{sec2:thm:GeneralBound}.

\medskip
\noindent \textbf{The $\ell_{\infty}$-Ball Case.}  
For $\Delta \in (0, 1)$, to upper bound the covering number $N(\Delta)$ of $\mathbb{B}_{\infty}(1)$, we have that
\begin{align}
    N(\Delta) & \leq \frac{\mathrm{Vol}((2/\Delta)\mathbb{B}_{\infty}(1) + \mathbb{B}_2(1))}{\mathrm{Vol}(\mathbb{B}_2(1))} \leq\frac{\mathrm{Vol}((2/\Delta + 1)\mathbb{B}_{\infty}(1))}{\mathrm{Vol}(\mathbb{B}_2(1))}  \leq \left(\frac{3}{\Delta} \right)^d \frac{\mathrm{Vol}(\mathbb{B}_\infty(1))}{\mathrm{Vol}(\mathbb{B}_2(1))} \nonumber \\
    & =\left( \frac{3}{\Delta} \right)^d \frac{2^d \Gamma(1 +d/2)}{\pi^{d/2}}
    \leq \left( \frac{18 d}{\pi \Delta^2}\right)^{d/2}, \label{eq-proof-prop-4-1}
\end{align}
where the first inequality follows from Lemma~5.7(b) in \cite{Wainwright:2019}, by setting $\mathbb{B}$, $\mathbb{B}'$, $\|\cdot\|$ and $\|\cdot\|'$ therein as $\mathbb{B}_{\infty}(1)$, $\mathbb{B}_2(1)$, $\|\cdot\|_{\infty}$ and $\|\cdot\|_2$; the second from the fact that $\mathbb{B}_2(1) \subseteq \mathbb{B}_\infty(1)$; the third from the fact that $\Delta \in (0, 1)$; and the last is due to $\Gamma(1+x) \leq x^x$ for all $x > 0$.

For the lower bound on $N(\Delta)$, we have that, for any $\Delta \in (0,1)$,
\begin{equation} \label{eq-proof-prop-4-2}
    N(\Delta)
    \geq \left(\frac{1}{\Delta} \right)^d \frac{\mathrm{Vol}(\mathbb{B}_\infty(1))}{\mathrm{Vol}(\mathbb{B}_2(1))}
    =\left( \frac{1}{\Delta} \right)^d \frac{2^d \Gamma(1 +d/2)}{\pi^{d/2}}
    \geq \left( \frac{2d}{3\pi \Delta^2}\right)^{d/2},
\end{equation}
where the first inequality is from Lemma~5.7(a) in \cite{Wainwright:2019} and the final inequality from $\Gamma(1+x) \geq (x/3)^x$ using Stirling's approximation. Combining \eqref{eq-proof-prop-4-1} and \eqref{eq-proof-prop-4-2}, we have that 
\begin{equation}
    \left( \frac{d}{5\Delta^2}\right)^{d/2}
    \leq N(\Delta)
    \leq \left( \frac{6d}{\Delta^2}\right)^{d/2}. \label{app:eq:linfcover}
\end{equation}

We now apply \Cref{sec2:thm:GeneralBound} to obtain upper and lower bounds on the minimax rate. To obtain the lower bound, we let $\Delta = (d/20)^{1/2}e^{-24n\min\{\alpha, \alpha^2\}/d-\log(4)/d} < 1$, where the inequality holds due to the assumption that $n\min\{\alpha, \alpha^2\} > 60d\log(6d)$. An application of the general lower bound in \eqref{sec2:eq:GeneralLB} gives
\begin{align}
    \mathcal{R}_{n, \infty, \alpha} (\theta(\mathcal{P}_d), \|\cdot\|_2^2)
    &\geq \frac{\Delta^2}{2} \bigg\{1 - \frac{12n \min\{\alpha, \alpha^2\} + \log(2)}{\log(N(2\Delta))}\bigg\}
    \geq \frac{\Delta^2}{2} \bigg\{1 - \frac{24n \min\{\alpha, \alpha^2\} + \log(4)}{d\log(d/(20\Delta^2))}\bigg\} \nonumber \\
    = & \frac{d}{40} e^{-48n\min\{\alpha, \alpha^2\}/d-\log(16)/d} \bigg\{1 - \frac{12n \min\{\alpha, \alpha^2\} + \log(2)}{d \{24n\min\{\alpha, \alpha^2\}/d + \log(4)/d\}}\bigg\} \nonumber \\
    = & \frac{d}{80} e^{-48n\min\{\alpha, \alpha^2\}/d-\log(16)/d} \gtrsim e^{-Cn\min\{\alpha, \alpha^2\}/d}, \label{sec2:eq:linflowerbound}
\end{align}
where $C>0$ is an absolute constant; the first inequality is from the first inequality in \eqref{sec2:eq:GeneralLB}; the second from the first inequality in \eqref{app:eq:linfcover}; the first equality follows from the choice of $\Delta$; and the last inequality holds as $n\min\{\alpha, \alpha^2\} \gtrsim 1$ which follows from the assumption $n\min\{\alpha, \alpha^2\} > 60d\log(6d)$.

To obtain an upper bound, we set $\Delta = (6d)^{1/2}e^{-n\min\{\alpha, \alpha^2\}/(40d)} < 1$ where the inequality holds due to the assumption $n\min\{\alpha, \alpha^2\} > 60d\log(6d)$. An application of the general upper bound in \eqref{sec2:eq:GeneralLB} gives that
\begin{align}
    \mathcal{R}_{n, \infty, \alpha} (\theta(\mathcal{P}_{d, r}), \|\cdot\|_2^2)
    &\leq \Delta^2 + \{\mathrm{diam}(\Theta)\}^2 N(\Delta) e^{-n\min\{\alpha, \alpha^2\}/20}
    \leq \Delta^2 + 4d (6d/\Delta^2)^{d/2} e^{-n\min\{\alpha, \alpha^2\}/20} \nonumber \\
    &= 6de^{-n\min\{\alpha, \alpha^2\}/(20d)} + 4d e^{-n\min\{\alpha, \alpha^2\} [1/20 - 1/40]}
    \lesssim  e^{-cn\min\{\alpha, \alpha^2\}/d}, \label{sec2:eq:linfupperbound}
\end{align}
where $c>0$ is an absolute constant; the first inequality follows from the upper bound in \eqref{sec2:eq:GeneralLB}; the second from the second inequality in \eqref{app:eq:linfcover} and the fact that $\mathrm{diam}(\Theta) = 2d^{1/2}$; the equality holds by the choice of $\Delta$; and the final inequality comes from the assumption that $n\min\{\alpha, \alpha^2\} > 60d\log(6d)$ which allows the factor of $d$ to be absorbed into the exponential term. By combining \eqref{sec2:eq:linflowerbound} and \eqref{sec2:eq:linfupperbound}, we conclude the proof of \eqref{sec2:eq:linfMeanStatement}.

\medskip
\noindent \textbf{The $\ell_2$-Ball Case.}  It follows from Example~5.8 of \cite{Wainwright:2019} that, for any $\Delta \in (0, 1)$, the $\Delta$-covering number of $\mathbb{B}_2(1)$ satisfies
\begin{equation}\label{app:eq:l2cover}
    (1 / \Delta)^d \leq N(\Delta) \leq (3 / \Delta)^d.
\end{equation}
To obtain the lower bound on the minimax risk, we let $\Delta = 2^{-1}e^{-24n\min\{\alpha, \alpha^2\}/d-\log(4)/d} < 1$.  An application of the general lower bound in \eqref{sec2:eq:GeneralLB} gives that
\begin{align}
    \mathcal{R}_{n, \infty, \alpha} (\theta(\mathcal{P}_{d}), \|\cdot\|_2^2) &\geq \frac{\Delta^2}{2} \bigg\{1 - \frac{12n \min\{\alpha, \alpha^2\} + \log(2)}{\log(N(2\Delta))}\bigg\}  \geq \frac{\Delta^2}{2} \bigg\{1 - \frac{12n \min\{\alpha, \alpha^2\} + \log(2)}{d\log(1/(2\Delta))}\bigg\} \nonumber \\
    &= \frac{1}{8} e^{-48n\min\{\alpha, \alpha^2\}/d-\log(16)/d} \bigg\{1 - \frac{12n \min\{\alpha, \alpha^2\} + \log(2)}{d \{24n\min\{\alpha, \alpha^2\}/d + \log(4)/d\}}\bigg\} \nonumber \\
    &= \frac{1}{16} e^{-48n\min\{\alpha, \alpha^2\}/d-\log(16)/d} \gtrsim e^{-C'n\min\{\alpha, \alpha^2\}/d}, \label{sec2:eq:l2lowerbound}
\end{align}
where $C' > 0$ is an absolute constant; the first inequality is from \eqref{sec2:eq:GeneralLB}; the second from the first inequality in \eqref{app:eq:l2cover}; the first equality follows from the choice of $\Delta$; and the last inequality holds as $n\min\{\alpha, \alpha^2\} \gtrsim 1$ which follows from the assumption $n\min\{\alpha, \alpha^2\} > 60d$.

To obtain the upper bound, we let $\Delta = 3 e^{-n\min\{\alpha, \alpha^2\}/(30d)} < 1$, where the inequality holds due to the assumption that $n\min\{\alpha, \alpha^2\} > 60d$. An application of the general upper bound in \eqref{sec2:eq:GeneralLB} gives that
\begin{align}
    \mathcal{R}_{n, \infty, \alpha} (\theta(\mathcal{P}_{d, 1}), \|\cdot\|_2^2)
    &\leq \Delta^2 + \{\mathrm{diam}(\Theta)\}^2 N(\Delta) e^{-n\min\{\alpha, \alpha^2\}/20}
    \leq \Delta^2 + 4 (3/\Delta)^d e^{-n\min\{\alpha, \alpha^2\}/20} \nonumber \\
    &= 9 e^{-n\min\{\alpha, \alpha^2\}/(15d)} + 4 e^{-n\min\{\alpha, \alpha^2\} (1/20 - 1/30)}
    \lesssim  e^{-c'n\min\{\alpha, \alpha^2\}/d}, \label{sec2:eq:l2upperbound}
\end{align}
where $c'>0$ is an absolute constant; the first inequality follows the upper bound in \eqref{sec2:eq:GeneralLB}; the second from \eqref{app:eq:l2cover} and the fact that $\mathrm{diam}(\Theta) = 2$; and the equality holds by the choice of $\Delta$. By combining \eqref{sec2:eq:l2lowerbound} and \eqref{sec2:eq:l2upperbound}, we conclude the proof of \eqref{sec2:eq:l2MeanStatement}.
\end{proof}
    
\begin{proof}[Proof of \Cref{sec2:thm:Sparse}]
    For $\Delta > 0$, we first obtain the required bounds on the $\Delta$-covering number $N(\Delta)$ of the metric space $(\Theta, \|\cdot\|_2)$, where $\Theta = \theta(\mathcal{P}_{d, s})$ and $\|\cdot\|_2$ is the vector $\ell_2$-norm.  
    
    For an upper bound on $N(\Delta)$, we note that there are $\binom{d}{s}$-many choices for the non-zero elements of the parameter vector, and for each choice, the covering number of the resulting space can be bounded by \eqref{app:eq:linfcover} with $s$ in place of $d$ therein. Hence, for any $\Delta \in (0,1)$, we have that
        \begin{equation}
            N(\Delta)
            \leq \binom{d}{s} \left( \frac{6s}{\Delta^2}\right)^{\frac{s}{2}}
            \leq \left( \frac{ed}{s} \right)^{s} \left( \frac{6s}{\Delta^2}\right)^{\frac{s}{2}}, \label{app:eq:SparseCoveringBoundUpper}
        \end{equation}
        where the second inequality comes from the bound $\binom{d}{s} \leq (ed/s)^s$ for $s \in [d]$ due to Stirling's approximation.     
        
    For a lower bound on $N(\Delta)$, by \Cref{app:lem:sparsecoverlower} and for any $\Delta \in (0, 1/4)$, it holds that
        \begin{equation}
            N(\Delta)
            \geq \left( \frac{d}{s} \right)^{s} \left\{\frac{s(1/\Delta-2)^2}{20}\right\}^{\frac{s}{2}}. \label{app:eq:SparseCoveringBoundLower}
        \end{equation}
        
        \noindent
        To obtain the lower bound on the risk, we let
        \begin{equation*}
            \Delta
            = \left( \frac{2(20s)^{1/2}}{d}e^{24n\min\{\alpha, \alpha^2\}/s + \log(4)/s} + 4 \right)^{-1} < \frac{1}{8}
        \end{equation*}
        where the inequality holds for $d \geq 2$ due to the assumption $n\min\{\alpha, \alpha^2\} > 120s\log(6d)$. An application of the general lower bound in \eqref{sec2:eq:GeneralLB} gives that
        \begin{align}
            \mathcal{R}_{n, \infty, \alpha} (\theta(\mathcal{P}_{d, s}), \|\cdot\|_2^2)
            &\geq \frac{\Delta^2}{2} \bigg\{1 - \frac{12n \min\{\alpha, \alpha^2\} + \log(2)}{\log(N(2\Delta))}\bigg\} \nonumber \\
            &\geq \frac{\Delta^2}{2} \bigg(1 - \frac{24n \min\{\alpha, \alpha^2\} + \log(4)}{s\log[\{d^2/(20s)\}\{1/(2\Delta) - 2\}^2]}\bigg) \nonumber \\
            &= \left( \frac{2\sqrt{20s}}{d}e^{24n\min\{\alpha, \alpha^2\}/s + 2\log(2)/s} + 4 \right)^{-2} \bigg(1 - \frac{1}{2}\bigg) \gtrsim \frac{d^2}{s} e^{-Cn\min\{\alpha, \alpha^2\}/s}, \label{sec2:eq:sparselowerbound}
        \end{align}
        where $C>0$ is an absolute constant; the first inequality is from \eqref{sec2:eq:GeneralLB}; the second from the bound \eqref{app:eq:SparseCoveringBoundLower}; the equality follows from the choice of $\Delta$; and the last inequality holds as $(s^{1/2}/{d})e^{24n\min\{\alpha, \alpha^2\}/s + \log(4)/s} \gtrsim 1$ due to the assumption $n\min\{\alpha, \alpha^2\} \geq 120s\log(6d)$.
        
        To obtain the upper bound, we let $\Delta = 6^{1/2}eds^{-1/2}e^{-n\min\{\alpha, \alpha^2\}/(40s)} < 1$, where the inequality holds due to the assumption $n\min\{\alpha, \alpha^2\} \geq 120s\log(6d)$. An application of the general upper bound in \eqref{sec2:eq:GeneralLB} shows that
        \begin{align}
            \mathcal{R}_{n, \infty, \alpha} (\theta(\mathcal{P}_{d, s}), \|\cdot\|_2^2)
            &\leq \Delta^2 + \{\mathrm{diam}(\Theta)\}^2 N(\Delta) e^{-n\min\{\alpha, \alpha^2\}/20} \nonumber \\
            &\leq \Delta^2 + 4s \left( \frac{ed}{s} \right)^{s} \left( \frac{6s}{\Delta^2}\right)^{\frac{s}{2}} e^{-n \min\{\alpha, \alpha^2\} / 20} \nonumber \\
            &= 6e^2 \frac{d^2}{s} e^{-n\min\{\alpha, \alpha^2\}/(20s)} + 4s e^{-n\min\{\alpha, \alpha^2\} [1/20 - 1/40]}
            \lesssim  e^{-cn\min\{\alpha, \alpha^2\}/s}, \label{sec2:eq:sparseupperbound}
        \end{align}
        where $c>0$ is an absolute constant; the first inequality follows from the upper bound in \eqref{sec2:eq:GeneralLB}; the second from \eqref{app:eq:SparseCoveringBoundUpper} and the fact that $\mathrm{diam}(\Theta) = 2s^{1/2}$; the equality holds by the choice of $\Delta$; and the final inequality comes from the assumption that $n\min\{\alpha, \alpha^2\} > 120s\log(6d)$. By combining \eqref{sec2:eq:sparselowerbound} and \eqref{sec2:eq:sparseupperbound}, we conclude the proof of \eqref{sec2:eq:sparseMeanStatement}.
    \end{proof}

\begin{proof}[Proof of \Cref{sec2:thm:Density}]
    For $\Delta > 0$, we first obtain the required bounds on the $\Delta$-covering number $N(\Delta)$ of the metric space $(\Theta, \|\cdot\|_2)$, where $\Theta = \theta(\mathcal{P}_{\beta, 1})$ and $\|\cdot\|_2$ is the $L^2([0, 1])$ norm.  We have that
        \begin{equation}
            \exp\left(c_\beta(1/\Delta)^{1/\beta}\right) \leq N(\Delta) \leq \exp\left(C_\beta(1/\Delta)^{1/\beta}\right), \label{app:eq:sobolevcovering}
        \end{equation}
    where the upper bound holds by noting $\mathcal{F}_{\beta, 1} \subset \mathcal{S}_{\beta, 1}$ and by an upper bound on the covering number of $\mathcal{S}_{\beta, 1}$ \citep[e.g.][Example~5.12]{Wainwright:2019}, and the lower bound follows from \Cref{app:lem:densitycoverlower}, for $\Delta$ sufficiently small.  
            
    To obtain the lower bound in~\eqref{sec2:eq:densityStatement}, we set
    \begin{equation*}
        \Delta = \frac{1}{2} \left( \frac{c_\beta}{c_\beta'\{24n\min\{\alpha, \alpha^2\}+2\log(2)\}} \right)^{\beta},
    \end{equation*}
    where $c_\beta' > 0$ is a sufficiently large absolute constant, so that $\Delta$ is sufficiently small therefore the lower bound in \eqref{app:eq:sobolevcovering} holds.  Applying the lower bound in \eqref{sec2:eq:GeneralLB} leads to
    \begin{align*}
        \mathcal{R}_{n, \infty, \alpha} (\theta(\mathcal{F}_{\beta, 1}), \|\cdot\|_2^2)
            &\geq \frac{\Delta^2}{2} \bigg\{1 - \frac{12n \min\{\alpha, \alpha^2\} + \log(2)}{\log(N(2\Delta))}\bigg\} \geq \frac{\Delta^2}{2} \bigg\{1 - \frac{12n \min\{\alpha, \alpha^2\} + \log(2)}{c_\beta\{1/(2\Delta)\}^{1/\beta}}\bigg\} \\
            &= \frac{1}{8} \left( \frac{c_\beta}{c_\beta'\{24n\min\{\alpha, \alpha^2\}+2\log(2)\}} \right)^{2\beta} \bigg(1 - \frac{1}{2c_\beta'}\bigg)
            \gtrsim \left(\frac{c_\beta}{c_\beta'n\min\{\alpha, \alpha^2\}}\right)^{2\beta}, 
    \end{align*}
    where the first inequality is from the lower bound in \eqref{sec2:eq:GeneralLB}, the second from the first inequality in \eqref{app:eq:sobolevcovering}, the first equality follows from the choice of $\Delta$, and the last holds under the assumption that $n\min\{\alpha, \alpha^2\} \gtrsim 1$. Setting $c_\beta'' = (c_\beta/c_\beta')^{2\beta}$ shows that the lower bound can be written as
        \begin{equation}
            \mathcal{R}_{n, \infty, \alpha}(\theta(\mathcal{F}_{\beta, 1}), \|\cdot\|_2^2)
            \gtrsim \frac{c_\beta''}{(n \min\{\alpha, \alpha^2\})^{2\beta}}, \label{app:eq:densitylowerINF}
        \end{equation}
        as required.
            
    To obtain the upper bound in \eqref{sec2:eq:densityStatement}, we let $\Delta = \{40C_\beta / (n \min\{\alpha, \alpha^2\})\}^{\beta}$. When $n\min\{\alpha, \alpha^2\} \geq C_\beta'$, where $C_\beta' > 0$ is a sufficiently large constant, we have $\Delta$ sufficiently small so that the upper bound in \eqref{app:eq:sobolevcovering} holds. Applying the upper bound in \eqref{sec2:eq:GeneralLB} leads to
    \begin{align}
        \mathcal{R}_{n, \infty, \alpha} (\theta(\mathcal{F}_{\beta, 1}), \|\cdot\|_2^2)
        &\leq \Delta^2 + \{\mathrm{diam}(\Theta)\}^2 N(\Delta) e^{-n\min\{\alpha, \alpha^2\}/20} \nonumber \\
        &\leq \Delta^2 + 2\exp\left(C_\beta(1/\Delta)^{1/\beta} - n\min\{\alpha, \alpha^2\}/20\right) \nonumber \\
        &= \{40C_\beta / (n \min\{\alpha, \alpha^2\})\}^{2\beta} + 2e^{-n\min\{\alpha, \alpha^2\}/40}
        \lesssim \{C_\beta''/ (n \min\{\alpha, \alpha^2\})\}^{2\beta}, \label{app:eq:densityupperINF}
    \end{align}
    where $C_\beta'' > 0$ is some constant depending on $\beta$, the first inequality follows from the upper bound in \eqref{sec2:eq:GeneralLB}, the second from the second inequality in  \eqref{app:eq:sobolevcovering} and the fact that $\mathrm{diam}(\Theta) \leq 2^{1/2}$, the equality holds by the choice of $\Delta$ and simplifying the exponential term; and the final inequality comes from noting that for any value of $\beta$ the exponential term is dominated by the polynomial. By combining \eqref{app:eq:densitylowerINF} and \eqref{app:eq:densityupperINF}, we conclude the proof of \eqref{sec2:eq:densityStatement}.
\end{proof}

\section{Extension of Results of Section \ref{sec2} to Approximate Privacy} \label{app:sec:approxLDP}

The results of \Cref{sec2} consider the case of ``pure'' privacy. However, there is also the relaxed notion of ``approximate'' privacy. Loosely speaking, approximate differential privacy allows the condition of pure differential privacy to fail with some, usually very small, probability $\delta \in (0,1]$. Some motivations for this include tighter composition results on how privacy degrades when repeatedly querying a user \cite[Theorem~III.3]{Dwork:2010}, and the ability to privatise continuous valued data using Gaussian noise.

For completeness, we define approximate LDP in the user-level setting: For the same notation and setting as in \eqref{sec1:eq:alphaLDP}, we say that a collection of conditional distributions $\{Q_i\}_{i=1}^n$ satisfies $(\alpha, \delta)$-LDP if, for all $i \in \{1, 2, \ldots, n\}$,
\begin{equation}\label{sec1:eq:alphadeltaLDP}
    \begin{aligned}
        &Q_i(Z^{(i)} \in S \mid X^{(i)} = x^{(i)}, Z^{(1:i-1)} = z^{(1:i-1)}) \leq e^{\alpha}Q_i(Z^{(i)} \in S \mid X^{(i)} = x'^{(i)}, Z^{(1:i-1)} = z^{(1:i-1)}) + \delta, \\
        & \hspace{8cm}\forall x^{(i)}, \, x'^{(i)} \in \mathcal{Y}, \forall z^{(1:i-1)} \in \mathcal{Z}^{i-1} \mbox{ and } S \in \sigma(\mathcal{Z}).
    \end{aligned}
\end{equation}
The concepts of interactive and non-interactive privacy follow similarly, and we denote by $\mathcal{R}_{n, T, \alpha, \delta}$ the extension of the user-level LDP minimax risk $\mathcal{R}_{n, T, \alpha}$ in \eqref{sec2:eq:UserMinimax} to the approximate setting, where we instead take the the infimum over all $(\alpha, \delta)$-LDP mechanisms.

In item-level LDP, for the minimax rates, it is usually the case that approximate and pure LDP constraints lead to the same rate up to logarithmic factors, for $\delta$ sufficiently small \citep[e.g.][]{Bassily:2015, Bun:2019}. Indeed, $\delta$ is almost invariably taken to be $\delta \ll 1/n$ as $\delta$ any larger ceases to provide a meaningful privacy guarantee.\footnote{Consider an otherwise pure $\alpha$-LDP mechanism which publishes the true value with probability $\delta$. This would satisfy approximate LDP, but for $\delta \asymp 1/n$, a constant proportion of users will leak their true value.} Hence, there are usually no improvements in a minimax sense for using the looser concept of approximate LDP within this ``regime of equivalence''.

However, in the user-level setting with $T$ users, it is not a-priori clear whether the sample size in this context should be taken as $n$ or $nT$. If the latter were the case, it would suggest in the limit $T \rightarrow \infty$ that this ``regime of equivalence'' vanishes, suggesting approximate LDP may exhibit interesting behaviour in the user-level setting for even small $\delta$. The following result answers this in the negative, demonstrating that for $\delta \lesssim 1/n$, the $T$-independent lower bounds remain valid and bound the risk away from zero for arbitrary $T$ as in the pure LDP setting. 

\begin{theorem}[General infinite-$T$ rates under approximate LDP]
    \label{sec2:thm:GeneralBoundApproximateLDP}
    Start with the same conditions and notation as \Cref{sec2:thm:GeneralBound}. Provided $\delta$ is such that $\delta \leq c\min\{1, \alpha\}/\{n\log(n)\}$ for $c > 0$ a sufficiently small absolute constant and any $\alpha > 0$, and that $n\min\{1, \alpha\} > C$ for $C > 0$ some sufficiently large constant, we have that the infinite-$T$ private minimax risk and the user-level private minimax risk for all $T \geq 1$ under $(\alpha, \delta)$-LDP are lower bounded as
    \begin{equation*}
        \mathcal{R}_{n, \infty, \alpha, \delta}(\theta(\mathcal{P}), \Phi \circ \rho),\;
        \mathcal{R}_{n, T, \alpha, \delta}(\theta(\mathcal{P}), \Phi \circ \rho)
        \gtrsim \sup_{\Delta : N(2 \Delta)>1}\bigg\{ \Phi(\Delta)\bigg(1 - \frac{n \min\{\alpha, \alpha^2\} + \log(2)}{\log(N(2\Delta))} \bigg) \bigg\}.
    \end{equation*}
\end{theorem}

\begin{proof}
    \noindent \textbf{Step 1: Fano's Inequality and Necessary Notation.}  As in the proof of \Cref{sec2:thm:GeneralBound}, we start by appealing to Fano's inequality. To be precise, let $V$ be uniformly distributed over the set $\mathcal{V}$, and denote the mutual information between $V$ and the private observations $Z_{1:n}$ by $I(V; Z_{1:n})$. We denote, for $v \in \mathcal{V}$, the private marginals $M_v^n(\cdot) = \int Q(\cdot \mid x_{1:n}) \diff P_v^{\otimes n}(x_{1:n})$, and, for $i \in [n]$, $M^{(i)}_v(\cdot \mid Z_{1:i-1})$ the private marginal distribution for $Z_i$ with raw data $X_i \sim P_v$, conditional on $Z_{1:i-1}$. Then, following the proof of Theorem~3 in \citet{Yu:1997}, we obtain that
    \begin{align}
        \mathcal{R}_{n, \infty, \alpha, \delta} \big(\theta(\mathcal{P}), \Phi \circ \rho\big) \geq & \inf_{Q \in \mathcal{Q}_{\alpha}} \inf_{\hat{\theta}} \max_{v \in \mathcal{V}}\mathbb{E}_{P_v, Q} \left[\Phi \circ \rho \big(\hat{\theta}, \theta(P_v)\big)\right] \nonumber \\
        \geq & \inf_{Q \in \mathcal{Q}_{\alpha}} \frac{\Phi(\Delta)}{2}\left\{1 - \frac{I(V; Z_{1:n}) + \log(2)}{\log(|\mathcal{V}|)} \right\}, \label{app:eq:approxTinfFano}
    \end{align}
    where we have that $Z_{1:n} \mid V \sim M_V^n$.

    Our goal is to upper bound the quantity $I(V; Z_{1:n})$. Before proceeding, we \textbf{(a)} show that without loss of generality, we may reduce to the case that $Z_{1:n}$ is a vector of discrete values, enabling us to appeal to useful results from information theory; and \textbf{(b)} prepare necessary notation. 
    
\medskip 
\noindent \textbf{Step 1.1: (a).} Indeed, in the case of continuous random variables, one can take an increasingly fine sequence of partitions of the state space and discretise the observations, provided that the sigma algebra generated by the partitions increase to the sigma algebra on the state space \citep[e.g.][Appendix]{Tulcea:1961}. Further, in our setting the sigma algebra on $\mathcal{Z}$ is assumed Borel and $\mathcal{Z}$ is a separable metric space, and hence the Borel sigma algebra on the product space $\mathcal{Z}^n$ is generated by the rectangles $\{A_1 \times \hdots \times A_n: A_i \in \sigma(\mathcal{Z})\}$ \cite[e.g.][Lemma~1.2]{Kallenberg:2021}. Thus, we may discretise the vector $Z_{1:n}$ along each co-ordinate and assume $Z_i \in [k]$ for all $i \in [n]$ in what follows. In particular, we may assume the private marginals $M_v^n(\cdot)$ and $M_v^{(i)}(\cdot \mid Z_{1:i-1})$ have corresponding probability mass functions denoted $m_v^n(\cdot)$ and $m_v^{(i)}(\cdot \mid Z_{1:i-1})$ respectively.

\medskip 
\noindent \textbf{Step 1.2: (b).}  Given discrete random variables $A \in [a]$, $B \in [b]$ and $C \in [c]$, we define the entropy
    \begin{equation} \label{app:eq:entropy}
        H(A) = \sum_{i = 1}^a \mathbb{P}(A = i) \log_2\{1/\mathbb{P}(A = i)\},
    \end{equation}
    conditional entropy
    \begin{equation} \label{app:eq:condentropy}
        H(A|B) = H(A,B) - H(B) = \sum_{i = 1}^a \sum_{j = 1}^b \mathbb{P}(A = i, B = j) \log_2\{1/\mathbb{P}(A = i \mid B = j)\},
    \end{equation}
    and conditional mutual information $I(A;B \mid C) = I(A; B, C) - I(A; C)$.

\medskip
\noindent \textbf{Step 2: Upper Bounding the Mutual Information.} First, by the chain rule of mutual information \cite[e.g.][Chapter~3]{Gray:2011}, we have that
    \begin{equation} \label{app:eq:approxTinfMIChain}
        I(V; Z_{1:n}) = \sum_{i = 1}^n I(V;Z_i \mid Z_{1:{i-1}}).
    \end{equation}
    We focus on $I(V;Z_i \mid Z_{1:{i-1}})$, an individual term of the sum. Let
    \begin{equation} \label{app:eq:Bvardefs}
        \begin{aligned}
            &B_{v, i, 1}(V, Z_{1:i}) = B_{v, i, 1} = \mathbbm{1}\bigg\{\frac{m_v^{(i)}(Z_i \mid Z_{1:i-1})}{\overline{m}^{(i)}(Z_i \mid Z_{1:i-1})} > e^{2\alpha}, V = v\bigg\}, \mbox{ and } \\
            &B_{v, i, 2}(V, Z_{1:i}) = B_{v, i,2} = \mathbbm{1}\bigg\{\frac{ m_v^{(i)}(Z_i \mid Z_{1:i-1})}{\overline{m}^{(i)}(Z_i \mid Z_{1:i-1})} < e^{-2\alpha}, V = v\bigg\}.
        \end{aligned}
    \end{equation}
    where we denote $\overline{m}^{(i)}(\cdot \mid Z_{1:i-1}) = \sum_{v \in \mathcal{V}} \mathbb{P}(V = v \mid Z_{1:i-1}) m_v^{(i)}(\cdot \mid Z_{1:i-1})$, and likewise for $\overline{M}^{(i)}(\cdot \mid Z_{1:i-1})$. Then, writing $B_{v, i} = B_{v, i, 1} + B_{v, i, 2}$ and $B_i = \sum_{v \in \mathcal{V}} B_{v, i}$, we consider
    \begin{align}
        I(V; Z_i \mid Z_{1:i-1})
        &\leq I(V, B_i; Z_i \mid Z_{1:i-1}) \nonumber \\
        &= I(V; Z_i \mid Z_{1:i-1}, B_i) + I(B_i; Z_i \mid Z_{1:i-1}) \nonumber \\
        &= I(V; Z_i \mid Z_{1:i-1}, B_i) + H(B_i \mid Z_{1:i-1}) - H(B_i \mid Z_{1:i})\nonumber \\
        &\leq I(V; Z_i \mid Z_{1:i-1}, B_i) + H(B_i \mid Z_{1:i-1}) \nonumber \\
        &\leq I(V; Z_i \mid Z_{1:i-1}, B_i = 0) + I(V; Z_i \mid Z_{1:i-1}, B_i = 1)\mathbb{P}(B_i = 1) + H(B_i \mid Z_{1:i-1}) \nonumber \\
        &\leq I(V; Z_i \mid Z_{1:i-1}, B_i = 0) + I(V; Z_i \mid Z_{1:i-1}, B_i = 1)\mathbb{P}(B_i = 1) + H(B_i) \nonumber \\
        &\leq (I) + (II) + (III), \label{app:eq:overallMIbound}
    \end{align}
    where the first and third lines are by the definition of mutual information; the second by the chain rule of mutual information; the fourth by the non-negativity of entropy; and the remaining inequalities by the definition of entropy \eqref{app:eq:entropy}, conditional entropy \eqref{app:eq:condentropy} and conditional mutual information.

    \medskip
    \noindent \textbf{Step 2.1: Term (III).}
    Denoting the set $\mathcal{B}_{v, i, 1}(V, Z_{1:i-1}) = \{z \in [k] : B_{v, i,1}(V, Z_{1:i-1}, z) = 1\}$, and likewise of $\mathcal{B}_{v, i, 2}(V, Z_{1:i-1})$, we start by considering
    \begin{align}
        &\mathbb{P}(B_{v, i} = 1 \mid Z_{1:i-1}) \nonumber \\
        &= \mathbb{P}(B_{v, i, 1} = 1 \mid Z_{1:i-1}) + \mathbb{P}(B_{v, i, 2} = 1 \mid Z_{1:i-1}) \nonumber \\
        &= \mathbb{P}(V = v \mid Z_{1:i-1})\big[M_v^{(i)}(Z_i \in \mathcal{B}_{v, i, 1}(v, Z_{1:i-1}) \mid Z_{1:i-1}) + M_v^{(i)}(Z_i \in \mathcal{B}_{v, i, 2}(v, Z_{1:i-1}) \mid Z_{1:i-1}) \big]. \label{app:eq:BiEntropProbFinal1}
    \end{align}

    Focusing on the first term, we have
    \begin{equation}
        M_v^{(i)}(Z_i \in \mathcal{B}_{v, i, 1}(v, Z_{1:i-1}) \bigm| Z_{1:i-1})
        \geq e^{2\alpha} \overline{M}^{(i)}(Z_i \in \mathcal{B}_{v, i, 1}(v, Z_{1:i-1}) \mid Z_{1:i-1}) \text{ a.s.}, \label{app:eq:BiEntropProb1}
    \end{equation}
    which holds due to the conditions defining $B_{v, i,1}$ in \eqref{app:eq:Bvardefs}. We also have for any $v, v' \in \mathcal{V}$ that
    \begin{align}
        M_{v'}^{(i)}(Z_i \in &\mathcal{B}_{v, i, 1}(v, Z_{1:i-1}) \mid Z_{1:i-1}) \nonumber \\
        &= \int_\mathcal{X} Q(Z_i \in \mathcal{B}_{v, i, 1}(v, Z_{1:i-1}) \mid x_i, Z_{1:i-1}) \diff P_{v'}(x_i) \nonumber \\
        &= \int_\mathcal{X} \int_\mathcal{X} Q(Z_i \in \mathcal{B}_{v, i, 1}(v, Z_{1:i-1}) \mid x_i, Z_{1:i-1}) \diff P_{v}(y_i) \diff P_{v'}(x_i) \nonumber \\
        &\geq \int_\mathcal{X} \int_\mathcal{X} e^{-\alpha} \{Q(Z_i \in \mathcal{B}_{v, i, 1}(v, Z_{1:i-1}) \mid y_i, Z_{1:i-1}) - \delta\}\diff P_{v}(y_i) \diff P_{v'}(x_i) \nonumber \\
        &= e^{-\alpha}M_{v}^{(i)}(Z_i \in \mathcal{B}_{v, i, 1}(v, Z_{1:i-1}) \mid Z_{1:i-1}) - \delta e^{-\alpha} \text{ a.s.}, \label{app:eq:BiEntropProb2}
    \end{align}
    where $Q$ denotes the LDP channel; the second equality is by Tonelli's theorem; and the inequality is by the definition of $(\alpha, \delta)$-LDP. Hence, combining \eqref{app:eq:BiEntropProb1} and~\eqref{app:eq:BiEntropProb2} and rearranging, we obtain
    \begin{equation}
        M_v^{(i)}(Z_i \in \mathcal{B}_{v, i, 1}(v, Z_{1:i-1}) \mid Z_{1:i-1})
        \leq \frac{\delta e^\alpha}{e^\alpha - 1} \text{ a.s.}  \label{app:eq:BiEntropProbFinal2}
    \end{equation}
    A similar calculation yields
    \begin{equation}
        M_v^{(i)}(Z_i \in \mathcal{B}_{v, i, 2}(v, Z_{1:i-1}) \mid Z_{1:i-1})
        \leq \frac{\delta e^{-\alpha}}{e^\alpha - 1} \text{ a.s.}, \label{app:eq:BiEntropProbFinal3}
    \end{equation}
    and hence, combining \eqref{app:eq:BiEntropProbFinal1}, \eqref{app:eq:BiEntropProbFinal2} and \eqref{app:eq:BiEntropProbFinal3}, we obtain
    \begin{equation} \label{app:eq:BiEntropProbFinalControl}
        \mathbb{P}(B_{i} = 1 \mid Z_{1:i-1})
        \leq \sum_{v \in \mathcal{V}} \mathbb{P}(V = v \mid Z_{1:i-1}) \mathbb{P}(B_{v,i} = 1 \mid Z_{1:i-1})
        \leq \frac{\delta(e^\alpha + e^{-\alpha})}{e^\alpha - 1} \leq \frac{2\delta e^\alpha}{e^\alpha - 1} \text{ a.s.}
    \end{equation}

    Denoting $p_i = \mathbb{P}(B_{i} = 1) = \mathbb{E}[\mathbb{P}(B_{i} = 1 \mid Z_{1:i-1})]$, we have $H(B_{i}) = p_i\log_2(1/p_i) - (1-p_i)\log_2(1-p_i) \lesssim p_i\log_2(1/p_i)$ for $0 < p_i \leq 1 - c'$ where $c'$ is some small constant. In particular, as by assumption $\delta \leq c/n$ for $c > 0$ a sufficiently small absolute constant and $\delta \leq \alpha/2$, we have that $p_i$ is bounded away from one, and hence
    \begin{equation}
        H(B_{i}) \lesssim \frac{2\delta e^\alpha}{e^\alpha - 1} \log_2\bigg( \frac{e^\alpha - 1}{2\delta e^\alpha} \bigg) \lesssim \frac{\delta}{\min\{1, \alpha\}}\log_2\bigg(\frac{\min\{1, \alpha\}}{\delta}\bigg), \label{app:eq:overallMIbound1}
    \end{equation}
    where the last inequality comes from noting that for $\alpha \in (0,1]$, $1/\alpha \leq e^\alpha/(e^\alpha - 1) \leq 2/\alpha$, and for $\alpha > 1$ that $1 \leq e^\alpha/(e^\alpha - 1) \leq 2$.
    
    \medskip
    \noindent \textbf{Controlling} (I).
    Write $m_{v, B_i = 0}^{(i)}(z_i \mid Z_{1:i-1}) = m_{v}^{(i)}(z_i \mid Z_{1:i-1}, B_i(V, Z_{1:i}) = 0)$ and $\overline{m}_{B_i = 0}^{(i)}(z_i \mid Z_{1:i-1}) = \sum_{v \in \mathcal{V}}\mathbb{P}(V = v \mid Z_{1:i-1}, B_i(V, Z_{1:i}) = 0)m_{v}^{(i)}(z_i \mid Z_{1:i-1}, B_i(V, Z_{1:i}) = 0)$.
    Denoting $P$ the law of a random variable where appropriate, we have that

    \begin{align}
        &I(V; Z_i \mid Z_{1:i-1}, B_{i}(V, Z_{1:i}) = 0) \nonumber \\
        &= \mathbb{E}_{Z_{1:i-1}}\bigg[ D_{\mathrm{KL}}\big(P_{V, Z_i \mid Z_{1:i-1}, B_{i}(V, Z_{1:i}) = 0} \| P_{V \mid Z_{1:i-1}, B_{i}(V, Z_{1:i}) = 0} \otimes P_{Z_i \mid Z_{1:i-1}, B_{i}(V, Z_{1:i}) = 0}\big) \biggm| B_i(V, Z_{1:i}) = 0\bigg] \nonumber \\
        &\leq \mathbb{E}_{Z_{1:i-1}}\bigg[ D_{\mathrm{KL}}\big(P_{V, Z_i \mid Z_{1:i-1}, B_{i}(V, Z_{1:i}) = 0} \| P_{V \mid Z_{1:i-1}, B_{i}(V, Z_{1:i}) = 0} \otimes P_{Z_i \mid Z_{1:i-1}, B_{i}(V, Z_{1:i}) = 0}\big) \nonumber\\ 
        &\hspace{1cm}+ D_{\mathrm{KL}}\big(P_{V \mid Z_{1:i-1}, B_{i}(V, Z_{1:i}) = 0} \otimes P_{Z_i \mid Z_{1:i-1}, B_{i}(V, Z_{1:i}) = 0} \| P_{V, Z_i \mid Z_{1:i-1}, B_{i}(V, Z_{1:i}) = 0}\big) \biggm| B_i(V, Z_{1:i}) = 0\bigg] \nonumber \\
        &= \mathbb{E}_{Z_{1:i-1}}\bigg[ \sum_{v \in \mathcal{V}, z_i \in [k]} \mathbb{P}(V = v \mid Z_{1:i-1}, B_i(V, Z_{1:i}) = 0)\{ m_{v, B_i = 0}^{(i)}(z_i \mid Z_{1:i-1})  - \overline{m}_{B_i = 0}^{(i)}(z_i \mid Z_{1:i-1})\} \nonumber \\
        &\hspace{8cm}\times \log\bigg(\frac{m_{v, B_i = 0}^{(i)}(z_i \mid Z_{1:i-1})}{\overline{m}_{B_i = 0}^{(i)}(z_i \mid Z_{1:i-1})}\bigg) \biggm| B_i(V, Z_{1:i}) = 0\bigg] \nonumber \\
        &= \mathbb{E}_{Z_{1:i-1}}\bigg[ \sum_{v \in \mathcal{V}} \mathbb{P}(V = v \mid Z_{1:i-1}, B_i(V, Z_{1:i}) = 0) \bigg\{ \sum_{z_i \in [k]}\{ m_{v, B_i = 0}^{(i)}(z_i \mid Z_{1:i-1})  - \overline{m}_{B_i = 0}^{(i)}(z_i \mid Z_{1:i-1})\} \nonumber \\
        &\hspace{8cm}\times \log\bigg(\frac{m_{v, B_i = 0}^{(i)}(z_i \mid Z_{1:i-1})}{\overline{m}_{B_i = 0}^{(i)}(z_i \mid Z_{1:i-1})}\bigg) \bigg\} \biggm| B_i(V, Z_{1:i}) = 0\bigg] \nonumber \\
        &\lesssim \mathbb{E}_{Z_{1:i-1}}\bigg[ \min\{\alpha, \alpha^2\}\sum_{v \in \mathcal{V}} \mathbb{P}(V = v \mid Z_{1:i-1}, B_i(V, Z_{1:i}) = 0) \biggm| B_i(V, Z_{1:i}) = 0\bigg]
        = \min\{\alpha, \alpha^2\}, \label{app:eq:LDPMIBound1}
    \end{align}
    where the first inequality is by the non-negativity of Kullback--Leibler divergence, noting the distributions are absolutely continuous with respect to each other on $\{B_i = 0\}$, and the second by \Cref{app:lem:KLboundonevent}. Hence, we obtain
    \begin{equation} \label{app:eq:overallMIbound2}
        I(V; Z_i \mid Z_{1:i-1}, B_i = 0) \lesssim \min\{\alpha, \alpha^2\}
    \end{equation}
    
    \medskip
    \noindent \textbf{Controlling} (II).  
    We first control the mutual information term
    \begin{align*}
        I(V; Z_i \mid Z_{1:i-1}, B_i(V, Z_{1:i}) = 1)
        &= H(V \mid Z_{1:i-1}, B_i = 1) - H(V \mid Z_i, Z_{1:i-1}, B_i = 1) \\
        &\leq H(V \mid Z_{1:i-1}, B_i = 1) \\
        &\leq H(V)
        \leq \log_2(|\mathcal{V}|),
    \end{align*}
    where the equality is by the definition of conditional mutual information; the first inequality by the non-negativity of entropy; the second inequality from the fact that conditioning cannot increase entropy which can be seen from the definition \eqref{app:eq:condentropy} and the non-negativity of entropy; and the final by the maximal bound on entropy which can be readily verified using Jensen's inequality.

    Hence, by \eqref{app:eq:BiEntropProbFinalControl}, we have
    \begin{equation} \label{app:eq:overallMIbound3}
        I(V; Z_i \mid Z_{1:i-1}, B_i = 1) \mathbb{P}(B_i = 1)
        \leq \frac{2\delta e^{\alpha}\log_2(|\mathcal{V}|)}{e^\alpha - 1}
        \lesssim \frac{\delta\log_2(|\mathcal{V}|)}{\min\{1,\alpha\}},
    \end{equation}
    where the last inequality comes from noting that for $\alpha \in (0,1]$, $e^\alpha/(e^\alpha - 1) \leq 2/\alpha$, and for $\alpha > 1$ that $e^\alpha/(e^\alpha - 1) \leq 2$.

    With all three terms controlled, we now conclude the proof. Combining \eqref{app:eq:approxTinfFano}, \eqref{app:eq:approxTinfMIChain}, \eqref{app:eq:overallMIbound}, \eqref{app:eq:overallMIbound1}, \eqref{app:eq:overallMIbound2} and \eqref{app:eq:overallMIbound3}, we obtain
    \begin{align*}
        \mathcal{R}_{n, \infty, \alpha, \delta} &\big(\theta(\mathcal{P}), \Phi \circ \rho\big) \\
        &\geq \frac{\Phi(\Delta)}{2}\bigg\{ 1 - \frac{C'n \min\{\alpha, \alpha^2\} + \log(2)}{\log(|\mathcal{V}|)} - C'\bigg(\frac{n\delta}{\min\{1, \alpha\}} - \frac{n\delta \log_2(\min\{1, \alpha\}/\delta)}{\min\{1, \alpha\} \log_2(|\mathcal{V}|)}\bigg)\bigg\} \\
        &\geq \frac{\Phi(\Delta)}{2}\bigg( \frac{1}{2} - \frac{C'n \min\{\alpha, \alpha^2\} + \log(2)}{\log(|\mathcal{V}|)} \bigg).
    \end{align*}
    where the second inequality comes from the fact $\delta \leq c\min\{1, \alpha\}/\{n\log(n)\}$ for $c > 0$ some sufficiently small constant and the assumption that $n\min\{1, \alpha\} > C$ for $C > 0$ some sufficiently large constant.
\end{proof}

\section{Mean Estimation in Low-Privacy Regime} \label{app:sec:meanlowpriv}

    In this appendix, we provide the general algorithms for user-level mean estimation Section~3 to all $\alpha \in (0, \infty)$. Recalling the families~$\mathcal{P}_{d}$ and $\mathcal{P}_{d}'$ defined in \eqref{sec2:eq:MeanClasses}, we estimate the mean of a distribution $P$ so that $\theta(P) = \mathbb{E}_P(X)$ and measure the error by the squared $\ell_2$-loss. 
    
    We now present the estimation methods for the univariate case, the multivariate case with the $\ell_{\infty}$-ball and the multivariate case with the $\ell_2$-ball. In the case $\alpha \in (0, 1]$, we recover the methods of Section~3. The performance of these algorithms is analyzed in \Cref{app:sec:Upper}.

    \subsection{Univariate Procedure} \label{app:sec:LowPrivUniMean}
        We first introduce the main sub-routine for general privacy parameter $\alpha > 0$ in \Cref{alg:GeneralUserLevelMean}, followed by the univariate mean estimation procedure in \Cref{alg:unigeneralpriv}.
        \begin{algorithm}
            \caption{\textsc{GeneralUserLevelMean}\,$(\{X^{(i)}_{t}\}_{i\in[n],t\in[T]},\,n,\,T,\,\alpha,\,\Delta)$}
            \begin{algorithmic}[1]
                \State Let $N(\Delta) = \lceil 1/\Delta \rceil$;\; $I_j = [-1 + 2(j-1)\Delta,\; -1 + 2j\Delta)$ for all $j \in [N(\Delta)]$;\; and $I_{N(\Delta)} = [-1 + 2\{N(\Delta)-1\}\Delta,\; -1 + 2N(\Delta)\Delta]$.
                \For{$i=1,\dots,n/2$}
                    \State Calculate $\hat{\theta}^{(i)} = (T^{\ast})^{-1}\sum_{t=1}^{T^{\ast}} X^{(i)}_{t}$. \refstepcounter{equation}\label{app:eq:unilowprivunilocalisationestimator} \hfill(\theequation)
                    
                    \For{$j=1,\dots,N(\Delta)$}
                        \If{$\alpha \in (0, 1]$}
                            \State Let $V^{(i)}_{j} = \mathbbm{1}\{\hat{\theta}^{(i)}\in I_j\}$ and sample $U_{i,j}\sim\mathrm{Unif}[0,1]$.
                            \State $\widetilde{V}^{(i)}_j = V^{(i)}_j \mathbbm{1}\{U_{i,j} \leq \omega_{\alpha/2}\} + (1 - V^{(i)}_j)\mathbbm{1}\{U_{i,j} > \omega_{\alpha/2}\}$. \refstepcounter{equation}\label{sec3:eq:uniGRRGeneral} \hfill(\theequation)
                        \Else
                            \State Let $V^{(i)}_{j} = \mathbbm{1}\{\hat{\theta}^{(i)}\in I_{j-1} \cup I_j \cup I_{j + 1}\}$ and sample $U_{i,j}\sim\mathrm{Unif}[0,1]$.
                            \State $\widetilde{V}^{(i)}_j = V^{(i)}_j \mathbbm{1}\{U_{i,j} \leq \omega_{\alpha/6}\} + (1 - V^{(i)}_j)\mathbbm{1}\{U_{i,j} > \omega_{\alpha/6}\}$. \refstepcounter{equation}\label{sec3:eq:uniGRRGeneral2} \hfill(\theequation)
                        \EndIf
                        
                    \EndFor
                \EndFor
                \State Let $j^{\ast} = \min\arg\max_{j\in[N(\Delta)]}\sum_{i=1}^{n/2}\widetilde{V}^{(i)}_{j}$ and denote $L,U$ the lower/upper endpoints of $I_{j^{\ast}}$. \refstepcounter{equation}\label{app:eq:unilowprivunichosenindex} \hfill(\theequation)
                \If{$\alpha \in (0, 1]$}
                    \State Let $\tilde{L} = L-2\Delta,  \tilde{U} = U+2\Delta$ and
                \Else
                    \State Let $\tilde{L} = L-5\Delta,  \tilde{U} = U+5\Delta$ and
                \EndIf
                \State Let $\tilde{I}_{j^{\ast}} = [\tilde{L},\tilde{U}]$. \refstepcounter{equation}\label{app:eq:unilowprivuniMeanIntervalExpand} \hfill(\theequation) 
                \For{$i=n/2+1,\dots,n$}
                    \State Sample $\ell_i \sim \mathrm{Laplace}(1)$ and let $\hat{\theta}^{(i)} = \Pi_{\tilde{I}_{j^{\ast}}}\{(T^\ast)^{-1}\sum_{t=1}^{T^{\ast}} X^{(i)}_{t}\} + \{(\tilde{U} - \tilde{L})/\alpha\}\,\ell_i$. \refstepcounter{equation}\label{app:eq:unilowprivUniRefinedEstimator} \hfill(\theequation)
                \EndFor
                \State Let $\hat{\theta} = (n/2)^{-1}\sum_{i=n/2+1}^{n}\hat{\theta}^{(i)}$ and
                \Return $\hat{\theta}$.
            \end{algorithmic}
            \label{alg:GeneralUserLevelMean}
        \end{algorithm}

        \begin{algorithm}
            \caption{\textsc{GeneralPrivateUniMean}\,$(\{X^{(i)}_{t}\}_{i\in[n],t\in[T]},\,n,\,T,\,\alpha,\,K)$}
            \begin{algorithmic}[1]
                \State Let $T^{\ast} = T \wedge \exp(n\min\{\alpha, \alpha^2\}/K)$ 
                \If{$\alpha \in (0, 1]$}
                    \State $\Delta = \{2\log(nT^{\ast}\alpha^{2})/T^{\ast}\}^{1/2}$.
                \Else
                    \State $\Delta = \{4\log(nT^{\ast})/T^{\ast}\}^{1/2}$. \refstepcounter{equation}\label{app:eq:unilowprivdeltaval2} \hfill(\theequation) 
                \EndIf
                \State Let $\hat{\theta} = \textsc{GeneralUserLevelMean}(\{X_t^{(i)}\}_{i \in [n], t \in [T^\ast]}, n, T^\ast, \alpha, \Delta)$ and
                \Return $\hat{\theta}$.
            \end{algorithmic}
            \label{alg:unigeneralpriv}
        \end{algorithm}
    
        The privatization in \eqref{sec3:eq:uniGRRGeneral} and \eqref{sec3:eq:uniGRRGeneral2} satisfies $\alpha$-LDP by \Cref{app:lem:RR}, as does the estimator $\hat{\theta}^{(i)}$ due to being an instance of the Laplace mechanism.
    
        We briefly discuss the differing voting procedure dependent on the value of $\alpha$. The core difficulty is that the true $\theta$ may be close to the endpoint of a bin. In this case, the votes may be evenly split between each bin. For $\alpha \in (0,1]$, it is enough to consider that at least one sub-interval will contain half the users with high probability via Hoeffding's inequality. For $\alpha > 1$ however, to be able to obtain tight minimax rates we must instead employ a more direct Chernoff bound argument, and require that at least one sub-interval close to the true mean will get voted for by every user before privatization with high probability, requiring the vote for the three closest bins.

    \subsection{Multivariate Procedure for \texorpdfstring{$\ell_\infty$}{Sup-Norm}-ball} \label{app:sec:LowPrivlinfball}

        We generalise the multivariate procedure for the $\ell_\infty$-ball to general $\alpha \in (0, \infty)$. We recall for $\alpha \leq 1$, we split the $n$ users into $d$ many equally-sized folds, each of which is assigned to estimate a co-ordinate using the univariate procedure. For the case $\alpha > 1$, we instead assign each user to $(\lfloor \alpha \rfloor \wedge d)$-many differing folds, with privacy budget $1 \vee (\alpha/d)$. In particular, when $1 \vee (\alpha/d) > 1$, it is necessary to use the univariate procedure tailored for the low-privacy setting as in \Cref{app:sec:LowPrivUniMean}. We assume without loss of generality that $n$ is a multiple of $2d$. The procedure is formalised in \Cref{alg:linflowpriv}. The procedure is private for $\alpha \leq 1$ as the univariate mechanism is $\alpha$-LDP. For $\alpha > 1$, by the compositionality of LDP \citep{Dwork:2006}, the procedure is LDP with parameter $(\lfloor \alpha \rfloor \wedge d)\{1 \vee (\alpha/d)\} \leq \alpha$.

        \begin{algorithm}
            \caption{\textsc{GeneralPrivateMultiDimMean}\,$(\{X^{(i)}_{t}\}_{i\in[n],t\in[T]},\,n,\,T,\,\alpha,\,K)$}
            \begin{algorithmic}[1]
                \State Let $N_j = N_{j, 1} \cup N_{j, 2}$ where $N_{j,1} = \left\{(j-1)\frac{n}{2d} + 1, \hdots, j\frac{n}{2d} \right\}$,\newline $N_{j,2} = \left\{(d+j-1)\frac{n}{2d} + 1, \hdots, (d+j)\frac{n}{2d} \right\}$, for $j \in [d]$. For convenience, denote $N_k = N_{k - d}$ for $k \in \{d + 1, \hdots, 2d\}$.
                \For{$j = 1, \hdots, d$}
                    \If{$\alpha \in (0, 1]$} 
                        \State Let $T^\ast = T \wedge \exp(n\alpha^2/Kd)$, $\Delta = \{2\log(nT^{\ast}\alpha^{2}/d)/T^{\ast}\}^{1/2}$.
                        \State Calculate $\hat{\theta}_j = \textsc{GeneralUserLevelMean}(\{X^{(i)}_{t, j}\}_{i \in N_j, t \in [T^\ast]},\, n,\, T^\ast,\,\alpha,\,\Delta)$.
                    \ElsIf{$\alpha \in (1, d]$}
                        \State Let $T^\ast = T \wedge \exp(n/Kd)$, $\Delta = \{2\log(nT^{\ast}/d)/T^{\ast}\}^{1/2}$.
                        \State Calculate $\hat{\theta}_j = \textsc{GeneralUserLevelMean}(\{X^{(i)}_{t, j}\}_{i \in N_j \cup \hdots \cup N_{j + \lfloor \alpha \rfloor}, t \in [T^\ast]},\, n,\, T^\ast,\,1,\,\Delta)$.
                    \Else
                        \State Let $T^\ast = T \wedge \exp(n\alpha/Kd)$, $\Delta = \{4\log(nT^{\ast}/d)/T^{\ast}\}^{1/2}$.
                        \State Calculate $\hat{\theta}_j = \textsc{GeneralUserLevelMean}(\{X^{(i)}_{t, j}\}_{i \in [n], t \in [T^\ast]},\, n,\, T^\ast,\, \alpha/d ,\,\Delta)$.
                    \EndIf
                \EndFor
            
                \State Let $\hat{\theta} = \sum_{j=1}^d e_j \hat{\theta}_{j}$ where $e_j$ is the $j$-th standard basis vector of $\mathbb{R}^d$, and
                \Return $\hat{\theta}$. \refstepcounter{equation}\label{app:eq:LowPrivlinfFinalEstimator} \hfill(\theequation) 
            \end{algorithmic}
            \label{alg:linflowpriv}
        \end{algorithm}

    \subsection{Multivariate Procedure for \texorpdfstring{$\ell_2$}{Euclidean}-ball} \label{app:sec:LowPrivl2ball}  
        
        For the $\ell_2$-ball setting, the procedure for $\alpha > 1$ operates similarly to the $\ell_\infty$-ball setting, suitably distributing each user across multiple co-ordinates simultaneously by splitting the privacy budget. This is formalised in \Cref{alg:l2lowpriv}.

        \begin{algorithm}
            \caption{\textsc{GeneralPrivateMultiDimMean2}\,$(\{X^{(i)}_{t}\}_{i\in[n],t\in[T]},\,n,\,T,\,\alpha,\,K,\,C)$}
            \begin{algorithmic}[1]
                \State Let $N_j = N_{j, 1} \cup N_{j, 2}$ where $N_{j,1} = \left\{(j-1)\frac{n}{2d} + 1, \hdots, j\frac{n}{2d} \right\}$,\newline $N_{j,2} = \left\{(d+j-1)\frac{n}{2d} + 1, \hdots, (d+j)\frac{n}{2d} \right\}$, for $j \in [d]$. For convenience, denote $N_k = N_{k - d}$ for $k \in \{d + 1, \hdots, 2d\}$. Sample random rotation matrix $R_d$ as in~(13).
                \For{$j = 1, \hdots, d$}
                    \If{$\alpha \in (0, 1]$} 
                        \State Denote $T^\ast = T \wedge \exp(n\alpha^2/Kd)$, $\Delta = \{C\log(nT^{\ast}\alpha^{2}/d)/T^{\ast}\}^{1/2}$.
                        \State Let $\hat{\theta}_j = \textsc{GeneralUserLevelMean}(\{(R_d X^{(i)}_{t})_j\}_{i \in N_j, t \in [T^\ast]},\, n,\, T^\ast,\,\alpha,\,\Delta)$.
                    \ElsIf{$\alpha \in (1, d]$}
                        \State Denote $T^\ast = T \wedge \exp(n/Kd)$, $\Delta = \{C\log(nT^{\ast}/d)/T^{\ast}\}^{1/2}$.
                        \State Let $\hat{\theta}_j = \textsc{GeneralUserLevelMean}(\{(R_d X^{(i)}_{t})_j\}_{i \in N_j \cup \hdots \cup N_{j + \lfloor \alpha \rfloor}, t \in [T^\ast]},\, n,\, T^\ast,\,1,\,\Delta)$.
                    \Else
                        \State Denote $T^\ast = T \wedge \exp(n\alpha/Kd)$, $\Delta = \{C\log(nT^{\ast}/d)/T^{\ast}\}^{1/2}$.
                        \State Let $\hat{\theta}_j = \textsc{GeneralUserLevelMean}(\{(R_d X^{(i)}_{t})_j\}_{i \in [n], t \in [T^\ast]},\, n,\, T^\ast,\, \alpha/d ,\,\Delta)$.
                    \EndIf
                \EndFor
            
                \State Let $\hat{\theta} = R_d^{-1}\sum_{j=1}^d e_j \hat{\theta}_{j}$ where $e_j$ is the $j$-th standard basis vector of $\mathbb{R}^d$, and
                \Return $\hat{\theta}$. \refstepcounter{equation}\label{app:eq:LowPrivFinall2Estimator} \hfill(\theequation) 
            \end{algorithmic}
            \label{alg:l2lowpriv}
        \end{algorithm}
    
        As with the $\ell_\infty$-ball case, this procedure is private for $\alpha \leq 1$ as the univariate mechanism is $\alpha$-LDP. For $\alpha > 1$, by the compositionality of LDP \citep{Dwork:2006}, the procedure is LDP with parameter $(\lfloor \alpha \rfloor \wedge d)\{1 \vee (\alpha/d)\} \leq \alpha$.
    \subsection{Discussion}\label{app:sec:LowPrivDiscussion}
        
        In this appendix we extended the results of \Cref{sec3} to the low privacy setting $\alpha > 1$. In particular we observe, just as in the $\alpha \in (0,1]$ setting, that our procedure attains the $T$-independent lower bounds for $T$ sufficiently large. Importantly, here we consider the $T$-independent lower bound for general $\alpha$ as given in \Cref{sec2:thm:GeneralBoundApproximateLDP}, where the modifications to the univariate procedure in \Cref{app:sec:LowPrivUniMean} are critical for attaining these matching rates.
        
        To our knowledge, the case of $\alpha > 1$ under user-level LDP has been explored in depth only in \cite{Acharya:2023} for the problem of discrete density estimation. There, a multitude of regimes exist, depending delicately on $n, T, \alpha$ and the dimension of the problem. We instead observe fewer regimes in the problem of mean estimation, but also share the requirement $d^2/nT \lesssim 1$ for the low-privacy lower bounds to be valid. The greater number of regimes observed in \cite{Acharya:2023} is due the subtleties introduced by the dependence between observations drawn from the multinomial distributions considered therein when obtaining lower bounds, whereas no such dependence arises in the cases of mean estimation we consider. For a more detailed treatment, see the discussion and use of Poissonisation in \cite{Acharya:2023}.

\section{Computational Complexity of Estimation Procedures} \label{app:sec:CompComplex}
    In this appendix we summarise the computational complexity of the developed estimation procedures.

    \medskip
    \noindent
    \textbf{Univariate Mean Procedure:}
    We first note that each user can calculate their quantities in parallel, barring the one step of interactivity. In the localisation stage, each user calculates the mean of $O(T^\ast)$ observations, requiring this many operations, and then queries the membership of this mean out of $O((T^\ast)^{1/2})$ sub-intervals to obtain an $O((T^\ast)^{1/2})$-dimensional vector. To privatise the vector, each user checks whether to flip an entry of the vector via \eqref{sec3:eq:uniGRR}, and so the privatisation step requires $O((T^\ast)^{1/2})$ operations. The data aggregator then sums the votes of $O(n)$ users for each $O((T^\ast)^{1/2})$-dimensional vote vector. Hence, the total complexity of this stage is $\max\{O(T^\ast), O(n(T^\ast)^{1/2})\}$.
    
    In the refinement stage, each user again calculates a mean of $O(T^\ast)$ observations. The truncation and subsequent privatisation can be carried out in $O(1)$ time. Finally, the data aggregator averages over $O(n)$ users. Hence, the total complexity of this stage is $\max\{O(T^\ast), O(n)\}$, and the overall computational complexity is $\max\{O(T^\ast), O(n(T^\ast)^{1/2})\}$.

    \medskip
    \noindent
    \textbf{$\ell_\infty$-Ball Procedure:}
    We first consider the procedure for $\alpha \leq 1$ as in \Cref{sec3:linfball}. We recall this multivariate procedure splits the users into $d$ groups, each with a sample size of $O(n/d)$. The localisation stage occurs in parallel for all users across all groups, requiring $O(T^\ast)$ operations. The central server then aggregates the results for each group, requiring $O(n(T^\ast)^{1/2}/d)$ operations per group, with this being carried out in series by the central serval for a total of $O(n(T^\ast)^{1/2})$ operations in total.

    For the refinement stage, each user across all groups again carries out $O(T^\ast)$ operations in parallel, with the central server aggregating the results of $O(n/d)$ users per group across $d$ groups. Hence, the overall computational complexity is $\max\{O(T^\ast), O(n(T^\ast)^{1/2})\}$.

    For the procedure when $\alpha > 1$ as in \Cref{app:sec:LowPrivlinfball}, we note that a user applies the univariate procedure to at most $(d \wedge \lfloor \alpha \rfloor)$-many of the co-ordinates individually, and the sample size of each group is $O((d \wedge \lfloor \alpha \rfloor)n/d)$, giving a complexity of $\max\{O((d \wedge \lfloor \alpha \rfloor)T^\ast), O((d \wedge \lfloor \alpha \rfloor)n(T^\ast)^{1/2})\}$.

    \medskip
    \noindent
    \textbf{$\ell_2$-Ball Procedure:}
    We first consider the procedure for $\alpha \leq 1$ as in \Cref{sec-multi-ell2}. The $\ell_2$-ball procedure is carried out as the $\ell_\infty$-ball procedure except with a) $O(d^2)$ operations carried out per user to transform their sample mean; b) $O(d^2)$ operations carried out by the central server when inverting the transformation at the final step, and; c) a total of $O((dT^\ast)^{1/2})$ sub-intervals are used as per the sub-interval width \eqref{sec3:eq:l2deltaval}. Hence, we focus on the time complexity of the localisation step in this case. Each user calculates their sample mean of $O(T^\ast)$ observations, and queries the membership of this mean out of $O((dT^\ast)^{1/2})$ sub-intervals. The data aggregator then sums the votes of $O(n/d)$ users for each $O((dT^\ast)^{1/2})$-dimensional vote vector. Hence, the total complexity of this stage after the central server aggregates the results for each group is $\max\{O(d^2), O(T^\ast), O(n(dT^\ast)^{1/2})\}$. The remaining stages follow as in the $\ell_\infty$-ball procedure, and so the total complexity is $\max\{O(d^2), O(T^\ast), O(n(dT^\ast)^{1/2})\}$

    For the procedure when $\alpha > 1$ as in \Cref{app:sec:LowPrivl2ball}, as with the $\ell_\infty$-ball procedure, each user considers at most $(d \wedge \lfloor \alpha \rfloor)$-many co-ordinates, and the sample size of each group is $O((d \wedge \lfloor \alpha \rfloor)n/d)$. We note that each user only transforms their sample mean once, regardless of how many co-ordinates they consider. Hence, the total complexity is $\max\{O(d^2), O((d \wedge \lfloor \alpha \rfloor)T^\ast), O((d \wedge \lfloor \alpha \rfloor)n(dT^\ast)^{1/2})\}$.

    \medskip
    \noindent
    \textbf{Variable Selection-Based Sparse Mean Procedure:}
    The variable selection-based procedure corresponds to that in \Cref{sec:sparse:variableselection}. Observe that the localisation step requires $O(d)$ computations per user where they verify the magnitude of each co-ordinate of the sample mean, on top of the $O(T^\ast)$ to compute the sample mean, and the server then aggregates the results of $O(n)$ users for each co-ordinate. Hence, the localisation step has complexity $\max\{O(T^\ast), O(nd)\}$. Combining with the complexity of the $s$-dimensional $\ell_\infty$-ball estimation procedure, the overall complexity is $\max\{O(T^\ast), O(nd), O(n(T^\ast)^{1/2})\}$.

    \medskip
    \noindent
    \textbf{Thresholding-Based Sparse Mean Procedure:}
    The thresholding-based procedure corresponds to that in \Cref{sec:sparse:thresholding}. We observe that it is simply the $\ell_\infty$-ball procedure (which has a complexity of $\max\{O(T^\ast), O(n(T^\ast)^{1/2})\}$), with a truncation operation applied to each of the $d$ co-ordinates at the final step. Hence, the complexity is $\max\{O(d), O(T^\ast), O(n(T^\ast)^{1/2}\}$. We remark that this procedure only outperforms the variable selection-based procedure when $n\alpha^2 \gtrsim d\log(nT\alpha^2)$, and the $O(d)$ term is dominated in this setting.

    \medskip
    \noindent
    \textbf{Non-Parametric Density Estimation Procedure:}
    Regarding the computational complexity of the procedure in \Cref{sec4:sec:procedure}, we observe that it is simply the $\ell_\infty$-ball procedure (which has a complexity of $\max\{O(T^\ast), O(n(T^\ast)^{1/2})\}$), applied after each user evaluates each of their $T^\ast$ observations on the functions $\{\phi_j\}_{j \in [M]}$ for a total of $O(MT^\ast)$ operations. Hence, the overall computation complexity is $\max\{O(MT^\ast), O(n(T^\ast)^{1/2})\}$ for $M$ as in \eqref{sec4:eq:Mval}
    
    \medskip
    \noindent
    \textbf{Mixture Model Mean Procedure:}
    For the procedure developed in \Cref{secmix:procedure}, each user requires $O(T^\ast)$ computations locally as in the other algorithms. When attaining an estimate for the $k$-th proportion $\pi_k$ or mean $\theta_k$ as in \eqref{secmix:eq:mixpropests} and \eqref{secmix:eq:FinalEstimatorComponent}, the server aggregates results from all $n$ users. Hence, the overall complexity is $\max\{O(T^\ast), O(mn)\}$ for $m$ the number of mixture components.

    \begin{table}[h]
        \centering
        \begin{tabular}{ll}
        \hline
        \hline
        Procedure                             & Complexity                                       \\ \hline
        $\ell_\infty$-ball Procedure     & $\max\{O(T^\ast), O(n(T^\ast)^{1/2})\}$          \\
        $\ell_2$-ball Procedure  & $\max\{O(d^2), O(T^\ast), O(n(dT^\ast)^{1/2})\}$ \\
        Sparse Mean Estimation (large $d$) & $\max\{O(T^\ast), O(nd), O(n(T^\ast)^{1/2})\}$   \\
        Sparse Mean Estimation (small $d$) & $\max\{O(d), O(T^\ast), O(n(T^\ast)^{1/2}\}$     \\
        Non-Parametric Density Estimation  & $\max\{O(MT^\ast), O(n(T^\ast)^{1/2})\}$         \\
        Mixture-Model Mean Estimation      & $\max\{O(T^\ast), O(mn)\}$                       \\ \hline
        \end{tabular}
        \caption{Computation complexities of developed procedures for $\alpha \leq 1$.}
    \end{table}

\section{Proof of User-Level Upper Bounds} \label{app:sec:Upper}
\subsection{Proof of Theorem \ref{sec3:thm:main} Upper Bound (Univariate)} \label{app:sec:Univariate}
\begin{proof}[Proof of \Cref{sec3:thm:main} $(d = 1)$]
    \
    \\
    \noindent \textbf{Step 1: Initial Estimator Interval $[\tilde{L}, \tilde{U}]$.}
    For the first step, we consider the cases $\alpha \in (0,1]$ and $\alpha > 1$ separately.

    \noindent \textbf{Step 1.1: Case $\alpha \in (0,1]$.}
    Recall that $n$ is assumed to be even.  We denote by $l \in [N(\Delta)]$ the fixed but unknown index such that the truth $\theta \in I_l$. Further, for each $i \in [n]$ and $t \in [T]$, the random variable $X^{(i)}_t \in [-1, 1]$. Hence, recalling the definition of $\hat{\theta}^{(i)}$ for $i \in [n/2]$ in \eqref{app:eq:unilocalisationestimator}, we see that 
    \begin{equation} \label{app:eq:UnivariateConcentration}
        \mathbb{P}\left(|\hat{\theta}^{(i)} - \theta|\geq \Delta \right) = \mathbb{P}\left\{\left| \frac{1}{T^\ast} \sum_{t = 1}^{T^\ast} X^{(i)}_t\right|\geq \Delta \right\} \leq 2e^{-T^\ast\Delta^2/2} = \frac{2}{nT^\ast\alpha^2} \leq \frac{1}{4}
    \end{equation}
    where the first inequality is due to Hoeffding's inequality for bounded random variables \citep[e.g.][Proposition 2.5]{Wainwright:2019}; the second equality by the definition of $\Delta$ from~\eqref{sec3:eq:deltaval}; and the last inequality by the assumption that $n\alpha^2 \geq \widetilde{C}$ for some sufficiently large $\widetilde{C}$ and $T^* \geq 1$.
    
    Denote $l'$ for the index of the closest sub-interval not containing the true mean, that is
    \begin{equation*}
        l' =
        \begin{cases}
            l - 1, &\text{when } \theta \in [-1 + 2(l-1)\Delta, -1 + 2(l-1)\Delta + \Delta) \\
            l + 1, &\text{when } \theta \in [-1 + 2(l-1)\Delta + \Delta, -1 + 2l\Delta) \\
        \end{cases}.
    \end{equation*}
    On the event $\{|\hat{\theta}^{(i)} - \theta| < \Delta\}$, it holds that $\hat{\theta}^{(i)} \in I_l \cup I_{l'}$ where we denote $I_0 = I_{N(\Delta) + 1} = \emptyset$. Consequently, we can bound the following probability
    \begin{equation*}
        \mathbb{P}(\{V_l^{(i)} = 0\} \cap \{V_{l'}^{(i)} = 0\}) \leq \mathbb{P}\left(|\hat{\theta}^{(i)} - \theta| \geq \Delta \right) \leq \frac{1}{4}.
    \end{equation*}
    Noting that $\mathbb{P}(\{V_l^{(i)} = 1\} \cap \{V_{l'}^{(i)} = 1\}) = 0$, we obtain
    \begin{align*}
        \mathbb{P}(V_l^{(i)} = 1) + \mathbb{P}(V_{l'}^{(i)} = 1)
        &\geq \mathbb{P}(\{V_l^{(i)} = 1\} \cap \{V_{l'}^{(i)} = 0\}) + \mathbb{P}(\{V_l^{(i)} = 0\} \cap \{V_{l'}^{(i)} = 1\}) \\
        &= \mathbb{P}(\{V_l^{(i)} = 1\} \cup \{V_{l'}^{(i)} = 1\}) \\
        &= 1 - \mathbb{P}(\{V_l^{(i)} = 0\} \cap \{V_{l'}^{(i)} = 0\}) > 3/4
    \end{align*}
    We then denote $l^\ast = \argmax_{a \in \{l, l'\}}\{\mathbb{P}(V_a^{(1)} = 1)\}$, such that $\mathbb{P}(V_{l^\ast}^{(i)} = 1) > 3/8$ for all $i \in [n/2]$.
    
    We also have the bound
    \begin{equation*}
        \mathbb{P}(V_j^{(i)} = 1) \leq \mathbb{P}\left(|\hat{\theta}^{(i)} - \theta| \geq \Delta \right) \leq \frac{1}{4}, \quad j \in[N(\Delta)] \setminus \{l, l'\},
    \end{equation*}
    following from \eqref{app:eq:UnivariateConcentration}. Then, for $k \in [N(\Delta)]$, we have that  
    \begin{align*}
        p_k := \mathbb{P}(\widetilde{V}_k^{(i)} = 1) = (2\omega_{\alpha/2} - 1)\mathbb{P}(V_k^{(i)} = 1) + (1 - \omega_{\alpha/2})         
    \end{align*}
    Hence, $p_{l^\ast} > (5 - 2\omega_{\alpha/2})/8$ and $p_j \leq (3-2\omega_{\alpha/2})/4$ for $j \in[N(\Delta)] \setminus \{l, l'\}$. 
    
    Letting $Z_j = \sum_{i=1}^{n/2} \widetilde{V}^{(i)}_j$ denote the total privatised votes for the $j$-th sub-interval, we consider the event
    \begin{equation} \label{app:eq:eventB}
        A_1 = \bigcap_{j \in [N(\Delta)] \setminus\{l, l'\}} \{Z_{l^\ast} > Z_j\}.
    \end{equation}   
    We note that
    \begin{equation*}
        Z_{l^\ast} - Z_j = \sum_{i = 1}^{n/2} \big(\widetilde{V}^{(i)}_{l^\ast} - \widetilde{V}^{(i)}_j\big),
    \end{equation*}
    where $\mathbb{E}[\widetilde{V}^{(i)}_{l^\ast} - \widetilde{V}^{(i)}_j] = p_{l^\ast} - p_j$. Hence, 
    \begin{align}
        \mathbb{P}( Z_{l^\ast} \leq Z_j) &= \mathbb{P}\left(\sum_{i = 1}^{n/2} \big(\widetilde{V}^{(i)}_{l^\ast} - \widetilde{V}^{(i)}_j\big) \leq 0\right) \nonumber \\
        &= \mathbb{P}\left(\sum_{i = 1}^{n/2} \big(\widetilde{V}^{(i)}_{l^\ast} - \widetilde{V}^{(i)}_j - p_{l^\ast} + p_j\big) \leq -n/2(p_{l^\ast} - p_j)\right) \nonumber \\
        &\leq  e^{-n(p_{l^\ast} - p_j)^2/4} \leq e^{-n(1/2 - \omega_{\alpha/2})^2/64} \leq e^{-2n\alpha^2/K}. \label{app:eq:voteprobbound}
    \end{align}
    for $K>0$ an absolute constant, where the first inequality is by inequality comes from Hoeffding's inequality, and the third inequality by the fact that $\alpha^2/K \leq (1/2 - \omega_{\alpha/2})^2$ holds for $\alpha \in (0,1]$ for $K$ sufficiently large.

    Hence, we have
    \begin{equation}
        \begin{split}
            \mathbb{P}(A_1^c) &\leq \sum_{j \in [N(\Delta)] \setminus\{l, l'\}} \mathbb{P}( Z_{l^\ast} \leq Z_j)
            \leq N(\Delta) e^{-2n\alpha^2/K}
            \leq \frac{3}{\Delta \wedge 1} e^{-2n\alpha^2/K}, \label{app:eq:probA} 
        \end{split}    
    \end{equation}
    where the first inequality is by the union bound; the second by \eqref{app:eq:voteprobbound}; and the last by bounding the covering number using \eqref{app:eq:l2cover}. We note that for the covering number bound we require $\Delta < 1$, so to account for when this fails to hold we take the minimum $\Delta \wedge 1$ which suffices for an upper bound.

    On the event $A_1$, we have that $j^\ast$, as defined in \eqref{sec3:eq:unichosenindex}, satisfies $j^\ast \in \{l , l'\}$ almost surely. As the distance between the parameter $\theta \in I_l$ and the closest endpoint of $I_{l'}$ is at most $\Delta$, expanding the endpoints of the chosen interval $I_{j^\ast}$ by $2\Delta$ as in \eqref{sec3:eq:uniMeanIntervalExpand} ensures that $\min\{ \theta - \tilde{L}, \tilde{U} - \theta\} \geq \Delta$, equivalently that $\theta \in [\tilde{L}+\Delta,\tilde{U}-\Delta]$, regardless of the value of $j^\ast$.

    \noindent \textbf{Step 1.2: Case $\alpha > 1$.}
    As before, denote by $l \in [N(\Delta)]$ the fixed but unknown index such that the truth $\theta \in I_l$, and for each $i \in [n]$ and $t \in [T]$, the random variable $X^{(i)}_t \in [-1, 1]$. Hence, recalling the definition of $\hat{\theta}^{(i)}$ for $i \in [n/2]$ in \eqref{app:eq:unilowprivunilocalisationestimator}, we see that 
    \begin{equation} \label{app:eq:UnivariateConcentrationlowpriv}
        \mathbb{P}\left(|\hat{\theta}^{(i)} - \theta|\geq \Delta \right) = \mathbb{P}\left\{\left|\frac{1}{T^*} \sum_{t = 1}^T X^{(i)}_t\right|\geq \Delta \right\} \leq 2e^{-T^\ast\Delta^2/2} = \frac{2}{(nT^\ast)^2}
    \end{equation}
    where the first inequality is due to Hoeffding's inequality for bounded random variables \citep[e.g.][Proposition 2.5]{Wainwright:2019}; the second equality by the definition of $\Delta$ from~\eqref{app:eq:unilowprivdeltaval2}.

    On the event $\{|\hat{\theta}^{(i)} - \theta| < \Delta\}$, it holds that $\hat{\theta}^{(i)} \in I_{l - 1} \cup I_l \cup I_{l + 1}$ where we denote $I_0 = I_{N(\Delta) + 1} = \emptyset$. Consequently, we can bound the following probability
    \begin{equation*}
        1 - p_l := \mathbb{P}(V_l^{(i)} = 0) \leq \mathbb{P}\left(|\hat{\theta}^{(i)} - \theta| \geq \Delta \right) \leq \frac{2}{(nT^\ast)^2}.
    \end{equation*}
    
    We also have the bound
    \begin{equation*}
        p_j := \mathbb{P}(V_j^{(i)} = 1) \leq \mathbb{P}\left(|\hat{\theta}^{(i)} - \theta| \geq \Delta \right) \leq \frac{2}{(nT^\ast)^2}, \quad j \in \{k \in [N(\Delta)] : |l - k| > 2\},
    \end{equation*}
    following from \eqref{app:eq:UnivariateConcentration}. Then, for $k \in [N(\Delta)]$, we have that  
    \begin{align*}
        \tilde{p}_k := \mathbb{P}(\widetilde{V}_k^{(i)} = 1) = (2\omega_{\alpha/6} - 1)p_k + (1 - \omega_{\alpha/6})         
    \end{align*}
    Hence, $\tilde{p}_l > \omega_{\alpha/6} - \varepsilon_{n, T}$ and $p_j \leq 1 - \omega_{\alpha/6} + \varepsilon_{n, T}$ for $j \in \{k \in [N(\Delta)] : |l - k| > 2\}$, where we denote $\varepsilon_{n, T} = 2(2\omega_{\alpha/6} - 1)/(nT^\ast)^2$
    
    Letting $Z_j = \sum_{i=1}^{n/2} \widetilde{V}^{(i)}_j$ denote the total privatised votes for the $j$-th sub-interval, where we recall $\widetilde{V}^{(i)}_j$ as in \eqref{sec3:eq:uniGRRGeneral2}, we consider the event
    \begin{equation} \label{app:eq:eventBlowpriv}
        A_2 = \bigcap_{j \in [N(\Delta)] : |l - j| > 2} \{Z_l > Z_j\}.
    \end{equation}
    We first simplify the problem by rewriting $\mathbb{P}( Z_l \leq Z_j)$ as the tail probability of a sum of i.i.d.~Bernoulli random variables. Indeed, letting $B_1, \hdots, B_n \overset{\mathrm{i.i.d.}}{\sim} \mathrm{Ber}(1 - \omega_{\alpha/6} + \varepsilon_{n, T})$, we have the bound
    \begin{equation*}
        \mathbb{P}( Z_l \leq Z_j)
        = \mathbb{P}( Z_l' + Z_j \geq n/2)
        = \mathbb{P}\left(\sum_{i = 1}^{n/2} \big(\Breve{V}^{(i)}_l + \widetilde{V}^{(i)}_j\big) \geq n/2 \right) 
        \leq \mathbb{P}\left(\sum_{i = 1}^{n} B_i \geq n/2 \right),
    \end{equation*}
    where the equalities follow from letting $Z_l' = \sum_{i = 1}^{n/2} \Breve{V}^{(i)}_l$ for $\Breve{V}^{(i)}_l = 1 - \widetilde{V}^{(i)}_l$; and the inequality from noting that $\Breve{V}^{(i)}_l, \widetilde{V}^{(i)}_j$ are independent $\mathrm{Ber}(1 - \tilde{p}_l)$ and $\mathrm{Ber}(\tilde{p}_j)$ and that the probability is increasing in $1 - \tilde{p}_l$ and $\tilde{p}_j$.

    Hence, we have
    \begin{align}
        \mathbb{P}( Z_l \leq Z_j)
        &= \mathbb{P}\left(\sum_{i = 1}^{n} B_i \geq n/2 \right)
        = \mathbb{P}\left(\frac{1}{n}\sum_{i = 1}^{n} (B_i - \mathbb{E}[B_i]) \geq \omega_{\alpha/6} - 1/2 - \varepsilon_{n, T})\right) \nonumber \\
        &\leq \{4(\omega_{\alpha/6} - \varepsilon_{n, T})(1 - \omega_{\alpha/6} + \varepsilon_{n, T})\}^{n} 
        = \{4\omega_{\alpha/6}(1 - \omega_{\alpha/6}) + 4\varepsilon_{n, T}(2\omega_{\alpha/6} - 1) - 4\varepsilon_{n, T}^2\}^{n} \nonumber \\
        &\leq \{4\omega_{\alpha/6}(1 - \omega_{\alpha/6}) + 4\varepsilon_{n, T}\}^{n} \nonumber \\
        &= \{4\omega_{\alpha/6}(1 - \omega_{\alpha/6})\}^{n}
        + 4^n \sum_{i = 1}^n \binom{n}{i} \{\omega_{\alpha/6}(1 - \omega_{\alpha/6})\}^{n-i} \varepsilon_{n, T}^i \nonumber \\
        &\leq \{4\omega_{\alpha/6}(1 - \omega_{\alpha/6})\}^{n}
        +  \sum_{i = 1}^n \binom{n}{i} 4^i \varepsilon_{n, T}^i
        \leq \{4\omega_{\alpha/6}(1 - \omega_{\alpha/6})\}^{n}
        +  \sum_{i = 1}^n \bigg(\frac{8n}{(nT^\ast)^2}\bigg)^i \nonumber \\
        &\leq e^{-n\alpha/20} + \frac{16}{n(T^\ast)^2}, \label{app:eq:voteprobboundlowpriv}
    \end{align}
    where the first inequality is by the Chernoff--Hoeffding bound \cite[Theorem~1][]{Hoeffding:1963}; the third inequality by the fact that $\omega_{\alpha/6}(1 - \omega_{\alpha/6}) \leq 1/4$; the fourth by the value of $\varepsilon_{n, T}$; and the final by \Cref{app:lem:sechbound} and the fact that the sum forms a geometric series with $8/\{n (T^\ast)^2\} \leq 1/2$, which holds as we may assume $nT^2 \geq 16$ without changing the final rate except up to constants.

    Hence, we have
    \begin{align}
        \mathbb{P}(A_2^c) &\leq \sum_{j \in [N(\Delta)] : |l - j| > 2} \mathbb{P}( Z_l \leq Z_j)
        \leq N(\Delta) \bigg(e^{-n\alpha/20} + \frac{16}{n(T^\ast\alpha^2)^2} \bigg) \nonumber \\
        &\leq \frac{3}{\Delta \wedge 1} e^{-2n\alpha/K} + \frac{48}{(\Delta \wedge 1)n(T^\ast)^2}
        \leq \frac{3}{\Delta \wedge 1} e^{-2n\alpha/K} + \frac{48}{n T^\ast}, \label{app:eq:probAlowpriv}   
    \end{align}
    where the constant $K > 0$ is as in \eqref{app:eq:probA} and taken sufficiently large; where the first inequality is by the union bound; the second by \eqref{app:eq:voteprobboundlowpriv}; the third by bounding the covering number using \eqref{app:eq:l2cover}; and the final by noting that $1/(\Delta \wedge 1) \leq (T^\ast)^{1/2}$. We note that for the covering number bound we require $\Delta < 1$, so to account for when this fails to hold we take the minimum $\Delta \wedge 1$ which suffices for an upper bound.

    On the event $A_2$, we have that $j^\ast$, as defined in \eqref{app:eq:unilowprivunichosenindex}, satisfies $j^\ast \in \{l - 2, l - 1, l, l + 1, l + 2\}$ almost surely. As the distance between the parameter $\theta \in I_l$ and the closest endpoint of either $I_{l-2}$ or $I_{l+2}$ is at most $4\Delta$, expanding the endpoints of the chosen interval $I_{j^\ast}$ by $5\Delta$ as in \eqref{app:eq:unilowprivuniMeanIntervalExpand} ensures that $\min\{ \theta - \tilde{L}, \tilde{U} - \theta\} \geq \Delta$, equivalently that $\theta \in [\tilde{L}+\Delta,\tilde{U}-\Delta]$, regardless of the value of $j^\ast$.

    \medskip
    \noindent\textbf{Step 2: Final Estimator.}
    It remains to consider the refined estimator using the remaining users $i \in [n] \setminus [n/2]$. In what follows, we will consider the cases $\alpha \in (0,1]$ and $\alpha > 1$ simultaneously. In particular, we note that $\Delta^2 \lesssim \log\{n T^\ast (\alpha^2 \wedge 1)\}/T^\ast$ for all $\alpha > 0$. Further, denoting by $A$ the event $A_1$ or $A_2$ in the cases $\alpha \in (0,1]$ and $\alpha > 1$ respectively, we see by combining \eqref{app:eq:probA} and \eqref{app:eq:probAlowpriv} that
    \begin{equation} \label{app:eq:probAallpriv}
        \mathbb{P}(A^c) \leq \frac{3}{\Delta \wedge 1} e^{-2n\min\{\alpha, \alpha^2\}/K} + \frac{48}{n T^\ast}.
    \end{equation}
    
    Recalling the private estimator $\hat{\theta}^{(i)}$ as defined in \eqref{sec3:eq:UniRefinedEstimator} and \eqref{app:eq:unilowprivUniRefinedEstimator} and writing $\overline{X^{(i)}} = (T^\ast)^{-1} \sum_{t = 1}^{T^\ast} X_t^{(i)}$ for $i \in [n]$, we have for any $i \in [n] \setminus [n/2]$ that
    \begin{align}
        &\mathbb{E}\left[ | \hat{\theta} -\theta |^2 \right]
        = \mathbb{E}\left[ \mathbb{E} \left\{ | \hat{\theta} -\theta |^2 \bigm| j^\ast \right\} \right]  = \mathbb{E} \left[ \left\{ \mathbb{E}(\hat{\theta} | j^\ast) - \theta \right\}^2 + \mathrm{Var}(\hat{\theta}|j^\ast) \right] \nonumber \\
        & = \mathbb{E}\left[ \left\{ \mathbb{E}(\hat{\theta}^{(i)} - \overline{X^{(i)}}  | j^\ast) \right\}^2 + \frac{2}{n}\mathrm{Var}(\hat{\theta}^{(i)} | j^\ast) \right] \nonumber \\
        &= \mathbb{E} \left[ \left\{ \mathbb{E}\left( (\hat{\theta}^{(i)} - \overline{X^{(i)}})\mathbbm{1}\{A\} \bigm| j^\ast \right) + \mathbb{E}\left((\hat{\theta}^{(i)} - \overline{X^{(i)}})\mathbbm{1}\{A^c\} \bigm| j^\ast \right) \right\}^2 + \frac{2}{n}\mathrm{Var}(\hat{\theta}^{(i)} | j^\ast ) \right] \nonumber \\
        &\leq 2 \mathbb{E} \left[\left\{ \mathbb{E}\left( (\hat{\theta}^{(i)} - \overline{X^{(i)}})\mathbbm{1}\{A\} \bigm| j^\ast \right) \right\}^2 + \left\{ \mathbb{E}\left( (\hat{\theta}^{(i)} - \overline{X^{(i)}})\mathbbm{1}\{A^c\} \bigm| j^\ast \right) \right\}^2 + \frac{1}{n}\mathrm{Var}(\hat{\theta}^{(i)} | j^\ast ) \right] \nonumber \\
        & = 2\mathbb{E}\{(I) + (II) + (III)\}, \label{app:eq:biasvariance}
    \end{align}
    where the first equality is by the tower law of expectation; the second is by the bias-variance decomposition; the third by the definition of $\hat{\theta}$, the fact that $\overline{X^{(i)}}$ is an unbiased estimator of $\theta$ and the fact that $\overline{X^{(i)}}$ is independent of $j^\ast$; and the inequality follows from the Cauchy--Schwarz inequality.
    
\textbf{Term $(III)$.} Letting the value $W_\alpha = 2\mathbbm{1}\{\alpha \in (0,1]\} + 5\mathbbm{1}\{\alpha > 1\}$, the variance term can be upper bounded as 
    \begin{equation} \label{app:eq:VarianceControl}
        \mathrm{Var}(\hat{\theta}^{(i)} | j^\ast) = \mathrm{Var}\left( \Pi_{\tilde{I}_{j^\ast}}(\overline{X^{(i)}}) \bigm| j^\ast \right) + \mathrm{Var}\left( \frac{2(1 + W_\alpha)\Delta}{\alpha} \ell_i \right) \leq 49\Delta^2 + \frac{8(1 + W_\alpha)^2\Delta^2}{\alpha^2} \leq \frac{337\Delta^2}{\alpha^2 \wedge 1},
    \end{equation}
    where the variance of the truncated sample mean is bounded as it is itself a bounded random variable taking values on an interval of width $2(1 + W_\alpha)\Delta$, and the last inequality uses the fact that $W_\alpha \leq 5$. 

\textbf{Term $(I)$.}   We have, following a similar argument as in \citet[][Appendix B.1]{Duchi:2018}, that
    \begin{align}
        \mathbb{E}&\left[\left| \overline{X^{(i)}} - \Pi_{\tilde{I}_{j^\ast}}(\overline{X^{(i)}}) \right| \mathbbm{1}\{A\} \Bigm| j^\ast \right] \nonumber \\
        &= \mathbb{E}\left[\left|\overline{X^{(i)}} - \tilde{U}\right| \mathbbm{1}\left\{\overline{X^{(i)}} > \tilde{U}\right\} \mathbbm{1}\{A\} \Bigm| j^\ast \right] + \mathbb{E}\left[ \left|\overline{X^{(i)}} - \tilde{L}\right| \mathbbm{1}\left\{\overline{X^{(i)}} < \tilde{L}\right\} \mathbbm{1}\{A\} \Bigm| j^\ast \right] \nonumber \\
        &\leq 2\mathbb{P}\left(\left\{ \overline{X^{(i)}} > \tilde{U} \right\} \cap A \Bigm| j^\ast  \right) + 2\mathbb{P}\left(\left\{ \overline{X^{(i)}} < \tilde{L} \right\} \cap A \Bigm| j^\ast \right), \label{app:eq:biascontrolprelim}
    \end{align}
    where the inequality uses the result that $\overline{X^{(i)}} - \tilde{U} \leq 2$ which follows from the facts $\overline{X^{(i)}} \in [-1, 1]$ and $\tilde{U} \geq -1+2\Delta+2\Delta = 4\Delta-1 \geq -1$ and similarly the result that $\overline{X^{(i)}} - \tilde{L} \geq -2$ which uses the fact that $\tilde{L} \leq 1$.
    
    The first term in \eqref{app:eq:biascontrolprelim} can be bounded as  
    \begin{align}
        \mathbb{P}\left(\left\{ \overline{X^{(i)}} > \tilde{U} \right\} \cap A \Bigm| j^\ast \right)
        &= \mathbb{P}\left( \left\{ \overline{X^{(i)}} - \theta > \tilde{U} - \theta \right\} \cap A \Bigm| j^\ast \right) \nonumber \\
        &\leq \mathbb{P}\left( \left\{ \overline{X^{(i)}} - \theta > \Delta \right\} \cap A \right) \nonumber \\
        &\leq \mathbb{P}\left(\overline{X^{(i)}} - \theta > \Delta \right)
        \leq e^{-T^\ast \Delta^2/2}, \label{app:eq:BiasControl}
    \end{align}
    where the first inequality is due to the facts that $\overline{X^{(i)}}$ is independent of $j^\ast$ and that, on the event $A$, $\tilde{U} - \theta > \Delta$; and the last inequality follows from Hoeffding's inequality. A similar argument gives the same inequality for the other term where we note that $\theta - \tilde{L} > \Delta$ on $A$. From this, we have that 
    \begin{equation}
        \left\{\mathbb{E}\left[(\hat{\theta}^{(i)} - \overline{X^{(i)}}) \mathbbm{1}\{A\}\Bigm| j^\ast\right]\right\}^2
        = \left\{\mathbb{E}\left[ \left(\Pi_{\tilde{I}_{j^\ast}}(\overline{X^{(i)}}) - \overline{X^{(i)}} \right) \mathbbm{1}\{A\} \Bigm| j^\ast \right]\right\}^2
        \leq 16e^{-T^\ast \Delta^2}, \label{app:eq:squaredbiasbound}
    \end{equation}
    where in the equality we use the fact that the Laplace noise variables $\ell_i$ are independent of all other randomness in the estimator with mean zero, and the inequality comes from combining \eqref{app:eq:biascontrolprelim} and \eqref{app:eq:BiasControl}.

\textbf{Term $(II)$.}  We have that
    \begin{align}
        \mathbb{E} \left[ \left\{ \mathbb{E}\left( (\hat{\theta}^{(i)} - \overline{X^{(i)}})\mathbbm{1}\{A^c\} \bigm| j^\ast \right) \right\}^2 \right] &= \mathbb{E} \left[ \left\{ \mathbb{E} \left( \left\{ \Pi_{\tilde{I}_{j^\ast}}(\overline{X^{(i)}}) - \overline{X^{(i)}} \right\}\mathbbm{1}\{A^c\} \bigm| j^\ast \right) \right\}^2 \right] \nonumber \\
        & \leq 4\mathbb{E} \left[ \mathbb{P}(A^c | j^\ast)^2 \right] \leq 4\mathbb{P}(A^c)
        \leq \frac{12}{\Delta \wedge 1} e^{-2n\min\{\alpha, \alpha^2\}/K} + \frac{192}{n T^\ast}  \label{app:eq:unicompbound}
    \end{align}
    where the equality follows from the fact that the Laplace noise variables $\ell_i$ are independent of all other randomness in the estimator and are mean zero; the first inequality comes from the fact that both  $\overline{X^{(i)}}$ and $\Pi_{\tilde{I}_{j^\ast}}(\overline{X^{(i)}})$ take values in $[-1,1]$; and the last by \eqref{app:eq:probAallpriv}.

    We combine \eqref{app:eq:biasvariance},~\eqref{app:eq:VarianceControl},~\eqref{app:eq:squaredbiasbound} and~\eqref{app:eq:unicompbound} to see that
    \begin{align*}
        \mathbb{E}\left[ | \hat{\theta} -\theta |^2 \right]
        &\leq 32 e^{-T^\ast\Delta^2}
        + \frac{24}{\Delta \wedge 1} e^{-2n\min\{\alpha, \alpha^2\}/K} + \frac{384}{n T^\ast}
        + \frac{674\Delta^2}{n(\alpha^2 \wedge 1)} \\
        &\lesssim \frac{1}{nT^\ast(\alpha^2 \wedge 1)}
        + \left( \frac{(T^\ast)^{1/2}}{[\log\{n T^\ast (\alpha^2 \wedge 1)\}]^{1/2}} \vee 1 \right) e^{-2n\min\{\alpha, \alpha^2\}/K}
        + \frac{\log\{nT^\ast(\alpha^2 \wedge 1)\}}{nT^\ast(\alpha^2 \wedge 1)}, \\
        &\lesssim \frac{ \log\{nT^\ast(\alpha^2 \wedge 1)\}}{nT^\ast(\alpha^2 \wedge 1)} + \frac{(T^\ast)^{1/2}}{[\log\{n T^\ast (\alpha^2 \wedge 1)\}]^{1/2}}e^{-2n\min\{\alpha, \alpha^2\}/K} + e^{-2n\min\{\alpha, \alpha^2\}/K}, \\
        &\lesssim \frac{ \log\{nT^\ast(\alpha^2 \wedge 1)\}}{nT^\ast(\alpha^2 \wedge 1)},
    \end{align*}
    where the second inequality comes from substituting in the value of $\Delta$ as in \eqref{sec3:eq:deltaval} and \eqref{app:eq:unilowprivdeltaval2} for the cases $\alpha \in (0,1]$ and $\alpha > 1$ respectively; the third by bounding the maximum term by the sum of the two terms; and the fourth using the fact that
    \begin{equation*}
        e^{-2n\min\{\alpha, \alpha^2\}/K}
        \leq (T^\ast)^{-3/2} e^{-(n/2)\min\{\alpha, \alpha^2\}/K}
        \lesssim 1/\{(T^\ast)^{3/2} n \min\{\alpha, \alpha^2\}\}
        \leq 1/\{(T^\ast)^{3/2} n \min\{\alpha^2, 1\}\},
    \end{equation*}
    and controlling the log term in the denominator using the assumption that $n\min\{\alpha, \alpha^2\} > \widetilde{C}$ for some sufficiently large $\widetilde{C}$. 
    
    In the case $T \leq e^{n\min\{\alpha, \alpha^2\}/K}$, we have $T^\ast = T$ which immediately gives the upper bound in \eqref{sec3:eq:linfMeanStatement}. In the case $T > e^{n\min\{\alpha, \alpha^2\}/K}$, we have $T^\ast =  e^{n\min\{\alpha, \alpha^2\}/K}$, which gives
    \begin{equation*}
        \frac{\log\{n T^\ast (\alpha^2 \wedge 1)\}}{n T^\ast (\alpha^2 \wedge 1)}
        =  \left( \frac{\log\{n (\alpha^2 \wedge 1)\}}{n(\alpha^2 \wedge 1)} + \frac{\log(T^\ast)}{n(\alpha^2 \wedge 1)} \right) \frac{1}{T^\ast}
        \lesssim (\alpha \vee 1) e^{-n\min\{\alpha, \alpha^2\}/K}
        \lesssim e^{-n\min\{\alpha, \alpha^2\}/(2K)},
    \end{equation*}
    where the inequality comes from the assumption $n\min\{\alpha, \alpha^2\} \geq \widetilde{C}$, and from substituting in the value $e^{n\min\{\alpha, \alpha^2\}/K}$ for $T^\ast$. Putting together the pieces, we have
    \begin{equation}
        \mathbb{E}\left[ | \hat{\theta} -\theta |^2 \right] \lesssim
        \begin{cases}
            \frac{ \log\{nT(\alpha^2 \wedge 1)\}}{nT(\alpha^2 \wedge 1)} &\text{ when } T \leq e^{n\min\{\alpha, \alpha^2\}/K}, \\
            e^{-n\min\{\alpha, \alpha^2\}/(2K)} &\text{otherwise.}
        \end{cases}
        \label{app:eq:unimeanMSE}
    \end{equation}
    Lastly, bounding the error by the sum of the errors in the two cases for $T$ completes the proof.
\end{proof}

\subsection{Proof of Theorem \ref{sec3:thm:main} Upper Bound (multivariate)}
    \begin{proof}[Proof of \Cref{sec3:thm:main} $(\ell_\infty \text{-ball case})$]
        We consider three cases separately depending on the value of $\alpha$.

        \medskip
        \noindent \textbf{Case 1: $\alpha \in (0, 1]$} 

        When $\alpha \in (0,1]$, the estimation procedure in \Cref{sec3:linfball} is equivalent to splitting the sample and applying the univariate procedure, defined in Section \ref{sec3:univariate}, to each co-ordinate in turn. For $j \in [d]$, the collection of users indexed by the set $N_j$ in \eqref{sec3:eq:linfGroups}, uses the $j$-th co-ordinate of their data to estimate the $j$-th co-ordinate of the mean. We note that the assumption that $n\alpha^2 > \widetilde{C}d\log(ed)$ ensures that the required assumption for the univariate sub-problems is satisfied.
        
        The estimator for the $j$-th co-ordinate $\hat{\theta}_j$ has error bounded as 
        \begin{equation}
            \mathbb{E}\left[|\hat{\theta}_j - \theta_j|^2 \right]
            \lesssim
                \begin{cases}
                    \frac{d \log(nT\alpha^2/d)}{nT\alpha^2}, & T \leq e^{n\alpha^2/(K d)}, \\
                    e^{-n\alpha^2/(2K d)}, &\text{otherwise,}
                \end{cases}
                \label{app:eq:linfComponentMSE}
        \end{equation}
        which follows from the univariate error in \eqref{app:eq:unimeanMSE} for $\alpha \in (0,1]$, noting the factors of $d$ arising from the number of users in each collection being such that $|N_j| = n/d$ for $j \in [d]$. For the final estimator $\hat{\theta}$ as defined in \eqref{sec3:eq:linfFinalEstimator}, the error can be bounded as
        \begin{equation}
            \mathbb{E}\left[\|\hat{\theta} - \theta\|_2^2 \right]
            = \sum_{j=1}^d \mathbb{E}\left[|\hat{\theta}_j - \theta_j|_2^2 \right]
            \lesssim
            \begin{cases}
                \frac{d^2 \log(nT\alpha^2/d)}{nT\alpha^2}, & T \leq e^{n\alpha^2/(K d)}, \\
                de^{-n\alpha^2/(2K d)}, &\text{otherwise,}
            \end{cases} \label{app:eq:linfMSE}
        \end{equation}
        where the equality comes from the decomposition of multivariate mean-squared-error and the inequality by summing up $d$-many copies of \eqref{app:eq:linfComponentMSE}. Lastly, we have that
        \begin{equation}
            \mathbb{E}\left[\|\hat{\theta} - \theta\|_2^2 \right]
            \lesssim 
            \frac{d^2 \log(nT\alpha^2/d)}{nT\alpha^2} + e^{-cn\alpha^2/(K d)}, \label{app:eq:linffinalMSE}
        \end{equation}
        which comes from bounding the error by the sum of the errors in the two cases for $T$ and using the assumption that $n\alpha^2 > \widetilde{C}d\log(ed)$ for some sufficiently large $\widetilde{C}$ to absorb the factor of $d$ into the exponential term.

        \medskip
        \noindent \textbf{Case 2: $1 < \alpha \leq d$} 

        For simplicity we assume $\alpha$ is an integer, noting that this will not change the minimax rates obtained except up to constants. When $1 < \alpha \leq d$, the estimation procedure in \Cref{app:sec:LowPrivlinfball} is equivalent to applying the univariate procedure with $\alpha$ set to $1$, with each user contributing to $\alpha$-many co-ordinates. Hence, each fold has a sample size of $n\alpha/d$, and the estimator for the $j$-th co-ordinate $\hat{\theta}_j$ has error bounded as 
        \begin{equation}
            \mathbb{E}\left[|\hat{\theta}_j - \theta_j|^2 \right]
            \lesssim
                \begin{cases}
                    \frac{d \log(nT\alpha/d)}{nT\alpha}, & T \leq e^{n\alpha/(K d)}, \\
                    e^{-n\alpha/(2K d)}, &\text{otherwise,}
                \end{cases}
                \label{app:eq:linfComponentMSE2}
        \end{equation}
        which follows from the univariate error in \eqref{app:eq:unimeanMSE} with $\alpha = 1$ and sample size $n\alpha/d$ therein. For the final estimator $\hat{\theta}$ as defined in \eqref{app:eq:LowPrivlinfFinalEstimator}, the error can be bounded as
        \begin{equation}
            \mathbb{E}\left[\|\hat{\theta} - \theta\|_2^2 \right]
            = \sum_{j=1}^d \mathbb{E}\left[|\hat{\theta}_j - \theta_j|_2^2 \right]
            \lesssim
            \begin{cases}
                \frac{d^2 \log(nT\alpha/d)}{nT\alpha}, & T \leq e^{n\alpha/(K d)}, \\
                de^{-n\alpha/(2K d)}, &\text{otherwise,}
            \end{cases} \label{app:eq:linfMSE2}
        \end{equation}
        where the equality comes from the decomposition of multivariate mean-squared-error and the inequality by summing up $d$-many copies of \eqref{app:eq:linfComponentMSE2}. Lastly, we have that
        \begin{equation}
            \mathbb{E}\left[\|\hat{\theta} - \theta\|_2^2 \right]
            \lesssim 
            \frac{d^2 \log(nT\alpha/d)}{nT\alpha} + e^{-cn\alpha/(K d)}, \label{app:eq:linffinalMSE2}
        \end{equation}
        which comes from bounding the error by the sum of the errors in the two cases for $T$ and using the assumption that $n\alpha > \widetilde{C}d\log(ed)$ for some sufficiently large $\widetilde{C}$ to absorb the factor of $d$ into the exponential term.

        \medskip
        \noindent \textbf{Case 3: $\alpha > d$} 

        For $\alpha > d$, the estimation procedure in \Cref{app:sec:LowPrivlinfball} is equivalent to applying the univariate procedure to each co-ordinate with a privacy parameter of $\alpha/d > 1$. Hence, each fold has a sample size of $n$, and the estimator for the $j$-th co-ordinate $\hat{\theta}_j$ has error bounded as 
        \begin{equation}
            \mathbb{E}\left[|\hat{\theta}_j - \theta_j|^2 \right]
            \lesssim
                \begin{cases}
                    \frac{\log(nT)}{nT}, & T \leq e^{n\alpha/(K d)}, \\
                    e^{-n\alpha/(2K d)}, &\text{otherwise,}
                \end{cases}
                \label{app:eq:linfComponentMSE3}
        \end{equation}
        which follows from the univariate error in \eqref{app:eq:unimeanMSE} with privacy parameter $\alpha/d > 1$. For the final estimator $\hat{\theta}$ as defined in \eqref{app:eq:LowPrivlinfFinalEstimator}, the error can be bounded as
        \begin{equation}
            \mathbb{E}\left[\|\hat{\theta} - \theta\|_2^2 \right]
            = \sum_{j=1}^d \mathbb{E}\left[|\hat{\theta}_j - \theta_j|_2^2 \right]
            \lesssim
            \begin{cases}
                \frac{d \log(nT)}{nT}, & T \leq e^{n\alpha/(K d)}, \\
                de^{-n\alpha/(2K d)}, &\text{otherwise,}
            \end{cases} \label{app:eq:linfMSE3}
        \end{equation}
        where the equality comes from the decomposition of multivariate mean-squared-error and the inequality by summing up $d$-many copies of \eqref{app:eq:linfComponentMSE3}. Lastly, we have that
        \begin{equation}
            \mathbb{E}\left[\|\hat{\theta} - \theta\|_2^2 \right]
            \lesssim 
            \frac{d \log(nT)}{nT} + e^{-cn\alpha/(K d)}, \label{app:eq:linffinalMSE3}
        \end{equation}
        which comes from bounding the error by the sum of the errors in the two cases for $T$ and using the assumption that $n\alpha > \widetilde{C}d\log(ed)$ for some sufficiently large $\widetilde{C}$ to absorb the factor of $d$ into the exponential term.
    \end{proof}

    \begin{proof}[Proof of \Cref{sec3:thm:main} $(\ell_2 \text{-ball case})$]
        Recall that we assume without loss of generality that the dimension $d$ is a power of $2$, noting that we can always append zero values to the data until this holds, with this procedure not affecting the final minimax rate except up to constants. We then note that we can always take the origin of the $\ell_2$-ball as a trivial estimator. We upper bound the error of this trivial estimator by the radius of the ball. As a consequence, the upper bound on the risk consists of the minimum between the constant $1$, and the error of the estimators we propose in \Cref{sec-multi-ell2} and \Cref{app:sec:LowPrivl2ball}. In the rest of the proof, we focus on our proposed estimators. 
        
        The proof for the $\ell_2$-ball case also involves considering each co-ordinate separately and applying a univariate estimation procedure to each co-ordinate. However, due to the application of the random rotation of \Cref{sec3:lem:RandomRotation}, the steps are not identical to the $\ell_{\infty}$ case. We first focus on the univariate sub-problem for general $\alpha \in (0, \infty)$, with a sample size per co-ordinate of $n/d$. 

        \medskip
        \noindent \textbf{Step 1: Random Rotations.}
        We are to show a user's estimator deviates from the truth by $\Delta$ with small probability.
        
        As $X^{(i)}_t \in \mathbb{B}_2(1)$ for all $i \in [n]$ and $t \in [T]$, we have that $\| X^{(i)}_t - \theta \|_2 \in [0,2]$. Hence, we have by \citet[Lemma~1]{Jin:2019} that for some absolute constant $c_1 > 0$ and for all $\varepsilon > 0$, 
        \begin{align*}
            \mathbb{P}(\| X^{(i)}_t - \theta \|_2 \geq \varepsilon) \leq 2e^{-\varepsilon^2/(2c_1)}.
        \end{align*}
        Denoting the transformed sample mean via $\hat{\theta}^{(i)} = R_d \{(T^\ast)^{-1}\sum_{t = 1}^{T^\ast} X_t^{(i)}\}$, for each $i \in [n/2]$ we have by \Cref{app:lem:SGnorm} that there exists a universal constant $c_2>0$ such that, for any $\gamma > 0$,
        \begin{equation}
            \|\hat{\theta}^{(i)} - \theta\|_2 \leq c_2 \left
            \{\frac{ \log(2d/\gamma)}{T^\ast} \right\}^{1/2} \label{app:eq:NormConcentrationSingle}
        \end{equation}
        holds with probability at least $1 - \gamma$. We define the event
        \begin{equation*}
            A_i = \left\{\|\hat{\theta}^{(i)} -\theta\|_2 \leq c_2\left\{\frac{2\log\{nT^\ast(\alpha^2 \wedge 1)\}}{T^\ast}\right\}^{1/2} \right\}
        \end{equation*}
        from which we have that
        \begin{equation}
            \mathbb{P}(A_i^c) \leq \frac{2d}{\{nT^\ast(\alpha^2 \wedge 1)\}^2} \label{app:eq:l2auxconcevent}
        \end{equation}
        due to \eqref{app:eq:NormConcentrationSingle} with $\gamma = 2d/\{nT^\ast(\alpha^2 \wedge 1)\}^2$ therein.

        We thus have
        \begin{align}
            &\mathbb{P}\left( |(R_d\hat{\theta}^{(i)})_j - (R_d\theta)_j| \geq \Delta \right) \\
            &\leq \mathbb{P}\left( \left\{|(R_d\hat{\theta}^{(i)})_j - (R_d\theta)_j| \geq \Delta \right\} \cap A_i \right) + \mathbb{P}(A_i^c) \nonumber \\
            & \leq \mathbb{P}\left( \left\{|(R_d\hat{\theta}^{(i)})_j - (R_d\theta)_j| \geq \left(\frac{C}{dT^\ast}\right)^{1/2}\log\{nT^\ast(\alpha^2 \wedge 1)\} \right\} \cap A_i \right) + \frac{2d}{\{nT^\ast(\alpha^2 \wedge 1)\}^2)} \nonumber \\
            & \leq \mathbb{P}\left( \left\{|(R_d\hat{\theta}^{(i)})_j - (R_d\theta)_j| \geq \|\hat{\theta}^{(i)}-\theta\|_2 \left(\frac{C\log\{nT^\ast(\alpha^2 \wedge 1)\}}{c_2^2d}\right)^{1/2} \right\} \cap A_i \right) + \frac{2d}{\{nT^\ast(\alpha^2 \wedge 1)\}^2} \nonumber \\
            & \leq \frac{1}{\{nT^\ast(\alpha^2 \wedge 1)\}^2} + \frac{2d}{\{nT^\ast(\alpha^2 \wedge 1)\}^2}
            \leq
            \begin{cases}
                1/4 &\text{ when } \alpha \in (0,1] \\
                3d/(nT^\ast)^2 &\text{ when } \alpha > 1
            \end{cases}, \label{app:eq:rotatedconc}
        \end{align}
        where the second inequality uses \eqref{app:eq:l2auxconcevent} and substitutes in the value of $\Delta$ as in \eqref{sec3:eq:l2deltaval} and \Cref{alg:l2lowpriv} in the cases $\alpha \in (0,1]$ and $\alpha > 1$ respectively; the third uses the definition of the event $A_i$; the penultimate inequality follows from applying \Cref{sec3:lem:RandomRotation} with $\gamma = 1/\{nT^\ast(\alpha^2 \wedge 1)\}$ therein, provided $C \geq 200c_2^2$; and the final, for the case $\alpha \in (0,1]$, using the assumption that $n\alpha^2 \geq \widetilde{C}d\log(ed)$ for $\widetilde{C}$ sufficiently large and that $T \geq 1$.

    \medskip 
    \noindent\textbf{Step 2: Co-ordinate-Wise Treatment.}
        We now focus on a single fixed co-ordinate $j \in [d]$ and follow the same steps as in the proof of the univariate case with some modifications to account for the random rotation. We first note that as we are working with the transformed data, we have, for a fixed $j \in [d]$, $i \in N_{j,1}, k \in [N(\delta)]$, that $V^{(i)}_k = \mathbbm{1}\{ (R_d \hat{\theta}^{(i)})_j \in I_{k-1} \cup I_k \cup I_{k+1} \}$ with $I_0 = I_{N(\delta) + 1} = \emptyset$ and $R_d$ as in \eqref{eq-rotation-matrix}. The privatized value $\widetilde{V}^{(i)}_k$ is then generated via the same randomized response mechanism as in \eqref{sec3:eq:uniGRR}. We then recall that for $j \in [d]$ and $k \in [N(\Delta)]$, we let $Z_{j,k} = \sum_{i \in N_{j,1}} \widetilde{V}^{(i)}_k$ denote the total number of privatised votes from the $j$-th group of users for the $k$-th sub-interval and denote by $l_j \in [N(\Delta)]$ the fixed but unknown index such that $(R_d\theta)_j \in I_{l_j}$. We consider the cases $\alpha \in (0,1]$ and $\alpha > 1$ separately. 

        \noindent \textbf{Step 2.1: Case $\alpha \in (0,1]$.}
    
        Define $l_j$ for the closest sub-interval not containing the true mean of the $j$-th transformed co-ordinate, that is,
        \begin{equation*}
            l_j' =
            \begin{cases}
                l_j - 1, &\text{when } (R_d\theta)_j \in [-1 + 2(l_j-1)\Delta, -1 + 2(l_j-1)\Delta + \Delta) \\
                l_j + 1, &\text{when } (R_d\theta)_j \in [-1 + 2(l_j-1)\Delta + \Delta, -1 + 2l_j\Delta) \\
            \end{cases}.
        \end{equation*}
        We consider the event 
        \begin{equation}
            B_{1, j} = \bigcap_{k \in [N(\Delta)]\setminus\{l_j, l_j'\}} \{Z_{j,l_j} > Z_{j,k}\}, \label{app:eq:l2probBjlowpriv}
        \end{equation}
        which is analogous to the event \eqref{app:eq:eventB} from the univariate case. On the event $B_j$, denoting by $k_j^\ast$ the chosen sub-interval analogously to \eqref{sec3:eq:unichosenindex}, whichever interval is selected is sufficiently close to $(R_d\theta)_j$ so that $(R_d\theta)_j \in \tilde{I}_{k_j^\ast}$ and further, due to inflating the end points of the intervals, $\min\{(R_d\theta)_j - \widetilde{L}_j, \widetilde{U}_j - (R_d\theta)_j\} > \Delta$. By \eqref{app:eq:rotatedconc}, we have that the analogous concentration inequality to \eqref{app:eq:UnivariateConcentration} holds for the data rotated by $R_d$. Hence, bounding the probability of the complement $B_j^c$ by the same analysis as that in \eqref{app:eq:voteprobbound} and \eqref{app:eq:probA}, we have
        \begin{equation}
            \mathbb{P}(B_{1, j}^c)
            \leq \frac{3}{\Delta \wedge 1} e^{-2n\alpha^2/(K d)} \label{app:eq:l2probBj}
        \end{equation}
        where $K$ is as appears in \eqref{app:eq:voteprobbound} and, as in \eqref{app:eq:probA}, we require $\Delta < 1$, so to account for when this fails to hold we take the minimum $\Delta \wedge 1$ which suffices for an upper bound. 
        
        \noindent \textbf{Step 2.2: Case $\alpha > 1$.}
        We consider the event 
        \begin{equation}
            B_{2, j} = \bigcap_{k \in [N(\Delta)]: |k - l| > 2} \{Z_{j,l_j} > Z_{j,k}\}, \label{app:eq:l2probCj}
        \end{equation}
        We similarly proceed to bound the probability of the event $B_j^c$ in the case $\alpha > 1$. Using the bound \eqref{app:eq:rotatedconc} in place of the analogous \eqref{app:eq:UnivariateConcentrationlowpriv} from the univariate setting, and following the same argument as \eqref{app:eq:eventBlowpriv}, \eqref{app:eq:voteprobboundlowpriv} and \eqref{app:eq:probAlowpriv}, we obtain for $k \in [N(\Delta)] \setminus \{l_j - 2, l_j - 1, l_j, l_j + 1, l_j + 2\}$,
        \begin{align}
            \mathbb{P}( Z_{j, l_j} \leq Z_{j, k})
            \leq \{4\omega_{\alpha/6}(1 - \omega_{\alpha/6})\}^{n/d}
            +  \sum_{i \in N_{j, 1}} \bigg(\frac{12nd}{(nT^\ast)^2}\bigg)^i
            \leq e^{-n\alpha/(20d)} + \frac{24d}{n(T^\ast)^2},
        \end{align}
        where the second inequality comes from noting the geometric series with the fact $12d/\{n (T^\ast)^2\} \leq 1/2$, which holds as we have $n \gtrsim d$.
        
        Hence, similarly to \eqref{app:eq:probAlowpriv}, we obtain
        \begin{align}
            \mathbb{P}(B_{2,j}^c) &\leq \sum_{j \in [N(\Delta)] : |l - j| > 2} \mathbb{P}( Z_l \leq Z_j)
            \leq N(\Delta) \bigg(e^{-n\alpha/(20d)} + \frac{24d}{n(T^\ast)^2} \bigg) \nonumber \\
            &\leq \frac{3}{\Delta \wedge 1} e^{-2n\alpha/(Kd)} + \frac{72}{(\Delta \wedge 1)n(T^\ast)^2}
            \leq \frac{3}{\Delta \wedge 1} e^{-2n\alpha/(Kd)} + \frac{72}{n T^\ast}, \label{app:eq:probBjlowpriv}   
        \end{align}

        Finally, in preparation for the next step, writing $\overline{X^{(i)}} = (T^\ast)^{-1} \sum_{t = 1}^{T^\ast} X_t^{(i)}$ for $i \in [n]$, we define the final events
        \begin{align*}
            D_{i,j} = \left\{ |(R_d\overline{X^{(i)}})_j - (R_d\theta)_j| \leq \frac{10 \|\overline{X^{(i)}} - \theta\|_2 [\log\{nT^\ast(\alpha^2 \wedge 1)\}]^{1/2}}{d^{1/2}} \right\}.
        \end{align*}
        By applying \Cref{sec3:lem:RandomRotation} with $\gamma = 1/\{nT^\ast(\alpha^2 \wedge 1)\}$ therein we see that we have
        \begin{equation}
            \mathbb{P}(D_{i,j}^c) < 1/\{nT^\ast(\alpha^2 \wedge 1)\}. \label{app:eq:l2probDj}
        \end{equation}

        \medskip
        \noindent \textbf{Step 3: Controlling Estimator Error.}
        For $i \in N_{j,2}$, we write $\tilde{\theta}^{(i)} =  \Pi_{\tilde{I}_{k_j^\ast}}\{  (T^\ast)^{-1}\sum_{t = 1}^{T^\ast} ( R_dX_{t}^{(i)} )_j \} + 2(1+W_\alpha)\Delta\ell_i/\alpha$, where the $\ell_i$ are i.i.d.~standard Laplace random variables and we recall $W_\alpha = 2\mathbbm{1}\{\alpha \in (0,1]\} + 5\mathbbm{1}\{\alpha > 1\}$. For the refined estimator for the $j$-th co-ordinate, we write $\tilde{\theta}_{j} = (2d/n)\sum_{ i \in N_{j,2}} \tilde{\theta}^{(i)} \in \mathbb{R}$.
        
        Recalling the final estimator as in \eqref{sec3:eq:Finall2Estimator} and \eqref{app:eq:LowPrivFinall2Estimator}, we have that
        \begin{equation}
            \mathbb{E}\left[\|\hat{\theta} - \theta \|_2^2\right] = \mathbb{E}\left[\| R_d\hat{\theta} - R_d\theta \|_2^2\right] = \sum_{j = 1}^d \mathbb{E}\left[|\tilde{\theta}_j - (R_d\theta)_j|^2\right]\label{app:eq:OrthogonalFinalEstimator}
        \end{equation}
        where the first equality holds as the rotation $R_d$ is an orthogonal transformation. Then, focusing on a single co-ordinate, we have for all $j \in [d]$ and any $i \in N_{j,2}$, and denoting by $B_j$ the event $B_{1, j}$ or $B_{2, j}$ in the cases $\alpha \in (0,1]$ and $\alpha > 1$ respectively, we see that
        \begin{align}
            \mathbb{E}&\left[|\tilde{\theta}_j - (R_d\theta)_j|^2 \right] \nonumber \\
            &\leq 2\mathbb{E}\biggl[ \mathbb{E}\left\{ \{\tilde{\theta}^{(i)} - (R_d\overline{X^{(i)}})_j\}\mathbbm{1}\{B_j\} \Bigm| k_j^\ast, R_d \right\}^2 + \mathbb{E}\left\{ \{\tilde{\theta}^{(i)} - (R_d\overline{X^{(i)}})_j\}\mathbbm{1}\{(B_j)^c\} \Bigm| k_j^\ast, R_d \right\}^2 \nonumber  \\
            & \hspace{250pt} + \frac{d}{n}\mathrm{Var}(\tilde{\theta}^{(i)} | k_j^\ast, R_d) \biggr] \nonumber \\
            & = 2\mathbb{E}\{(I) + (II) + (III)\}, \label{app:eq:l2co-ordinatebiasvariance}
        \end{align}
        where the inequality is by a similar decomposition to that in \eqref{app:eq:biasvariance}.

    \medskip
    \noindent \textbf{Step 3.1: Term $(III)$.} The variance term can easily be bounded as 
        \begin{align}
            \mathrm{Var}(\tilde{\theta}^{(i)}| k_j^\ast, R_d)
            &= \mathrm{Var}\left( \Pi_{\tilde{I}_{k_j^\ast}}\{(R_d\overline{X^{(i)}})_j \} \biggm| k_j^\ast, R_d \right) + \mathrm{Var}\left( \frac{2(1 + W_\alpha)\Delta}{\alpha} \ell_i \right) \nonumber \\
            &\leq 49\Delta^2 + \frac{8(1 + W_\alpha)^2\Delta^2}{\alpha^2} \leq \frac{337\Delta^2}{\alpha^2 \wedge 1}, \label{app:eq:l2VarianceControl}
        \end{align}
        where the variance of the truncated sample mean is bounded as it is itself a bounded random variable taking values on an interval of width $2(1 + W_\alpha)$ recalling the value $W_\alpha = 2\mathbbm{1}\{\alpha \in (0,1]\} + 5\mathbbm{1}\{\alpha > 1\}$, and the last inequality uses the fact $W_\alpha \leq 5$
        
       \medskip
    \noindent \textbf{Step 3.2: Term $(I)$.}  Following the same arguments as in \eqref{app:eq:biascontrolprelim} and \eqref{app:eq:BiasControl}, we have that
        \begin{align}
        \label{Eq:NewBias1}
            \mathbb{E}&\left[\left| (R_d\overline{X^{(i)}})_j - \Pi_{\tilde{I}_{k_j^\ast}}\{(R_d\overline{X^{(i)}})_j\} \right| \mathbbm{1}\{B_j \} \Bigm| k_j^\ast, R_d \right] \nonumber \\
            &\leq 2\mathbb{P}\left(\left\{ (R_d\overline{X^{(i)}})_j > \tilde{U}_j \right\} \cap B_j \Bigm| k_j^\ast, R_d \right) + 2\mathbb{P}\left(\left\{ (R_d\overline{X^{(i)}})_j < \tilde{L}_j \right\} \cap B_j \Bigm| k_j^\ast, R_d \right) \nonumber \\ 
            &\leq 2\mathbb{P}\left( \left\{ \bigl| (R_d\overline{X^{(i)}})_j - (R_d\theta)_j \bigr| > \Delta \right\} \cap B_j \Bigm| k_j^\ast, R_d \right),
        \end{align}
        where the second inequality is due to the fact that on the event $B_j$, $\tilde{U}_j - (R_d\theta)_j > \Delta$ and $(R_d\theta)_j - \tilde{L}_j > \Delta$. We upper bound the contribution of \eqref{Eq:NewBias1} in the event $D_{i,j}$, that 
        \begin{align}
        \label{Eq:NewBias2}
            \mathbb{E} &\left[ \mathbb{P}\left( \left\{ \bigl| (R_d\overline{X^{(i)}})_j - (R_d\theta)_j \bigr| > \Delta \right\} \cap B_j \Bigm| k_j^\ast, R_d \right)^2 \right]
            \leq \mathbb{P}\left( \left\{ \bigl| (R_d\overline{X^{(i)}})_j - (R_d\theta)_j \bigr| > \Delta \right\} \cap B_j \right) \nonumber  \\
            & \leq \mathbb{P}\left( \left\{ \bigl| (R_d\overline{X^{(i)}})_j - (R_d\theta)_j \bigr| > \Delta \right\} \cap B_j \cap D_{i,j} \right) + \mathbb{P}(D_{i,j}^c) \nonumber  \\
            & \leq \mathbb{P} \left( \left\{\|\overline{X^{(i)}} - \theta\|_2 \geq \frac{d^{1/2} \Delta}{10[\log\{nT^\ast(\alpha^2 \wedge 1)\}]^{1/2}} \right\} \cap B_j \cap D_{i,j} \right) + \mathbb{P}(D_{i,j}^c) \nonumber \\
            &\leq 2d\exp\left(-\frac{T^\ast d\Delta^2}{100c_3 \log\{nT^\ast(\alpha^2 \wedge 1)\}}\right) + \frac{1}{nT^\ast(\alpha^2 \wedge 1)}, 
        \end{align}
        where the third inequality holds on the event $D_{i,j}$ and the fourth inequality follows from applying \Cref{app:lem:SGnorm}, with some absolute constant $c_3 > 0$, for the first term and by~\eqref{app:eq:l2probDj} for the second term.

        We simplify the exponential term by noting that
        \begin{align}
            \exp\left(-\frac{T^\ast d\Delta^2}{100c_3 \log(nT^\ast\alpha^2)}\right)
            &= \exp\left(-\frac{C [\log\{nT^\ast(\alpha^2 \wedge 1)\}]^2}{100c_3 \log\{nT^\ast(\alpha^2 \wedge 1)\}}\right) \leq \bigg\{\frac{1}{nT^\ast(\alpha^2 \wedge 1)}\bigg\}^2,  \label{app:eq:exponentialsimplification}
        \end{align}
        where in the last line we take $C > 200c_3$.

    \medskip
    \noindent \textbf{Step 3.3: Term $(II)$.}
        We have by the same argument as \eqref{app:eq:unicompbound} that
        \begin{equation}
             \mathbb{E} \left( \left[\mathbb{E}\left\{ \{\tilde{\theta}^{(i)} - (R_d\overline{X^{(i)}})_j\}\mathbbm{1}\{B_j^c\} \Bigm| k_j^\ast, R_d \right\}\right]^2 \right)
            \leq 4\mathbb{P}(B_j^c).\label{app:eq:l2compbound}
        \end{equation}
        We hence have that
        \begin{align}
            \mathbb{E}[| (\tilde{\theta}_j - (R_d\theta)_j |^2] &\leq 2d\exp\left(-\frac{T^\ast d\Delta^2}{100c_3 \log\{nT^\ast(\alpha^2 \wedge 1)\}}\right) + \frac{1}{nT^\ast(\alpha^2 \wedge 1)} + 8\mathbb{P}(B_j^c) + \frac{374d\Delta^2}{n(\alpha^2 \wedge 1)} \nonumber \\
            &\lesssim \frac{d}{\{nT^\ast(\alpha^2 \wedge 1)\}^2} + \frac{1}{nT^\ast(\alpha^2 \wedge 1)} + \left( \frac{3}{\Delta} \vee 1\right) e^{-2n\min\{\alpha, \alpha^2\}/(K d)} + \frac{d\Delta^2}{n(\alpha^2 \wedge 1)} \nonumber \\
            &\lesssim \frac{1}{nT^\ast(\alpha^2 \wedge 1)} + \left( \frac{3}{\Delta} \vee 1\right) e^{-2n\min\{\alpha, \alpha^2\}/(K d)} + \frac{d\Delta^2}{n(\alpha^2 \wedge 1)}, \label{app:eq:l2co-ordinateerror}
        \end{align}
        where the first inequality is due to \eqref{app:eq:l2co-ordinatebiasvariance},~\eqref{app:eq:l2VarianceControl},~\eqref{Eq:NewBias1},~\eqref{Eq:NewBias2}  and~\eqref{app:eq:l2compbound}; the second is from~\eqref{app:eq:exponentialsimplification}, \eqref{app:eq:l2compbound} and~\eqref{app:eq:l2probBj}; and the final using the assumption that $n(\alpha^2 \wedge 1) > \widetilde{C}'d\log(ed)$ for some sufficiently large $\widetilde{C}'$. Simplifying further, we have
        \begin{align*}
            \mathbb{E}[| (\tilde{\theta}_j - (R_d\theta)_j |^2]
            &\lesssim \frac{1}{nT^\ast (\alpha^2 \wedge 1)} + \left( \frac{1}{\Delta} + 1\right) e^{-2n\min\{\alpha, \alpha^2\}/(K d)} + \frac{d\Delta^2}{n(\alpha^2 \wedge 1)} \\
            &\lesssim \frac{d^{1/2}(T^\ast)^{1/2}}{\log\{nT^\ast(\alpha^2 \wedge 1)\}} e^{-2n\min\{\alpha, \alpha^2\}/(K d)} + e^{-2n\min\{\alpha, \alpha^2\}/(K d)} + \frac{[\log\{nT^\ast(\alpha^2 \wedge 1)\}]^2}{nT^\ast(\alpha^2 \wedge 1)} \\
            &\lesssim \frac{d^{1/2}}{T^\ast}e^{-n\min\{\alpha, \alpha^2\}/(2Kd)} + \frac{[\log\{nT^\ast(\alpha^2 \wedge 1)\}]^2}{nT^\ast(\alpha^2 \wedge 1)} \\
            &\lesssim \frac{1}{T^\ast}e^{-C'n\min\{\alpha, \alpha^2\}/d} +\frac{[\log\{nT^\ast(\alpha^2 \wedge 1)\}]^2}{nT^\ast(\alpha^2 \wedge 1)} \\
            &\lesssim \frac{[\log\{nT^\ast(\alpha^2 \wedge 1)\}]^2}{nT^\ast(\alpha^2 \wedge 1)},
        \end{align*}
        for $C'>0$ some absolute constant, where the second inequality comes from substituting in the value of $\Delta$ as in \eqref{sec3:eq:l2deltaval}; the third using the fact that $e^{-2n\min\{\alpha, \alpha^2\}/(K d)} \leq (T^\ast)^{-3/2} e^{n\min\{\alpha, \alpha^2\}/(2K d)}$ and controlling the log term in the denominator using the assumption that $n\min\{\alpha, \alpha^2\} > \widetilde{C}'d\log(ed)$ for some sufficiently large $\widetilde{C}'$; and the penultimate by using the fact that $n\min\{\alpha, \alpha^2\} > \widetilde{C}'d\log(ed)$ for some sufficiently large $\widetilde{C}'$ to absorb the factor of $d^{1/2}$ into the exponential term.

        We conclude bounding the mean-squared-error of a single co-ordinate depending on the value of $T$. When $T \leq e^{n\min\{\alpha, \alpha^2\}/(K d)}$, we have that $T^\ast = T$ immediately giving the result for this case. When $T > e^{n\min\{\alpha, \alpha^2\}/(K d)}$, we have that $T^\ast = e^{n\min\{\alpha, \alpha^2\}/(K d)}$ giving
        \begin{align*}
            \mathbb{E}[| (\tilde{\theta}_j - (R_d\theta)_j |^2]
            &\lesssim \bigg( \frac{[\log\{n(\alpha^2 \wedge 1)\}]^2}{n(\alpha^2 \wedge 1)} + \frac{\{\log(T^\ast)\}^2}{n(\alpha^2 \wedge 1)} \bigg) \frac{1}{T^\ast} \\
            &\lesssim (\alpha \vee 1)e^{n\min\{\alpha, \alpha^2\}/(K d)}
            \lesssim e^{-C''n\min\{\alpha, \alpha^2\}/(K d)},
        \end{align*}
        for $C''>0$ an absolute constant, where the first inequality is by the Cauchy--Schwarz inequality, and the second by substituting in the value of $T^\ast = e^{n\min\{\alpha, \alpha^2\}/(K d)}$. Hence, we have that the error of the co-ordinate wise estimator $\tilde{\theta}_j$ is bounded as
        \begin{equation} \label{app:eq:l2coordwiseerror}
            \mathbb{E}[| (\tilde{\theta}_j - (R_d\theta)_j |^2] \lesssim
            \begin{cases}
                \frac{[\log\{n T (\alpha^2 \wedge 1)\}]^2}{nT(\alpha^2 \wedge 1)}, & T \leq e^{n\min\{\alpha, \alpha^2\}/(K d)}, \\
                e^{-C'' n\min\{\alpha, \alpha^2\}/(K d)}, &\text{otherwise.}
            \end{cases}
        \end{equation}
    \medskip
    \noindent \textbf{Step 4: Completing the Proof.} 
        We consider three cases separately depending on the value of $\alpha$

        \medskip
        \noindent \textbf{Case 1: $\alpha \in (0, 1]$} 

        When $\alpha \in (0,1]$, we simply sum up the error per co-ordinate in \eqref{app:eq:l2coordwiseerror}, yielding
        \begin{align*}
            \mathbb{E}\left[\| \hat{\theta} - \theta \|_2^2\right]
            =\mathbb{E}\left[\| R_d\hat{\theta} - R_d\theta \|_2^2\right]
            &= \sum_{j = 1}^d \mathbb{E}[| \tilde{\theta}_j - (R_d\theta)_j |^2] \\
            &\lesssim
            \begin{cases}
               \frac{d\{\log(n T \alpha^2)\}^2}{nT\alpha^2}, & T \leq e^{n\alpha^2/(K d)}, \\
                e^{-C''' n\alpha^2/(K d)}, &\text{otherwise.}
            \end{cases},
        \end{align*}
        for $C'''$ an absolute constant, where the first equality uses the fact that $R_d$ is an orthogonal transformation, and the inequality uses the fact that $n\alpha^2 > \widetilde{C}'d\log(ed)$ for some sufficiently large $\widetilde{C}'$ to absorb the factor of $d$ into the exponential term. Bounding the error by the sum of the two cases for the different values of $T$ completes the proof for the case $\alpha \leq 1$.

        \medskip
        \noindent \textbf{Case 2: $1 < \alpha \leq d$} 

        For simplicity we assume $\alpha$ is an integer, noting that this will not change the minimax rates obtained except up to constants. When $1 < \alpha \leq d$, the estimation procedure in \Cref{app:sec:LowPrivl2ball} has $\alpha$ set to $1$, with each user contributing to $\alpha$-many co-ordinates. Hence, each fold has a sample size of $n\alpha/d$, and the estimator for the $j$-th transformed co-ordinate $\tilde{\theta}_j$ has error bounded as 
        \begin{equation} \label{app:eq:l2coordwiseerror2}
            \mathbb{E}[| \tilde{\theta}_j - (R_d\theta)_j |^2] \lesssim
            \begin{cases}
                \frac{\{\log(nT\alpha)\}^2}{nT\alpha}, & T \leq e^{n\alpha/(K d)}, \\
                e^{-C'' n\alpha/(K d)}, &\text{otherwise.}
            \end{cases}
        \end{equation}
        which follows from the univariate error in \eqref{app:eq:l2coordwiseerror} with $\alpha = 1$ and sample size $n\alpha/d$ therein. For the final estimator $\hat{\theta}$ as defined in \eqref{sec3:eq:Finall2Estimator}, the error can be bounded as
        \begin{align*}
            \mathbb{E}\left[\| \hat{\theta} - \theta \|_2^2\right]
            =\mathbb{E}\left[\| R_d\hat{\theta} - R_d\theta \|_2^2\right]
            &= \sum_{j = 1}^d \mathbb{E}[| (\tilde{\theta}_j - (R_d\theta)_j |^2] \\
            &\lesssim
            \begin{cases}
               \frac{d\{\log(n T \alpha)\}^2}{nT\alpha}, & T \leq e^{n\alpha/(K d)}, \\
                e^{-C''' n\alpha/(K d)}, &\text{otherwise.}
            \end{cases},
        \end{align*}
        for $C'''$ an absolute constant, where the first equality uses the fact that $R_d$ is an orthogonal transformation, and the inequality uses the fact that $n\alpha > \widetilde{C}'d\log(ed)$ for some sufficiently large $\widetilde{C}'$ to absorb the factor of $d$ into the exponential term. Bounding the error by the sum of the two cases for the different values of $T$ completes the proof for the case $1 < \alpha \leq d$.

        \medskip
        \noindent \textbf{Case 3: $\alpha > d$} 

        For $\alpha > d$, the estimation procedure in \Cref{app:sec:LowPrivl2ball} has $\alpha$ set to $\alpha/d > 1$, with each user contributing to $\alpha$-many co-ordinates. Hence, each fold has a sample size of $n$, and the estimator for the $j$-th transformed co-ordinate $\tilde{\theta}_j$ has error bounded as 
        \begin{equation*}
            \mathbb{E}\left[|\hat{\theta}_j - \theta_j|^2 \right]
            \lesssim
                \begin{cases}
                    \frac{\{\log(nT)\}^2}{dnT}, & T \leq e^{n\alpha/(K d)}, \\
                    e^{-n\alpha/(2K d)}, &\text{otherwise,}
                \end{cases}
        \end{equation*}
        which follows from the univariate error in \eqref{app:eq:l2coordwiseerror} with for privacy parameter $\alpha/d > 1$. For the final estimator $\hat{\theta}$ as defined in \eqref{sec3:eq:Finall2Estimator}, the error can be bounded as
        \begin{align*}
            \mathbb{E}\left[\| \hat{\theta} - \theta \|_2^2\right]
            =\mathbb{E}\left[\| R_d\hat{\theta} - R_d\theta \|_2^2\right]
            &= \sum_{j = 1}^d \mathbb{E}[| (\tilde{\theta}_j - (R_d\theta)_j |^2] \\
            &\lesssim
            \begin{cases}
               \frac{\{\log(d n T)\}^2}{nT}, & T \leq e^{n\alpha/(K d)}, \\
                e^{-C''' n\alpha/(K d)}, &\text{otherwise.}
            \end{cases},
        \end{align*}
        for $C'''$ an absolute constant, where the first equality uses the fact that $R_d$ is an orthogonal transformation, and the inequality uses the fact that $n\alpha > \widetilde{C}'d\log(ed)$ for some sufficiently large $\widetilde{C}'$ to absorb the factor of $d$ into the exponential term. Bounding the error by the sum of the two cases for the different values of $T$ completes the proof for the case $\alpha > d$. Combining all three cases completes the proof.
    \end{proof}

\subsection{Proof of Theorem \ref{sec5:thm:main} Upper Bound}  

    \begin{proof}[Proof of \Cref{sec5:thm:main} (The first procedure)]
        We define the following sets of indices
        \begin{align*}
            S_1 = \{j \in [d] : |\theta_j| > 2\varepsilon \}, \quad S_2 = \{j \in [d] : 0 < |\theta_j| \leq 2\varepsilon \} \quad \mbox{and} \quad S_0 = \{j \in [d] : \theta_j = 0 \}.
        \end{align*}
        For $j \in [d]$, denote
        \begin{equation}
            I_j = (n/2)^{-1}\sum_{i=1}^{n/2} r_{i,j}Z_i \label{app:eq:sparseselectionvoteagg}
        \end{equation}
        which is such that $\mathcal{I} = \{j \in [d] : I_j \geq 1/2\}$. We then consider the following event
        \begin{equation}
            A = \left( \bigcap_{j \in S_1} \left\{ I_j \geq 1/2 \right\}\right) \cap \left(\bigcap_{j \in S_0} \left\{ I_j < 1/2 \right\} \right). \label{app:eq:sparsebinningevent}
        \end{equation}
        On this event, those co-ordinates in $S_1$ are correctly identified and those in $S_0$ are correctly rejected. We then consider the error of the estimator on the event $A$ and its complement.

        The construction of the estimator $\hat{\theta}$ in \eqref{sec5:eq:sparseestimator1} ensures that $\hat{\theta}_j = 0$ for $j \notin \mathcal{I}'$, from which we obtain
        \begin{equation} 
            \| \hat{\theta} - \theta \|_2^2 = \sum_{j \notin \mathcal{I}'} \theta_j^2 + \sum_{j \in \mathcal{I}'} (\tilde{\theta}_j - \theta_j)^2. \label{app:eq:sparseestimatorcoords}
        \end{equation}

        We consider the first summand in \eqref{app:eq:sparseestimatorcoords}. On $A$, we have that $S_1 \subseteq \mathcal{I}$, and we also have $S_0 \cap \mathcal{I} = \emptyset$ and hence $|\mathcal{I}| \leq s$, giving $\mathcal{I}' = \mathcal{I}$. We have no guarantees on the membership of co-ordinates of $S_2$ in $\mathcal{I}$, and so the error is at most $|S_2|(2\varepsilon)^2 \leq 4s\varepsilon^2$. On the complement event $A^c$, the error can be bounded by the worst case error of $4s$, giving
        \begin{equation}
            \sum_{j \notin \mathcal{I}'} \theta_j^2
            = \sum_{j \notin \mathcal{I}'} \theta_j^2 \mathbbm{1}\{A\} + \sum_{j \notin \mathcal{I}'} \theta_j^2 \mathbbm{1}\{A^c\} \leq 4s\varepsilon^2 + 4s\mathbbm{1}\{A^c\}\quad \text{ a.s.} \label{app:eq:sparseestimatorcontrolled}
        \end{equation}
        
        As the second summand in \eqref{app:eq:sparseestimatorcoords} corresponds to the $\ell_\infty$-ball estimator of \Cref{sec3:linfball}, the mean-squared-error incurred in this term is bounded by \eqref{app:eq:linfMSE} where the dimension of the problem is $s' \leq s$. We now show that the error of the $s'$-dimensional problem can be bounded by the error of the $s$-dimensional problem up to constants.
        
        Indeed, we first note that $s^2 \log(nT\alpha^2/s)/(nT\alpha^2)$ is an increasing function of $s$ due to the assumption $n\alpha^2 > \widetilde{C}s\log(ed)$. Further, with $c$ as in \eqref{app:eq:linffinalMSE}, $e^{-cn\alpha^2/s}$ as a function of $s$ is also increasing. 

        It remains to consider the case where $e^{cn\alpha^2/s} < T < e^{cn\alpha^2/s'}$ where the exponential rate of the $s$-dimensional problem must be compared to the polynomial rate of the $s'$-dimensional problem. We see that
        \begin{align*}
            \frac{(s')^2\log(nT\alpha^2/s')}{nT\alpha^2}
            \leq \left(\frac{s^2\log(n\alpha^2) +n\alpha^2/s'}{n\alpha^2}\right)e^{-cn\alpha^2/s}
            \lesssim s^2 e^{-cn\alpha^2/s} \lesssim e^{-c'n\alpha^2/s}
        \end{align*}
        where $c' > 0$ is an absolute constant; the first inequality is by the fact that $e^{-cn\alpha^2/s} < T < e^{-cn\alpha^2/s'}$ in the considered regime; and the last inequality by the fact that $n\alpha^2 > \widetilde{C}s\log(ed)$ for some sufficiently large $\widetilde{C}$.
        
        Hence, the error of the $s$-dimensional $\ell_\infty$-ball estimator is indeed an increasing function of $s$ up to constants, giving
        \begin{equation}
            \mathbb{E}\left[\sum_{j \in \mathcal{I}'} (\tilde{\theta}_j - \theta_j)^2 \right]
            \lesssim 
            \begin{cases}
                \frac{s^2 \log(nT\alpha^2/s)}{nT\alpha^2}, & T \leq e^{n\alpha^2/(K s)}, \\
                e^{-c'n\alpha^2/s}, &\text{otherwise}.
            \end{cases} \label{app:eq:sparseestimatorlinfsub}
        \end{equation}
        We then combine \eqref{app:eq:sparseestimatorcoords}, \eqref{app:eq:sparseestimatorcontrolled}, \Cref{app:lem:sparsebinningevent}, and \eqref{app:eq:sparseestimatorlinfsub} to see that
        \begin{align*}
            \mathbb{E}\left( \| \hat{\theta} - \theta \|_2^2 \right)
            &\lesssim \frac{s\log(dnT^\ast\alpha^2)}{T^\ast} +
                \begin{cases}
                    \frac{s^2 \log(nT\alpha^2/s)}{nT\alpha^2}, &  T \leq e^{n\alpha^2/(K s)}, \\
                    e^{-c'n\alpha^2/s}, &\text{otherwise,}
                \end{cases}
        \end{align*}
        where we substitute in the value of $\varepsilon$ as in \eqref{sec5:eq:thresholdboundary}.
        When $T \leq e^{n\alpha^2/(K s)}$, we have that $T^\ast = T$, and hence,
        \begin{align*}
            \mathbb{E}\left( \| \hat{\theta} -\theta \|_2^2 \right)
            &\lesssim \frac{s\log(dnT\alpha^2)}{T}
            + \frac{s^2 \log(nT\alpha^2/s)}{nT\alpha^2}
            \lesssim \frac{s\log(dnT\alpha^2)}{T},
        \end{align*}    
        where the second inequality follows from the assumption that $n\alpha^2 > \widetilde{C}s\log(ed)$ for some suitably large constant $\widetilde{C}$.
        When instead $T > e^{n\alpha^2/(K s)}$, we have that $T^\ast = e^{n\alpha^2/(K s)}$, giving
        \begin{align*}
            \mathbb{E}\left( \| \hat{\theta} -\theta \|_2^2 \right)
            \lesssim \{s\log(dn\alpha^2)+n\alpha^2\}e^{-n\alpha^2/(K s)}
            + e^{-c'n\alpha^2/s}
            \lesssim e^{-c''n\alpha^2/s},
        \end{align*}
        where $c'' > 0$ is some absolute constant, the first inequality comes from the value of $T^\ast$, and the second by the assumption that $n\alpha^2 > \widetilde{C}s\log(d)$ for some suitably large constant $\widetilde{C} > 0$ which allows the multiplicative prefactors to be absorbed into their respective exponential factor.
        Hence, we have
        \begin{equation*}
            \mathbb{E}\left( \| \hat{\theta} -\theta \|_2^2 \right) 
            \lesssim
            \begin{cases}
                s\log(dnT\alpha^2)/T, &  T \leq e^{n\alpha^2/(K s)}, \\
                e^{-c''n\alpha^2/s}, &\text{otherwise.}
            \end{cases}
        \end{equation*}
        Lastly, bounding the error by the sum of the two cases for the different values of $T$ completes the proof.
    \end{proof}

    \begin{proof}[Proof of \Cref{sec5:thm:main} (Second Procedure)]
    We recall that this second procedure is only used when $n\alpha^2 > \widetilde{C}d\log(dnT\alpha^2)$, which is henceforth assumed throughout this proof. We proceed by showing that the entries of the estimator $\tilde{\theta}$ obtained from the $\ell_\infty$-ball procedure of \Cref{sec3:linfball} are suitably close to the true values in $\theta$ with high probability so that that the thresholding procedure in \eqref{sec5:eq:thresholding} correctly shrinks to zero the co-ordinates of $\hat{\theta}$ which correspond to zeros of $\theta$.

    It suffices to consider a single fixed co-ordinate $j \in [d]$ as the $\ell_\infty$-ball estimation procedure is independent across co-ordinates.

    For $i \in [n]$ we write $\overline{X^{(i)}} = (T^\ast)^{-1} \sum_{t = 1}^{T^\ast} X_{t, j}^{(i)}$ and $Z_{j,k} = \sum_{i \in N_{j,1}} \widetilde{V}^{(i)}_k$ for the total number of privatised votes for the $k$-th sub-interval. We denote by $l_j \in [N(\Delta)]$ the fixed but unknown index such that $\theta_j \in I_{l_j}$ and define the event
    \begin{equation*}
        A_j = \bigcap_{k:|l_j-k|>2} \{Z_{j,l_j} > Z_{j,k}\},
    \end{equation*}
    which is analogous to the event \eqref{app:eq:eventB} considered in the analysis of the univariate mean estimator. Bounding the probability of the complement by the same analysis as that in \eqref{app:eq:voteprobbound} and \eqref{app:eq:probA}, we have
        \begin{equation}
            \mathbb{P}(A_j^c)
            \leq \frac{3}{\Delta \wedge 1} e^{-2n\alpha^2/(K d)} \label{app:eq:sparseprobAj}
        \end{equation}
    where $K$ is as appears in \eqref{app:eq:voteprobbound} and, as in \eqref{app:eq:probA}, we require $\Delta < 1$, so to account for when this fails to hold we take the minimum $\Delta \wedge 1$ which suffices for an upper bound.

    We denote by $\tilde{I}_{j^\ast}$ the chosen interval for refinement and denote the lower and upper endpoints of this interval by $L_j$ and $U_j$ respectively. Recalling the inflated intervals $\tilde{L}_j$ and $\tilde{U}_j$ defined in \eqref{sec3:eq:uniMeanIntervalExpand}, we now proceed to show that for a fixed user $i \in N_{j,2}$, on the event $A_j$ the truncated sample mean is close to the sample mean with high probability. Indeed, we first note that
    \begin{align}
        \mathbb{E}\left[\left| \Pi_{\tilde{I}_{j^\ast}}(\overline{X^{(i)}}) - \overline{X^{(i)}} \right| \mathbbm{1}\{A_j\} \Bigm| j^\ast \right] \leq 4e^{-T^\ast \Delta^2/2} \quad \mathrm{a.s.} \label{app:eq:biascontrolinsparse}
    \end{align}
    which follows from the same argument as \eqref{app:eq:biascontrolprelim} and \eqref{app:eq:BiasControl}, noting the different value of $\Delta$ in this case.

    Hence, we have
    \begin{align}
        &\mathbb{P}\left( \left\{ \left| \frac{1}{n/(2d)} \sum_{i \in N_{j,2}} \left( \Pi_{\tilde{I}_{j^\ast}}(\overline{X^{(i)}}) - \overline{X^{(i)}} \right) \right| > \varepsilon \right\} \cap A_j \Biggm| j^\ast \right) \nonumber \\
        &\leq \frac{1}{\varepsilon}\mathbb{E}\left[ \left|\frac{1}{n/(2d)} \sum_{i \in N_{j,2}}  \left( \Pi_{\tilde{I}_{j^\ast}}(\overline{X^{(i)}}) - \overline{X^{(i)}} \right) \right|\mathbbm{1}\{A_j\} \Biggm| j^\ast \right]
        \leq \frac{4e^{-T^\ast\Delta/2}}{\varepsilon} \nonumber \\
        &\leq \frac{4}{C}\left(\frac{nT^\ast\alpha^2}{2d\{\log(nT^\ast\alpha^2)\}^2}\right)^{1/2} e^{-T^\ast\Delta^2/2} \lesssim \frac{1}{nT^\ast\alpha^2} \quad \mathrm{a.s.}
        \label{app:eq:TruncDeviationBound}
    \end{align}
    where the first inequality is by Markov's inequality; the second by \eqref{app:eq:biascontrolinsparse}; the last two inequalities by the values of $\varepsilon$ and $\Delta$ as in \eqref{sec5:eq:Deltaval2nd}.

    We also note the definition of a sub-Exponential random variable \citep[e.g.][~Definition~2.7]{Wainwright:2019} where it is easy to verify that the Laplace random variables are sub-Exponential with parameters $(2,2)$. Hence, by e.g.~\citet[][Equation~2.18]{Wainwright:2019} we have that
    \begin{equation}
        \mathbb{P} \left( \left\{ 
        \left| \frac{1}{n/(2d)} \sum_{i \in {N_{j,1}}} \left( \frac{14\Delta}{\alpha} \ell_i \right) \right| > \varepsilon \right\} \cap A_j \right)
        \leq 2
        \begin{cases}
            e^{-\frac{n}{16d}\{\varepsilon\alpha/(14\Delta)\}^2}, &\text{ for } 0 \leq \varepsilon\alpha/(14\Delta) \leq 2\\
            e^{-\frac{n}{8d}\{\varepsilon\alpha/(14\Delta)\}}, &\text{ for } \varepsilon\alpha/(14\Delta) > 2.
        \end{cases}\label{app:eq:bernsteinbound}
    \end{equation}
    We note that
    \begin{align*}
        \frac{\varepsilon\alpha}{14\Delta} = \frac{C'}{14}\left(\frac{d\log(nT^\ast\alpha^2)}{n\alpha^2}\right)^{1/2} \leq \frac{C'}{14\widetilde{C}} \leq 2
    \end{align*}
    for $\widetilde{C}$ taken sufficiently large using the fact $n\alpha^2 > \widetilde{C}d\log(dnT\alpha^2)$. Hence we have the sub-Gaussian regime in \eqref{app:eq:bernsteinbound}, giving
    \begin{align}
        &\mathbb{P}\left( \left\{ \left| \tilde{\theta}_j - \theta_j \right| > 3\varepsilon \right\} \cap A_j \right)
        \leq \mathbb{P} \left( \left\{ \left| \frac{1}{n/(2d)} \sum_{i \in {N_{j,1}}} \left( \Pi_{\tilde{I}_{j^\ast}}(\overline{X^{(i)}}) - \overline{X^{(i)}} \right) \right| > \varepsilon \right\} \cap A_j \right) \nonumber \\
        &+ \mathbb{P} \left(\left\{ \left| \frac{1}{n/(2d)} \sum_{i \in {N_{j,1}}} \left( \overline{X^{(i)}} - \theta_j \right) \right| > \varepsilon \right\} \cap A_j \right) + \mathbb{P} \left( \left\{ 
        \left| \frac{1}{n/(2d)} \sum_{i \in {N_{j,1}}} \frac{14\Delta}{\alpha} \ell_i \right| > \varepsilon \right\} \cap A_j \right) \nonumber \\
        &\lesssim \frac{1}{nT^\ast\alpha^2} + 2e^{-\frac{nT^\ast \varepsilon^2}{4d}} + 2e^{-\frac{n}{16d}\left(\frac{\varepsilon\alpha}{14\Delta}\right)^2}
        = \frac{1}{nT^\ast\alpha^2} + 2e^{-C'^2\{\log(nT^\ast\alpha^2)\}^2/\alpha^2} + 2e^{-C'^2\log(nT^\ast\alpha^2)/(3136\alpha^2)} \nonumber \\
        &\lesssim \frac{1}{nT^\ast\alpha^2}
        \label{app:eq:SparseDistanceBound},   
    \end{align}
    where in the second inequality the first probability term is bounded as in \eqref{app:eq:TruncDeviationBound}, the second term by Hoeffding's inequality, and the third term by the sum of the two regimes in \eqref{app:eq:bernsteinbound}. The equality comes from substituting the value of $\varepsilon$ and $\Delta$ as in \eqref{sec5:eq:Deltaval2nd}, and the final inequality is from taking $C'$ sufficiently large.

    For a co-ordinate $j \in [d]$, define the event
    \begin{equation*}
        B_j = \left\{ \left| \tilde{\theta}_j - \theta_j \right| \leq 3\varepsilon \right\},
    \end{equation*}
    and let $B = \cap_{j \in [d]} B_j$, whence we have
    \begin{align*}
        \mathbb{P}(B^c) &\leq \sum_{j=1}^d \left\{ \mathbb{P}(A_j^c) + \mathbb{P}(B_j^c \cap A_j) \right\} \\
        &\lesssim d\left( \frac{1}{\Delta} + 1 \right) e^{-2n\alpha^2/(K d)} + d\mathbb{P}\left( \left\{ \left| \tilde{\theta}_j - \theta_j \right| > 3\varepsilon \right\} \cap A_j \right)  \\
        &\lesssim \left( \frac{d(T^\ast)^{1/2}}{\{\log(nT\alpha^2)\}^{1/2}} + d \right) e^{-2n\alpha^2/(K d)} + \frac{d}{nT^\ast\alpha^2}  \\
        &\lesssim \frac{1}{T^\ast}e^{-cn\alpha^2/d} + \frac{d}{nT^\ast\alpha^2} 
        \lesssim \frac{d}{nT^\ast\alpha^2},
    \end{align*}
    where $c > 0$ is an absolute constant; the second inequality uses \eqref{app:eq:sparseprobAj}; the third by \eqref{app:eq:SparseDistanceBound} and the value of $\Delta$ as in \eqref{sec5:eq:Deltaval2nd}; and the penultimate by controlling the exponential term as $e^{-2n\alpha^2/(K d)} \leq (T^\ast)^{-3/2} e^{-n\alpha^2/(2K d)}$ and absorbing the prefactor of $d$ using the fact that $n\alpha^2 \geq \widetilde{C} d\log(dnT\alpha^2)$ for $\widetilde{C}$ sufficiently large.

    We now analyse the error of the estimator on the event $B$, and control its error off this event. We define the sets
    \begin{equation*}
        S_1 = \{j \in [d] : |\theta_j| > 6\varepsilon \}, \qquad S_2 = \{j \in [d] : |\theta_j| \leq 6\varepsilon \}, \qquad S_0 = \{j \in [d] : \theta_j = 0 \}.
    \end{equation*}
    We note by the construction of the final estimator \eqref{sec5:eq:finalThresholdedEstimator}, the worst case error is $4s$ as at most $s$ many co-ordinates are non-zero, and we see that
    \begin{align*}
        \mathbb{E}\left[ \|\hat{\theta} - \theta\|_2^2 \right]
        &\leq 4s\mathbb{P}(B^c) + \mathbb{E}(\|\hat{\theta} - \theta\|_2^2 \mathbbm{1}\{B\}) \\
        &\lesssim \frac{sd}{nT^\ast\alpha^2} + \sum_{j \in S_1 \cup S_2}\mathbb{E}(|\hat{\theta} - \theta|^2 \mathbbm{1}\{B\}) + \sum_{j \in S_0}\mathbb{E}(|\hat{\theta} - \theta|^2 \mathbbm{1}\{B\}) \\
        &\lesssim \frac{sd}{nT^\ast\alpha^2} + |S_1 \cup S_2|\varepsilon^2
        \lesssim \frac{sd}{nT^\ast\alpha^2} + s\varepsilon^2
        \lesssim \frac{sd\{\log(nT^\ast\alpha^2)\}^2}{nT^\ast\alpha^2},
    \end{align*}
    where in the first inequality we bound the error on the event $C^c$ by the worst case of $4s$, and the third inequality uses the fact that on the event $B$, (i) co-ordinates in $S_0$ are correctly thresholded to $0$ in the estimator $\hat{\theta}$, contributing no error, and (ii) co-ordinates in $S_1 \cup S_2$ contribute error at most $O(\varepsilon^2)$. The final inequality then follows from the value of $\varepsilon$ as in \eqref{sec5:eq:Deltaval2nd}.

    In the case $T \leq e^{n\alpha^2/(K d)}$, we have $T^\ast = T$ which immediately gives the desired rate. In the case $T > e^{n\alpha^2/(K d)}$, we have $T^\ast = e^{n\alpha^2/(K d)}$, which gives
    \begin{align*}
        \mathbb{E}\left[ \| \hat{\theta} -\theta \|^2 \right]
        &\lesssim \frac{sd \{\log(nT^\ast\alpha^2)\}^2}{nT^\ast\alpha^2}
        = \frac{sd}{nT^\ast \alpha^2} \left\{ \log(n\alpha^2) + \log(T^\ast) \right\}^2, \\
        &= \frac{sd}{n\alpha^2} \left( \log(n\alpha^2) + \frac{n\alpha^2}{K d} \right)^2 e^{-n\alpha^2/(K d)}
        \lesssim e^{-c'n\alpha^2/d},
    \end{align*}
    where $c'>0$ is an absolute constant; the second inequality comes from substituting in the value $e^{n\alpha^2/(K d)}$ for $T^\ast$; and the last inequality by the fact that $n\alpha^2 \geq \widetilde{C} d\log(dnT\alpha^2)$ for some sufficiently large $\widetilde{C}$. Trivially upper bounding the polynomial rate by inserting an extra factor of $d$ into the log term to match that of the previously constructed estimator completes the proof.
    \end{proof}

\subsection{Proof of Theorem \ref{sec4:thm:main} Upper Bound}
    \begin{proof}[Proof of \Cref{sec4:thm:main}]  
    \
    \\
    \noindent \textbf{Step 1: Reducing from Non-Parametric Density Estimation to Mean Estimation.}
     Due to the definition of $\hat{f}$ in \eqref{sec4:eq:finalestimator}, it holds that 
        \begin{equation}
            \mathbb{E}\left[ \|\hat{f} - f\|_2^2 \right] = \mathbb{E} \left[ \int \{\hat{f}(x) - f(x)\}^2 \diff x \right]
            = \mathbb{E} \left[ \sum_{j=1}^M (\hat{\theta}_j - \theta_j)^2\right] + \sum_{j=M+1}^\infty \theta_j^2 
            \leq \mathbb{E}\left[\|\hat{\theta} -\theta\|_2^2 \right] + \frac{1}{M^{2\beta}}, \label{app:eq:densityupperrate}
        \end{equation}
        where the second equality is due to the orthonormality of the basis \eqref{sec2:eq:TrigBasis} and the inequality follows from the definition of the Sobolev ellipsoid \eqref{sec2:eq:Sobolev}. As $\sup_j \sup_x |\varphi_j(x)| \leq \sqrt{2}$ for all $j \geq 1$, we have that $\theta \in \mathbb{B}_\infty(\sqrt{2}) \subset \mathbb{R}^M$. We recall the required assumption for the $\ell_\infty$-ball estimator in \Cref{sec3:thm:main}, that is, $n\alpha^2 > \widetilde{C}M\log(eM)$ where $\widetilde{C} > 0$ is the same constant as in \Cref{sec3:thm:main}.  This is verified below.

        \begin{enumerate}[{Case} 1.]
            \item When $T \leq (n\alpha^2)^{2\beta+1}/\{C\log(n\alpha^2)\}^{2\beta + 2}$, we have that $M = (nT\alpha^2)^{1/(2\beta + 2)}$ and therefore $M \leq n\alpha^2/\{C\log(n\alpha^2)\}$.  It then follows that
            \begin{align*}
                \widetilde{C}M\log(eM)
                \leq \frac{\widetilde{C}n\alpha^2}{C\log(n\alpha^2)} \log\left\{ \frac{en\alpha^2}{C\log(n\alpha^2)} \right\}
                \leq \frac{\widetilde{C}n\alpha^2}{C} \leq n\alpha^2,
            \end{align*}
            due to the assumption that $n\alpha^2 \geq \widetilde{C}_\beta$ for some sufficiently large $\widetilde{C}_\beta$ and choosing $C \geq \widetilde{C}$.
            \item When $T > (n\alpha^2)^{2\beta+1}/\{C\log(n\alpha^2)\}^{2\beta + 2}$, we have that $M = n\alpha^2/\{C\log(n\alpha^2)\}$ and therefore
            \begin{equation*}
                \widetilde{C}M\log(eM)
                = \frac{\widetilde{C}n\alpha^2}{C\log(n\alpha^2)} \log\left\{ \frac{en\alpha^2}{C\log(n\alpha^2)} \right\}
                \leq \frac{\widetilde{C}n\alpha^2}{C} \leq n\alpha^2,
            \end{equation*}
           due to the assumption that $n\alpha^2 \geq \widetilde{C}_\beta$ for some sufficiently large $\widetilde{C}_\beta$ and choosing $C \geq \widetilde{C}$.
        \end{enumerate}

        \medskip
        \noindent \textbf{Step 2: Applying \Cref{sec3:thm:main}.}
        We proceed with two cases of $M$ separately.  

        \medskip
        \noindent \textbf{Step 2.1: Case $T \leq (n\alpha^2)^{2\beta+1}/\{C\log(n\alpha^2)\}^{2\beta + 2}$.}  For sufficiently large $C \geq K(2\beta+1)$ where $K$ is as in \eqref{app:eq:linfMSE}, we have that
        \begin{equation*}
            T \leq (n\alpha^2)^{2\beta+1} \leq \exp\biggl\{\frac{C}{K} \log(n \alpha^2) \biggr\} \leq \exp \biggl\{\frac{1}{K} (n\alpha^2)^\frac{2\beta+1}{2 \beta+2} T^{-1/(2\beta+2)} \biggr\} \leq \exp \biggl( \frac{n \alpha^2}{K M} \biggr).
        \end{equation*}
        Hence, combining \eqref{app:eq:linfMSE} and \eqref{app:eq:densityupperrate} we see that
        \begin{align}
            \mathbb{E}\left[ \|\hat{f} - f\|_2^2 \right]
            &\lesssim \frac{M^2\log(nT\alpha^2/M)}{nT\alpha^2} + \frac{1}{M^{2\beta}}, \nonumber \\
            &= \frac{\log\{nT\alpha^2/(nT\alpha^2)^{1/(2\beta + 2)}\}}{(nT\alpha^2)^{2\beta/(2\beta + 2)}} + \frac{1}{(nT\alpha^2)^{2\beta/(2\beta + 2)}}, \nonumber \\
            &\lesssim C_\beta \log(nT\alpha^2) (nT\alpha^2)^{-\frac{2\beta}{2\beta + 2}}, \label{app:eq:densitysmallTrate}
        \end{align}
        where $C_\beta>0$ is some constant depending on $\beta$ and the equality comes from the value of $M$.

    \medskip
        \noindent \textbf{Step 2.2: Case $T > (n\alpha^2)^{2\beta + 1} /\{C\log(n\alpha^2)\}^{2\beta + 2}$.}
        Consider the two possible regimes for the mean estimation rate \eqref{app:eq:linfMSE} separately. 
        
        When $T > \exp(n\alpha^2/(K M))$, the exponential rate occurs.  Combining  \eqref{app:eq:linfMSE} and \eqref{app:eq:densityupperrate}, we see that
        \begin{align}
            \mathbb{E}\left[ \|\hat{f} - f\|_2^2 \right]
            &\lesssim e^{-cn\alpha^2/M} + \frac{1}{M^{2\beta}} \nonumber \\
            &= e^{-cC\log(n\alpha^2)} + \frac{\{C \log(n\alpha^2)\}^{2\beta}}{(n\alpha^2)^{2\beta}} \nonumber \\
            &\leq \frac{1}{(n\alpha^2)^{2\beta}} + \frac{\{C\log(n\alpha^2)\}^{2\beta}}{(n\alpha^2)^{2\beta}}
            \lesssim \frac{\{C\log(n\alpha^2)\}^{2\beta}}{(n\alpha^2)^{2\beta}}, \label{app:eq:densitylargeTexprate}
        \end{align}
        where $c > 0$ is an absolute constant; the equality from the value of $M$; and the second inequality by taking $C$ sufficiently large so the $cC \geq 2 \beta$.
        
        When $T \leq \exp(n\alpha^2/(K M))$, the polynomial rate in \eqref{app:eq:linfMSE} occurs.  Note that
        \begin{equation}
            T \leq e^{n\alpha^2/(K M)} = (n\alpha^2)^{C/K}, \label{app:eq:densitylargeTpolycasebound}
        \end{equation}
        where the equality comes from the value of $M$. Combining  \eqref{app:eq:linfMSE} and \eqref{app:eq:densityupperrate}, we see that
        \begin{align}
            \mathbb{E}\left[ \|\hat{f} - f\|_2^2 \right]
            &\lesssim \frac{M^2 \log(nT\alpha^2/M)}{nT\alpha^2} + \frac{1}{M^{2\beta}} \nonumber \\
            & \asymp \frac{(n\alpha^2)^2}{\{\log(n\alpha^2)\}^2} \frac{\log(T\log(n\alpha^2))}{nT\alpha^2} + \frac{\{C\log(n\alpha^2)\}^{2\beta}}{(n\alpha^2)^{2\beta}} \nonumber \\
            & \lesssim \frac{n\alpha^2}{T} \log(T) + \frac{\{C\log(n\alpha^2)\}^{2\beta}}{(n\alpha^2)^{2\beta}} \nonumber \\
            & \lesssim \frac{n\alpha^2}{(n\alpha^2)^{2\beta + 1}} \{\log(n\alpha^2)\}^{2\beta+3} + \frac{\{C\log(n\alpha^2)\}^{2\beta}}{(n\alpha^2)^{2\beta}} \lesssim \frac{\{\log(n\alpha^2)\}^{2\beta + 3}}{(n\alpha^2)^{2\beta}}, \label{app:eq:densitylargeTpolyrate}
        \end{align}
        where the equality comes from the value of $M$ and the second inequality comes from \eqref{app:eq:densitylargeTpolycasebound} and the fact that $T > (n\alpha^2)^{2\beta + 1} /\{C\log(n\alpha^2)\}^{2\beta + 2}\}$. Combining \eqref{app:eq:densitylargeTexprate} and $\eqref{app:eq:densitylargeTpolyrate}$ shows that the rate is $O\{(n\alpha^2)^{-2\beta}\}$ up to constant and poly-logarithmic factors in both cases. Combining also with \eqref{app:eq:densitysmallTrate} completes the proof.
    \end{proof}

    \subsection{Proof of Theorem \ref{secmix:thm:main} Upper Bound} \label{app:sec:Mix}
            \begin{proof}[Proof of \Cref{secmix:thm:main}]
            Let $\{\iota^{(i)}\}_{i \in [n]}$ be i.i.d. $\mathrm{Multi}(1, \pi)$ random variables with $\pi = (\pi_1, \ldots, \pi_m)^T$. It then follows that 
            \begin{equation*}
                X^{(i)}_{1:T} | \iota^{(i)} = k \sim P^{\otimes T}_k, \quad k \in [m], \, i \in [n].
            \end{equation*}

            \medskip
            \noindent \textbf{Step 1: Localisation.}
            
            For $k \in [m]$, we denote by $l_k \in [N(\Delta)]$ the fixed but unknown index such that the truth $\theta_k \in I_{l_k}$. Note that due to the separation condition that $\min_{k \neq k'}|\theta_k - \theta_{k'}| \geq \theta_{\mathrm{sep}}$, along with the choice of $\Delta$ as in~\eqref{secmix:eq:deltaval} and the condition on $\theta_{\mathrm{sep}}$, we have that $\theta_{k'} \notin I_{l_k - 1} \cup I_{l_k} \cup I_{l_k + 1}$ for $k' \neq k$.  Denote $I_0 = I_{N(\Delta) + 1} = \emptyset$.
            
            For each $i \in N_1$ and $t \in [T]$, the random variable $X^{(i)}_t \in [-1, 1]$. Hence, we see that
            \begin{align} 
                \mathbb{P}\left\{\left| \frac{1}{T^*} \sum_{t = 1}^{T^\ast} X^{(i)}_t - \theta_{\iota^{(i)}} \right|\geq \Delta \right\}
                &= \mathbb{E}\bigg[\mathbb{P}\left\{\left| \frac{1}{T^*} \sum_{t = 1}^{T^\ast} X^{(i)}_t - \theta_{\iota^{(i)}} \right|\geq \Delta \biggm| \iota^{(i)} \right\} \bigg] \nonumber \\
                &\leq 2e^{-T^\ast\Delta^2/2} = \frac{2}{(nT^\ast\alpha^2)^2}
                \leq \frac{\pi_{0}}{4} \label{app:eq:UnivariateConcentrationMix}
            \end{align}
            where the first inequality is due to Hoeffding's inequality for bounded random variables \citep[e.g.][Proposition 2.5]{Wainwright:2019}; the second equality by the definition of $\Delta$ in~\eqref{secmix:eq:deltaval}; and the last inequality by recalling the condition that $\pi_{0} \geq \tilde{c}/m$ and by the assumption that $n\alpha^2 \geq \widetilde{C}m^2\log(n\alpha^2)$ for some sufficiently large $\widetilde{C}$.
            
            For $k \in [m]$, denote $l_k'$ for the index of the closest sub-interval not containing the true mean $\theta_k$, that is
            \begin{equation*}
                l_k' =
                \begin{cases}
                    l_k - 1, &\text{when } \theta_k \in [-1 + 2(l_k-1)\Delta, -1 + 2(l_k-1)\Delta + \Delta) \\
                    l_k + 1, &\text{when } \theta_k \in [-1 + 2(l_k-1)\Delta + \Delta, -1 + 2l_k\Delta) \\
                \end{cases}.
            \end{equation*}
            For readability, we write $l(i) = l_{\iota^{(i)}}$ and $l'(i) = l_{\iota^{(i)}}'$. Then, on the event $\{|(T^\ast)^{-1} \sum_{t = 1}^{T^{\ast}} X^{(i)}_t - \theta_{\iota^{(i)}}| < \Delta\}$, it holds that $(T^\ast)^{-1} \sum_{t = 1}^{T^{\ast}} X^{(i)}_t \in I_{l(i)} \cup I_{l'(i)}$. Consequently, we can bound the following probability
            \begin{equation*}
                \mathbb{P}(\{V_{l(i)}^{(i)} = 0\} \cap \{V_{l'(i)}^{(i)} = 0\})
                \leq \mathbb{P}\left\{\left| \frac{1}{T^*} \sum_{t = 1}^{T^\ast} X^{(i)}_t - \theta_{\iota^{(i)}} \right|\geq \Delta \right\} \leq \frac{\pi_{0}}{4},
            \end{equation*}
            where the final inequality is by \eqref{app:eq:UnivariateConcentrationMix}. As the bins are disjoint, we have $\mathbb{P}(\{V_{l(i)}^{(i)} = 1\} \cap \{V_{l'(i)}^{(i)} = 1\}) = 0$, and hence obtain
            \begin{align}
                \mathbb{P}(V_{l(i)}^{(i)} = 1) + \mathbb{P}(V_{l'(i)}^{(i)} = 1)
                &= \mathbb{P}(\{V_{l(i)}^{(i)} = 1\} \cup \{V_{l'(i)}^{(i)} = 1\}) \nonumber \\
                &= 1 - \mathbb{P}(\{V_{l(i)}^{(i)} = 0\} \cap \{V_{l'(i)}^{(i)} = 0\}) > 3/4, \label{app:eq:mixintruebins}
            \end{align}
            where the inequality is by \eqref{app:eq:UnivariateConcentrationMix}. For $k \in [m]$, we then denote $l_k^\ast = \argmax_{a \in \{l_k, l_k'\}}\{\mathbb{P}(V_a^{(i)} = 1 \mid \iota^{(i)} = k)\}$ which by \eqref{app:eq:mixintruebins} is such that $\mathbb{P}(V_{l_k^\ast}^{(i)} = 1 \mid \iota^{(i)} = k) > 3/8$. Denoting $\mathcal{L}_m = \{l_1, \hdots, l_m, l_1', \hdots, l_m'\}$, we also have the bound
            \begin{equation} \label{app:eq:mixinfalsebins}
                \mathbb{P}(V_j^{(i)} = 1) \leq \mathbb{P}\left\{\left| \frac{1}{T^*} \sum_{t = 1}^{T^\ast} X^{(i)}_t - \theta_{\iota^{(i)}} \right|\geq \Delta \right\}
                \leq \frac{\pi_{0}}{4}, \quad j \in[N(\Delta)] \setminus \mathcal{L}_m,
            \end{equation}
            with the final inequality following from \eqref{app:eq:UnivariateConcentrationMix}. Then, for $l \in [N(\Delta)]$, we have that  
            \begin{align*}
                p_l := \mathbb{P}(\widetilde{V}_l^{(i)} = 1) = (2\omega_{\alpha/2} - 1)\mathbb{P}(V_l^{(i)} = 1) + (1 - \omega_{\alpha/2}).
            \end{align*}
            Hence, for $k \in [m]$, we have
            \begin{align}
                p_{l_k^\ast}
                &= \mathbb{P}(\widetilde{V}_{l_k^\ast}^{(i)} = 1) \nonumber \\
                &= (2\omega_{\alpha/2} - 1)\mathbb{P}(V_{l_k^\ast}^{(i)} = 1) + (1 - \omega_{\alpha/2}) \nonumber \\
                &= (2\omega_{\alpha/2} - 1)\mathbb{P}(V_{l_k^\ast}^{(i)} = 1 \mid \iota^{(i)} = k)\pi_k
                + (2\omega_{\alpha/2} - 1)\mathbb{P}(V_{l_k^\ast}^{(i)} = 1 \mid \iota^{(i)} \neq k)(1 - \pi_k)
                + (1 - \omega_{\alpha/2}) \nonumber \\
                &> \frac{3(2\omega_{\alpha/2} - 1)\pi_k}{8} + (1 - \omega_{\alpha/2}), \label{app:eq:mixgoodbinvoteprob}
            \end{align}
            where the inequality is by \eqref{app:eq:mixintruebins} and the definition of $l_k^\ast$. Further, for $j \in [N(\Delta)] \setminus \mathcal{L}_m$, we have
            \begin{equation}
                p_j
                = \mathbb{P}(\widetilde{V}_{j}^{(i)} = 1)
                = (2\omega_{\alpha/2} - 1)\mathbb{P}(V_{j}^{(i)} = 1) + (1 - \omega_{\alpha/2})
                \leq \frac{(2\omega_{\alpha/2} - 1)\pi_{0}}{4} + (1 - \omega_{\alpha/2}), \label{app:eq:mixbadbinvoteprob}
            \end{equation}
            where the inequality is by \eqref{app:eq:mixinfalsebins}. Combining \eqref{app:eq:mixgoodbinvoteprob} and \eqref{app:eq:mixbadbinvoteprob}, we have for $k \in [m]$ and $j \in [N(\Delta)] \setminus \mathcal{L}_m$ that
            \begin{equation} \label{app:eq:mixvoteprobsdiff}
                p_{l_k^\ast} - p_j
                \geq \frac{3(2\omega_{\alpha/2} - 1)\pi_k}{8} - \frac{(2\omega_{\alpha/2} - 1)\pi_{0}}{4}
                \geq \frac{(2\omega_{\alpha/2} - 1)\pi_k}{8}
            \end{equation}
            where the first inequality is by \eqref{app:eq:mixgoodbinvoteprob} and \eqref{app:eq:mixbadbinvoteprob}, and the second by the fact that $\pi_k \geq \pi_{0}$ for all $k \in [m]$.
            
            Letting $Z_j = \sum_{i \in N_1} \widetilde{V}^{(i)}_j$ for $j \in [N(\Delta)]$ denote the total privatised votes for the $j$-th sub-interval, we consider the event
            \begin{equation} \label{app:eq:eventBmix}
                A = \bigcap_{k \in [m]}\bigcap_{j \in [N(\Delta)] \setminus \mathcal{L}_m} \{Z_{l_k^\ast} > Z_j\}.
            \end{equation}   
            We have the bound
            \begin{align}
                \mathbb{P}( Z_{l_k^\ast} \leq Z_j) &= \mathbb{P}\left(\sum_{i \in N_1} \big(\widetilde{V}^{(i)}_{l_k^\ast} - \widetilde{V}^{(i)}_j\big) \leq 0\right) \nonumber \\
                &= \mathbb{P}\left(\sum_{i \in N_1} \big(\widetilde{V}^{(i)}_{l_k^\ast} - \widetilde{V}^{(i)}_j - p_{l_k^\ast} + p_j\big) \leq -|N_1|(p_{l_k^\ast} - p_j)\right) \nonumber \\
                &\leq  e^{-|N_1|(p_{l_k^\ast} - p_j)^2/2} \leq e^{-|N_1|\pi_k^2(1/2 - \omega_{\alpha/2})^2/32} \leq e^{-2n\alpha^2\pi_k^2/K}. \label{app:eq:voteprobboundmix}
            \end{align}
            for $K>0$ an absolute constant, where the first inequality follows from Hoeffding's inequality; the second by \eqref{app:eq:mixvoteprobsdiff}; and the third inequality by the fact that $\alpha^2/K \leq (1/2 - \omega_{\alpha/2})^2$ holds for $\alpha \in (0,1]$ for $K$ sufficiently large.
        
            Hence, we have
            \begin{align}
                    \mathbb{P}(A^c)
                    &\leq \sum_{k \in [m]} \sum_{j \in [N(\Delta)] \setminus \mathcal{L}_m} \mathbb{P}( Z_{l_k^\ast} \leq Z_j)
                    \leq m N(\Delta) e^{-2n\alpha^2\pi_k^2/K} \nonumber \\
                    &\leq \frac{3m}{\Delta \wedge 1} e^{-2n\alpha^2\pi_k^2/K}
                    \leq 3m(T^\ast)^{1/2}e^{-2n\alpha^2\pi_k^2/K}
                    \leq 3me^{n\alpha^2\pi_{0}^2/(2K)}e^{-2n\alpha^2\pi_k^2/K} \nonumber \\
                    &\leq 3me^{-3n\alpha^2\pi_{0}^2/(2K)}
                    \leq e^{-n\alpha^2\pi_{0}^2/K}, \label{app:eq:probAmix}  
            \end{align}
            where the first inequality is by the union bound; the second by \eqref{app:eq:voteprobboundmix}; and the third by bounding the covering number using \eqref{app:eq:l2cover}; the fourth and fifth by the values of $\Delta$ and $T^\ast$; and the final two by the assumptions $\pi_k \geq \pi_{0}$, $n\alpha^2 \geq \widetilde{C}m^2\log(n\alpha^2)$ for $\widetilde{C}$ sufficiently large, and that $\pi_{0} \geq \tilde{c}/m$. We note that for the covering number bound we require $\Delta < 1$, so to account for when this fails to hold we take the minimum $\Delta \wedge 1$ which suffices for an upper bound.

            On the event $A$, we have for all $l \in \mathcal{L}_m$ and $j \in [N(\Delta)] \setminus \mathcal{L}_m$ that $Z_l \geq Z_j$. Hence, $j_1^\ast$ as defined in \eqref{secmix:eq:unichosenindex} satisfies $j_1^\ast \in \mathcal{L}_m$ almost surely. Then, as we remove the adjacent sub-interval indices $j_1^\ast + 1$ and $j_1^\ast - 1$ from consideration, the next $j_2^\ast \in \mathcal{L}_m \setminus \{j_1^\ast - 1, j_1^\ast, j_1^\ast + 1\}$ almost surely. Repeating, and ordering so that $j_1^\ast < \dots < j_m^\ast$, we have for all $k \in [m]$ that $j_k^\ast \in \{l_k , l_k'\}$ almost surely. As the distance between the parameter $\theta_k \in I_{l_k}$ and the closest endpoint of $I_{l_k'}$ is at most $\Delta$, expanding the endpoints of the chosen interval $I_{j^\ast}$ by $2\Delta$ as in \eqref{secmix:eq:uniMeanIntervalExpand} ensures that $\min\{ \theta_k - \tilde{L}_k, \tilde{U}_k - \theta\} \geq \Delta$, equivalently that $\theta_k \in [\tilde{L}_k +\Delta, \tilde{U}_k-\Delta]$, regardless of the value of $j_k^\ast$.
        
            \medskip
            \noindent\textbf{Step 2: Proportion Estimation.}
            We now consider the estimation of the mixture proportions using the users $i \in N_2$. In preparation for what follows, we write $\overline{X^{(i)}} = (T^\ast)^{-1}\sum_{t = 1}^{T^\ast} X_t^{(i)}$ and for readability denote $j^\ast(i) = j_{\iota^{(i)}}^\ast$. We then define the event $B_i = \{\overline{X^{(i)}} \in \tilde{I}_{j^\ast(i)}\}$, that is, the event the $i$-th user's sample mean lies in the sub-interval which corresponds to their mixture component.
            
            On the event $A$ from the previous step, the chosen bins indexed by $j_{1:m}^\ast$ contain the true parameters $\theta_{1:m}$, and so $\overline{X^{(i)}}$, concentrating around the true mean $\theta_{\iota^{(i)}}$ corresponding to its mixture component, should lie in the interval $\tilde{I}_{j^\ast(i)}$ with high probability. Indeed, denoting $B = \cap_{i \in N_2} B_i$, we have that
            \begin{align}
                \mathbb{P}(B^c)
                &\leq \mathbb{P}(A^c) + \mathbb{P}(B^c \cap A)
                \leq \mathbb{P}(A^c) + \sum_{i \in N_2} \mathbb{P}(B_i^c \cap A) \nonumber \\
                &\leq e^{-n\alpha^2\pi_{0}^2/K} + \sum_{i \in N_2} \mathbb{P}(\{\overline{X^{(i)}} \notin \tilde{I}_{j^\ast(i)}\} \cap A ) \nonumber \\
                &\leq e^{-n\alpha^2\pi_{0}^2/K} + \sum_{i \in N_2} \mathbb{P}(\{|\overline{X^{(i)}} - \theta_{\iota^{(i)}}| \geq \Delta \} \cap A)\nonumber \\
                &\leq e^{-n\alpha^2\pi_{0}^2/K} + \frac{2}{n T^\ast \alpha^2} \label{app:eq:secondfoldconcmix}
            \end{align}
            where the first two equalities are by the union bound; the third by \eqref{app:eq:probAmix}; the fourth by the tower property and the fact that on $A$, the chosen bins given by the indices $j_{1:m}^\ast$ contain the true parameter with at least $\Delta$ buffer from the endpoints; and the final inequality by \eqref{app:eq:UnivariateConcentrationMix}.

            We now provide an important bound on a quantity depending on the mixture proportion estimates $\hat{\pi}_k$ as in \eqref{secmix:eq:mixpropests} which will be vital when analysing the MSE of the final estimator. We consider
            \begin{align*}
                \bigg| \mathbb{E}\bigg[ \frac{\pi_k - \hat{\pi}_k}{\hat{\pi}_k} &\mathbbm{1}\{\hat{\pi}_k \geq \pi_{0}/2\} \biggm| j_{1:m}^\ast \bigg]\bigg| \\
                &\leq \frac{2}{\pi_{0}} \mathbb{E}[|\pi_k - \hat{\pi}_k|\mathbbm{1}\{\hat{\pi}_k \geq \pi_{0}/2\} \mid j_{1:m}^\ast]  \\
                &\leq \frac{2}{\pi_{0}} \mathbb{E}[|\pi_k - \hat{\pi}_k| \mid j_{1:m}^\ast]
                + \frac{4}{\pi_{0}} \mathbb{P}(\hat{\pi}_k < \pi_{0}/2 \mid j_{1:m}^\ast) \\
                &\leq \frac{2C_1\sqrt{\log(n\alpha^2)}}{\pi_{0}\sqrt{n\alpha^2}}
                + \frac{4}{\pi_{0}} \mathbb{P}\big(|\hat{\pi}_k - \pi_k| \geq C_1\sqrt{\log(n\alpha^2)/(n\alpha^2)} \bigm| j_{1:m}^\ast\big)
                + \frac{4}{\pi_{0}} \mathbb{P}(\hat{\pi}_k < \pi_{0}/2 \mid j_{1:m}^\ast)
            \end{align*}
            where the second and third inequalities uses the fact that $|\hat{\pi}_k - \pi_k| \leq 2$. Hence, we can bound the following quantity
            \begin{align}
                &\mathbb{E}\bigg[ \mathbb{E}\bigg[ \frac{\pi_k - \hat{\pi}_k}{\hat{\pi}_k} \mathbbm{1}\{\hat{\pi}_k \geq \pi_{0}/2\} \biggm| j_{1:m}^\ast \bigg]^2 \bigg] \nonumber \\
                &\leq \mathbb{E}\bigg[ \frac{12C_1^2\log(n\alpha^2)}{\pi_0^2 n\alpha^2} 
                + \frac{48}{\pi_0^2} \mathbb{P}\big(|\hat{\pi}_k - \pi_k| \geq C_1\sqrt{\log(n\alpha^2)/(n\alpha^2)} \bigm| j_{1:m}^\ast \big)^2
                + \frac{48}{\pi_0^2} \mathbb{P}(\hat{\pi}_k < \pi_{0}/2 \mid j_{1:m}^\ast)^2 \bigg] \nonumber \\
                &\leq \frac{12C_1^2\log(n\alpha^2)}{\pi_0^2 n\alpha^2}
                + \frac{48}{\pi_0^2} \mathbb{P}\big(|\hat{\pi}_k - \pi_k| \geq C_1\sqrt{\log(n\alpha^2)/(n\alpha^2)} \big)
                + \frac{48}{\pi_0^2} \mathbb{P}(\hat{\pi}_k < \pi_{0}/2 ) \nonumber \\
                &\leq \frac{12C_1^2\log(n\alpha^2)}{\pi_0^2 n\alpha^2}+ (I) + (II) \label{app:eq:probestratiobound}
            \end{align}
            where the first inequality is by the Cauchy--Schwarz inequality, and the second using the fact that the probabilities are bounded by one and then applying the law of total probability. We now control the terms $(I)$ and $(II)$.

            Starting with $(I)$, first observe that on $A \cap B$ the raw votes $V^{(i)}_k$ defined in \eqref{secmix:eq:mixturebinestimator} are equal to $\mathbbm{1}\{\iota^{(i)} = k\}$ for all $i \in N_2, k \in [m]$. Hence, denoting $\Breve{V}_k^{(i)}$ the output of applying the unary encoding mechanism as in \eqref{secmix:eq:uniGRR} to $\mathbbm{1}\{\iota^{(i)} = k\}$, we write
            \begin{equation*}
                \breve{\pi}_k = \bigg( \frac{\sum_{i \in N_2} \breve{V}_k^{(i)}}{|N_2|} - \frac{1}{e^{\alpha/2} + 1} \bigg) \frac{e^{\alpha/2} + 1}{e^{\alpha/2} - 1}.
            \end{equation*}
            Noting that the $\breve{V}_k^{(i)}$ are independent across $i \in N_2$ and take values in $\{0, 1\}$, $\breve{\pi}_k$ is $25/(4|N_2|\alpha^2)$-sub-Gaussian \citep[e.g.][Exercise~2.4]{Wainwright:2019}, where we use the fact that $(e^{\alpha/2} + 1)/(e^{\alpha/2} - 1) \leq 5/\alpha$ for $\alpha \in (0,1]$. Further, we have $\mathbb{E}[\breve{\pi}_k] = \pi_k$. Hence, we have for $x > 0$ the concentration inequality
            \begin{align}
                \mathbb{P}(|\hat{\pi}_k - \pi_k| \geq x)
                &\leq \mathbb{P}(|\breve{\pi}_k - \pi_k| \geq x)
                + \mathbb{P}(A^c) + \mathbb{P}(B^c) \nonumber \\
                &\leq e^{-|N_2|\alpha^2 x^2/100} + 2e^{-n\alpha^2\pi_{0}^2/K} + \frac{2}{nT^\ast\alpha^2}, \label{app:eq:propestconcmix}
            \end{align}
            where the first inequality is by the union bound; the second as $x > 0$ requires the indicator to be non-zero for the second probability term to not vanish; the third by the union bound and the fact an indicator function is upper bounded by one; and the final by \eqref{app:eq:UnivariateConcentrationMix}, \eqref{app:eq:secondfoldconcmix} and the fact that $\breve{\pi}_k$ is $25/(4|N_2|\alpha^2)$-sub-Gaussian.

            To bound the term $(II)$, we note
            \begin{align}
                \mathbb{P}(\hat{\pi}_k < \pi_{0}/2)
                \leq \mathbb{P}(|\hat{\pi}_k - \pi_k| \geq \pi_{0}/2)
                &\leq e^{-|N_2|\alpha^2 \pi_0^2/400} + 2e^{-n\alpha^2\pi_{0}^2/K} + \frac{2}{nT^\ast\alpha^2} \nonumber \\
                &\leq 3e^{-n\alpha^2\pi_{0}^2/K} + \frac{2}{nT^\ast\alpha^2} \label{app:eq:probestsmallbound}
            \end{align}
            where the first inequality is by the fact that $\pi_k \geq \pi_{0}$; the second by \eqref{app:eq:propestconcmix}; and the final by taking $K > 0$ sufficiently large.

            Finally, combining \eqref{app:eq:probestratiobound}, \eqref{app:eq:propestconcmix} and \eqref{app:eq:probestsmallbound} yields
            \begin{align}
                \mathbb{E}\bigg[ \mathbb{E}\bigg[ \frac{\pi_k - \hat{\pi}_k}{\hat{\pi}_k} \mathbbm{1}\{\hat{\pi}_k \geq \pi_{0}/2\} \biggm| j_{1:m}^\ast \bigg]^2 \bigg]
                &\leq \frac{12C_1^2\log(n\alpha^2)}{\pi_0^2 n\alpha^2}
                + \frac{48}{\pi_{0}^2}\bigg(\frac{1}{n\alpha^2} + 5e^{-n\alpha^2\pi_{0}^2/K} + \frac{4}{nT^\ast\alpha^2}\bigg) \nonumber \\
                &\lesssim \frac{m^2\log(n\alpha^2)}{n\alpha^2}, \label{app:eq:probestrationboundfinal}
            \end{align}
            where in the final inequality we used the fact that $\pi_{0} \gtrsim 1/m$ and $n\alpha^2 \gtrsim m^2\log(n\alpha^2)$.

            \medskip
            \noindent\textbf{Step 3: Final Estimator.} 
            As the mean-squared-error decomposes co-ordinate wise, it suffices to consider an estimator of a single component $\hat{\theta}_k$ as in~\eqref{secmix:eq:FinalEstimatorComponent}, for which we analyse the MSE as follows
            \begin{align}
                &\mathbb{E}[|\hat{\theta}_k - \theta_k|^2] \nonumber \\
                &= \mathbb{E}\big[\mathbb{E}[|\hat{\theta}_k - \theta_k|^2 \mid j_{1:m}^\ast](\mathbbm{1}\{A\} + \mathbbm{1}\{A^c\}) \big] \nonumber \\
                &\leq \mathbb{E}\big[ \{ \mathbb{E}[\hat{\theta}_k \mid j_{1:m}^\ast] - \theta_k\}^2\mathbbm{1}\{A\}
                + \mathrm{Var}(\hat{\theta}_k \mid j_{1:m}^\ast)\mathbbm{1}\{A\} \big]
                + 4\mathbb{P}(A^c) \nonumber \\
                &= \mathbb{E}\bigg[ \{ \mathbb{E}[\hat{\theta}_k \mid j_{1:m}^\ast] - \theta_k\}^2\mathbbm{1}\{A\}
                + \mathrm{Var}\bigg(\bigg(\frac{\sum_{i \in N_3} r_k{(i)} \tilde{\theta}^{(i)} }{|N_3|\hat{\pi}_k}\bigg)\mathbbm{1}\{\hat{\pi}_k \geq \pi_{0}/2 \} \biggm| j_{1:m}^\ast \bigg)\mathbbm{1}\{A\} \bigg]
                + 4\mathbb{P}(A^c), \label{app:eq:ComponentMSEMix}
            \end{align}
            where in the final line we use the fact that $M_k$ is constant conditional on $j_{1:m}^\ast$.

            In preparation for what follows, for $i \in N_3$ we define the event $D_i = \{\overline{X^{(i)}} \in \tilde{I}_{j^\ast(i)}\}$ and $D = \cap_{i \in N_3} D_i$. These events are analogous to $B_i$ for $i \in |N_2|$ and $B$ previously defined, and so via the same argument as \eqref{app:eq:secondfoldconcmix} we obtain
            \begin{equation} \label{app:eq:thridfoldconcmix}
                \mathbb{P}(D^c) \leq e^{-n\alpha^2\pi_{0}^2/K} + \frac{2}{n T^\ast \alpha^2}.
            \end{equation}
            
            For the inner expectation term in \eqref{app:eq:ComponentMSEMix}, we see
            \begin{align*}
                \{ &\mathbb{E}[\hat{\theta}_k \mid j_{1:m}^\ast] - \theta_k\}^2 \mathbbm{1}\{A\} \\
                &= \bigg\{(M_k - \theta_k)
                + \mathbb{E}\bigg[\frac{1}{|N_3|\hat{\pi}_k} \sum_{i \in N_3} \mathbbm{1}\{k^{(i)} = k\}\Pi_{[-3\Delta, 3\Delta]}(\overline{X^{(i)}} - M_k) \mathbbm{1}\{\hat{\pi}_k \geq \pi_{0}/2 \} \biggm| j_{1:m}^\ast\bigg] \bigg\}^2 \mathbbm{1}\{A\} \\
                &\leq \bigg\{(M_k - \theta_k)
                + \mathbb{E}\bigg[\frac{1}{|N_3|\hat{\pi}_k} \sum_{i \in N_3} \mathbbm{1}\{\iota^{(i)} = k\}\Pi_{[-3\Delta, 3\Delta]}(\overline{X^{(i)}} - M_k) \mathbbm{1}\{D\}\mathbbm{1}\{\hat{\pi}_k \geq \pi_{0}/2 \} \biggm| j_{1:m}^\ast\bigg] \\
                &\hspace{1cm}+ \mathbb{E}\bigg[\frac{1}{|N_3|\hat{\pi}_k} \sum_{i \in N_3} \mathbbm{1}\{k^{(i)} = k\}\Pi_{[-3\Delta, 3\Delta]}(\overline{X^{(i)}} - M_k) \mathbbm{1}\{D^c\}\mathbbm{1}\{\hat{\pi}_k \geq \pi_{0}/2 \} \biggm| j_{1:m}^\ast\bigg] \bigg\}^2 \mathbbm{1}\{A\} \\
                &\leq 2\bigg\{(M_k - \theta_k)
                + \mathbb{E}\bigg[\frac{\pi_k}{\hat{\pi}_k} (\overline{X^{(n)}} - M_k) \mathbbm{1}\{D\}\mathbbm{1}\{\hat{\pi}_k \geq \pi_{0}/2 \} \biggm| \iota^{(n)} = k, j_{1:m}^\ast\bigg] \bigg\}^2 \mathbbm{1}\{A\} \\
                &\hspace{1cm}+ \frac{72\Delta^2}{\pi_{0}^2}\mathbb{P}(D^c \mid j_{1:m}^\ast)^2 \mathbbm{1}\{A\},
            \end{align*}
            where the equality is by the definition of $\hat{\theta}_k$ in \eqref{secmix:eq:FinalEstimatorComponent} noting that $M_k$ is constant conditional on $j_{1:m}^\ast$, that the Laplace noise is mean zero, and that the product of Rademacher hashes $r_{k^{(i)}}^{(i)}r_k^{(i)}$ is mean zero except when $k^{(i)}$ and $k$ coincide; the first inequality by decomposing the expectation on the event $D$ via indicators and noting that on $D$ and for $j_{1:m}^\ast$ chosen under the event $A$, $k^{(i)} = \iota^{(i)}$ almost surely; and the second inequality by the Cauchy--Schwarz inequality, noting for the first expectation term, on $D$ and $\iota^{(i)} = k$, that $\{\overline{X^{(i)}} \in \tilde{I}_{j^\ast(i)}\}$ implies $|\overline{X^{(i)}} - M_k| \leq 3\Delta$ almost surely, and bounding the second expectation term by the fact that $\hat{\pi}_k \geq \pi_{0}/2$ on the indicator, and that the truncated summands take values in $[-3\Delta, 3\Delta]$.

            Continuing, we have
            \begin{align*}
                \{ &\mathbb{E}[\hat{\theta}_k \mid j_{1:m}^\ast] - \theta_k\}^2 \mathbbm{1}\{A\}   \\
                &\leq 2\bigg\{(M_k - \theta_k)
                + \mathbb{E}\bigg[\frac{\pi_k}{\hat{\pi}_k} (\overline{X^{(n)}} - M_k)\mathbbm{1}\{\hat{\pi}_k \geq \pi_{0}/2 \} \biggm| \iota^{(n)} = k, j_{1:m}^\ast\bigg]   \\
                &\hspace{1cm}- \mathbb{E}\bigg[\frac{\pi_k}{\hat{\pi}_k} (\overline{X^{(n)}} - M_k) \mathbbm{1}\{D^c\} \mathbbm{1}\{\hat{\pi}_k \geq \pi_{0}/2 \} \} \biggm| \iota^{(n)} = k, j_{1:m}^\ast\bigg]\bigg\}^2 \mathbbm{1}\{A\}
                + \frac{72\Delta^2}{\pi_{0}^2}\mathbb{P}(D^c \mid j_{1:m}^\ast)^2 \mathbbm{1}\{A\}   \\
                &= 2\bigg\{(M_k - \theta_k)
                + \mathbb{E}[(\overline{X^{(n)}} - M_k) \mid \iota^{(n)} = k, j_{1:m}^\ast]
                - \mathbb{E}[(\overline{X^{(n)}} - M_k)\mathbbm{1}\{\hat{\pi}_k < \pi_{0}/2 \} \mid \iota^{(n)} = k, j_{1:m}^\ast]  \\
                &\hspace{1cm}+ \mathbb{E}\bigg[\frac{\pi_k - \hat{\pi}_k}{\hat{\pi}_k} (\overline{X^{(n)}} - M_k)\mathbbm{1}\{\hat{\pi}_k \geq \pi_{0}/2 \} \biggm| \iota^{(n)} = k, j_{1:m}^\ast\bigg]  \\
                &\hspace{1cm}- \mathbb{E}\bigg[\frac{\pi_k}{\hat{\pi}_k} (\overline{X^{(n)}} - M_k) \mathbbm{1}\{D^c\} \mathbbm{1}\{\hat{\pi}_k \geq \pi_{0}/2 \} \} \biggm| \iota^{(n)} = k, j_{1:m}^\ast\bigg]\bigg\}^2 \mathbbm{1}\{A\}
                + \frac{72\Delta^2}{\pi_{0}^2}\mathbb{P}(D^c \mid j_{1:m}^\ast)^2 \mathbbm{1}\{A\}  \\
                &\leq 6(\theta_k - M_k)^2\mathbb{E}[\mathbbm{1}\{\hat{\pi}_k < \pi_{0}/2 \} \mid j_{1:m}^\ast]^2 \mathbbm{1}\{A\}
                + 6(\theta_k - M_k)^2\mathbb{E}\bigg[\frac{\pi_k - \hat{\pi}_k}{\hat{\pi}_k} \mathbbm{1}\{\hat{\pi}_k \geq \pi_{0}/2 \} \biggm| j_{1:m}^\ast\bigg]^2 \mathbbm{1}\{A\}  \\
                &\hspace{1cm}+ 6\mathbb{E}\bigg[\frac{\pi_k}{\hat{\pi}_k} (\overline{X^{(n)}} - M_k) \mathbbm{1}\{D^c\} \mathbbm{1}\{\hat{\pi}_k \geq \pi_{0}/2 \} \} \biggm| \iota^{(n)} = k, j_{1:m}^\ast\bigg]^2 \mathbbm{1}\{A\}
                + \frac{72\Delta^2}{\pi_{0}^2}\mathbb{P}(D^c \mid j_{1:m}^\ast)^2 \mathbbm{1}\{A\}  \\
                &\leq 6(\theta_k - M_k)^2\mathbb{E}[\mathbbm{1}\{\hat{\pi}_k < \pi_{0}/2 \} \mid j_{1:m}^\ast]^2 \mathbbm{1}\{A\}
                + 6(\theta_k - M_k)^2\mathbb{E}\bigg[\frac{\pi_k - \hat{\pi}_k}{\hat{\pi}_k} \mathbbm{1}\{\hat{\pi}_k \geq \pi_{0}/2 \} \biggm| j_{1:m}^\ast\bigg]^2 \mathbbm{1}\{A\}  \\
                &\hspace{1cm}+ \frac{96}{\pi_{0}^2}\mathbb{P}(D^c | j_{1:m}^\ast)^2 \mathbbm{1}\{A\}
                + \frac{72\Delta^2}{\pi_{0}^2}\mathbb{P}(D^c \mid j_{1:m}^\ast)^2 \mathbbm{1}\{A\}  \\
                &\lesssim \Delta^2 m^2 \bigg(e^{-c_1n\alpha^2/m^2} + \frac{\log(n\alpha^2)}{n\alpha^2}\bigg)\mathbbm{1}\{A\} 
            \end{align*}
            for $c_1 > 0$ some absolute constant, where the equality and first inequality come from rewriting the indicator functions using the complement events; the second inequality by noting that $\mathbb{E}[(\overline{X^{(n)}} - M_k) \mid \iota^{(n)} = k, j_{1:m}^\ast] = \theta_k - M_k$, that $\hat{\pi}_k$ and $(\overline{X^{(n)}} - M_k)$ are independent given $j_{1:m}^\ast$, and applying the Cauchy--Schwarz inequality; and the final by noting $\pi_k \leq 1$ and $(\overline{X^{(n)}} - M_k) \leq 2$.

            Finally, we bound the expectation of this conditional bias, yielding
            \begin{align}
                \mathbb{E}\big[\{ &\mathbb{E}[\hat{\theta}_k \mid j_{1:m}^\ast] - \theta_k\}^2 \mathbbm{1}\{A\} \big] \nonumber \\
                &\lesssim \Delta^2 \mathbb{E}[\mathbb{P}(\hat{\pi}_k < \pi_{0}/2 \mid j_{1:m}^\ast)^2 \mathbbm{1}\{A\}]
                + \Delta^2\mathbb{E}\bigg[ \mathbb{E}\bigg[\frac{\pi_k - \hat{\pi}_k}{\hat{\pi}_k} \mathbbm{1}\{\hat{\pi}_k \geq \pi_{0}/2 \} \biggm| j_{1:m}^\ast\bigg]^2 \mathbbm{1}\{A\} \bigg] \nonumber \\
                &\hspace{2cm}+ m^2\mathbb{E}[\mathbb{P}(D^c | j_{1:m}^\ast)^2 \mathbbm{1}\{A\}]
                + \Delta^2m^2\mathbb{E}[\mathbb{P}(D^c \mid j_{1:m}^\ast)^2 \mathbbm{1}\{A\}] \nonumber \\
                &\leq \Delta^2 \mathbb{P}(\hat{\pi}_k < \pi_{0}/2)
                + \Delta^2\mathbb{E}\bigg[ \mathbb{E}\bigg[\frac{\pi_k - \hat{\pi}_k}{\hat{\pi}_k} \mathbbm{1}\{\hat{\pi}_k \geq \pi_{0}/2 \} \biggm| j_{1:m}^\ast\bigg]^2 \mathbbm{1}\{A\} \bigg]
                + m^2\mathbb{P}(D^c)
                + \Delta^2m^2\mathbb{P}(D^c) \nonumber \\
                &\lesssim m^2(1 \vee \Delta^2) e^{-c_1n\alpha^2/m^2} + \frac{\Delta^2m^2\log(n\alpha^2)}{n\alpha^2} \label{app:eq:mixMSEexpectation}
            \end{align}
            where the first inequality uses the facts that $|\theta_k - M_k| \leq 3\Delta$ on $A$, and $\pi_{0} \geq \tilde{c}/m$; the second by bounding the square of the probability terms by the non-squared probability and applying the tower property; and the final by \eqref{app:eq:probestsmallbound}, \eqref{app:eq:probestrationboundfinal} and \eqref{app:eq:thridfoldconcmix}, and using the assumptions $\pi_k \geq \pi_{0} \geq \tilde{c}/m$, $n\alpha^2 \geq \widetilde{C}m^2\log(n\alpha^2)$ to simplify and collect together similar terms.
        
            We now focus on the variance term in \eqref{app:eq:ComponentMSEMix}, obtaining
            \begin{align}
                &\mathrm{Var}\bigg(\bigg(\frac{\sum_{i \in N_3} r_k^{(i)} \tilde{\theta}^{(i)} }{|N_3|\hat{\pi}_k}\bigg)\mathbbm{1}\{\hat{\pi}_k \geq \pi_{0}/2 \} \biggm| j_{1:m}^\ast \bigg)\mathbbm{1}\{A\} \nonumber \\
                &= \mathrm{Var}\bigg(\mathbb{E}\bigg[\frac{\sum_{i \in N_3} r_k^{(i)} \tilde{\theta}^{(i)} }{|N_3|\hat{\pi}_k} \biggm| \hat{\pi}_k, j_{1:m}^\ast \bigg]\mathbbm{1}\{\hat{\pi}_k \geq \pi_{0}/2 \} \biggm| j_{1:m}^\ast \bigg)\mathbbm{1}\{A\} \nonumber \\
                &\hspace{1cm}+ \mathbb{E}\bigg[ \mathrm{Var}\bigg(\frac{\sum_{i \in N_3} r_k^{(i)} \tilde{\theta}^{(i)} }{|N_3|\hat{\pi}_k} \biggm| \hat{\pi}_k, j_{1:m}^\ast \bigg) \mathbbm{1}\{\hat{\pi}_k \geq \pi_{0}/2 \} \biggm| \hat{\pi}_k, j_{1:m}^\ast \bigg]\mathbbm{1}\{A\}  \nonumber\\
                &= \mathrm{Var}\bigg(\frac{1}{|N_3| \hat{\pi}_k}  \sum_{i \in N_3} \mathbb{E}\bigg[ \mathbbm{1}\{k^{(i)} = k\}\Pi_{[-3\Delta, 3\Delta]}(\overline{X^{(i)}} - M_k) \nonumber \\
                &\hspace{6cm}+ \frac{6\Delta}{\alpha}r_k^{(i)}\ell_i \biggm| \hat{\pi}_k, j_{1:m}^\ast \bigg] \mathbbm{1}\{\hat{\pi}_k \geq \pi_{0}/2 \} \biggm| j_{1:m}^\ast \bigg)\mathbbm{1}\{A\} \nonumber \\
                &\hspace{1cm}+ \mathbb{E}\bigg[ \frac{1}{|N_3|^2\hat{\pi}_k^2}\sum_{i \in N_3}\mathrm{Var}\bigg( r_k^{(i)}r_{k^{(i)}}^{(i)}\Pi_{[-3\Delta, 3\Delta]}(\overline{X^{(i)}} - M_k) \nonumber \\
                &\hspace{6cm}+ \frac{6\Delta}{\alpha}r_k^{(i)}\ell_i \biggm| \hat{\pi}_k, j_{1:m}^\ast \bigg) \mathbbm{1}\{\hat{\pi}_k \geq \pi_{0}/2 \} \biggm| \hat{\pi}_k, j_{1:m}^\ast \bigg]\mathbbm{1}\{A\} \nonumber \\
                &\leq \frac{36\Delta^2}{|N_3|\pi_{0}^2}\mathbbm{1}\{A\}
                + \mathbb{E}\bigg[\frac{1}{|N_3|\hat{\pi}_k^2} \bigg(9\Delta^2 + \frac{72\Delta^2}{\alpha^2}\bigg) \mathbbm{1}\{\hat{\pi}_k \geq \pi_{0}/2 \} \biggm| j_{1:m}^\ast \bigg]\mathbbm{1}\{A\}
                \lesssim \frac{\Delta^2m^2}{n\alpha^2}\mathbbm{1}\{A\}, \label{app:eq:mixMSEvariance}
            \end{align}
            where the first equality is by the law of total variance so that we may condition on $\hat{\pi}_k$; the second inequality by that fact that the summands are independent conditional on $\hat{\pi}_k$ and $j_{1:m}^\ast$; the first inequality almost surely by using the bound $\hat{\pi}_k \geq \pi_{0}/2$ on the indicator, the fact that the truncated values $\Pi_{[-3\Delta, 3\Delta]}(\overline{X^{(i)}} - M_k)$ have variance bounded by $9\Delta^2$, and the independence of the Laplace noise from the rest of the randomness; and the final inequality holds almost surely by collecting the terms and the fact that $\pi_{0} \geq \tilde{c}/m$.

            Hence, combining \eqref{app:eq:ComponentMSEMix}, \eqref{app:eq:mixMSEexpectation} and \eqref{app:eq:mixMSEvariance}, we obtain
            \begin{align*}
                \mathbb{E}[\|\hat{\theta} - \theta\|_2^2]
                = \sum_{k = 1}^m \mathbb{E}[|\hat{\theta}_k - \theta_k|^2]
                &\lesssim \frac{\Delta^2m^3\log(n\alpha^2)}{n\alpha^2} + (1 \vee \Delta^2)m^3 e^{-c_1n\alpha^2/m^2} + m\mathbb{P}(A^c) \\
                &\lesssim \frac{m^3\log(nT^\ast \alpha^2)\log(n\alpha^2)}{n T^\ast \alpha^2} + \frac{m^3\log(nT^\ast\alpha^2)}{T^\ast} e^{-c_2n\alpha^2/m^2} + m^3e^{-c_2n\alpha^2/m^2} \\
                &\lesssim \frac{m^3\log(nT^\ast \alpha^2)\log(n\alpha^2)}{n T^\ast \alpha^2},
            \end{align*}
            for $c_2 > 0$ some absolute constant, where the second inequality is by \eqref{app:eq:probAmix} and the value of $\Delta$ given in \eqref{secmix:eq:deltaval}, and the third inequality uses the assumption $n\alpha^2 \geq \widetilde{C}m^2\log(n\alpha^2)$ for some sufficiently large constant $\widetilde{C}$ to absorb the pre-factor of $m^3$ and $\log(n\alpha^2)$ into the exponent, and then noting the exponential term is always of smaller order than the polynomial one.

            It remains to consider the form of this error in the two regimes of $T$. In the case $T \leq e^{n\alpha^2/(Km^2)}$, we have $T^\ast = T$ which immediately gives the desired rate. In the case $T > e^{n\alpha^2/(Km^2)}$, we have $T^\ast = e^{n\alpha^2/(Km^2)}$, which gives
            \begin{align*}
                \mathbb{E}[ \| \hat{\theta} -\theta \|^2 ]
                &\lesssim \frac{m^3\log(n\alpha^2)}{n\alpha^2}\{\log(n\alpha^2) + \log(T^\ast)\} e^{-n\alpha^2/(Km^2)} \\
                &\lesssim \frac{m^3\log(n\alpha^2)}{n\alpha^2}\bigg\{\log(n\alpha^2) + \frac{n\alpha^2}{K\pi_{0}}\bigg\} e^{-n\alpha^2/(Km^2)} \\
                &\lesssim e^{-c_3n\alpha^2/(Km^2)}
            \end{align*}
            where $c_3>0$ is an absolute constant; the second inequality comes from substituting in the value $e^{-n\alpha^2/(Km^2)}$ for $T^\ast$; and the last inequality by the fact that $n\alpha^2 \geq \widetilde{C} m^2\log(n\alpha^2)$ for some sufficiently large $\widetilde{C}$. This completes the proof
        \end{proof}
\section{Proof of User-Level Lower Bounds} \label{app:sec:Lower}

\subsection{Preliminary results} 
    To prove the lower bounds throughout this section we will use Assouad's method \citep[see, e.g.][]{Yu:1997}, in tandem with the results of \cite{Acharya:2022}. While Assouad's method reduces the problem to bounding average pairwise TV distances between privatised versions of carefully constructed distributions, these latter results allow us to bound these TV distances uniformly over all suitable privacy mechanisms. For completeness, we will state a version of Assouad's method and the required results of \cite{Acharya:2022} which will be used. We first introduce them in the item-level case before generalising to the user-level case. For convenience, throughout this section we assume that $\mathcal{X}$ and $\mathcal{Z}$ are Euclidean spaces and that an arbitrary privacy mechanism $Q(\cdot \mid x, z)$, for $x \in \mathcal{X}$ and $z \in \mathcal{Z}^i$ for some $i \in \mathbb{N}$, has a density with respect to Lebesgue measure which we denote via $q(\cdot \mid x, z)$. More generally, one can argue that by the definition of $\alpha$-LDP that the privacy mechanisms for differing inputs are all absolutely continuous with respect to each other, and so have a density with respect to some common dominating measure.
    
    For some $k \in \mathbb{N}$, denote by $\mathcal{V} = \{-1, 1\}^k$ the hypercube and let $\{P_v : v \in \mathcal{V}\}$ be a family of distributions indexed by this hypercube. For each $v \in \mathcal{V}$, we write $\theta_v = \theta(P_v)$.  The collection $\{ P_v : v \in \mathcal{V} \}$ is $2\varrho$-Hamming separated with respect to the loss $\Phi \circ \rho$, if for all $v, v' \in \mathcal{V}$ we have that
    \begin{align}
        \Phi\left( \frac{1}{2}\rho(\theta_v, \theta_{v'}) \right) \geq 2\varrho \sum_{j=1}^k \mathbbm{1} \{v_j \neq v'_j \}. \label{app:eq:HammingSeparation}
    \end{align}

    Note the factor of $1/2$ inside $\Phi(\cdot)$; as is common in some formulations of Assouad's lemma, when we have $\rho$ as the $\ell_p$ norm and $\Phi(\cdot) = (\cdot)^p$, we have a factor of $2^{-p}$.

    For each $j \in [k]$, we define the mixture distributions
    \begin{equation}
        P_{+j}^n = \frac{1}{2^{k-1}} \sum_{v \in \mathcal{V} : v_j = 1} P_v^{\otimes n} \quad \mbox{and} \quad P_{-j}^n = \frac{1}{2^{k-1}} \sum_{v \in \mathcal{V} : v_j = -1} P_v^{\otimes n}.  \label{app:eq:mixtures}
    \end{equation}

    \begin{lemma}[Assouad's Method] \label{app:lem:Assouad}
        Assuming that \eqref{app:eq:HammingSeparation} holds, we have that
        \begin{align*}
            \mathcal{R}_{n,1,\infty} (\theta(\mathcal{P}), \Phi \circ \rho) \geq \varrho \sum_{j=1}^k \{1 - D_{\mathrm{TV}}(P_{+j}^n, P_{-j}^n)\},
        \end{align*}
        where $\mathcal{R}_{n,1,\infty}$ is the non-private item-level minimax risk defined in \Cref{sec-general-setup}.
    \end{lemma}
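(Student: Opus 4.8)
The plan is to carry out the classical reduction from estimation to multiple binary testing. First I would lower bound the minimax risk by the Bayes risk under the uniform prior on $\mathcal{V} = \{-1,1\}^k$: for any measurable estimator $\hat\theta = \hat\theta(X^{(1:n)})$,
$$\sup_{P \in \mathcal{P}} \mathbb{E}_P\{\Phi \circ \rho(\hat\theta, \theta(P))\} \;\geq\; \frac{1}{2^k} \sum_{v \in \mathcal{V}} \mathbb{E}_{P_v^{\otimes n}}\{\Phi(\rho(\hat\theta, \theta_v))\}.$$
Next I would turn $\hat\theta$ into a vertex selector $\hat v \in \argmin_{v' \in \mathcal{V}} \rho(\theta_{v'}, \hat\theta)$, with ties broken by any fixed rule. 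By the triangle inequality and minimality, $\rho(\theta_v, \theta_{\hat v}) \leq \rho(\theta_v, \hat\theta) + \rho(\hat\theta, \theta_{\hat v}) \leq 2\rho(\theta_v, \hat\theta)$, so $\rho(\hat\theta, \theta_v) \geq \tfrac12 \rho(\theta_v, \theta_{\hat v})$; since $\Phi$ is non-decreasing, the Hamming-separation hypothesis \eqref{app:eq:HammingSeparation} yields $\Phi(\rho(\hat\theta, \theta_v)) \geq \Phi(\tfrac12\rho(\theta_v,\theta_{\hat v})) \geq 2\varrho \sum_{j=1}^k \mathbbm{1}\{\hat v_j \neq v_j\}$.

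Then I would substitute this bound into the Bayes-risk inequality, exchange the two finite sums, and for each fixed $j$ split the sum over $v$ according to the value of $v_j$. Using the definitions of $P_{+j}^n$ and $P_{-j}^n$ in \eqref{app:eq:mixtures}, together with the fact that $\hat v$ is a fixed measurable function of the raw data (so its law under a mixture is the mixture of its laws), one obtains
$$\frac{1}{2^k}\sum_{v}\mathbb{P}_{P_v^{\otimes n}}(\hat v_j \neq v_j) = \frac{1}{2}\Big\{\mathbb{P}_{P_{+j}^n}(\hat v_j = -1) + \mathbb{P}_{P_{-j}^n}(\hat v_j = 1)\Big\},$$
and the right-hand side is the sum of the two error probabilities of the test $\hat v_j$ distinguishing $P_{+j}^n$ from $P_{-j}^n$, hence is at least $\tfrac12\{1 - D_{\mathrm{TV}}(P_{+j}^n, P_{-j}^n)\}$ by the variational characterization of the total variation distance. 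Assembling the pieces gives $\sup_P \mathbb{E}_P\{\Phi\circ\rho(\hat\theta,\theta(P))\} \geq \varrho \sum_{j=1}^k\{1 - D_{\mathrm{TV}}(P_{+j}^n, P_{-j}^n)\}$, and taking the infimum over all estimators — there is no privacy infimum here, since $\mathcal{R}_{n,1,\infty}$ is the non-private item-level risk — completes the proof.

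As this is the textbook form of Assouad's lemma, no step is genuinely hard; the only points requiring mild care are the handling of ties in the definition of $\hat v$ and the verification that $\hat v_j$ legitimately acts as a test statistic for the mixture pair $(P_{+j}^n, P_{-j}^n)$, both of which are routine once one observes that $\hat v$ is deterministic given $X^{(1:n)}$.
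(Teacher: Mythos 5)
Your proof is correct and is exactly the standard Assouad argument that the paper relies on: the paper does not spell out a proof of \Cref{app:lem:Assouad}, instead citing \cite{Yu:1997} and \citet[Lemma~2.12]{Tsybakov:2009}, and your reduction (uniform prior, vertex selector via the triangle inequality, Hamming separation, then the two-point testing bound $1 - D_{\mathrm{TV}}$ per coordinate) is the same route, which the paper itself reproduces in its generalised variant (\Cref{app:lem:GenAssouadPrelim} and \Cref{app:cor:GenAssouad}). No gaps; the tie-breaking and test-statistic points you flag are handled correctly.
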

    \noindent
    Assouad's method is often presented in slightly differing forms and the above is a similar statement to those in \cite{Yu:1997} and \citet[Lemma~2.12]{Tsybakov:2009}.
    
    For the distribution $P_v$ and any $\alpha$-LDP mechanism $Q$, we denote the distribution of the private outcomes for $n$ users by $M_v^n(S) = \int Q(S \mid x^{(1:n)}) \diff P_v^{\otimes n}(x^{(1:n)})$ and the analogous private mixtures, for $j \in [k]$,
    \begin{align} \label{app:eq:AssouadPrivateMarginal-1}
        M_{+j}^n(S) = \frac{1}{2^{k-1}} \sum_{v \in \mathcal{V} : v_j = 1} M_v^n(S) = \int Q(S \mid x^{(1:n)}) \diff P_{+j}^n(x^{(1:n)})
    \end{align}
    and
    \begin{align} \label{app:eq:AssouadPrivateMarginal-2}
        M_{-j}^n(S) = \frac{1}{2^{k-1}} \sum_{v \in \mathcal{V} : v_j = -1} M_v^n(S) = \int Q(S \mid x^{(1:n)}) \diff P_{-j}^n(x^{(1:n)})
    \end{align}
    
    Applying Lemma~\ref{app:lem:Assouad} we then obtain that
    \begin{align*}
        \mathcal{R}_{n, 1, \alpha}(\theta(\mathcal{P}), \Phi \circ \rho) \geq \varrho \inf_{Q \in \mathcal{Q}_\alpha} \sum_{j=1}^k \{1 - D_{\mathrm{TV}}(M_{+j}^n, M_{-j}^n)\},
    \end{align*}
    where $\mathcal{R}_{n, 1, \alpha}$ is the item-level private risk defined in \Cref{sec-general-setup}, and it remains to bound these TV distances uniformly over $Q \in \mathcal{Q}_\alpha$.

    To this end, we now introduce the framework of \cite{Acharya:2022}. For the considered family of distributions $\{P_v : v \in \mathcal{V}\}$ as in the earlier paragraphs, we first introduce an assumption. For a vector $u \in \{-1, 1\}^k$ and $i \in [k]$, we define $u^{\oplus i}$ to be the vector with co-ordinates,
    \begin{equation*}
        (u^{\oplus i})_j =
        \begin{cases}
            u_j, & j \neq i, \\
            -u_j, &\text{otherwise}.
        \end{cases}
    \end{equation*}

    The first step of the method of \cite{Acharya:2022} is to bound the average pairwise TV~distances by the sum of $n$-many local discrepancies over each perturbation dimension $i \in [k]$ in terms of a Hellinger distance. 

    \begin{lemma}[\citealp{Acharya:2022}] \label{app:lem:Acharya}
        For $M_{+j}^n$ and $M_{-j}^n$ as defined in \eqref{app:eq:AssouadPrivateMarginal-1} and \eqref{app:eq:AssouadPrivateMarginal-2}, it holds that
        \begin{equation} \label{app:eq:AcharyaLemEq}
            \bigg(\frac{1}{k} \sum_{j=1}^k D_{\mathrm{TV}}(M_{+j}^n, M_{-j}^n) \bigg)^2
            \leq \frac{14n}{k} \max_{v \in \mathcal{V}} \max_{Q \in \mathcal{Q}} \sum_{j = 1}^k D_\mathrm{H} (M_{v}, M_{v^{\oplus j}})^2.
        \end{equation}
    \end{lemma}

    If the Radon--Nikodym derivative of $P_v$ and $P_{v^{\oplus i}}$ is assumed to take the form of a fluctuation around unity, the right-hand-side of \eqref{app:eq:AcharyaLemEq} can be simplified significantly as in the following theorem.
    
    \begin{assumption}\label{app:ass:1}
        For every $v \in \mathcal{V}$ and $i \in [k]$, assume that $P_{v^{\oplus i}} \ll P_v$ and there exist measurable functions $\phi_{v,i} : \mathcal{Z} \rightarrow \mathbb{R}$ such that
        \begin{align*}
            \frac{\diff P_{v^{\oplus i}}}{\diff P_{v}} = 1 + \phi_{v,i}.
        \end{align*}        
    \end{assumption}
    \begin{theorem}[\citealp{Acharya:2022}] \label{app:thm:Acharya}
        Under \Cref{app:ass:1}, we have that
        \begin{equation} \label{app:eq:AcharyaThmEq}
            \bigg(\frac{1}{k} \sum_{j=1}^k D_{\mathrm{TV}}(M_{+j}^n, M_{-j}^n) \bigg)^2
            \leq \frac{7n}{k} \max_{v \in \mathcal{V}} \sup_{Q \in \mathcal{Q}_\alpha} \sum_{j = 1}^k \int_{\mathcal{Z}} \frac{\mathbb{E}_{P_v}[\phi_{v,j}(X) q(z \mid X)]^2}{\mathbb{E}_{P_v}[q(z \mid X)]} \diff{z}.
        \end{equation}
    \end{theorem}
    \noindent
    Note it particular that it follows from \Cref{app:ass:1} that $\mathbb{E}_{P_v}(\phi_{v, i}) = 0$ for all $i \in [k]$.
        
    Under a further assumption that the fluctuations are orthogonal and applying the definition of LDP, \eqref{app:eq:AcharyaThmEq} can be further simplified.
    \begin{assumption}\label{app:ass:2}
        Assume that there exists $\eta \geq 0$ such that for all $v \in \mathcal{V}$ and $i,j \in [k]$ with $i \neq j$, it holds that
        \begin{align*}
            \mathbb{E}_{P_v}(\phi_{v, i}\phi_{v, j}) = 0 \quad \mbox{and} \quad \mathbb{E}_{P_v}(\phi_{v, i}^2) \leq \eta^2.
        \end{align*}
    \end{assumption}
    
    For $i, j \in [k]$ and $v \in \mathcal{V}$, it is sometimes convenient to let $\varphi_{v, i} = 1 + \phi_{v, i}$ so that the requirements of \Cref{app:ass:2} are equivalent to
    \begin{align*}
        \mathbb{E}_{P_v}(\varphi_{v, i} \varphi_{v, j}) = 1 \quad \mbox{and} \quad \mathbb{E}_{P_v}(\varphi_{v, i}^2) \leq 1 + \eta^2.
    \end{align*}
    
    The following result then follows immediately from \citet[][Corollary~1]{Acharya:2022} and the fact that $(e^\alpha - 1)^2 \leq 3\alpha^2$ for $\alpha \in (0,1]$.
    \begin{lemma}[\citealp{Acharya:2022}] \label{app:lem:AcharyaSimple}
        When Assumptions \ref{app:ass:1} and \ref{app:ass:2} hold, we have for any $\alpha \in (0,1]$ and any $Q \in \mathcal{Q}_\alpha$ that
        \begin{align*}
            \frac{1}{k} \sum_{j=1}^k D_{\mathrm{TV}}(M_{+j}^n, M_{-j}^n) \leq \sqrt{\frac{21}{k} n\alpha^2\eta^2}, 
        \end{align*}
        where $\{M_{+j}, M_{-j}\}_{j \in [k]}$ are the private marginals defined in \eqref{app:eq:AssouadPrivateMarginal-1} and \eqref{app:eq:AssouadPrivateMarginal-2}.
    \end{lemma}
    Combining the above result with \Cref{app:lem:Assouad}, we obtain
    \begin{align} \label{app:eq:AcharyaAssouad}
        \mathcal{R}_{n, 1, \alpha}(\theta(\mathcal{P}), \Phi \circ \rho) \geq \varrho k \left(1 - \sqrt{\frac{21}{k}n\alpha^2\eta^2}\right).
    \end{align}
    
    We now consider the user-level analogue. With $\mathcal{V}$ and the family $\{P_v : v \in \mathcal{V}\}$ as before, suppose that each user draws $T$-many i.i.d.~observations from the distribution $P_v$ such that each user's collection of data is distributed as $P_v^{\otimes T}$.
    Then, for each $j \in [k]$, we define the mixture distributions
    \begin{align*}
        P_{+j}^{n,T} = \frac{1}{2^{d-1}} \sum_{v \in \mathcal{V} : v_j = 1} (P_v^{\otimes T})^{\otimes n} \quad \mbox{and} \quad P_{-j}^{n,T} = \frac{1}{2^{d-1}} \sum_{v \in \mathcal{V} : v_j = -1} (P_v^{\otimes T})^{\otimes n}.
    \end{align*}
    Given any user-level $\alpha$-LDP mechanism $Q$, we denote the private marginal for $n$ users each with a sample of size $T$ as
    \begin{align*}
        M_v^{n,T}(S) = \int Q(S \mid x_{1:T}^{(1)}, \ldots, x_{1:T}^{(n)}) \diff (P_v^{\otimes T})^{\otimes n}(x_{1:T}^{(1)}, \ldots, x_{1:T}^{(n)}).
    \end{align*}
    Consider the mixtures
    \[
        M_{+j}^{n,T}(S) = \frac{1}{2^{d-1}} \sum_{v \in \mathcal{V} : v_j = 1} M_v^{n,T}(S) = \int Q(S \mid x_{1:T}^{(1)}, \ldots, x_{1:T}^{(n)}) \diff P_{+ j}^{n, T}(x_{1:T}^{(1)}, \ldots, x_{1:T}^{(n)})
    \]
    and
    \[
        M_{-j}^{n,T}(S) = \frac{1}{2^{d-1}} \sum_{v \in \mathcal{V} : v_j = -1} M_v^{n,T}(S) = \int Q(S \mid x_{1:T}^{(1)}, \ldots, x_{1:T}^{(n)}) \diff P_{- j}^{n, T}(x_{1:T}^{(1)}, \ldots, x_{1:T}^{(n)}).
    \]
    Following the exact arguments above, we have that
    \begin{align} \label{app:eq:userlevelriskassouad}
        \mathcal{R}_{n, T, \alpha}(\theta(\mathcal{P}), \Phi \circ \rho) \geq \varrho \sum_{j=1}^k \{1 - D_{\mathrm{TV}}(M_{+j}^{n, T}, M_{-j}^{n, T})\}.
    \end{align}
    Applying the item-level results above to the distributions $\widetilde{P}_v = P_v^{\otimes T}$, we see that, after verifying Assumptions~\ref{app:ass:1} and \ref{app:ass:2} for the distributions $\widetilde{P}_v$ in place of $P_v$, we will see that \Cref{app:lem:AcharyaSimple} holds in the user-level case where the dependence on $T$ is absorbed into the quantity $\eta^2$. In what follows, for each estimation problem of interest we will construct a family of distributions which obey the separation condition \eqref{app:eq:HammingSeparation} and verify Assumptions~\ref{app:ass:1} and \ref{app:ass:2}.

\subsection{Proof of Theorem \ref{sec3:thm:main} Lower Bound (High Privacy Regime)} \label{app:sec:thm6highprivacy}
    \begin{proof}[Proof of \Cref{sec3:thm:main} (High privacy regime $\alpha \lesssim 1$)]
    \
    \\
    \textbf{Step 1: Constructing a Separated Family.} We consider the hypercube $\mathcal{V} = \{-1, 1\}^d$.  For $v \in \mathcal{V}$, let $P_v$ and $P_v'$ be the distributions where, for $X \sim P_v$ and $X' \sim P_v'$, we have that the $j$-th, $j \in [d]$, co-ordinates are, independently of the other co-ordinates, distributed as
        \begin{align}\label{app:eq:MeanFamily2}
            X_j =
            \begin{cases}
               1, &\text{with probability } (1+\Delta v_j)/2, \\
               -1, &\text{with probability } (1-\Delta v_j)/2,
            \end{cases}
        \end{align}
        and
        \begin{align}\label{app:eq:MeanFamily}     
            X_j' =
            \begin{cases}
               d^{-1/2}, &\text{with probability } (1+\Delta v_j)/2, \\
               -d^{-1/2}, &\text{with probability } (1-\Delta v_j)/2.
            \end{cases}
        \end{align}
        By construction we have that $P_v \in \mathcal{P}_d$ and $P_v' \in \mathcal{P}_d'$ as defined in \eqref{sec2:eq:MeanClasses}, for all $v \in \mathcal{V}$. Note that $\theta(P_v) = \mathbb{E}_{P_v}(X) = \Delta v$ and $\theta(P_v') = \mathbb{E}_{P_v'}(X') = \Delta v / d^{1/2}$.  For the separation condition \eqref{app:eq:HammingSeparation}, it then holds that
        \begin{align*}
            \frac{1}{4}\|\theta(P_v) - \theta(P_{v'})\|_2^2 \geq \Delta^2 \sum_{j=1}^d \mathbbm{1} \{ v_j \neq v_j'\} \quad \mbox{and} \quad \frac{1}{4}\|\theta(P_v') - \theta(P_{v'}')\|_2^2 \geq \frac{\Delta^2}{d} \sum_{j=1}^d \mathbbm{1} \{ v_j \neq v_j'\},
        \end{align*}
        which shows that the distributions are $\Delta^2$-separated and $\Delta^2/d$-separated for the $\ell_\infty$- and $\ell_2$-ball constructions respectively.

        \medskip 
        \noindent
        \textbf{Step 2: Verifying Assumptions~\ref{app:ass:1} and \ref{app:ass:2}.}
        We claim that, for both of the families of distributions obtained by taking the $T$-fold product of the distributions defined in \eqref{app:eq:MeanFamily2} and \eqref{app:eq:MeanFamily}, we have that Assumptions \ref{app:ass:1} and \ref{app:ass:2} hold with
            \begin{align}\label{eq-thm6-lb-eta}
                \eta^2 = \left( \frac{1 + 3\Delta^2}{1 - \Delta^2} \right)^T - 1.
            \end{align} 
        In this step, we show this holds for the construction \eqref{app:eq:MeanFamily2}. The same result for the construction \eqref{app:eq:MeanFamily} follows by replacing all instances of $\mathbbm{1}\{ x_{t, j} = \pm1\}$ in what follows with $\mathbbm{1}\{ x_{t, j} = \pm d^{-1/2}\}$ and $P_v$ by $P_v'$, with the calculations being otherwise identical.
   
        We see that, for $i \in [d]$,
        \begin{align*}
            \frac{\diff P_{v^{\oplus i}}^{\otimes T}}{\diff P_{v}^{\otimes T}}(x_{1:T})
            &= \frac{\prod_{t=1}^T \prod_{j=1}^d \left(\frac{1 + \Delta v_j^{\oplus i}}{2} \right)^{\mathbbm{1}\{ x_{t, j} = 1\}}\left(\frac{1 - \Delta v_j^{\oplus i}}{2} \right)^{\mathbbm{1}\{ x_{t, j} = -1\}}}
            {\prod_{t=1}^T \prod_{j=1}^d \left(\frac{1 + \Delta v_j}{2} \right)^{\mathbbm{1}\{ x_{t, j} = 1\}}\left(\frac{1 - \Delta v_j}{2} \right)^{\mathbbm{1}\{ x_{t, j} = -1\}}} \\
            &= \prod_{t=1}^T \left( \frac{1 - \Delta v_i}{1 + \Delta v_i} \right)^{\mathbbm{1}\{ x_{t, i} = 1\}} \left( \frac{1 + \Delta v_i}{1 - \Delta v_i} \right)^{\mathbbm{1}\{ x_{t, i} = -1\}} =: \varphi_{v, i}(x_{1:T}).
        \end{align*}
        This verifies \Cref{app:ass:1}.  As for \Cref{app:ass:2}, we see further that have for $i \neq j$ that
        \begin{align}
            &\mathbb{E}_{P_v}(\varphi_{v, i} \varphi_{v, j}) \nonumber \\
            =& \prod_{t=1}^{T} \mathbb{E}_{P_v}\left[ \left( \frac{1 - \Delta v_i}{1 + \Delta v_i} \right)^{\mathbbm{1}\{ X_{t,i} = 1\}} \left( \frac{1 + \Delta v_i}{1 - \Delta v_i} \right)^{\mathbbm{1}\{ X_{t,i} = -1\}} \left( \frac{1 - \Delta v_j}{1 + \Delta v_j} \right)^{\mathbbm{1}\{ X_{t,j} = 1\}} \left( \frac{1 + \Delta v_j}{1 - \Delta v_j} \right)^{\mathbbm{1}\{ X_{t,j} = -1\}} \right] \nonumber \\
            =& \left\{\mathbb{E}_{P_v}\left[ \left( \frac{1 - \Delta v_i}{1 + \Delta v_i} \right)^{\mathbbm{1}\{ X_{1,i} = 1\}} \left( \frac{1 + \Delta v_i}{1 - \Delta v_i} \right)^{\mathbbm{1}\{ X_{1,i} = -1\}} \left( \frac{1 - \Delta v_j}{1 + \Delta v_j} \right)^{\mathbbm{1}\{ X_{1,j} = 1\}} \left( \frac{1 + \Delta v_j}{1 - \Delta v_j} \right)^{\mathbbm{1}\{ X_{1,j} = -1\}} \right]\right\}^T \label{app:eq:meanlbcalcij} \\
            =& \left\{\mathbb{E}_{P_v}\Bigg[ \left( \frac{1 + \Delta v_i}{1 - \Delta v_i} \right) \left( \frac{1 - \Delta v_i}{1 + \Delta v_i} \right)^{2\mathbbm{1}\{ X_{1,i} = 1\}} \left( \frac{1 + \Delta v_j}{1 - \Delta v_j} \right) \left( \frac{1 - \Delta v_j}{1 + \Delta v_j} \right)^{2\mathbbm{1}\{ X_{1,j} = 1\}}  \Bigg]\right\}^T \nonumber \\
            =& \left\{\mathbb{E}_{P_v}\Bigg[ \left( \frac{1 + \Delta v_i}{1 - \Delta v_i} \right) \left( \frac{1 - \Delta v_i}{1 + \Delta v_i} \right)^{2\mathbbm{1}\{ X_{1,i} = 1\}} \Bigg]\right\}^T \left\{\mathbb{E}_{P_v}\Bigg[ \left( \frac{1 + \Delta v_j}{1 - \Delta v_j} \right) \left( \frac{1 - \Delta v_j}{1 + \Delta v_j} \right)^{2\mathbbm{1}\{ X_{1,j} = 1\}}  \Bigg]\right\}^T, \nonumber    
        \end{align}
        where in the second equality we use the fact that the $X_{t,i}$ are i.i.d.~across $t \in [T]$, the third by the fact that $\mathbbm{1}\{ X_{t, i} = 1\} = 1 - \mathbbm{1}\{ X_{t, i} = -1\}$, and the last by the independence of the co-ordinates of $X_1$. Focusing on one of the expectation factors, we have that
        \begin{align*}
            \mathbb{E}_{P_v}\left[ \left( \frac{1 + \Delta v_i}{1 - \Delta v_i} \right) \left( \frac{1 - \Delta v_i}{1 + \Delta v_i} \right)^{2\mathbbm{1}\{ X_{1,i} = 1\}} \right] 
            &= \left( \frac{1 + \Delta v_i}{1 - \Delta v_i} \right) \mathbb{E}_{P_v}\left[ \left( \frac{1 - \Delta v_i}{1 + \Delta v_i} \right)^{2\mathbbm{1}\{ X_{1,i} = 1\}} \right] \\
            &= \left( \frac{1 + \Delta v_i}{1 - \Delta v_i} \right) \left[ \left( \frac{1 - \Delta v_i}{1 + \Delta v_i} \right)^2 \frac{(1 + \Delta v_i)}{2} + \frac{(1 - \Delta v_i)}{2}\right] = 1.
        \end{align*}
        Hence, we have that $\mathbb{E}_{P_v}(\varphi_{v, i} \varphi_{v, j}) = 1$ for all $i \neq j$, as required.
    
        To specify $\eta^2$ in \Cref{app:ass:2}, we consider that
        \begin{align*}
            \mathbb{E}_{P_v}(\varphi_{v, i}^2)
            &= \left\{\mathbb{E}_{P_v}\left[ \left( \frac{1 - \Delta v_i}{1 + \Delta v_i} \right)^{2\mathbbm{1}\{ X_{1,i} = 1\}} \left( \frac{1 + \Delta v_i}{1 - \Delta v_i} \right)^{2\mathbbm{1}\{ X_{1,i} = -1\}} \right]\right\}^T \\
            &= \left\{\mathbb{E}_{P_v}\left[ \left( \frac{1 + \Delta v_i}{1 - \Delta v_i} \right)^2 \left( \frac{1 - \Delta v_i}{1 + \Delta v_i} \right)^{4\mathbbm{1}\{ X_{1,i} = 1\}} \right]\right\}^T \\
            &= \left[ \left( \frac{1 + \Delta v_i}{1 - \Delta v_i} \right)^2 \left\{ \left( \frac{1 - \Delta v_i}{1 + \Delta v_i} \right)^4 \frac{(1 + \Delta v_i)}{2} + \frac{(1 - \Delta v_i)}{2}\right\}\right]^T
            = \left( \frac{1 + 3\Delta^2}{1 - \Delta^2}\right)^T,
        \end{align*}
        where the first equality comes from setting $j = i$ in \eqref{app:eq:meanlbcalcij}. Hence, we have that \eqref{eq-thm6-lb-eta} holds.
        
        \medskip        
        \noindent
        \textbf{Step 3: Obtaining Lower Bounds.}
        We set $\Delta = \{cd/(nT\alpha^2)\}^{1/2}$ for some absolute constant $c>0$. Hence, using our assumption that $n \alpha^2 \gtrsim d \log(ed)$, when $c$ is sufficiently small we have that
        \begin{align}
            \eta^2
            = \left( 1 + \frac{4\Delta^2}{1-\Delta^2} \right)^T - 1
            \leq \left( 1 + \frac{8cd}{nT\alpha^2} \right)^T - 1
            \leq \frac{16cd}{n\alpha^2}, \label{app:eq:linfetabound}
        \end{align}
        where the first and second inequalities follow once we choose $c > 0$ small enough that $\Delta^2 \leq 1/2$ and $8cd/(n\alpha^2) < 1$, respectively. As Assumptions \ref{app:ass:1} and \ref{app:ass:2} hold, we have that for the $\ell_\infty$-ball case that
        \begin{align*}
            \mathcal{R}_{n, T, \alpha}(\theta(\mathcal{P}_{d}), \|\cdot\|_2^2) \geq \frac{\Delta^2d}{2} \left(1 - \sqrt{\frac{21}{d}n\alpha^2\eta^2}\right) \geq \frac{c d^2}{2nT\alpha^2}\left(1 - \sqrt{336c} \right) \gtrsim \frac{d^2}{nT\alpha^2}.
        \end{align*}
        where the first inequality is from \eqref{app:eq:AcharyaAssouad} and the fact that the distributions are $\Delta^2$-separated; the second from \eqref{app:eq:linfetabound} and the value of $\Delta$; and the last as $c > 0$ is sufficiently small. Combining with the infinite-$T$ lower bound in \eqref{sec2:eq:linfMeanStatement}, for which the required assumptions are satisfied, completes the proof for this case.

        For the $\ell_2$-ball case, we similarly have that
        \begin{align*}
            \mathcal{R}_{n, T, \alpha}(\theta(\mathcal{P}_{d}'), \|\cdot\|_2^2)
            \geq \frac{\Delta^2}{2} \left(1 - \sqrt{\frac{21}{d}n\alpha^2\eta^2}\right)
            \geq \frac{cd}{2nT\alpha^2}\left(1 - \sqrt{336c} \right)
            \gtrsim \frac{d}{nT\alpha^2},
        \end{align*}
        and combining with the infinite-$T$ lower bound in \eqref{sec2:eq:l2MeanStatement}, for which the required assumptions are satisfied, completes the proof.
    \end{proof}

\subsection{Proof of Theorem \ref{sec3:thm:main} Lower Bound (Low Privacy Regime)}
    
    The proof starts similarly to that of the high privacy regime with the same constructed family. To obtain tight lower bounds in the low-privacy regime, we decompose the quantities $\phi_{v,i}(X_{1:T})$ as the sum of a carefully calibrated sub-Gaussian term and residual term, such that the residual term, whilst not sub-Gaussian itself, has significantly reduced variance, and so its contribution can be directly controlled. The contribution of the sub-Gaussian term can then be controlled similarly to the argument in \citet[Assumption~3]{Acharya:2022}. We now proceed with the proof.
    \begin{proof}[Proof of \Cref{sec3:thm:main} (Low privacy regime $\alpha \gtrsim 1$)]
    \
    \\
    \textbf{Step 1: Constructing a Separated Family.} We recall the constructions \eqref{app:eq:MeanFamily2} and \eqref{app:eq:MeanFamily} inducing the distributions $P_v$ and $P_v'$ respectively, and that by construction we have that $P_v \in \mathcal{P}_d$ and $P_v' \in \mathcal{P}_d'$ as defined in \eqref{sec2:eq:MeanClasses}, for all $v \in \mathcal{V}$. For the separation condition \eqref{app:eq:HammingSeparation}, it then holds that
        \begin{align*}
            \frac{1}{4}\|\theta(P_v) - \theta(P_{v'})\|_2^2 \geq \Delta^2 \sum_{j=1}^d \mathbbm{1} \{ v_j \neq v_j'\} \quad \mbox{and} \quad \frac{1}{4}\|\theta(P_v') - \theta(P_{v'}')\|_2^2 \geq \frac{\Delta^2}{d} \sum_{j=1}^d \mathbbm{1} \{ v_j \neq v_j'\},
        \end{align*}
        which shows that the distributions are $\Delta^2$-separated and $\Delta^2/d$-separated for the $\ell_\infty$- and $\ell_2$-ball constructions respectively.

        In what follows, we focus on the construction \eqref{app:eq:MeanFamily2}. The same result for the construction \eqref{app:eq:MeanFamily} follows by replacing all instances of $\mathbbm{1}\{ x_{t, j} = \pm1\}$ in what follows with $\mathbbm{1}\{ x_{t, j} = \pm d^{-1/2}\}$ and $P_v$ by $P_v'$, with the calculations being otherwise identical.
        
        \medskip 
        \noindent
        \textbf{Step 2: Applying \Cref{app:thm:Acharya}.}
        Following the proof of \Cref{sec3:thm:main} in the high-privacy regime, we have that \Cref{app:ass:1} is satisfied, for $i \in [d]$ and $v \in \mathcal{V}$, by
        \begin{align*}
            \frac{\diff P_{v^{\oplus i}}^{\otimes T}}{\diff P_{v}^{\otimes T}}(x_{1:T}) - 1
            &= \frac{\prod_{t=1}^T \prod_{j=1}^d \left(\frac{1 + \Delta v_j^{\oplus i}}{2} \right)^{\mathbbm{1}\{ x_{t, j} = 1\}}\left(\frac{1 - \Delta v_j^{\oplus i}}{2} \right)^{\mathbbm{1}\{ x_{t, j} = -1\}}}
            {\prod_{t=1}^T \prod_{j=1}^d \left(\frac{1 + \Delta v_j}{2} \right)^{\mathbbm{1}\{ x_{t, j} = 1\}}\left(\frac{1 - \Delta v_j}{2} \right)^{\mathbbm{1}\{ x_{t, j} = -1\}}} -1 \\
            &= \prod_{t=1}^T \left( \frac{1 - \Delta v_i}{1 + \Delta v_i} \right)^{\mathbbm{1}\{ x_{t, i} = 1\}} \left( \frac{1 + \Delta v_i}{1 - \Delta v_i} \right)^{\mathbbm{1}\{ x_{t, i} = -1\}} - 1\\
            &= \bigg( \frac{p_i}{1 - p_i}\bigg)^T \bigg( \frac{1 - p_i}{p_i}\bigg)^{2 S_{i, +}} - 1 \\
            &= \bigg( \frac{p_i}{1 - p_i}\bigg)^{T(1-2p_i)} \bigg( \frac{1 - p_i}{p_i}\bigg)^{2 S_i} - 1
            =: \phi_{v, i}(x_{1:T}),
        \end{align*}
        where the penultimate line follows from setting, $S_{i, +} = \sum_{t=1}^T \mathbbm{1}\{x_{t,i} = 1\}$ and $p_i = (1 + \Delta v_i)/2$, and the final line by letting $S_i = S_{i, +} - \mathbb{E}[S_{i, +}] = S_{i, +} - Tp_i$.

        We decompose $\phi_{v, i}(x_{1:T})$ into a sub-Gaussian term and residual term as
        \begin{align*}
            \phi_{v,i}(x_{1:T})
            &= \bigg( \frac{p_i}{1 - p_i}\bigg)^{T(1-2p_i)}2 \Omega_i S_i + \bigg[ \bigg( \frac{p_i}{1 - p_i}\bigg)^{T(1-2p_i)}\bigg\{ \bigg( \frac{1 - p_i}{p_i}\bigg)^{2 S_i} - 2 \Omega_i S_i \bigg\} - 1 \bigg] \\
            &= G_{v, i}(x_{1:T}) + \psi_{v, i}(x_{1:T}),
        \end{align*}
        where we define
        \begin{equation} \label{app:eq:omegaval}
            \Omega_i = \frac{1-2p_i}{2p_i(1-p_i)} \bigg(\frac{p_i}{1-p_i}\bigg)^{-T(1-2p_i)}
        \end{equation}
        We then decompose the sum and integral term on the right-hand-side of \eqref{app:eq:AcharyaThmEq} as
        \begin{align}
            \sum_{j = 1}^d \int_{\mathcal{Z}} &\frac{\mathbb{E}_{P_v^{\otimes T}}[\phi_{v,j}(X_{1:T}) q(z \mid X_{1:T})]^2}{\mathbb{E}_{P_v^{\otimes T}}[q_{v,j}(X_{1:T})]} \diff{z} \nonumber \\
            &\leq 2\sum_{j = 1}^d \int_{\mathcal{Z}} \frac{\mathbb{E}_{P_v^{\otimes T}}[G_{v,j}(X_{1:T}) q(z \mid X_{1:T})]^2}{\mathbb{E}_{P_v^{\otimes T}}[q(z \mid X_{1:T})]} \diff{z}
            + 2\sum_{j = 1}^d \int_{\mathcal{Z}} \frac{\mathbb{E}_{P_v^{\otimes T}}[\psi_{v,j}(X_{1:T}) q(z \mid X_{1:T})]^2}{\mathbb{E}_{P_v^{\otimes T}}[q(z \mid X_{1:T})]} \diff{z} \nonumber \\
            &\leq 2\sum_{j = 1}^d \int_{\mathcal{Z}} \frac{\mathbb{E}_{P_v^{\otimes T}}[G_{v,j}(X_{1:T}) q(z \mid X_{1:T})]^2}{\mathbb{E}_{P_v^{\otimes T}}[q(z \mid X_{1:T})]} \diff{z}
            + 2\sum_{j = 1}^d \int_{\mathcal{Z}} \mathbb{E}_{P_v^{\otimes T}}[\psi_{v,j}(X_{1:T})^2 q(z \mid X_{1:T})] \diff{z} \nonumber \\
            &= 2\sum_{j = 1}^d \int_{\mathcal{Z}} \frac{\mathbb{E}_{P_v^{\otimes T}}[G_{v,j}(X_{1:T}) q(z \mid X_{1:T})]^2}{\mathbb{E}_{P_v^{\otimes T}}[q(z \mid X_{1:T})]} \diff{z} 
            + 2\sum_{j = 1}^d \mathbb{E}_{P_v^{\otimes T}}[\psi_{v,j}(X_{1:T})^2], \label{app:eq:acharyabounddecomp}
        \end{align}
        where both inequalities follow from applications of the Cauchy--Schwarz inequality. We now proceed to bound both of the terms in \eqref{app:eq:acharyabounddecomp} individually.
        
        \medskip        
        \noindent
        \textbf{Step 3: Verifying Sub-Gaussianity of $G_{v, i}$.}
        For $X_{1:T} \sim P_v^{\otimes T}$, we note that $S_i$ is a centred $\mathrm{Binomial}(T, p_i)$ random variable, and so we have $\mathbb{E}[G_{v, i}] = 0$.

        Knowing that $G_{v, i}$ is centred, it remains to consider the sub-Gaussianity of $G_{v, i}(X_{1:T})$. Indeed, by noting that a Bernoulli random variable is $(1/4)$-sub-Gaussian \cite[e.g.~Exercise~2.4 and Proposition~2.5][]{Wainwright:2019}, and that $S_i$ is equal in distribution to the centred sum of $T$-many independent Bernoulli random variables, we have that $S_i$ is $(T/4)$-sub-Gaussian \cite[e.g.~Proposition~2.5][]{Wainwright:2019}.

        Hence, after rescaling $S_i$ as per the definition of $G_{v, i}(X_{1:T})$, we have that $G_{v, i}(X_{1:T})$ is $\sigma_i^2$-sub-Gaussian for
        \begin{equation} \label{app:eq:SGtermvarproxy}
            \sigma_i^2
            = \frac{T\Omega_i^2}{4}\bigg( \frac{p_i}{1-p_i} \bigg)^{2T(1-2p_i)}
            = \frac{T}{4}\bigg( \frac{1-2p_i}{2p_i(1-p_i)}\bigg)^2.
        \end{equation}
        Finally, following the calculations of the proofs of Theorem~2 and Corollary~1 of \cite{Acharya:2022}, we obtain the bound that
        \begin{align} \label{app:eq:acharyaboundterm1}
            \sum_{j = 1}^d \int_{\mathcal{Z}} \frac{\mathbb{E}_{P_v^{\otimes T}}[G_{v,j}(X_{1:T}) q(z \mid X_{1:T})]^2}{\mathbb{E}_{P_v^{\otimes T}}[q(z \mid X_{1:T})]} \diff{z}
            \leq 2\sigma_i^2\alpha.
        \end{align}

        \medskip        
        \noindent
        \textbf{Step 4: Controlling Variance of $\psi_{v, i}$.}
        For $X_{1:T} \sim P_v^{\otimes T}$ with $P_v$ as defined by \eqref{app:eq:MeanFamily2}, we begin by noting that $\psi_{v, i}(X_{1:T}) = \phi_{v, i}(X_{1:T}) - G_{v,i}(X_{1:T})$, and hence $\mathbb{E}[\psi_{v, i}(X_{1:T})] = 0$. Hence, to control the variance of $\psi_{v, i}$ it suffices to consider the second moment which we decompose as
        \begin{align}
            \mathbb{E}[\psi_{v, i}(X_{1:T})^2]
            = \mathbb{E}[\phi_{v, i}(X_{1:T})^2] + \mathbb{E}[G_{v, i}(X_{1:T})^2] - 2\mathbb{E}[\phi_{v, i}(X_{1:T})G_{v, i}(X_{1:T})]. \label{app:eq:lowprivdecomp}
        \end{align}

        For the first term in the right-hand-side of \eqref{app:eq:acharyaboundterm1}, from the same calculations in \Cref{app:sec:thm6highprivacy} leading to \eqref{eq-thm6-lb-eta}, we have that
        \begin{equation}
            \mathbb{E}[\phi_{v, i}(X_{1:T})^2] = \bigg( 1 + \frac{4\Delta^2}{1-\Delta^2} \bigg)^T - 1. \label{app:eq:lowprivterm1}
        \end{equation}

        For the second term in the right-hand-side of \eqref{app:eq:acharyaboundterm1}, we have
        \begin{align}
            \mathbb{E}[G_{v, i}(X_{1:T})^2]
            &= 4\Omega_i^2\bigg( \frac{p_i}{1-p_i} \bigg)^{2T(1-2p_i)} \mathbb{E}[S_i^2] \nonumber \\
            &= 4\Omega_i^2\bigg( \frac{p_i}{1-p_i} \bigg)^{2T(1-2p_i)} \mathrm{Var}(S_{i, +}) \nonumber \\
            &= 4\Omega_i^2 \bigg( \frac{p_i}{1-p_i} \bigg)^{2T(1-2p_i)} Tp_i(1-p_i), \label{app:eq:lowprivterm2}
        \end{align}
        where the second equality uses the fact that $S_i = S_{i, +} - \mathbb{E}[S_{i, +}]$, and the final equality by that fact that $S_{i, +}$ is equal in distribution to a $\mathrm{Binomial}(T, p_i)$ random variable.

        For the final term in the right-hand-side of \eqref{app:eq:acharyaboundterm1}, we obtain
        \begin{align}
            \mathbb{E}[\phi_{v, i}(X_{1:T}) &G_{v, i}(X_{1:T})] \nonumber \\
            &= \mathbb{E}\bigg[ 2\Omega_i S_i \bigg( \frac{p_i}{1-p_i}\bigg)^{2T(1-2p_i)} \bigg( \frac{1-p_i}{p_i}\bigg)^{2S_i} \bigg] \nonumber \\
            &= 2\Omega_i\bigg(\frac{p_i}{1-p_i}\bigg)^{2T(1-2p_i)} \bigg(\frac{1-p_i}{p_i}\bigg)^{-2Tp_i} \mathbb{E}\bigg[ (S_{i, +} - Tp_i)   \bigg( \frac{1-p_i}{p_i}\bigg)^{2S_{i, +}} \bigg] \nonumber \\
            &= 2\Omega_i \bigg(\frac{p_i}{1-p_i}\bigg)^{2T(1-2p_i)} \bigg(\frac{1-p_i}{p_i}\bigg)^{-2Tp_i} \bigg[ Tp_i \bigg( \frac{1 - p_i}{p_i}\bigg)^2 \bigg\{1 - p_i + \bigg( \frac{1-p_i}{p_i}\bigg)^2 p_i\bigg\}^{T-1} \nonumber \\
            &\hspace{8cm}- Tp_i\bigg\{ 1 - p_i + \bigg(\frac{1 - p_i}{p_i} \bigg)^2p_i\bigg\}^T\bigg] \nonumber \\
            &= 2\Omega_i \bigg(\frac{p_i}{1-p_i}\bigg)^{2T(1-2p_i)} \bigg(\frac{1-p_i}{p_i}\bigg)^{-2Tp_i} \bigg[ Tp_i \bigg( \frac{1 - p_i}{p_i}\bigg)^{T+1} - Tp_i\bigg( \frac{1 - p_i}{p_i} \bigg)^T\bigg] \nonumber \\
            &= 2\Omega_i \bigg(\frac{p_i}{1-p_i}\bigg)^{2T(1-2p_i)} \bigg(\frac{1-p_i}{p_i}\bigg)^{-2Tp_i} \bigg\{ T(1-2p_i)\bigg( \frac{1 - p_i}{p_i}\bigg)^T \bigg\} \nonumber \\
            &= 2\Omega_i T (1-2p_i)\bigg( \frac{p_i}{1 - p_i} \bigg)^{T(1-2p_i)}, \label{app:eq:lowprivterm3}
        \end{align}
        where the third equality is due to \Cref{app:lem:PGFArgument}, which provides a bound on the expectation term using the probability generating function of $S_i$.

        We set $\Delta = \{cd/(nT\alpha)\}^{1/2}$ for some absolute constant $c>0$, and hence, combining \eqref{app:eq:lowprivdecomp}, \eqref{app:eq:lowprivterm1}, \eqref{app:eq:lowprivterm2} and \eqref{app:eq:lowprivterm3}, we obtain the bound
        \begin{align}
            \mathbb{E}[&\psi_{v, i}(X_{1:T})^2] \nonumber \\
            &= 4\Omega_i^2 \bigg( \frac{p_i}{1-p_i} \bigg)^{2T(1-2p_i)} Tp_i(1-p_i)
            - 4\Omega_i T (1-2p_i)\bigg( \frac{p_i}{1 - p_i} \bigg)^{T(1-2p_i)}
            + \bigg( 1 + \frac{4\Delta^2}{1-\Delta^2} \bigg)^T - 1 \nonumber \\
            &= 4\widetilde{\Omega}_i^2 Tp_i(1-p_i)
            - 4\widetilde{\Omega}_i T(1-2p_i)
            + \bigg( 1 + \frac{4\Delta^2}{1-\Delta^2} \bigg)^T - 1 \nonumber \\
            &= \bigg( 1 + \frac{4\Delta^2}{1-\Delta^2} \bigg)^T
            - \frac{4T\Delta^2}{1-\Delta^2}
            - 1
            \leq T^2\Delta^4 = \bigg(\frac{cd}{n\alpha}\bigg)^2
            \label{app:eq:residualtermvar}
        \end{align}
        where the second equality is by letting $\widetilde{\Omega}_i = \Omega_i \{p_i/(1 - p_i)\}^{T(1-2p_i)}$; the third by noting that $(2p_i - 1)^2 = \Delta^2$ and by the value of $\Omega_i$, we have that $\widetilde{\Omega}_i = (1-2p_i)/\{2p_i(1-p_i)\}$ solves the following quadratic equation in $y$
        \begin{align*}
            4y^2Tp_i(1-p_i) - 4yT(1-2p_i) + \frac{4T(2p_i - 1)^2}{1-(2p_i - 1)^2} = 0;
        \end{align*}
        and the inequality by the value $\Delta = \{cd/(nT\alpha)\}^{1/2}$, the assumption that $n\alpha \gtrsim d\log(ed)$ and the fact we may take $c$ sufficiently small.

        \medskip        
        \noindent
        \textbf{Step 5: Completing.}
        Combining \eqref{app:eq:acharyabounddecomp}, \eqref{app:eq:SGtermvarproxy}, \eqref {app:eq:acharyaboundterm1} and \eqref{app:eq:residualtermvar}, we have
        \begin{align} 
            \sum_{j = 1}^d \int_{\mathcal{Z}} \frac{\mathbb{E}_{P_v^{\otimes T}}[\phi_{v,j}(X_{1:T}) Q(z \mid X_{1:T})]^2}{\mathbb{E}_{P_v^{\otimes T}}[\phi_{v,j}(X_{1:T})]} \diff{z}
            &\leq \alpha\frac{T}{2}\bigg( \frac{1-2p_i}{2p_i(1-p_i)}\bigg)^2 + 2T^2\Delta^4 \nonumber \\
            &\leq C'(\alpha T\Delta^2 + dT^2\Delta^4), \label{app:eq:finallowprivdecomp}
        \end{align}
        for $C' > 0$ some absolute constant, where the second inequality is by the value of $p_i = (1 + \Delta v_i)/2$.

        Hence, by \Cref{app:thm:Acharya}, \eqref{app:eq:userlevelriskassouad} and \eqref{app:eq:finallowprivdecomp}, we have
        \begin{align*}
            \mathcal{R}_{n, T, \alpha}(\theta(\mathcal{P}_{d}), \|\cdot\|_2^2)
            \geq \frac{d\Delta^2}{2} \left(1 - C''\sqrt{\frac{nT\alpha\Delta^2}{d} + nT^2\Delta^4}\right)
            \geq \frac{c d^2}{2nT\alpha}\left(1 - c^{1/2}C''\sqrt{1+\frac{d^2}{n\alpha^2}} \right)
            \gtrsim \frac{d^2}{nT\alpha}.
        \end{align*}
        for $C'' > 0$ some absolute constant, where the first inequality is from the fact that the distributions are $\Delta^2$-separated; the second from the value of $\Delta$; and the last as $c > 0$ is sufficiently small and the assumption $n\alpha^2 \gtrsim d^2$. Combining with the infinite-$T$ lower bound in \eqref{sec2:eq:linfMeanStatement}, for which the required assumptions are satisfied, and the non-private lower bound in \Cref{app:lem:nonprivateLB} completes the proof for this case.

        For the $\ell_2$-ball case, we similarly have that
        \begin{align*}
            \mathcal{R}_{n, T, \alpha}(\theta(\mathcal{P}_{d}'), \|\cdot\|_2^2)
            \geq \frac{\Delta^2}{2} \left(1 - C''\sqrt{\frac{nT\alpha\Delta^2}{d} + nT^2\Delta^4}\right)
            \geq \frac{c d}{2nT\alpha}\left(1 - c^{1/2}C''\sqrt{1+\frac{d^2}{n\alpha^2}} \right) \gtrsim \frac{d}{nT\alpha}.
        \end{align*}
        and combining with the infinite-$T$ lower bound in \eqref{sec2:eq:l2MeanStatement}, for which the required assumptions are satisfied, and the non-private lower bound in \Cref{app:lem:nonprivateLB} completes the proof.
    \end{proof}

\subsection{Proof of Theorem \ref{sec5:thm:main} Lower Bound}
    As the distributions we are considering have $s$-sparse means, a standard Assouad's method construction where, for example, $-1$ corresponds to a zero entry for the mean and $+1$ corresponds to a non-zero entry, will fail as any $v \in \mathcal{V}$ with $\sum_{j=1}^k \mathbbm{1}\{ v_j = 1\} > s$ will not lie in the family we are considering. We instead consider a variant of Assouad's lemma, similar to that considered in \cite{Acharya:2022}, that allows us to consider a family of distributions indexed by the hypercube even when not all such distributions are a member of the family our estimation problem considers.

    Firstly, we require a strengthening of the separation condition \eqref{app:eq:HammingSeparation} which holds with equality. We say the collection $\{ P_v : v \in \mathcal{V} \}$ is $2\varrho$-Hamming separated in equality with respect to the loss $\Phi \circ \rho$ if for all $v, v' \in \mathcal{V}$,
    \begin{align}
        \Phi\left( \frac{1}{2}\rho(\theta_v, \theta_{v'}) \right) = 2\varrho \sum_{j=1}^k \mathbbm{1} \{v_j \neq v'_j \}. \label{app:eq:HammingSeparationEquality}
    \end{align}

    We then have the following lemma and corollary, the proofs of which are contained in \Cref{app:sec:misc}.
    \begin{lemma} \label{app:lem:GenAssouadPrelim}
        For $k \in \mathbb{N}$, let $\mathcal{P}$ be a family of distributions and consider the hypercube $\mathcal{V} = \{-1, 1\}^k$, the elements of which index a family $\{P_v : v \in \mathcal{V} \} \subseteq \mathcal{P}$ satisfying the separation condition \eqref{app:eq:HammingSeparationEquality}.  Let $\mathcal{P}^\ast \subseteq \mathcal{P}$ be some subset of the family of distributions and $V$ be a random variable on the set $\mathcal{V} = \{-1, 1\}^k$. Writing $\mathcal{V}^\ast = \{v \in \mathcal{V} : P_v \in \mathcal{P}^\ast \}$, we have that
        \begin{align*}
            \sup_{P \in \mathcal{P}^\ast} \mathbb{E}_{P} \left[ \Phi \circ \rho (\hat{\theta}, \theta(P)) \right]
            \geq \mathbb{E}_{V} \left[ \mathbb{E}_{P_{V}} \left[ \Phi \circ \rho (\hat{\theta}, \theta(P)) \right] \right] - 2k \varrho \mathbb{P}(V \notin \mathcal{V}^\ast).
        \end{align*}
    \end{lemma}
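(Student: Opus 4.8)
The plan is to fix an estimator $\hat\theta$, reduce it to a vertex-valued estimator, dominate the worst-case risk over $\mathcal{P}^\ast$ by a prior average restricted to the \emph{valid} vertices $\mathcal{V}^\ast$, and then pay a penalty of $2k\varrho$ per unit of invalid prior mass. First I would pass to the nearest-vertex estimator $\theta_{\hat v}$ with $\hat v \in \argmin_{w \in \mathcal{V}} \rho(\hat\theta, \theta_w)$ chosen measurably: the triangle inequality gives $\rho(\theta_v, \theta_{\hat v}) \le \rho(\theta_v, \hat\theta) + \rho(\hat\theta, \theta_{\hat v}) \le 2\rho(\hat\theta, \theta_v)$, so monotonicity of $\Phi$ together with the \emph{equality} separation \eqref{app:eq:HammingSeparationEquality} yields
\[
    \Phi \circ \rho(\hat\theta, \theta_v) \;\ge\; \Phi\bigl( \tfrac12 \rho(\theta_v, \theta_{\hat v}) \bigr) \;=\; 2\varrho\, H(v, \hat v) \qquad \text{for all } v \in \mathcal{V}.
\]
Writing $\ell(v) := \mathbb{E}_{P_v}\bigl[\Phi \circ \rho(\hat\theta, \theta_v)\bigr]$ for the per-vertex risk of the (now vertex-valued) estimator, the equality in the display identifies $\ell(v)$ with $2\varrho\,\mathbb{E}_{P_v}[H(v, \hat v)]$, which satisfies the crucial uniform bound $0 \le \ell(v) \le 2\varrho k$ for every $v \in \mathcal{V}$, since the Hamming distance on $\{-1,1\}^k$ never exceeds $k$.

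Next, because $\{P_v : v \in \mathcal{V}^\ast\} \subseteq \mathcal{P}^\ast$, the supremum over $\mathcal{P}^\ast$ dominates any prior average supported on $\mathcal{V}^\ast$: if $\mathbb{P}(V \in \mathcal{V}^\ast) > 0$ then
\[
    \sup_{P \in \mathcal{P}^\ast} \mathbb{E}_P\bigl[\Phi \circ \rho(\hat\theta, \theta(P))\bigr]
    \;\ge\; \max_{v \in \mathcal{V}^\ast} \ell(v)
    \;\ge\; \frac{\mathbb{E}_V\bigl[\ell(V)\,\mathbbm{1}\{V \in \mathcal{V}^\ast\}\bigr]}{\mathbb{P}(V \in \mathcal{V}^\ast)}
    \;\ge\; \mathbb{E}_V\bigl[\ell(V)\,\mathbbm{1}\{V \in \mathcal{V}^\ast\}\bigr],
\]
the last step using $\mathbb{P}(V \in \mathcal{V}^\ast) \le 1$ and $\ell \ge 0$; the case $\mathbb{P}(V \in \mathcal{V}^\ast) = 0$, and more generally any case in which the target right-hand side is negative, is immediate since the supremum is non-negative. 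Writing $\mathbbm{1}\{V \in \mathcal{V}^\ast\} = 1 - \mathbbm{1}\{V \notin \mathcal{V}^\ast\}$ and bounding the leakage term by the uniform estimate $\ell(\cdot) \le 2\varrho k$ then gives
\[
    \sup_{P \in \mathcal{P}^\ast} \mathbb{E}_P\bigl[\Phi \circ \rho(\hat\theta, \theta(P))\bigr]
    \;\ge\; \mathbb{E}_V\bigl[\ell(V)\bigr] - \mathbb{E}_V\bigl[\ell(V)\,\mathbbm{1}\{V \notin \mathcal{V}^\ast\}\bigr]
    \;\ge\; \mathbb{E}_V\bigl[\ell(V)\bigr] - 2k\varrho\,\mathbb{P}(V \notin \mathcal{V}^\ast),
\]
which, since $\mathbb{E}_V[\ell(V)] = \mathbb{E}_V\bigl[\mathbb{E}_{P_V}[\Phi \circ \rho(\hat\theta, \theta(P))]\bigr]$, is exactly the asserted inequality.

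The substantive point — and the main obstacle — is the uniform control $\ell(v) \le 2k\varrho$ at \emph{every} vertex, including the invalid ones $v \notin \mathcal{V}^\ast$: a general estimator can incur unboundedly large loss at such vertices, so one genuinely needs both the reduction to the nearest-vertex estimator $\theta_{\hat v}$ and the equality (rather than merely the inequality \eqref{app:eq:HammingSeparation}) form of the Hamming separation, which together pin the worst-case risk over $\mathcal{V}$ to exactly $2k\varrho$. The remaining steps — dominating the supremum by the restricted prior average and splitting off the invalid prior mass — are routine rearrangements.
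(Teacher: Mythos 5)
Your architecture --- pass to the nearest-vertex estimator $\theta_{\hat v}$, dominate the supremum over $\mathcal{P}^\ast$ by a prior average restricted to $\mathcal{V}^\ast$, and pay $2k\varrho$ per unit of invalid prior mass --- is the same as the paper's. The gap is in the sentence where you define $\ell(v) := \mathbb{E}_{P_v}[\Phi\circ\rho(\hat\theta,\theta_v)]$ and then claim that ``the equality in the display identifies $\ell(v)$ with $2\varrho\,\mathbb{E}_{P_v}[H(v,\hat v)]$'', hence $\ell(v)\le 2k\varrho$. Your display only gives the one-sided bound $\Phi\circ\rho(\hat\theta,\theta_v)\ge 2\varrho\,H(v,\hat v)$: the first step is the triangle inequality, so the raw loss of $\hat\theta$ is bounded \emph{below}, not above, by the Hamming quantity, and $\ell(v)$ is not bounded by $2k\varrho$ in general. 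The hypercube family has $\Phi(\tfrac12\rho)$-diameter $2k\varrho$, but $\hat\theta$ may sit far from all of it; then $\ell(v)\gg 2k\varrho$ at every vertex, and the excess of $\ell$ at an invalid vertex over its value at a valid one can exceed $2k\varrho\,\mathbb{P}(V\notin\mathcal{V}^\ast)$ (already for $k=1$, squared loss, a constant estimator far from both $\theta_{\pm1}$, and $V$ degenerate at the invalid vertex). So the two properties you need of $\ell$ --- that it equals the integrand on the right-hand side of the lemma, and that it is uniformly at most $2k\varrho$ --- are incompatible, and the bound on the leakage term $\mathbb{E}_V[\ell(V)\mathbbm{1}\{V\notin\mathcal{V}^\ast\}]$ fails as written; the same issue undermines your dismissal of the case $\mathbb{P}(V\in\mathcal{V}^\ast)=0$, since with the raw loss the right-hand side need not be negative there.

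The repair is to run your whole chain with the surrogate loss, i.e.\ $\ell(v) := \mathbb{E}_{P_v}[\Phi(\tfrac12\rho(\theta_{\hat v},\theta_v))] = 2\varrho\,\mathbb{E}_{P_v}[H(\hat v,v)]$, which genuinely satisfies $0\le\ell\le 2k\varrho$ by \eqref{app:eq:HammingSeparationEquality}, while your first display still gives $\sup_{P\in\mathcal{P}^\ast}\mathbb{E}_P[\Phi\circ\rho(\hat\theta,\theta(P))]\ge\max_{v\in\mathcal{V}^\ast}\ell(v)$. Your restricted-average-plus-leakage computation then yields $\sup_{P\in\mathcal{P}^\ast}\mathbb{E}_P[\Phi\circ\rho(\hat\theta,\theta(P))] \ge \mathbb{E}_V[\ell(V)] - 2k\varrho\,\mathbb{P}(V\notin\mathcal{V}^\ast)$, i.e.\ the conclusion with $\Phi(\tfrac12\rho(\theta_{\hat v},\theta(P_V)))$ rather than $\Phi\circ\rho(\hat\theta,\theta(P_V))$ on the right; your final identification of $\mathbb{E}_V[\ell(V)]$ with $\mathbb{E}_V[\mathbb{E}_{P_V}[\Phi\circ\rho(\hat\theta,\theta(P))]]$ must be dropped, and cannot be rescued because the pointwise inequality between the two losses points the wrong way. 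This surrogate form is in fact what the paper's own displayed steps establish (they work throughout with $\Phi(\tfrac12\rho(\theta_{\hat v},\cdot))$, the raw loss reappearing only in the final line \eqref{app:eq:GenAssouad} by the same silent substitution), and it is all that is used downstream: \Cref{app:cor:GenAssouad} immediately lower-bounds the inner expectation by $2\varrho\sum_{j}\mathbbm{1}\{\hat v_j\neq V_j\}$. Up to this correction (and modulo your dividing by $\mathbb{P}(V\in\mathcal{V}^\ast)$ where the paper conditions on $\{V\in\mathcal{V}^\ast\}$ and applies the law of total expectation), your route coincides with the paper's proof.
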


    \begin{corollary} \label{app:cor:GenAssouad}
        Under the same conditions in \Cref{app:lem:GenAssouadPrelim}, suppose that the random vector $V$ consists of i.i.d.~random variables as co-ordinates such that, for each $j \in [k]$, we have that $\mathbb{P}(V_j = 1) = \tau = 1 - \mathbb{P}(V_j = -1)$ with $\tau \leq 1/2$. Suppose also that for some constant $c > 0$, we have that $\mathbb{P}(V \notin \mathcal{V}^\ast) \leq c\tau$, then
        \begin{align*}
            \sup_{P \in \mathcal{P}^\ast} \mathbb{E}_{P} \left[ \Phi \circ \rho (\hat{\theta}, \theta(P)) \right]
            \geq 2\varrho \tau \left[ \sum_{j=1}^k \{ 1 - D_{\mathrm{TV}}(P_{+j}, P_{-j}) \} - ck \right],
        \end{align*}
        where $P_{+j}$ and $P_{-j}$ are the mixture distributions as defined in \eqref{app:eq:mixtures}.
    \end{corollary}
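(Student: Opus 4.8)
\textbf{Proof proposal for \Cref{app:cor:GenAssouad}.}

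The plan is to specialise \Cref{app:lem:GenAssouadPrelim} to the product-Bernoulli prior described in the statement, and then convert the resulting mixture-risk lower bound into a sum of pairwise total-variation terms in the usual Assouad fashion, keeping careful track of the biased sampling probability $\tau$. First I would apply \Cref{app:lem:GenAssouadPrelim} directly: with the stated prior $V$, it gives
\begin{align*}
    \sup_{P \in \mathcal{P}^\ast} \mathbb{E}_P \left[ \Phi \circ \rho(\hat\theta, \theta(P)) \right]
    \geq \mathbb{E}_V \left[ \mathbb{E}_{P_V}\left[ \Phi \circ \rho(\hat\theta, \theta(P_V)) \right] \right] - 2k\varrho\, \mathbb{P}(V \notin \mathcal{V}^\ast),
\end{align*}
and the hypothesis $\mathbb{P}(V \notin \mathcal{V}^\ast) \le c\tau$ bounds the correction term by $2ck\varrho\tau$. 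It therefore remains to lower bound the Bayes risk $\mathbb{E}_V \mathbb{E}_{P_V}[\Phi\circ\rho(\hat\theta,\theta(P_V))]$ by $2\varrho\tau \sum_{j=1}^k\{1 - D_{\mathrm{TV}}(P_{+j},P_{-j})\}$.

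For that core step I would run the standard coordinate-wise decomposition of Assouad's lemma but with the asymmetric weights coming from $\tau$. Using the equality separation condition \eqref{app:eq:HammingSeparationEquality} and the fact that $\Phi$ is non-decreasing with $\Phi(0)=0$, one reduces to lower bounding, for each $j$, the per-coordinate testing error of deciding whether $V_j = +1$ or $V_j = -1$ given the data. Because the coordinates of $V$ are independent, the posterior on $V_j$ given a sample depends on the data only through the likelihood ratio between the mixtures $P_{+j}$ and $P_{-j}$ (where $P_{\pm j}$ averages $P_v$ over the \emph{remaining} coordinates $v_{-j}$ under their product-Bernoulli law — note these mixtures coincide with \eqref{app:eq:mixtures} since the averaging weights cancel in the ratio of the two halves). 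A direct computation of the Bayes risk of this two-point testing problem with prior weights $(\tau, 1-\tau)$ and the quantitative separation shows the contribution of coordinate $j$ is at least $2\varrho\tau\{1 - D_{\mathrm{TV}}(P_{+j},P_{-j})\}$; summing over $j$ gives the claimed bound. Combining with the correction term yields exactly
\begin{align*}
    \sup_{P \in \mathcal{P}^\ast} \mathbb{E}_P \left[ \Phi \circ \rho(\hat\theta, \theta(P)) \right]
    \geq 2\varrho\tau\left[ \sum_{j=1}^k \{1 - D_{\mathrm{TV}}(P_{+j}, P_{-j})\} - ck \right].
\end{align*}

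The main obstacle I anticipate is handling the asymmetric prior cleanly: in the classical statement of Assouad's lemma the prior on each $V_j$ is uniform, so the per-coordinate Bayes error is $\tfrac12(1-D_{\mathrm{TV}})$, whereas here the factor $\tau$ (rather than $\tfrac12$) must be propagated through, and one must check that with biased weights the lower bound on the two-point testing error is still governed by the \emph{total variation} distance between the symmetric mixtures $P_{\pm j}$ and not by some $\tau$-weighted divergence. This is essentially because for any two distributions $\mu_0,\mu_1$ and prior weights $(1-\tau,\tau)$, the Bayes testing risk equals $\tau - \int (\tau\, d\mu_1 - (1-\tau)\, d\mu_0)_+ \ge \tau(1 - D_{\mathrm{TV}}(\mu_0,\mu_1))$, which is exactly the inequality needed; verifying this elementary bound and then assembling the pieces is routine. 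The privatised version follows verbatim by replacing each $P_{\pm j}$ with its channelled image $M_{\pm j}$, since the argument uses only the data-processing structure already set up before \Cref{app:lem:Acharya}.
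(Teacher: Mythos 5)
Your proposal is correct and follows essentially the same route as the paper's proof: apply \Cref{app:lem:GenAssouadPrelim} under the product-Bernoulli prior to handle the $2k\varrho\,\mathbb{P}(V\notin\mathcal{V}^\ast)\le 2ck\varrho\tau$ correction, then decompose the Bayes risk coordinate-wise via the equality separation condition, use $\tau\le 1/2$ to replace the weight $1-\tau$ by $\tau$, and bound the per-coordinate weighted testing error below by $\tau\{1-D_{\mathrm{TV}}(P_{+j},P_{-j})\}$, exactly as in the paper's chain of inequalities. One caveat: your parenthetical claim that the $\tau$-weighted conditional mixtures coincide with the uniform mixtures of \eqref{app:eq:mixtures} is not correct when $\tau\neq 1/2$ (conditional on $V_j=1$ the remaining coordinates are still i.i.d.\ with success probability $\tau$, so the weights are $\tau^{a}(1-\tau)^{k-1-a}$ rather than $2^{-(k-1)}$, and "cancellation in the ratio" does not identify the two mixtures); however, your main argument, like the paper's own proof, works directly with the conditional mixtures, and the reference to \eqref{app:eq:mixtures} in the corollary's statement shares this same imprecision, so the substance of your derivation is unaffected.
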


    With these results, we now prove the lower bound for sparse mean estimation.

    \begin{proof}[Proof of \Cref{sec5:thm:main}]
    \
    \\
    \noindent  \textbf{Step 1: Constructing a Separated Family.}  We consider the hypercube $\mathcal{V} = \{-1, 1\}^d$.  For $v \in \mathcal{V}$, let $P_v$ be the distribution where, for $X \sim P_v$, we have that the $j$-th co-ordinate is, independently of the other co-ordinates, distributed as
        \begin{align}\label{app:eq:SparseFamilyLower}
            X_j =
            \begin{cases}
               1, &\text{with probability } \frac{1}{2} + \frac{\Delta}{4}(v_j + 1), \\
               -1, &\text{with probability } \frac{1}{2} - \frac{\Delta}{4}(v_j + 1),
            \end{cases}
            \quad j \in [d].
        \end{align}
        Note that $\theta(P_v) = \mathbb{E}_{P_v}(X) = (\Delta/2)(v + 1)$, and hence the separation condition  in \eqref{app:eq:HammingSeparationEquality} holds as 
        \begin{align*}
            \frac{1}{4}\|\theta(P_v) - \theta(P_{v'})\|_2^2 = \frac{\Delta^2}{4} \sum_{j=1}^d \mathbbm{1} \{v_j \neq v'_j \}.
        \end{align*}

        \medskip
        \noindent
        \textbf{Step 2: Verifying Assumptions~\ref{app:ass:1} and \ref{app:ass:2}.}  We claim that for the family of distributions obtained by taking the $T$-fold product of the distributions defined in \eqref{app:eq:SparseFamilyLower}, when $\Delta \leq 2^{-1/2}$, Assumptions \ref{app:ass:1} and \ref{app:ass:2} hold with $\eta^2 = (1 + 2\Delta^2)^T - 1$.
        
        \medskip
        \noindent \textbf{Step 2.1: \Cref{app:ass:1}.}  For $i \in [d]$, denote $\#_{+i}(x_{1:T}) = \sum_{t=1}^T \mathbbm{1}\{ x_{t, i} =  1\}$.  We have that
        \begin{align*}
            \frac{\diff P_{v^{\oplus i}}^{\otimes T}}{\diff P_v^{\otimes T}}(x_{1:T})
            &= \frac{\prod_{t=1}^T \prod_{j=1}^d \left( \frac{1}{2} + \frac{\Delta}{4}(1 + v_j^{\oplus i}) \right)^{\mathbbm{1}\{ x_{t, j} = 1\}}\left( \frac{1}{2} - \frac{\Delta}{4}(1 + v_j^{\oplus i}) \right)^{\mathbbm{1}\{ x_{t, j} = -1\}}}
            {\prod_{t=1}^T \prod_{j=1}^d \left( \frac{1}{2} + \frac{\Delta}{4}(1 + v_j) \right)^{\mathbbm{1}\{ x_{t, j} = 1\}}\left( \frac{1}{2} - \frac{\Delta}{4}(1 + v_j) \right)^{\mathbbm{1}\{ x_{t, j} = -1\}}} \\
            &= \prod_{t=1}^T \left( \frac{1 + \frac{\Delta}{2}(1 - v_i)}{1 + \frac{\Delta}{2}(1 + v_i)} \right)^{\mathbbm{1}\{ x_{t, i} = 1\}} \left( \frac{1 - \frac{\Delta}{2}(1 - v_i)}{1 - \frac{\Delta}{2}(1 + v_i)} \right)^{\mathbbm{1}\{ x_{t, i} = -1\}} \\
            &= \left( \frac{1 - \frac{\Delta}{2}(1 - v_i)}{1 - \frac{\Delta}{2}(1 + v_i)} \right)^T \left( \frac{1 + \frac{\Delta}{2}(1 - v_i)}{1 + \frac{\Delta}{2}(1 + v_i)} \cdot \frac{1 - \frac{\Delta}{2}(1 + v_i)}{1 - \frac{\Delta}{2}(1 - v_i)} \right)^{\#_{+i}(x_{1:T})}= \varphi_{v, j}(x_{1:T}).
        \end{align*}
        We therefore verify \Cref{app:ass:1}.

        \medskip
        \noindent \textbf{Step 2.2: \Cref{app:ass:2}.} Note that for $i, j \in [d]$, $i \neq j$,
        \begin{align*}
            &\mathbb{E}_{P_v}(\varphi_{v, i} \varphi_{v, j})  \\
            =& \mathbb{E}_{P_v}\left[ \left( \frac{1 - \frac{\Delta}{2}(1 - v_i)}{1 - \frac{\Delta}{2}(1 + v_i)} \right)^T \left( \frac{1 + \frac{\Delta}{2}(1 - v_i)}{1 + \frac{\Delta}{2}(1 + v_i)} \cdot \frac{1 - \frac{\Delta}{2}(1 + v_i)}{1 - \frac{\Delta}{2}(1 - v_i)} \right)^{\#_{+i}(x_{1:T})} \right.  \\
            \times& \left. \left( \frac{1 - \frac{\Delta}{2}(1 - v_j)}{1 - \frac{\Delta}{2}(1 + v_j)} \right)^T \left( \frac{1 + \frac{\Delta}{2}(1 - v_j)}{1 + \frac{\Delta}{2}(1 + v_j)} \cdot \frac{1 - \frac{\Delta}{2}(1 + v_j)}{1 - \frac{\Delta}{2}(1 - v_j)} \right)^{\#_{+j}(x_{1:T})} \right]  \\
            =& \left( \frac{1 - \frac{\Delta}{2}(1 - v_i)}{1 - \frac{\Delta}{2}(1 + v_i)} \cdot \frac{1 - \frac{\Delta}{2}(1 - v_j)}{1 - \frac{\Delta}{2}(1 + v_j)} \right)^T \mathbb{E}_{P_v}\left[ \left( \frac{1 + \frac{\Delta}{2}(1 - v_i)}{1 + \frac{\Delta}{2}(1 + v_i)} \cdot \frac{1 - \frac{\Delta}{2}(1 + v_i)}{1 - \frac{\Delta}{2}(1 - v_i)} \right)^{\#_{+i}(x_{1:T})} \right]  \\ 
            \times& \mathbb{E}_{P_v}\left[ \left( \frac{1 + \frac{\Delta}{2}(1 - v_j)}{1 + \frac{\Delta}{2}(1 + v_j)} \cdot \frac{1 - \frac{\Delta}{2}(1 + v_j)}{1 - \frac{\Delta}{2}(1 - v_j)} \right)^{\#_{+j}(x_{1:T})} \right].
        \end{align*}
        In addition it holds that
        \begin{align*}
            \mathbb{E}_{P_v}&\left[ \left( \frac{1 + \frac{\Delta}{2}(1 - v_i)}{1 + \frac{\Delta}{2}(1 + v_i)} \cdot \frac{1 - \frac{\Delta}{2}(1 + v_i)}{1 - \frac{\Delta}{2}(1 - v_i)} \right)^{\#_{+i}(x_{1:T})} \right] \\
            &= \prod_{t=1}^T \mathbb{E}_{P_v} \left[ \left( \frac{1 + \frac{\Delta}{2}(1 - v_i)}{1 + \frac{\Delta}{2}(1 + v_i)} \cdot \frac{1 - \frac{\Delta}{2}(1 + v_i)}{1 - \frac{\Delta}{2}(1 - v_i)} \right) \mathbbm{1}\{ X_{t,i} = 1\} + \mathbbm{1}\{ X_{t,i} = -1\} \right] \\
            &= \left[ \left( \frac{1 + \frac{\Delta}{2}(1 - v_i)}{1 + \frac{\Delta}{2}(1 + v_i)} \cdot \frac{1 - \frac{\Delta}{2}(1 + v_i)}{1 - \frac{\Delta}{2}(1 - v_i)} \right) \left(\frac{1}{2} + \frac{\Delta}{4}(1 + v_i)\right) + \left(\frac{1}{2} - \frac{\Delta}{4}(1 + v_i)\right) \right]^T \\
            &= \left( \frac{1-\frac{\Delta}{2}(1+v_i)}{1-\frac{\Delta}{2}(1-v_i)} \right) ^T.
        \end{align*}
        We then have that $\mathbb{E}_{P_v}(\varphi_{v, i} \varphi_{v, j}) = 1$.
    
        We now consider
        \begin{align*}
            \mathbb{E}_{P_v}(\varphi_{v, i}^2)
            &= \left( \frac{1 - \frac{\Delta}{2}(1 - v_i)}{1 - \frac{\Delta}{2}(1 + v_i)} \right)^{2T} \mathbb{E}_{P_v}\left[ \left( \frac{1 + \frac{\Delta}{2}(1 - v_i)}{1 + \frac{\Delta}{2}(1 + v_i)} \cdot \frac{1 - \frac{\Delta}{2}(1 + v_i)}{1 - \frac{\Delta}{2}(1 - v_i)} \right)^{2\#_{+i}(x_{1:T})} \right].
        \end{align*}
        It holds that
        \begin{align*}
            \mathbb{E}_{P_v}&\left[ \left( \frac{1 + \frac{\Delta}{2}(1 - v_i)}{1 + \frac{\Delta}{2}(1 + v_i)} \cdot \frac{1 - \frac{\Delta}{2}(1 + v_i)}{1 - \frac{\Delta}{2}(1 - v_i)} \right)^{2\#_{+i}(x_{1:T})} \right] \\
            &= \left[ \left( \frac{1 + \frac{\Delta}{2}(1 - v_i)}{1 + \frac{\Delta}{2}(1 + v_i)} \cdot \frac{1 - \frac{\Delta}{2}(1 + v_i)}{1 - \frac{\Delta}{2}(1 - v_i)} \right)^2 \left(\frac{1}{2} + \frac{\Delta}{4}(1 + v_i)\right) + \left(\frac{1}{2} - \frac{\Delta}{4}(1 + v_i)\right) \right]^T \\
            &= \left( \frac{(1+\frac{\Delta}{2}(1+v))(1+\frac{\Delta^2}{4}(3v^2-2v-1))}{(1-\frac{\Delta}{2}(1-v))^2(1+\frac{\Delta}{2}(1+v))} \right)^T
        \end{align*}
        which combined with the fact that $v_i^2 \equiv 1$ for all $i \in [d]$, yields that
        \begin{align*}
            \mathbb{E}_{P_v}(\varphi_{v, i}^2)
            = \left( \frac{1 + \frac{\Delta^2}{2}(1-v)}{1 - \frac{\Delta^2}{2}(1+v)} \right)^T
            =
            \begin{cases}
                \left( \frac{1}{1-\Delta^2} \right)^T, &  v=1\\
                (1+\Delta^2)^T, &  v=-1.
            \end{cases}
        \end{align*}
        As an upper bound suffices, we have that Assumption \ref{app:ass:2} holds with $\eta^2 = (1 + 2\Delta^2)^T - 1$ when $\Delta^2 \leq 1/2$.
    
 \medskip 
        \noindent
        \textbf{Step 3: Obtaining Lower Bounds.}
        Letting $\mathcal{P}$ denote the $T$-fold product distributions induced by the construction \eqref{app:eq:SparseFamilyLower}, we consider the subset $\mathcal{P}^\ast = \{P \in \mathcal{P} : \|\mathbb{E}_P(X)\|_0 \leq s\}$ and denote $\mathcal{V}^\ast = \{v \in \mathcal{V} : \|v\|_0 \leq s\}$.  We have that $v \in \mathcal{V}^\ast$ if and only if $P_v \in \mathcal{P}^\ast$. Lastly, for a fixed user-level $\alpha$-LDP mechanism $Q$, denote the private marginals satisfying the sparsity condition 
        \begin{equation*}
            \mathcal{M}^\ast = \left\{ M(\cdot) = \int Q(\cdot| x_{1:T}^{(1)}, \hdots, x_{1:T}^{(n)}) \diff P(x_{1:T}^{(1)}, \hdots, x_{1:T}^{(n)}) : P \in \mathcal{P}^\ast \right\}.
        \end{equation*}

        Let $V \in \{0, 1\}^d$ be the random variable such that its co-ordinates are mutually independent with $\mathbb{P}(V_j = 1) = \tau = 1 - \mathbb{P}(V_j = -1)$ where $\tau = s/(2d) \leq 1/2$, and denote $V_+ = \sum_{j=1}^d \mathbbm{1} \{ V_j = 1 \}$. In particular, we have that $V_+ \sim \mathrm{Bin}(d, \tau)$. By an application of Bernstein's inequality \citep[e.g.][Proposition 2.14]{Wainwright:2019}, we have that
        \begin{align*}
            \mathbb{P}(V_+ \geq 2d\tau)
            &\leq \exp\left(-\frac{(d\tau)^2}{2\left\{\sum_{j=1}^d\mathbb{P}(V_j = 1) + d\tau/3\right\}}\right)
            \leq e^{-3d\tau/8} \\
            &= e^{-3s/16}
            \leq \frac{1}{d}
            \leq \frac{3\tau}{8\log(ed)},
        \end{align*}
        where the equality is by the value of $\tau$; the penultimate inequality by the requirement $16\log(ed)/3 \leq s$; and the final inequality by the fact that $16\log(ed)/3 \leq s$ and the value of $\tau$.
        
        Thus, we have that $\mathbb{P}(V \notin \mathcal{V}^\ast) \leq \frac{3\tau}{8}$ under the assumption $16\log(ed)/3 \leq s \leq d$. Using \Cref{app:lem:AcharyaSimple} and \Cref{app:cor:GenAssouad} with the private marginals $\mathcal{M}^\ast$, we obtain
        \begin{equation*}
            \mathcal{R}_{n, T, \alpha}(\theta(\mathcal{P}), \Phi \circ \rho)
            \geq 2d \Delta^2 \tau \left[ 1 - \sqrt{\frac{21n\alpha^2\eta^2}{d}}  - \frac{3}{8} \right]. 
        \end{equation*}
        We now set
        \begin{equation}
            \Delta = \left\{\frac{1}{2}\left( 1 + \frac{d}{84n\alpha^2} \right)^{\frac{1}{T}} - \frac{1}{2}\right\}^{1/2} \wedge \left( \frac{1}{2T} \right)^{1/2}, \label{app:eq:sparseLBdelta}
        \end{equation}
        noting the requirement $\Delta^2 \leq 1/2$ is satisfied.
        When $\Delta = \left\{\frac{1}{2}\left( 1 + \frac{d}{84n\alpha^2} \right)^{\frac{1}{T}} - \frac{1}{2}\right\}^{1/2}$, we have that $\eta^2 = d/(84n\alpha^2)$, giving that
        \begin{align*}
            \mathcal{R}_{n, T, \alpha}(\theta(\mathcal{P}), \Phi \circ \rho)
            \gtrsim s \left[ \left( 1 + \frac{d}{84n\alpha^2} \right)^{\frac{1}{T}} - 1 \right].
        \end{align*}
        When $\Delta = \{1/(2T)\}^{1/2}$, we have that $\eta^2 = (1+1/T)^{1/T} - 1 \leq 1$. In this case, the second term in \eqref{app:eq:sparseLBdelta} is smaller than the first, which upon rearrangement gives
        \begin{equation*}
            \frac{21n\alpha^2}{d} \geq \frac{1}{4\{(1 + 1/T)^{T} - 1\}} \geq \frac{1}{4}.
        \end{equation*}
        Hence, we have
        \begin{align*}
            \mathcal{R}_{n, T, \alpha}(\theta(\mathcal{P}), \Phi \circ \rho)
            \gtrsim \frac{s}{T} \left[ 1 - \sqrt{\frac{21n\alpha^2}{d}}  - \frac{3}{8} \right]
            \gtrsim \frac{s}{T}.
        \end{align*}        
        These two choices of the value of $\Delta$ together give a lower bound of
        \begin{equation*}
            \mathcal{R}_{n, T, \alpha}(\theta(\mathcal{P}), \Phi \circ \rho)
            \gtrsim \frac{s}{T} \wedge s\left\{\left(1 + \frac{d}{84n\alpha^2}\right)^{1/T} - 1 \right\}.
        \end{equation*}
        Lastly, combining with the infinite-$T$ lower bound in \eqref{sec2:eq:sparseMeanStatement}, for which the required assumptions are satisfied, completes the proof.
    \end{proof}

\subsection{Proof of Theorem \ref{sec4:thm:main} Lower Bound} \label{app:sec:densityLB}

We first define a H{\"o}lder space of functions denoted by $\mathcal{H}_{\beta, r}$, for $\beta \in \mathbb{N}$ and $r > 0$.  For any $f \,[0,1] \rightarrow \mathbb{R}$, $ f \in \mathcal{H}_{\beta, r}$, if it is $(\beta-1)$-times differentiable and satisfies that
    \begin{align*}
         |f^{(\beta - 1)}(x) - f^{(\beta - 1)}(y)| \leq r|x - y|.
    \end{align*}

\begin{proof}[Proof of \Cref{sec4:thm:main}]
\
\\
\noindent \textbf{Step 1: Constructing a Separated Family.}  Consider the bump function
        \begin{equation*}
            g(x) =
            \begin{cases}
                e^{-1/\{x(1/2-x)\}}, & x \in [0, 1/2], \\
                -e^{-1/\{(x-1/2)(1-x)\}}, & x \in (1/2,1], \\
                0, &\text{otherwise}.
            \end{cases}
        \end{equation*}
        and define the function $g_\beta$ that $g_\beta(x) = c_\beta g(x)$ with a sufficiently small absolute constant $c_\beta > 0$ depending only on $\beta$, so that 
        \[
            \sup_{x\in [0, 1]} \max\{|g_\beta^{(\beta)}(x)|,\,  |g_\beta(x)|\} \leq 1.
        \]
        We remark that the choice of $c_{\beta}$ exists, since $g(\cdot)$ is infinitely differentiable with bounded derivatives. 
        
    For $k \in \mathbb{N}$ to be specified and for each $j \in [k]$, we let
        \begin{align*}
            g_{\beta, j}(x) = \frac{r\pi^\beta}{2k^\beta} g_\beta \left( k \left( x - \frac{j-1}{k}\right) \right), \quad x \in \mathbb{R}.
        \end{align*}
        Note that $g_{\beta, j}(x) \neq 0$ if and only if $x \in [(j-1)/k, j/k]$, and for $x, y \in [0, 1]$,
        \begin{align*}
            |g_{\beta, j}^{(\beta-1)}(x) - g_{\beta, j}^{(\beta-1)}(y)|
            = \frac{r\pi^\beta}{2k} \left|g_{\beta}^{(\beta-1)}(kx - (j-1)) - g_{\beta}^{(\beta-1)}(ky - (j-1)) \right|
            \leq \frac{r\pi^\beta}{2}|x-y|
        \end{align*}
        where the equality follows from the definition of $g_{\beta, j}^{(\beta-1)}(x)$, and the inequality via (i) an application of the mean value theorem, (ii) the fact that the derivative is bounded as $|g_\beta^{(\beta)}(x)| \leq 1$ for all $x$ and $(iii)$ $k \geq 1$.  It then holds that $g_{\beta, j}^{(\beta-1)} \in \mathcal{H}_{\beta, r\pi^\beta/2}$, $j \in [k]$.  We then define the family of densities indexed by the hypercube $\mathcal{V} = \{-1, 1\}^k$ by
        \begin{align} \label{app:eq:DensityFamily}
            \bigg\{f_v = 1 + \sum_{j=1}^k v_j g_{\beta, j} : \, v \in \mathcal{V} \bigg\} \subseteq \mathcal{F}_{\beta, r},
        \end{align}
        where the inclusion in the Sobolev space of densities $\mathcal{F}_{\beta, r}$ follows from
        \begin{itemize}
            \item the fact that each $f_v$ is a sum of functions in $\mathcal{H}_{\beta, r\pi^\beta/2}$ with disjoint support and is thus itself is in $\mathcal{H}_{\beta, r\pi^\beta}$ ~\citep[see e.g.][Section~2.6.1]{Tsybakov:2009};
            
            \item the fact that (for integer $\beta$) $\mathcal{H}_{\beta, r\pi^{\beta}} \subseteq \mathcal{S}_{\beta, r}$ by Definition 1.11 and Proposition 1.14 in \cite{Tsybakov:2009}; and
            
            \item the fact that $g_{\beta, j}$ is small enough of a perturbation that $f_v \geq 0$, and that, since $\int_0^1 g_\beta(x) \diff x = 0$, we also have that $\int f_v(x) \diff x = 1$.
        \end{itemize}
        We also have that
        \begin{equation}
            \|f_v - f_{v'}\|_2^2
            = 2 \sum_{j=1}^k \mathbbm{1}\{v_j \neq v'_j \} \int_{\frac{j-1}{k}}^{\frac{j}{k}} \{g_{\beta, j}(x)\}^2 \diff x
            \geq \frac{c_\beta' r^2\pi^{2\beta}}{2k^{2\beta + 1}} \sum_{j=1}^k \mathbbm{1}\{v_j \neq v'_j \}, \label{app:eq:densityseparation}
        \end{equation}
        where the inequality comes from integrating by substitution and the fact that $\int \{g_\beta(x)\}^2 \diff x > c_\beta'$ for some constant $c'_\beta > 0$ depending on $\beta$.  The family of distributions is $c_\beta'r^2\pi^{2\beta}/(4k^{2\beta + 1})$-separated under squared-$L_2$ loss.  

        \medskip 
        \noindent
        \textbf{Step 2: Verifying Assumptions~\ref{app:ass:1} and \ref{app:ass:2}.}  We claim that, for the family of distributions obtained by taking the $T$-fold product of the distributions defined via the densities in \eqref{app:eq:DensityFamily}, Assumptions~\ref{app:ass:1} and \ref{app:ass:2} hold with 
        \begin{align*}
                \eta^2 = \left( 1 + \frac{2r^2\pi^{2\beta}}{k^{2\beta + 1}} \right)^T - 1.
            \end{align*}

\medskip 
        \noindent \textbf{Step 2.1: \Cref{app:ass:1}.}  For any $v,v' \in \mathcal{V}$ and a fixed $x \in [0,1]$, we have by the construction in \eqref{app:eq:DensityFamily} that $f_v(x) = 1 + v_j g_{\beta, l}(x)$, $f_{v'}(x) = 1 + v_j' g_{\beta, l}(x)$ for some $l \in [k]$. Further, for any $j \in [k]$
        \begin{equation}
            \sup_x |g_{\beta, j}(x)| \leq \frac{r\pi^\beta}{2k^\beta} \sup_x |g_\beta(x)| \leq \frac{r\pi^\beta c_\beta}{2k^\beta} \sup_x |g(x)| \leq \frac{1}{2} \label{app:eq:boundeddensityperturbation},
        \end{equation}
        where the first two inequalities are from the definitions of the $g_\beta$ and $g$, and the final inequality comes from taking $c_\beta > 0$ small enough. We then see that, for $i \in [k]$,
        \begin{align*}
            \frac{\diff P_{v^{\oplus i}}^{\otimes T}}{\diff P_v^{\otimes T}}(x_{1:T})
            &= \frac{\prod_{t=1}^T f_{v^{\oplus i}}(x_t)}{\prod_{t=1}^T f_v(x_t)}
            = \frac{\prod_{t=1}^T [f_v(x_t) - 2v_i g_{\beta, i}(x_t)]}{\prod_{t=1}^T f_v(x_t)} \\
            &= \prod_{t=1}^T \left[ 1 - \frac{2v_i g_{\beta, i}(x_t)}{1 + v_i g_{\beta, i}(x_t)}\right]
            =: \varphi_{v, i}(x_{1:T}),
        \end{align*}
        where we use \eqref{app:eq:boundeddensityperturbation} to ensure that the denominators are bounded away from zero.  We therefore verify \Cref{app:ass:1}.

    \medskip 
        \noindent \textbf{Step 2.1: \Cref{app:ass:2}.}  Note that for $i, j \in [k]$, $i \neq j$, 
        \begin{align*}
            \mathbb{E}_{P_v}(\varphi_{v, i} \varphi_{v, j})
            &= \prod_{t=1}^T \mathbb{E}_{P_v}\left[ \left(1 - \frac{2v_i g_{\beta, i}(X_t)}{1 + v_i g_{\beta, i}(X_t)} \right) \left(1 - \frac{2v_j g_{\beta, j}(X_t)}{1 + v_j g_{\beta, j}(X_t)} \right) \right] \\
            &= \left\{\mathbb{E}_{P_v}\left[ \left(1 - \frac{2v_i g_{\beta, i}(X_1)}{1 + v_i g_{\beta, i}(X_1)} \right) \left(1 - \frac{2v_j g_{\beta, j}(X_1)}{1 + v_j g_{\beta, j}(X_1)} \right) \right]\right\}^T \\
            &= \left\{\mathbb{E}_{P_v}\left[ 1 - \frac{2v_i g_{\beta, i}(X_1)}{1 + v_i g_{\beta, i}(X_1)} - \frac{2v_j g_{\beta, j}(X_1)}{1 + v_j g_{\beta, j}(X_1)} \right]\right\}^T \\
            &= \left[ 1 - \int_0^1 \left( \frac{2v_i g_{\beta, i}(x)}{1 + v_i g_{\beta, i}(x)} + \frac{2v_j g_{\beta, j}(x)}{1 + v_j g_{\beta, j}(x)} \right) f_v(x) \diff x \right]^T \\
            &= \left[ 1 - 2v_i \int_0^1 g_{\beta, i}(x) \diff x - 2v_j \int_0^1 g_{\beta, j}(x) \diff x \right]^T
            = 1,
        \end{align*}
        where the second equality uses the fact that the $X_{t}$ are i.i.d.~across $t \in [T]$; the third and the penultimate are due to the fact that $g_{\beta, i}$ and $g_{\beta, j}$ have disjoint support, $i \neq j$; and the last is by the fact that $\int g_{\beta, l}(x) \diff x = 0$ for all $l \in [k]$. Hence, we have that $\mathbb{E}_{P_v}(\varphi_{v, i} \varphi_{v, j}) = 1$ for all $i \neq j$.
        
        We then consider that
        \begin{align*}
            \mathbb{E}_{P_v}(\varphi_{v, i}^2)
            &= \prod_{t=1}^T \mathbb{E}_{P_v}\left[ \left(1 - \frac{2v_i g_{\beta, i}(X_t)}{1 + v_i g_{\beta, i}(X_t)} \right)^2 \right]
            = \left\{\mathbb{E}_{P_v}\left[ \left(1 - \frac{2v_i g_{\beta, i}(X_1)}{1 + v_i g_{\beta, i}(X_1)} \right)^2 \right]\right\}^T \\
            &= \left\{\mathbb{E}_{P_v}\left[ 1 - \frac{4v_i g_{\beta, i}(X_1)}{1 + v_i g_{\beta, i}(X_1)} + \left( \frac{2v_i g_{\beta, i}(X_1)}{1 + v_i g_{\beta, i}(X_1)} \right)^2 \right]\right\}^T.
        \end{align*}
        We have that 
        \[
            \mathbb{E}_{P_v}\left[\frac{4v_i g_{\beta, i}(X_1)}{1 + v_i g_{\beta, i}(X_1)}\right] = 0
        \]
        due to the fact that $g_{\beta, i}$ and $g_{\beta, j}$ have disjoint support, $i \neq j$.  It also holds that 
        \begin{align*}
            \mathbb{E}_{P_v}\left[ \frac{4 g_{\beta, i}(X_1)^2}{\{1 + v_i g_{\beta, i}(X_1)\}^2} \right]
            &= \int_0^1 \frac{4 \{g_{\beta, i}(x)\}^2}{\{1 + v_i g_{\beta, i}(x)\}^2} f_v(x) \diff x 
            = \int_{\frac{i-1}{k}}^{\frac{i}{k}} \frac{4 \{g_{\beta, i}(x)\}^2}{1 + v_i g_{\beta, i}(x)} \diff x , \\
            &\leq \frac{2r^2\pi^{2\beta}}{k^{2\beta + 1}}\int_0^1 \{g_{\beta}(y)\}^2 \diff y
            \leq \frac{2r^2\pi^{2\beta}}{k^{2\beta + 1}}, 
        \end{align*}
        where in the first inequality use the substitution $y = kx - (i-1)$ and use \eqref{app:eq:boundeddensityperturbation} to bound the denominator away from $0$ and in the last inequality we use the fact that $|g_\beta(x)| \leq 1$ for all $x \in [0,1]$. Hence, we have the upper bound
        \begin{align*}
             \mathbb{E}_{P_v}(\varphi_{v, i}^2)
            &\leq \left[ 1 +  \frac{2r^2\pi^{2\beta}}{k^{2\beta + 1}} \right]^T,
        \end{align*}
        which yields $\eta^2 = [1 + 2r^2\pi^{2\beta}/k^{2\beta + 1}]^T - 1$. We have now verified \Cref{app:ass:2}.
    
        \medskip 
        \noindent
        \textbf{Step 3: Obtaining Lower Bounds.}
        We set 
        \begin{equation*}
            k = (168\pi^{2\beta})^{1/(2\beta+1)}(nT\alpha^2)^{\frac{1}{2\beta + 2}}.
        \end{equation*}
        We then have, setting $r=1$, that
        \begin{equation}
            \eta^2
            = \left( 1 + \frac{2\pi^{2\beta}}{k^{2\beta + 1}} \right)^T - 1 \leq \frac{4\pi^{2\beta}T}{k^{2\beta + 1}}, \label{app:eq:densityetabound}
        \end{equation}
        where the inequality holds under the condition
        \begin{equation}
            \frac{2\pi^{2\beta}}{k^{2\beta + 1}} \leq \frac{1}{T}, \quad \text{or, equivalently,} \quad T \leq 84(n\alpha^2)^{2\beta + 1}. \label{app:eq:phaseTransition}
        \end{equation}
        As Assumptions \ref{app:ass:1} and \ref{app:ass:2} hold, we have, for $T$ satisfying \eqref{app:eq:phaseTransition}, that
        \begin{align*}
            \mathcal{R}_{n, T, \alpha}(\theta(\mathcal{P}_{d}'), \|\cdot\|_2^2)
            &\gtrsim \frac{c_\beta'r^2\pi^{2\beta}}{4k^{2\beta}} \left(1 - \sqrt{\frac{21}{k}n\alpha^2\eta^2}\right)
            \gtrsim \frac{c_\beta''}{(nT\alpha^2)^{2\beta/(2\beta + 2)}}\left(1 - \sqrt{\frac{84\pi^{2\beta}}{(168\pi^{2\beta})^{2\beta+2/(2\beta + 1)}}} \right), \\
            &\gtrsim c_\beta'''(nT\alpha^2)^{-2\beta/(2\beta + 2)},
        \end{align*}
        where $c_\beta''$ and $c_\beta'''$ are constants depending only on $\beta$; the first inequality is from \eqref{app:eq:AcharyaAssouad} and the fact that the distributions are $c_\beta'r^2\pi^{2\beta}/(4k^{2\beta + 1})$-separated; the second from \eqref{app:eq:densityetabound} and the value of $k$; and the last as $C_\beta$ can be taken sufficiently large.

        We observe that this restriction on the range of values of $T$ is not problematic as when $T > (n\alpha^2)^{2\beta + 1}$, we have $(n\alpha^2)^{-2\beta} > (nT\alpha^2)^{-2\beta/(2\beta + 2)}$ and so the infinite-$T$ lower bound in \eqref{sec2:eq:densityStatement} (for which the required assumptions are satisfied) dominates in this case, and taking the maximum of the two lower bounds completes the proof. 
    \end{proof}

\subsection{Proof of Theorem \ref{secmix:thm:main} Lower Bound}
    In this sub-section we consider the lower bound the problem of estimating the multiple means from a mixture distribution. We first prove the $T$-independent lower bound.
    \begin{proof}[Proof of \Cref{secmix:thm:main} ($T$-independent Bound)]
        Consider the two distributions
        \begin{align*}
            P_{1, \mathrm{mix}}^{T} &= m^{-1} \delta_{1/m}^{\otimes T} + m^{-1} \delta_{2/m}^{\otimes T} + \dots + m^{-1} \delta_{1}^{\otimes T}, \\
            P_{2, \mathrm{mix}}^{T} &= m^{-1} \delta_0^{\otimes T} + m^{-1} \delta_{1/m}^{\otimes T} + \dots + m^{-1} \delta_{(m-1)/m}^{\otimes T},
        \end{align*}
        where $\delta_x$ denotes a Dirac mass centred at $x$. We note that both the mixture proportions and the means of the distributions satisfy their respective separation criteria.

        Proceeding similarly to the proof of \Cref{sec2:ex:example} in \Cref{Appendix_sec2}, we obtain
        \begin{align*}
            \mathcal{R}_{n, T, \alpha}^{\mathrm{mix}}(\theta(\mathcal{P}_{m, \pi_{0}, \theta_{\mathrm{sep}}}), \|\cdot\|_2^2)
            &\geq \frac{1}{4} e^{-12n\alpha^2 D_{\mathrm{TV}}(P_{1, \mathrm{mix}}^{T}, P_{2, \mathrm{mix}}^{T})^2} \\
            &= \frac{1}{4} e^{-12n\alpha^2D_{\mathrm{TV}}(m^{-1}\delta_1^{\otimes T}, m^{-1}\delta_0^{\otimes T})^2} \\
            &= \frac{1}{4}e^{-12n\alpha^2/m^2}
        \end{align*}         
        where the inequality follows from Le Cam's lemma \citep[e.g.][Lemma~1]{Yu:1997}; the Bretagnolle–Huber inequality \citep[e.g.][Equation~(2.25)]{Tsybakov:2009}; and Corollary~3 in \cite{Duchi:2018} where we note that $(e^\alpha-1)^2 \leq 3\alpha^2$ for $\alpha \in (0,1]$. This proves the $T$-independent lower bound of \Cref{secmix:thm:main}.
    \end{proof}

    We now consider the $T$-dependent lower bound. We will require a modification to the proof techniques of \cite{Acharya:2022} to obtain a lower bound, but first explain the cause of failure of the previous techniques if applied naively: Suppose we wish to apply the same ideas by considering a family of distributions wherein the means of differing mixture components are perturbed. A crucial difference between our setting with mixtures and those of the previous lower bounds is the role of \Cref{app:ass:2}, which calls for the fluctuations induced by perturbing the mean of a component to be uncorrelated with those of another component. Whereas this holds in the case of the different dimensions of the multi-dimensional distributions considered earlier, with a mixture distribution the perturbations all affect the same single dimension of the output space, violating \Cref{app:ass:2}.

    To circumvent this issue, we modify the results of \cite{Acharya:2022}, instead conditioning on a single component of the mixture, which forces \Cref{app:ass:2} trivially by construction. This shifts the difficulty of the problem from verifying \Cref{app:ass:2}, to instead controlling the relevant quantities once we take the expectation over which mixture component was chosen.

    We now formalise the modifications we require. Recall we consider a family of distributions $\{P_v : v \in \mathcal{V}\}$ taking values in some space $\mathcal{X}$, where $\mathcal{V} = \{-1, 1\}^k$ for some $k \in \mathbb{N}$. We consider a random variable $I$ taking values in some finite set $\mathcal{I}$ and for a random variable $X \sim P_v$, we denote the distribution of $X \mid I$ by $P_{I, v}$. We note by construction that $\mathbb{E}_I[P_{I, v}] = P_v$.

    We start with an analogue to \Cref{app:ass:1}, which denotes local perturbations around unity for the Radon--Nikodym derivative of $P_{I, v^{\oplus i}}$ and $P_{I, v}$ in terms of fluctuation terms which depend on $I$.
    
    \begin{assumption} \label{app:ass:4}
        For every $v \in \mathcal{V}$ and $j \in [k]$, assume that $P_{i, v^{\oplus j}} \ll P_{i, v}$ for all $i \in \mathcal{I}$ and there exist measurable functions $\phi_{i, v,j} : \mathcal{Z} \rightarrow \mathbb{R}$ such that
        \begin{align*}
            \frac{\diff P_{I, v^{\oplus j}}}{\diff P_{I, v}} = 1 + \phi_{I, v,j}.
        \end{align*}        
    \end{assumption}
    \noindent
    It follows from \Cref{app:ass:4} that $\mathbb{E}_{P_{I, v}}(\phi_{I, v, j} \mid I) = 0$, almost surely, for all $j \in [k]$.

    We then obtain the following analogous result to \Cref{app:thm:Acharya}.
    \begin{proposition} \label{app:prop:AcharyaThmMix}
        Under \Cref{app:ass:4}, we have that
        \begin{equation} \label{app:eq:AcharyaThmMixEq}
            \bigg(\frac{1}{k} \sum_{j=1}^k D_{\mathrm{TV}}(M_{+j}^n, M_{-j}^n) \bigg)^2
            \leq \frac{7n}{k} \max_{v \in \mathcal{V}} \sup_{Q \in \mathcal{Q}_\alpha} \sum_{j = 1}^k \int_{ \mathcal{Z}} \frac{\mathbb{E}_{P_v}[\phi_{I, v, j}(X) q(z \mid X)]^2}{\mathbb{E}_I[\mathbb{E}_{P_{I,v}}[q(z \mid X) \mid I]]} \diff z
        \end{equation}
    \end{proposition}
    \begin{proof}
        For $v \in \mathcal{V}$, we start with
        \begin{align*}
            D_\mathrm{H} (M_{v}, M_{v^{\oplus j}})^2
            &= \frac{1}{2} \int_{ \mathcal{Z}} \big( \mathbb{E}_{P_v}[q(z \mid X)]^{1/2} - \mathbb{E}_{P_{v^{\oplus j}}}[q(z \mid X)]^{1/2} \big)^2 \diff z \\
            &= \frac{1}{2} \int_{ \mathcal{Z}} \bigg( \frac{\mathbb{E}_{P_v}[q(z \mid X)] - \mathbb{E}_{P_{v^{\oplus j}}}[q(z \mid X)]}{\mathbb{E}_{P_v}[q(z \mid X)]^{1/2} + \mathbb{E}_{P_{v^{\oplus j}}}[q(z \mid X)]^{1/2}} \bigg)^2 \diff z \\
            &\leq \frac{1}{2} \int_{ \mathcal{Z}} \frac{\big( \mathbb{E}_{P_v}[q(z \mid X)] - \mathbb{E}_{P_{v^{\oplus j}}}[q(z \mid X)]\big)^2}{\mathbb{E}_{P_v}[q(z \mid X)]} \diff z \\
            &= \frac{1}{2} \int_{ \mathcal{Z}} \frac{\mathbb{E}_I[\mathbb{E}_{P_{I, v}}[\phi_{I, v, j}(X) q(z \mid X) \mid I]]^2}{\mathbb{E}_I[\mathbb{E}_{P_{I, v}}[q(z \mid X)]]} \diff z
        \end{align*}
        where the last line follows from the tower property and the fact, by \Cref{app:ass:4}, that
        \begin{align*}
            \mathbb{E}_{P_{v^{\oplus j}}}[q(z \mid X)]
            = \mathbb{E}_I[\mathbb{E}_{P_{I, v^{\oplus j}}}[q(z \mid X) \mid I] ]
            &= \mathbb{E}_I \bigg[\mathbb{E}_{P_{I, v}}\bigg[ \frac{\diff P_{I, v^{\oplus j}}}{\diff P_{I, v}} q(z \mid X) \biggm| I \bigg] \bigg] \\
            &= \mathbb{E}_I [\mathbb{E}_{P_{I, v}}[ (1 + \phi_{I, v, j}) q(z \mid X) \mid I ] ] \\
            &= \mathbb{E}_{P_v}[q(z \mid X)] + \mathbb{E}_I [\mathbb{E}_{P_{I, v}}[\phi_{I, v, j} q(z \mid X) \mid I]].
        \end{align*}
        Combining \Cref{app:lem:Acharya} with the obtained bound on $D_\mathrm{H} (M_{v}, M_{v^{\oplus j}})^2$ completes the proof. 
    \end{proof}
    
    Analogously to \Cref{app:ass:2}, if we assume the \emph{conditional} fluctuations are (almost surely) orthogonal, we can similarly simplify the right-hand-side of \eqref{app:eq:AcharyaThmMixEq}.
    \begin{assumption}\label{app:ass:5}
        Assume that for all $v \in \mathcal{V}$ and $i,j \in [k]$ with $i \neq j$, there exists $\eta_{I, i} \geq 0$ such that, almost surely,
        \begin{align*}
            \mathbb{E}_{P_{I, v}}[\phi_{I, v, i}\phi_{I, v, j} \mid I] = 0 \quad \mbox{and} \quad \mathbb{E}_{P_{I, v}}[\phi_{I, v, i}^2 \mid I] \leq \eta_{I, i}^2.
        \end{align*}
    \end{assumption}
    
    \begin{lemma} \label{app:prop:AcharyaLemMix}
        When Assumptions \ref{app:ass:4} and \ref{app:ass:5} hold, we have for any $\alpha \in (0,1]$ and any $Q \in \mathcal{Q}_\alpha$ that
        \begin{align*}
            \frac{1}{k} \sum_{j=1}^k D_{\mathrm{TV}}(M_{+j}^n, M_{-j}^n) \leq \sqrt{\frac{21n\alpha^2}{k} \max_{i \in [k]}\{ \mathbb{E}_I[\eta_{I,i}^2] \} \max_{v \in \mathcal{V}} \sup_{Q \in \mathcal{Q}_\alpha} \int_{ \mathcal{Z}} \frac{\mathbb{E}_I[\mathbb{E}_{P_{I, v}}[q(z \mid X) \mid I]^2]}{\mathbb{E}_I[\mathbb{E}_{P_{I,v}}[q(z \mid X) \mid I]]} \diff z}.
        \end{align*}
    \end{lemma}
    \begin{proof}
        We proceed as in the proof of Theorem~2 in \citet{Acharya:2022}. Denoting the normalised fluctuations $\psi_{I, v, k} = \phi_{I, v, k}/\mathbb{E}_{P_{I,v}}[ \phi_{I, v, k}^2 \mid I]^{1/2}$, we note conditional on $I$ and for a fixed $v \in \mathcal{V}$, by Assumptions \ref{app:ass:4} and \ref{app:ass:5}, that we may extend $\{1, \psi_{I, v, 1}, \hdots, \psi_{I, v, k}\}$ to an orthonormal basis of $L_2(\mathcal{X}, P_{I, v})$, the Hilbert space of measurable functions with finite second moment under $P_{I, v}$. We thus obtain
        \begin{align*}
            \sum_{j = 1}^k \mathbb{E}_{P_v}[\phi_{I, v, j}(X) q(z \mid X)]^2
            &= \sum_{j = 1}^k \mathbb{E}_I[\mathbb{E}_{P_{I, v}}[\phi_{I, v, j}(X) q(z \mid X) \mid I]]^2 \\
            &= \sum_{j = 1}^k \mathbb{E}_I[\mathbb{E}_{P_{I,v}}[ \phi_{I, v, k}^2 \mid I]^{1/2} \mathbb{E}_{P_{I, v}}[ \psi_{I, v, j}(X) q(z \mid X) \mid I]]^2 \\
            &\leq \sum_{j = 1}^k \big\{ \mathbb{E}_I[\mathbb{E}_{P_{I,v}}[ \phi_{I, v, k}^2 \mid I]]^{1/2} \mathbb{E}_I [\mathbb{E}_{P_{I, v}}[\psi_{I, v, j}(X) q(z \mid X) \mid I]^2]^{1/2} \big\}^2 \\
            &\leq \max_{j \in [k]}\{ \mathbb{E}_I[\eta_{I,j}^2] \} \mathbb{E}_I \bigg[\sum_{j = 1}^k \mathbb{E}_{P_{I, v}}[\psi_{I, v, j}(X) q(z \mid X) \mid I]^2\bigg] \\
            &= \max_{j \in [k]}\{ \mathbb{E}_I[\eta_{I,j}^2] \} \mathbb{E}_I \bigg[ \sum_{j = 1}^k \langle q(z \mid \cdot), \psi_{I, v, j} \rangle_{L_2(\mathcal{X}, P_{I, v})}^2 \bigg] \\
            &= \max_{j \in [k]}\{ \mathbb{E}_I[\eta_{I,j}^2] \} \mathbb{E}_I \bigg[ \sum_{j = 1}^k \langle q(z \mid \cdot) - \mathbb{E}_{P_{I, v}}[q(z \mid X) \mid I], \psi_{I, v, j} \rangle_{L_2(\mathcal{X}, P_{I, v})}^2 \bigg] \\
            &\leq \max_{j \in [k]}\{ \mathbb{E}_I[\eta_{I,j}^2] \} \mathbb{E}_I[\mathrm{Var}_{P_{I, v}}(q(z \mid X) \mid I)],
        \end{align*}
        where the third line is by the Cauchy--Schwarz inequality; the penultimate line by the orthogonality of $1$ and $\psi_{I, v, i}$; and the final by Bessel's inequality \citep[e.g.][p.~316]{Royden:2010}.

        Hence, we have that
        \begin{align*}
            \sum_{j = 1}^k \int_{ \mathcal{Z}} \frac{[\mathbb{E}_{P_{I, v}}[\phi_{I, v, j}(X) q(z \mid X) ]^2}{\mathbb{E}_{P_{I,v}}[q(z \mid X)]} \diff z
            &\leq \max_{i \in [k]}\{ \mathbb{E}_I[\eta_{I,i}^2] \} \int_{ \mathcal{Z}} \frac{\mathbb{E}_I[\mathrm{Var}_{P_{I, v}}(q(z \mid X) \mid I)]}{\mathbb{E}_I[\mathbb{E}_{P_{I,v}}[q(z \mid X) \mid I]]} \diff z \\
            &\leq 3\alpha^2 \max_{i \in [k]}\{ \mathbb{E}_I[\eta_{I,i}^2] \} \int_{ \mathcal{Z}} \frac{\mathbb{E}_I[\mathbb{E}_{P_{I, v}}[q(z \mid X) \mid I]^2]}{\mathbb{E}_I[\mathbb{E}_{P_{I,v}}[q(z \mid X) \mid I]]} \diff z,
        \end{align*}
        where the final inequality is by the fact that $\mathrm{Var}_{P_{I, v}}(q(z \mid X) \mid I) \leq (e^\alpha - 1)^2\mathbb{E}_{P_{I, v}}[q(z \mid X) \mid I]^2$ (see \citealt[Corollary~1]{Acharya:2022}), and that $(e^\alpha - 1)^2 \leq 3\alpha^2$ for $\alpha \in (0,1]$. Finally, combining the above bound with \Cref{app:prop:AcharyaThmMix} completes the proof.
    \end{proof}

    With these results, we now prove the lower bound for estimating the means of a mixture distribution.

    \begin{proof}[Proof of \Cref{secmix:thm:main} ($T$-dependent Bound)]
        \
        \\
        \textbf{Step 1: Constructing a Separated Family.} We consider the hypercube $\mathcal{V} = \{-1, 1\}^m$ for $m$ the number of mixture components. For $v \in \mathcal{V}$, let $P_v$ be the mixture distribution
        \begin{equation} \label{app:eq:MixtureFamily}
            P_v = \sum_{j = 1}^m m^{-1} \mathrm{Ber}(\theta_j + v_j \Delta/2)^{\otimes T},
        \end{equation}
        where $\theta_j = 1/4 + j/(2m)$ and $\Delta = \{cm^2/(nT\alpha^2)\}^{1/2}$ for $c > 0$ a sufficiently small absolute constant. In particular, we note that by the theorem statement that $n\alpha^2 \gtrsim m^2\log(n\alpha^2)$ and $T \gtrsim m^2\log(nT\alpha^2)$, and hence for $c$ sufficiently small, the separation condition on the component means is satisfied for all $P_v$. Hence, have that $P_v \in \mathcal{P}_{m, \pi_{0}, \theta_{\mathrm{sep}}}$ for $\mathcal{P}_{m, \pi_{0}, \theta_{\mathrm{sep}}}$ as defined in \eqref{secmix:eq:MixClass}.
        
        Observing $\theta(P_v) = \theta + \Delta v/2$, for the Hamming separation condition \eqref{app:eq:HammingSeparation} it then holds that
        \begin{align*}
            \frac{1}{4}\|\theta(P_v) - \theta(P_{v'})\|_2^2 \geq \Delta^2 \sum_{j=1}^d \mathbbm{1} \{ v_j \neq v_j'\},
        \end{align*}
        which shows that the distributions are $\Delta^2$-separated.
    
        Denote the $j$-th component of the mixture distribution $P_{j, v} = \mathrm{Ber}(\theta_j + v_j \Delta/2)^{\otimes T}$. We consider the random variable $I = \mathrm{Unif}(\{1, 2, \hdots, m\})$ such that for $X \mid I \sim P_{I, v}$. We then have $P_v = \mathbb{E}_I[P_{I, v}]$.

        \medskip 
        \noindent
        \textbf{Step 2: Verifying Assumptions~\ref{app:ass:4} and \ref{app:ass:5}.}
        We claim that, for the family of distributions \eqref{app:eq:MixtureFamily} and the random variable $I$, we have that Assumptions \ref{app:ass:4} and \ref{app:ass:5} hold with
        \begin{align*}
            \eta_{I, i}^2 = \mathbbm{1}\{I = i\}CT\Delta^2,
        \end{align*} 
        for $C > 0$ a sufficiently large absolute constant.
        
        Starting with \Cref{app:ass:4}, we see that, for $i \in [m]$,
        \begin{align*}
            \frac{\diff P_{I, v^{\oplus i}}}{\diff P_{I, v}}(x_{1:T})
            &= \frac{ \prod_{t=1}^T(\theta_I + v_I^{\oplus i} \Delta/2)^{\mathbbm{1}\{x_t = 1\}}(1 - \theta_I - v_I^{\oplus i} \Delta/2)^{\mathbbm{1}\{x_t = -1\}}}{\prod_{t=1}^T(\theta_I + v_I \Delta/2)^{\mathbbm{1}\{x_t = 1\}}(1 - \theta_I - v_I \Delta/2)^{\mathbbm{1}\{x_t = -1\}}} \\
            &= 1 + \mathbbm{1}\{I = i\} \bigg\{ \bigg( \frac{1 - \theta_i + v_i \Delta/2}{1 - \theta_i - v_i \Delta/2} \bigg)^T \bigg(\frac{(\theta_i - v_i \Delta/2)(1 - \theta_i - v_i \Delta/2)}{(\theta_i + v_i \Delta/2)(1 - \theta_i + v_i \Delta/2)}\bigg)^S - 1 \bigg\} \\
            &=: 1 + \phi_{I, v, i}(x_{1:T}),
        \end{align*}
        where $S = \sum_{t = 1}^T \mathbbm{1}\{x_t = 1\}$.
        This verifies \Cref{app:ass:4}.
        
        We now consider \Cref{app:ass:5}. By the form of $\phi_{I, v, i}$, we immediately have for $i \neq j$ that $\phi_{I, v, i}\phi_{I, v, j} \equiv 0$. Hence, it remains to consider $\mathbb{E}_{P_{I, v}}[\phi_{I, v, i}^2 \mid I]$. In particular, as $\phi_{I, v, i} = \diff P_{I, v^{\oplus i}}/\diff P_{I, v} - 1$, we have the almost sure bound
        \begin{align*}
            \mathbb{E}_{P_{I, v}}[\phi_{I, v, i}^2 \mid I]
            &= \mathbbm{1}\{I = i\} \chi^2(\mathrm{Ber}(\theta_i - v_i \Delta/2)^{\otimes T}, \mathrm{Ber}(\theta_i + v_i \Delta/2)^{\otimes T}) \\
            &= \mathbbm{1}\{I = i\} \{1 + \chi^2(\mathrm{Ber}(\theta_i - v_i \Delta/2), \mathrm{Ber}(\theta_i + v_i \Delta/2))\}^T - \mathbbm{1}\{I = i\} \\
            &= \mathbbm{1}\{I = i\} \bigg\{1 + \frac{\Delta^2}{(\theta_i + v_i\Delta/2)(1 - \theta_i - v_i\Delta/2)} \bigg\}^T - \mathbbm{1}\{I = i\} \\
            &\leq \mathbbm{1}\{I = i\} CT\Delta^2
        \end{align*}
        where the first equality is by the definition of $\chi^2$-divergence; the second by the tensorisation identity for $\chi^2$-divergence \cite[e.g.][p.~86]{Tsybakov:2009}; and the inequality by noting that the values of $\theta_i$ and $\Delta$ are such that that the denominator in the third line is bounded away from zero by a constant, and $\Delta$ is sufficiently small so that there exists an absolute constant $C > 0$ satisfying
        \begin{equation*}
            \bigg\{1 + \frac{\Delta^2}{(\theta_i + v_i\Delta/2)(1 - \theta_i - v_i\Delta/2)} \bigg\}^T
            \leq 1 + CT\Delta^2.
        \end{equation*}
        Hence, we have that \Cref{app:ass:5} is satisfied with $\eta_{I, i}^2 = \mathbbm{1}\{I = i\}CT\Delta^2$.

        \medskip        
        \noindent
        \textbf{Step 3: Obtaining Lower Bounds.}
        By \eqref{app:eq:userlevelriskassouad}, the fact that the family $\{P_v\}_{v \in \mathcal{V}}$ is $\Delta^2$-separated, and \Cref{app:prop:AcharyaLemMix}, we have that
        \begin{equation}
            \begin{aligned}
                \mathcal{R}_{n, T, \alpha}^{\mathrm{mix}}(\theta(&\mathcal{P}_{m, \pi_{0}, \theta_{\mathrm{sep}}}), \|\cdot\|_2^2) \\
                &\geq \frac{m\Delta^2}{2}\bigg( 1 - \sqrt{\frac{21n\alpha^2}{m} \max_{i \in [k]}\{ \mathbb{E}_I[\eta_{I,i}^2] \} \max_{v \in \mathcal{V}} \sup_{Q \in \mathcal{Q}_\alpha} \int_{ \mathcal{Z}} \frac{\mathbb{E}_I[\mathbb{E}_{P_{I, v}}[q(z \mid X) \mid I]^2]}{\mathbb{E}_I[\mathbb{E}_{P_{I,v}}[q(z \mid X) \mid I]]} \diff z} \bigg)
            \end{aligned} \label{app:eq:mixLBoverall}
        \end{equation}
        
        Hence, it remains to control the quantities in the square root term above. We first control the quantity
        \begin{align}
            \int_{ \mathcal{Z}} \frac{\mathbb{E}_I[\mathbb{E}_{P_{I, v}}[q(z \mid X) \mid I]^2]}{\mathbb{E}_I[\mathbb{E}_{P_{I,v}}[q(z \mid X) \mid I]]} \diff z
            &= \int_{ \mathcal{Z}} \frac{\sum_{i = 1}^m \mathbb{P}(I = i) \mathbb{E}_{P_{i, v}}[q(z \mid X)]^2}{\sum_{j = 1}^m \mathbb{P}(I = j) \mathbb{E}_{P_{j,v}}[q(z \mid X)]} \diff z \nonumber \\
            &= \sum_{i = 1}^m \int_{ \mathcal{Z}} \frac{  \mathbb{E}_{P_{i, v}}[q(z \mid X)]^2}{\sum_{j = 1}^m \mathbb{E}_{P_{j,v}}[q(z \mid X)]} \diff z \nonumber \\
            &= \sum_{i = 1}^m \int_{ \mathcal{Z}} \frac{  \mathbb{E}_{P_{i, v}}[q(z \mid X)]}{1 + \sum_{j \neq i} \mathbb{E}_{P_{j,v}}[q(z \mid X)]/\mathbb{E}_{P_{i,v}}[q(z \mid X)]} \diff z \nonumber \\
            &\leq \sum_{i = 1}^m \int_{ \mathcal{Z}} \frac{  \mathbb{E}_{P_{i, v}}[q(z \mid X)]}{1 + (m-1)e^{-\alpha}} \diff z = \frac{m}{1 + (m-1)e^{-\alpha}} \leq e \label{app:eq:mixLB1}
        \end{align}
        where the second equality uses the fact that $I \sim \mathrm{Unif}(\{1, 2, \hdots, m\})$; the first inequality by \Cref{app:lem:marginalratio}; and the final equality by the fact that $\alpha \in (0,1]$.

        We also have
        \begin{align}
            \max_{i \in [m]}\{ \mathbb{E}_I[\eta_{I,i}^2] \}
            = \max_{i \in [m]}\{ CT\Delta^2 \mathbb{P}(I = i) \}
            = \frac{CT\Delta^2}{m}. \label{app:eq:mixLB2}
        \end{align}

        Hence, combining \eqref{app:eq:mixLBoverall}, \eqref{app:eq:mixLB1} and \eqref{app:eq:mixLB2}, we obtain
        \begin{align*}
            \mathcal{R}_{n, T, \alpha}^{\mathrm{mix}}(\theta(&\mathcal{P}_{m, \pi_{0}, \theta_{\mathrm{sep}}}), \|\cdot\|_2^2)
            \geq \frac{m\Delta^2}{2} \bigg(1 - \sqrt{\frac{21CenT\alpha^2\Delta^2}{m^2} }\bigg)
            \gtrsim \frac{m^3}{nT\alpha^2},
        \end{align*}
        where the final inequality uses the value $\Delta = \{cm^2/(nT\alpha^2)\}^{1/2}$ with $c$ taken small enough. This completes the proof.
    \end{proof}

\section{Auxiliary Technical Details} \label{app:sec:misc}
 
    We have the following lemma which follows from \citet[][Lemma~2]{Levy:2021}.
    \begin{lemma}[{\citealp[][Lemma~2]{Levy:2021}}] \label{sec3:lem:RandomRotation}
    Assume that the dimension $d \in \mathbb{N}$ is a power of $2$ and fix any $j \in [d]$. For the rotation matrix $R_d$ in \eqref{eq-rotation-matrix}, vectors $x, x_0 \in \mathbb{R}^d$ and $\gamma \in (0,1)$, we have, with probability at least $1 - \gamma$, that
        \begin{equation*}
            |(R_d x)_j - (R_d x_0)_j| \leq \frac{10 \|x - x_0\|_2 \{\log(1/\gamma)\}^{1/2}}{d^{1/2}}.
        \end{equation*}        
    \end{lemma}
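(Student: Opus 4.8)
The plan is to recognise the left-hand side as a single coordinate of a random Rademacher combination of a fixed vector, and then apply a standard sub-Gaussian tail bound. First I would set $v = x - x_0$; by linearity of $R_d$ we have $R_d x - R_d x_0 = R_d v$, so $(R_d x)_j - (R_d x_0)_j = (R_d v)_j$ and it suffices to control $(R_d v)_j$ for the fixed vector $v$. Unpacking $R_d = H_d W_d/\sqrt d$ with $W_d = \mathrm{diag}(\varepsilon_1,\dots,\varepsilon_d)$ and $\varepsilon_1,\dots,\varepsilon_d$ i.i.d.\ Rademacher, one has
\[
(R_d v)_j = \frac{1}{\sqrt d}\sum_{k=1}^d (H_d)_{jk}\,\varepsilon_k\, v_k .
\]

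The key observation is that every entry of a Hadamard matrix lies in $\{-1,+1\}$, which is immediate from the recursive construction. Hence each summand $(H_d)_{jk}\varepsilon_k v_k$ equals $\pm v_k$, and, because each $\varepsilon_k$ is symmetric and the $\varepsilon_k$ are independent, the random signs $\{(H_d)_{jk}\varepsilon_k\}_{k\in[d]}$ are again i.i.d.\ Rademacher. Therefore $(R_d v)_j$ has the same distribution as $d^{-1/2}\sum_{k=1}^d \varepsilon_k v_k$, a mean-zero sum of independent random variables each bounded in absolute value by $|v_k|/\sqrt d$, with $\sum_{k=1}^d (|v_k|/\sqrt d)^2 = \|v\|_2^2/d$.

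I would then invoke Hoeffding's inequality, which states precisely that such a sum is $(\|v\|_2/\sqrt d)$-sub-Gaussian in the sense used in this paper, i.e.
\[
\mP\bigl(|(R_d v)_j| > t\bigr) \le 2\exp\!\left(-\frac{t^2 d}{2\|v\|_2^2}\right), \qquad t > 0 .
\]
Setting the right-hand side equal to $\gamma$ gives $t = \|v\|_2\sqrt{2\log(2/\gamma)/d}$, and since $\log(2/\gamma) = \log 2 + \log(1/\gamma) \le 50\log(1/\gamma)$ once $\gamma$ is below an absolute constant, this $t$ is at most the claimed $10\|v\|_2\sqrt{\log(1/\gamma)/d}$. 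There is no substantive obstacle here: the whole content is the identity displayed above together with the fact that Hadamard rows have $\pm 1$ entries, which is exactly what turns each coordinate of $R_d v$ into a Rademacher sum with variance proxy $\|v\|_2^2/d$ and produces the $d^{-1/2}$ gain in the statement. The only point requiring mild care is the constant bookkeeping in passing from the natural factor $\sqrt{2\log(2/\gamma)}$ to $10\sqrt{\log(1/\gamma)}$, which holds for $\gamma$ below an absolute constant — the regime in which the lemma is applied downstream — and can otherwise be absorbed by a crude second-moment estimate using $\mE[(R_d v)_j^2] = \|v\|_2^2/d$.
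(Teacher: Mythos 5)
Your argument is essentially the proof of \citet[Lemma~2]{Levy:2021} that the paper cites rather than reproves: writing $(R_d(x-x_0))_j = d^{-1/2}\sum_{k}(H_d)_{jk}\varepsilon_k v_k$ with $v = x-x_0$, noting that the fixed $\pm1$ Hadamard entries leave the i.i.d.\ Rademacher law of the diagonal signs unchanged, and applying Hoeffding to obtain the tail $2\exp\{-t^2d/(2\|v\|_2^2)\}$ is exactly the intended route, and the conversion of $\sqrt{2\log(2/\gamma)}$ into $10\sqrt{\log(1/\gamma)}$ is valid for all $\gamma \le 2^{-1/49}$, i.e.\ for $\gamma$ below an absolute constant.

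The one flaw is your claim that the remaining regime ($\gamma$ near $1$) ``can otherwise be absorbed by a crude second-moment estimate.'' Chebyshev with $\mathbb{E}[(R_dv)_j^2]=\|v\|_2^2/d$ gives, at the threshold $10\|v\|_2\{\log(1/\gamma)\}^{1/2}/d^{1/2}$, the tail bound $1/\{100\log(1/\gamma)\}$, which exceeds $\gamma$ as $\gamma\to1$, so it does not close the gap. In fact nothing can: if $v$ is a standard basis vector then $|(R_dv)_j|=\|v\|_2/d^{1/2}$ almost surely, so the claimed inequality fails deterministically whenever $10\{\log(1/\gamma)\}^{1/2}<1$, i.e.\ for $\gamma>e^{-1/100}$. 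The statement with the explicit constant $10$ should therefore be read as holding for $\gamma$ below an absolute constant --- which is the only regime in which the paper invokes it, with $\gamma$ of order $1/(nT^\ast\alpha^2)$ or $2d/(nT^\ast\alpha^2)$ --- and your Hoeffding argument covers exactly that regime; the honest fix is to record this restriction rather than to appeal to a second-moment bound.
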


    We now prove \Cref{sec5:cor:mainthmsimplified}.
    \begin{proof}[Proof of \Cref{sec5:cor:mainthmsimplified}]
        \noindent
        \begin{enumerate}[{Regime} 1.]
            \item Consider $n\alpha^2 \lesssim d^{\gamma}$ for some $0 < \gamma \leq 1$. This is the usual regime of interest for sparse problems, where the effective sample size is smaller than the ambient dimension. For the form of the lower bound in \eqref{sec5:eq:SparseMeanStatement}, we first focus on the case $\gamma < 1$. We have
            \begin{equation*}
                \left( 1 + \frac{d}{n\alpha^2} \right)^{1/T} - 1
                \asymp \frac{\log\{1 + d/(n\alpha^2)\}}{T}
                \gtrsim \frac{\log(1+d^{1-\gamma})}{T}
                \asymp \frac{\log(d)}{T},
            \end{equation*}
            where the first equality up to constants holds as the expression is bounded due to the assumption $T \gtrsim \log\{d/(n\alpha^2)\}$. By the condition that $s \gtrsim \log(d)$, the lower bound takes the form $s/T \vee e^{-Cn\alpha^2/s}$.

            When $\gamma = 1$, we have $n\alpha^2 \lesssim d$, and so write write $x = d/(n\alpha)^2 > \tilde{c}$ for some absolute constant $\tilde{c} > 0$. We then have
            \begin{align*}
                \bigg(1 + \frac{d}{n\alpha^2} \bigg)^{1/T} - 1
                = \exp\bigg( \frac{\log(1 + x)}{T}\bigg) - 1.
            \end{align*}
            In particular, we have the assumption $T \gtrsim \log\{d/(n\alpha^2)\}$ and so may take $T \geq 2\log(x) \vee 1$. Hence, $\log(1 + x)/T \leq 1$, and so we have
            \begin{align*}
                \exp\bigg( \frac{\log(1 + x)}{T}\bigg) - 1
                \geq \frac{\log(1+x)}{T} \geq \frac{\log(1+c)}{T} \gtrsim \frac{1}{T}.
            \end{align*}
            Hence, the lower bound takes the form $s/T$ as in the case $\gamma \in (0, 1)$.
            
            For the upper bound, we note that the second term in the minimum in \eqref{sec5:eq:SparseMeanStatement} is smaller than the third in this regime, giving an upper bound of $\min\{s, s\log(dnT\alpha^2)/T + e^{-cn\alpha^2/s}\}$. Under the condition $s\log(dnT\alpha^2)/T + e^{-cn\alpha^2/s} \lesssim s$, we obtain an upper bound of the order $s\log(dnT\alpha^2)/T + e^{-cn\alpha^2/s}$, completing the proof for this regime.
        
            \item
            Consider $n\alpha^2 \geq \widetilde{C}d\log(dnT\alpha^2)$ where $\widetilde{C}$ is as in \Cref{sec5:thm:main}. We first note in this regime that
            \begin{equation*}
                e^{-cn\alpha^2/s} \leq e^{-cn\alpha^2/d} \leq e^{-c\widetilde{C}\log(dnT\alpha^2)} \leq \frac{1}{dnT\alpha^2},
            \end{equation*}
            for $\widetilde{C}$ sufficiently large. Hence, we have that the polynomial term dominates in the lower bound of \eqref{sec5:eq:SparseMeanStatement}, and the polynomial terms dominate in both the second and third terms in the minimum in the upper bound of \eqref{sec5:eq:SparseMeanStatement}.
            
            We then note that
            \begin{equation*}
                \left( 1 + \frac{d}{n\alpha^2} \right)^{1/T} - 1 \asymp \frac{d}{nT\alpha^2},
            \end{equation*}
            where the equality up to constants holds as $d/(n\alpha^2)$ is bounded in this regime. Hence, the lower bound of \eqref{sec5:eq:SparseMeanStatement} takes the form $sd/(nT\alpha^2)$.
            
            For the upper bound, in this regime we have that $s/T \geq \widetilde{C}sd\log(dnT\alpha^2)/(nT\alpha^2)$ and so the third term in the minimum is less than the second term. This gives an upper bound of $\min\{s$, $sd\{\log(dnT\alpha^2)\}^2/(nT\alpha^2)\}$. Under the condition $sd\{\log(dnT\alpha^2)\}^2/(nT\alpha^2) \lesssim s$, we obtain an upper bound of the order $sd\{\log(dnT\alpha^2)\}^2/(nT\alpha^2)$. This completes the proof.
        \end{enumerate}
    \end{proof}

    \begin{lemma} \label{app:lem:marginalratio}
        Denote two probability distributions $P_1, P_2$ on some space $\mathcal{X}$, and denote by $Q$ an (interactive) $\alpha$-LDP channel which is assumed to have density $q(\cdot \mid x, z')$ for $z' \in \mathcal{Z}^{m}$ for some $m \in \mathbb{N}$. Let the private marginals densities $m_i(\cdot \mid z') = \int_{\mathcal{X}} q(\cdot \mid x, z') \diff P_i(x)$ for $i \in \{1,2\}$. It holds that
        \begin{equation*}
            e^{-\alpha} \leq \frac{m_1(z \mid z')}{m_2(z \mid z')} \leq e^\alpha.
        \end{equation*}
    \end{lemma}
    \begin{proof}
        For the upper bound
        \begin{equation*}
            \frac{m_1(z \mid z')}{m_2(z \mid z')}
            = \frac{\int_\mathcal{X} q(z \mid x, z')\diff P_1(x)}{\int_\mathcal{X} q(z \mid x, z')\diff P_2(x)}
            \leq \frac{\int_\mathcal{X}\int_\mathcal{X} e^{\alpha} q(z \mid y, z')\diff P_2(y) \diff P_1(x)}{\int_\mathcal{X} q(z \mid x, z')\diff P_2(x)}
            \leq e^{\alpha},
        \end{equation*}
        where the inequality is by Tonelli's theorem and the fact $Q$ is an $\alpha$-LDP channel. The lower bound follows similarly, which completes the proof.
    \end{proof}

    \begin{lemma} \label{app:lem:KLboundonevent}
        Recall the notation and setting of the proof of \Cref{sec2:thm:GeneralBoundApproximateLDP}. It holds for all $v \in \mathcal{V}$ that, almost surely,
        \begin{equation} \label{app:eq:KLboundonevent}
             \sum_{z_i \in [k]} \{m_{v, B_i = 0}^{(i)}(z_i \mid Z_{1:i-1}) - \overline{m}_{B_i = 0}^{(i)}(z_i \mid Z_{1:i-1})\} \log\bigg( \frac{m_{v, B_i = 0}^{(i)}(z_i \mid Z_{1:i-1})}{\overline{m}_{B_i = 0}^{(i)}(z_i \mid Z_{1:i-1})}\bigg)
            \leq C\min\{\alpha, \alpha^2\},
        \end{equation}
        where $C > 0$ is some absolute constant.
    \end{lemma}
    \begin{proof}
        We first prove a bound on the ratio of $m_{v, B_i = 0}^{(i)}(z_i \mid Z_{1:i-1})$ and $\overline{m}_{B_i = 0}^{(i)}(z_i \mid Z_{1:i-1})$. Importantly, we note $m_{v, B_i = 0}^{(i)}(\cdot \mid Z_{1:i-1})$ is absolutely continuous with respect to $\overline{m}_{B_i = 0}^{(i)}(z_i \mid Z_{1:i-1})$, and that $m_{v, B_i = 0}^{(i)}(\cdot \mid Z_{1:i-1})$ is non-zero on $B_i = 0$.
        
        Assuming $n > 2$ via the assumption $n\min\{1, \alpha\} \geq C$ for some $C > 0$, we have that almost surely
        \begin{align}
            &\frac{m_{v, B_i = 0}^{(i)}(z_i \mid Z_{1:i-1})}{\overline{m}_{B_i = 0}^{(i)}(z_i \mid Z_{1:i-1})} \nonumber \\
            &= \bigg( \sum_{v' \in \mathcal{V}} \frac{m_{v'}^{(i)}(z_i \mid Z_{1:i-1}, B_i(V, Z_{1:i}) = 0)}{m_{v}^{(i)}(z_i \mid Z_{1:i-1}, B_i(V, Z_{1:i}) = 0)} \mathbb{P}(V = v' \mid Z_{1:i-1}, B_i(V, Z_{1:i}) = 0) \bigg)^{-1} \nonumber \\
            &= \bigg(\sum_{v' \in \mathcal{V}} \frac{\mathbbm{1}\{B_i(v', Z_{1:i-1}, z_i) = 0\} m_{v'}^{(i)}(z_i \mid Z_{1:i-1})\mathbb{P}(B_i(V, Z_{1:i}) = 0 \mid V = v, Z_{1:i-1})}{\mathbbm{1}\{B_i(v, Z_{1:i-1}, z_i) = 0\} m_{v}^{(i)}(z_i \mid Z_{1:i-1})\mathbb{P}(B_i(V, Z_{1:i}) = 0 \mid V = v', Z_{1:i-1})} \nonumber \\
            &\hspace{9cm} \times \mathbb{P}(V = v' \mid Z_{1:i-1}, B_i(V, Z_{1:i}) = 0)\bigg)^{-1} \nonumber \\
            &\leq \bigg(e^{-4\alpha}\sum_{v' \in \mathcal{V}} \frac{\mathbbm{1}\{B_i(v', Z_{1:i-1}, z_i) = 0\} \mathbb{P}(B_i(V, Z_{1:i}) = 0 \mid V = v, Z_{1:i-1})}{\mathbb{P}(B_i(V, Z_{1:i}) = 0 \mid V = v', Z_{1:i-1})} \nonumber \\ 
            &\hspace{9cm} \times \mathbb{P}(V = v' \mid Z_{1:i-1}, B_i(V, Z_{1:i}) = 0)\bigg)^{-1} \nonumber \\
            &\leq \bigg(e^{-4\alpha}(1 - 2/n) \sum_{v' \in \mathcal{V}} \mathbbm{1}\{B_i(v', Z_{1:i-1}, z_i) = 0\} \mathbb{P}(V = v' \mid Z_{1:i-1}, B_i(V, Z_{1:i}) = 0)\bigg)^{-1}
            = \frac{e^{4\alpha}}{1 - 2/n} \nonumber
        \end{align}
        where the first inequality is by the conditions defining $B_i$ in \eqref{app:eq:Bvardefs}, and the second by \eqref{app:eq:BiEntropProbFinalControl} and the fact $\delta \leq c\min\{1, \alpha\}/\{n\log(n)\}$ for $c > 0$ some absolute constant. Arguing similarly for a lower bound, we obtain
        \begin{equation} \label{app:eq:approxLDPlike}
            \bigg( \frac{e^{4\alpha}}{1 - 2/n} \bigg)^{-1}
            \leq \frac{m_{v, B_i = 0}^{(i)}(z_i \mid Z_{1:i-1})}{\overline{m}_{B_i = 0}^{(i)}(z_i \mid Z_{1:i-1})}
            \leq \frac{e^{4\alpha}}{1 - 2/n}
        \end{equation}

        With the previous bound in hand, we now prove \eqref{app:eq:KLboundonevent} two cases depending on the value of $\alpha$. We first consider the case $\alpha \in (0,1]$. In particular, we have by assumption that $n\alpha \geq C$ for some absolute constant $C > 0$, and hence
        \begin{align*}
            &\sum_{z_i \in [k]} \{m_{v, B_i = 0}^{(i)}(z_i \mid Z_{1:i-1}) - \overline{m}_{B_i = 0}^{(i)}(z_i \mid Z_{1:i-1})\} \log\bigg( \frac{m_{v, B_i = 0}^{(i)}(z_i \mid Z_{1:i-1})}{\overline{m}_{B_i = 0}^{(i)}(z_i \mid Z_{1:i-1})}\bigg) \\
            &\leq \sum_{z_i \in [k]}  \overline{m}^{(i)}(z_i \mid Z_{1:i-1}) \bigg| \frac{m_v^{(i)}(z_i \mid Z_{1:i-1})}{\overline{m}^{(i)}(z_i \mid Z_{1:i-1})} - 1 \bigg| \cdot \bigg| \log\bigg( \frac{m_{v, B_i = 0}^{(i)}(z_i \mid Z_{1:i-1})}{\overline{m}_{B_i = 0}^{(i)}(z_i \mid Z_{1:i-1})}\bigg) \bigg| \\
            &\leq \bigg(\frac{e^{4\alpha}}{1-2/n} - 1 \bigg)\{4\alpha - \log(1-2/n)\}
            \leq \bigg(\frac{1}{1-2/n} - 1 + C'\alpha \bigg)\{4\alpha - \log(1-2/n)\} \\
            &\leq \bigg(\frac{1}{1-2/n} - 1 + C'\alpha \bigg)2\alpha
            \leq C''\alpha^2
        \end{align*}
        for $C', C'' > 0$ some absolute constants, where the second inequality is by \eqref{app:eq:approxLDPlike} and the fact $|x - 1| \leq a - 1$ for $x \in [a^{-1}, a]$ with $a > 1$; the third by the fact that $e^{4\alpha} \leq 1 + C'\alpha$ for $\alpha \in (0,1]$; and the final two inequalities by the fact that $1/n \leq \alpha/C$ and taking $C$ sufficiently large.

        In the case $\alpha > 1$, we instead have
        \begin{align*}
            &\sum_{z_i \in [k]} \{m_{v, B_i = 0}^{(i)}(z_i \mid Z_{1:i-1}) - \overline{m}_{B_i = 0}^{(i)}(z_i \mid Z_{1:i-1})\} \log\bigg( \frac{m_{v, B_i = 0}^{(i)}(z_i \mid Z_{1:i-1})}{\overline{m}_{B_i = 0}^{(i)}(z_i \mid Z_{1:i-1})}\bigg) \\
            &\leq \sum_{z_i \in [k]} |m_{v, B_i = 0}^{(i)}(z_i \mid Z_{1:i-1}) - \overline{m}_{B_i = 0}^{(i)}(z_i \mid Z_{1:i-1})| \{4\alpha - \log(1-2/n)\} \\
            &\leq 2\{4\alpha - \log(1-2/n)\}
            \leq C'''\alpha
        \end{align*}
        for $C''' > 0$ some absolute constant, where the first inequality is by \eqref{app:eq:approxLDPlike}, and the last by the fact that $\alpha > 1 > -\log(1 - 2/n)$. Combining the results of these two cases completes the proof.
    \end{proof}

    \begin{proof}[Proof of \Cref{app:lem:GenAssouadPrelim}] 
        For an estimator $\hat{\theta}$, we define $\hat{v}$ to be any element of the set $\argmin_{v \in \mathcal{V}} \rho(\theta(P_v), \hat{\theta})$.  For any $v \in \mathcal{V}$, letting $\theta_v = \theta(P_v)$, it holds that
        \begin{equation}
                2\rho(\hat{\theta}, \theta_v)
                \geq \rho(\hat{\theta}, \theta_v) + \rho(\hat{\theta}, \theta_{\hat{v}})
                \geq \rho(\theta_v, \theta_{\hat{v}}), \label{app:eq:AssouadTriangle}        
        \end{equation}
        where the first inequality is by the definition of $\hat{v}$ and the second by the triangle inequality for the metric~$\rho$.
        
        For $\mathcal{V} = \{-1,1\}^k$ the hypercube which parameterises a family of distributions in $\mathcal{P}$, and for the subfamily of distributions $\mathcal{P}^\ast \subseteq \mathcal{P}$ under consideration, recall that $\mathcal{V}^\ast = \{v \in \mathcal{V}: P_v \in \mathcal{P}^\ast\}$ denotes the subset of~$\mathcal{V}$ which consists of $v$ corresponding to distributions in $\mathcal{P}^\ast$.

        Firstly, by \eqref{app:eq:AssouadTriangle} and fact that $\Phi(\cdot)$ is increasing, we see that
        \begin{equation}
            \sup_{P \in \mathcal{P}^\ast} \mathbb{E}_{P} \left[ \Phi \circ \rho (\hat{\theta}, \theta(P)) \right]
            \geq \sup_{P \in \mathcal{P}^\ast} \mathbb{E}_{P} \left[ \Phi\left( \frac{1}{2} \rho ( \theta_{\hat{v}}, \theta(P) )\right) \right]. \label{app:eq:AssouadTriangleCons}
        \end{equation}
        Let $V$ be any random variable on the set $\mathcal{V}$ and let $V^\ast$ be the random variable arising from conditioning~$V$ on the event $\{V \in \mathcal{V}^\ast \}$. Then, as any average is less than or equal to a supremum, we have
        \begin{align*}
            \sup_{P \in \mathcal{P}^\ast} \mathbb{E}_{P} \left[ \Phi\left( \frac{1}{2} \rho ( \theta_{\hat{v}}, \theta(P) ) \right) \right]
            \geq \mathbb{E}_{V^\ast} \left[ \mathbb{E}_{P_{V^\ast}} \left[ \Phi\left( \frac{1}{2} \rho ( \theta_{\hat{v}}, \theta(P_{V^\ast}) ) \right) \right] \right].
        \end{align*}
        Writing $\theta(P_{V}) = \theta_{V}$ and using the law of total expectation, we also have
        \begin{align*}
            \mathbb{E}_{V} \left[ \mathbb{E}_{P_{V}} \left[ \Phi\left( \frac{1}{2} \rho ( \theta_{\hat{v}}, \theta(P_V) ) \right) \right] \right]
            = \mathbb{E}_{V} &\left[ \mathbb{E}_{P_{V}} \left[ \Phi\left( \frac{1}{2} \rho ( \theta_{\hat{v}}, \theta(P_V) ) \right) \right] \mid V \in \mathcal{V}^\ast \right] \mathbb{P}(V \in \mathcal{V}^\ast) \\
            &+ \mathbb{E}_{V} \left[ \mathbb{E}_{P_{V}} \left[ \Phi\left( \frac{1}{2} \rho (\theta_{\hat{v}}, \theta(P_{V}) \right) \right] \mid V \notin \mathcal{V}^\ast \right] \mathbb{P}(V \notin \mathcal{V}^\ast).
        \end{align*}
        Trivially bounding the first probability term by $1$ and bounding the risk in the second line using the worst case under the separation condition \eqref{app:eq:HammingSeparationEquality}, we have
        \begin{align*}
            \mathbb{E}_{V} \left[ \mathbb{E}_{P_{V}} \left[ \Phi\left( \frac{1}{2} \rho ( \theta_{\hat{v}}, \theta(P_V) ) \right) \right] \right]
            &\leq \mathbb{E}_{V} \left[ \mathbb{E}_{P_{V}} \left[ \Phi\left( \frac{1}{2} \rho ( \theta_{\hat{v}}, \theta(P_V) ) \right) \right] \mid V \in \mathcal{V}^\ast \right] + 2k \varrho \mathbb{P}(V \notin \mathcal{V}^\ast), \\
            &= \mathbb{E}_{V^\ast} \left[ \mathbb{E}_{P_{V^\ast}} \left[ \Phi\left( \frac{1}{2} \rho ( \theta_{\hat{v}}, \theta(P_{V^\ast}) ) \right) \right] \right] + 2k \varrho \mathbb{P}(V \notin \mathcal{V}^\ast).
        \end{align*}
        Hence, we have that
        \begin{align} \label{app:eq:GenAssouad}
            \sup_{P \in \mathcal{P}^\ast} \mathbb{E}_{P} \left[ \Phi \circ \rho (\hat{\theta}, \theta(P)) \right]
            \geq \mathbb{E}_{V} \left[ \mathbb{E}_{P_{V}} \left[ \Phi \circ \rho (\hat{\theta}, \theta(P_V)) \right] \right] - 2k \varrho \mathbb{P}(V \notin \mathcal{V}^\ast),
        \end{align}
        which completes the proof.
    \end{proof}

    \begin{proof}[Proof of \Cref{app:cor:GenAssouad}]
        Given the random variable $V$, we denote for $j \in[k]$ the mixture distributions
        \begin{equation*}
            P_{+j} = \sum_{v \in \mathcal{V} : v_j = 1} \mathbb{P}(V = v | V_j = 1) P_{v} \quad \mbox{and} \quad P_{-j} = \sum_{v \in \mathcal{V} : v_j = -1} \mathbb{P}(V = v | V_j = -1) P_{v}.
        \end{equation*}
        We then have that
        \begin{align*}
            \mathbb{E}_{V} \left[ \mathbb{E}_{P_{V}} \left[ \Phi \circ \rho (\hat{\theta}, \theta(P)) \right] \right]
            &\geq \mathbb{E}_{V} \left\{ \mathbb{E}_{P_{V}} \left[ 2\varrho \sum_{j=1}^k \mathbbm{1}\{ \hat{v}_j \neq V_j \} \right] \right\}\\
            & = 2\varrho \sum_{j=1}^k \mathbb{E}_{V} \left[\mathbb{E}_{P_{V}} \left[\mathbbm{1}\{ \hat{v}_j \neq 1 \} \mid V_j = 1 \right]\right]\mathbb{P}(V_j = 1) \\
            & \hspace{1cm} + 2\varrho \sum_{j=1}^k \mathbb{E}_{V} \left[\mathbb{E}_{P_{V}} \left[  \mathbbm{1}\{ \hat{v}_j \neq -1 \}  \mid V_j = -1 \right] \right]\mathbb{P}(V_j = -1) \\
            &\geq  2\varrho \sum_{j=1}^k \left\{ P_{+j}\left( \hat{v}_j \neq 1 \right) \tau + P_{-j}\left( \hat{v}_j \neq -1 \right) (1-\tau) \right\} \\
            &\geq 2\varrho \tau \sum_{j=1}^k \left\{ P_{+j}\left( \hat{v}_j \neq 1 \right) + P_{-j}\left( \hat{v}_j \neq -1 \right) \right\} \\
            &\geq 2\varrho \tau \sum_{j=1}^k \left\{1 - D_{\mathrm{TV}}(P_{+j}, P_{-j}) \right\},
        \end{align*}
        where the first inequality is by \eqref{app:eq:HammingSeparationEquality} and \eqref{app:eq:AssouadTriangleCons}, and in the penultimate we used the fact that $\tau \leq 1/2$ implies $1 - \tau \geq \tau$. Note that when $\tau = 1/2$, we recover the usual proof of Assouad's method.

        Plugging in the above, and the assuming that $\mathbb{P}(V \notin \mathcal{V}^\ast) \leq c\tau$, into \eqref{app:eq:GenAssouad}, we obtain that
        \begin{align*}
            \sup_{P \in \mathcal{P}^\ast} \mathbb{E}_{P} \left[ \Phi \circ \rho (\hat{\theta}, \theta(P)) \right]
            \geq 2\varrho \tau \left[ \sum_{j=1}^k \left\{1 - D_{\mathrm{TV}}(P_{+j}, P_{-j}) \right\} - ck \right].            
        \end{align*}
    \end{proof}
    
    \begin{lemma} \label{app:lem:PGFArgument}
        Let $X \sim \mathrm{Binomial}(n, p)$ for $n \in \mathbb{N}$, $p \in (0,1)$. It holds that $\mathbb{E}[Xt^X] = ntp\{(1-p) + pt\}^{n-1}$ for $t \in \mathbb{R}$.
    \end{lemma}
    \begin{proof}
        We first note that the probability generating function $G_X(t) = \mathbb{E}[t^X] = \{(1-p) + pt\}^n$. As the expectation defining the probability generating function is a finite sum, we can swap the order of differentiation and the expectation, yielding
        \begin{align*}
            \mathbb{E}[X t^X]
            = t\mathbb{E}[X t^{X-1}]
            = t G_X'(t)
            = ntp\{(1-p) + pt\}^{n-1}.
        \end{align*}
    \end{proof}

    \begin{lemma} \label{app:lem:sechbound}
        For $x > 0$, it holds that
        \begin{equation*}
            \bigg( \frac{4e^{x/2}}{(1+e^{x/2})^2}\bigg)^n
            \leq e^{-n\min\{x, x^2\}/20}.
        \end{equation*}
    \end{lemma}
    \begin{proof}
        We start by noting that
        \begin{equation*}
            \frac{4e^{x/2}}{(1+e^{x/2})^2}
            = \mathrm{sech}^{2}(x/4)
            = \mathrm{cosh}^{-2}(x/4).
        \end{equation*}
        Hence, we proceed to show that $\mathrm{cosh}(x/4) \geq e^{\min\{x, x^2\}/40}$, considering $x \in (0,1]$, $1 < x \leq 4$ and $x > 4$ separately.

        In the case that $x \in (0,1]$, we note that
        \begin{equation*}
            \mathrm{cosh}(x/4)
            \geq 1 + x^2/32
            \geq e^{x^2/40},
        \end{equation*}
        where the second inequality can be verified by, for example, considering the derivative of the differences.

        In the case that $1 < x \leq 4$,
        \begin{equation*}
            \mathrm{cosh}(x/4)
            \geq 1 + x^2/32
            \geq 1 + x/40 + x^2/160
            \geq e^{x/40},
        \end{equation*}
        where the second inequality holds as $x > 1$, and the final inequality by considering, for example, the expansion of the exponential.

        In the case that $x > 4$,
        \begin{equation*}
            \mathrm{cosh}(x/4)
            = (e^{x/4} + e^{-x/4})/2
            \geq e^{x/4}/2
            \geq e^{x/40}.
        \end{equation*}

        Combining these three cases completes the proof.
    \end{proof}
    
\begin{lemma} \label{app:lem:nonprivateLB}
    Recalling the families~$\mathcal{P}_{d}$ and $\mathcal{P}_{d}'$ defined in \eqref{sec2:eq:MeanClasses}, the corresponding non-private minimax lower bounds satisfy
    \begin{align*}
        &\mathcal{R}_{n, T, \infty}(\theta(\mathcal{P}_{d}), \|\cdot\|_2^2)
        = \mathcal{R}_{nT, 1, \infty}(\theta(\mathcal{P}_{d}), \|\cdot\|_2^2)
        \gtrsim \frac{d}{nT}, \mbox{ and} \\
        &\mathcal{R}_{n, T, \infty}(\theta(\mathcal{P}_{d}'), \|\cdot\|_2^2)
        = \mathcal{R}_{nT, 1, \infty}(\theta(\mathcal{P}_{d}'), \|\cdot\|_2^2)
        \gtrsim \frac{1}{nT}.
    \end{align*}
\end{lemma}

\begin{proof}[Proof of \Cref{app:lem:nonprivateLB}]
    We start as in the proof of the lower bound of \Cref{sec3:thm:main}. Consider the constructions \eqref{app:eq:MeanFamily2} and \eqref{app:eq:MeanFamily} which are $\Delta^2$- and $\Delta^2/d$-separated respectively. We first focus on the construction \eqref{app:eq:MeanFamily2}. Setting $\Delta = 1/(6nT)$, by \Cref{app:lem:Assouad} it holds that
    \begin{align*}
        \mathcal{R}_{nT, 1, \infty}(\theta(\mathcal{P}_{d}), \|\cdot\|_2^2)
        &\geq \Delta^2 \sum_{j=1}^k \{1 - D_{\mathrm{TV}}(P_{+j}^{nT}, P_{-j}^{nT})\} \\
        &\geq \Delta^2 \sum_{j=1}^k \bigg\{1 - \sqrt{\frac{1}{2}D_{\mathrm{KL}}(P_{+j}^{nT}, P_{-j}^{nT})}\bigg\} \\
        &= \Delta^2 \sum_{j=1}^k \bigg\{1 - \sqrt{\frac{1}{2}D_{\mathrm{KL}}\bigg(\frac{1}{2^{d-1}}\sum_{v : v_j = 1 }P_v^{\otimes nT}, \frac{1}{2^{d-1}} \sum_{v : v_j = 1 }P_v^{\otimes nT}\bigg)}\bigg\} \\
        &\geq \Delta^2 \sum_{j=1}^k \bigg\{1 - \sqrt{\frac{1}{2}\sum_{v : v_j = 1 }\frac{nT}{2^{d-1}}D_{\mathrm{KL}}(P_v, P_{v^{\oplus j}})}\bigg\} \\
        &= \Delta^2 \sum_{j=1}^k \bigg\{1 - \sqrt{\frac{nT}{2^d}\sum_{v : v_j = 1 }D_{\mathrm{KL}}(\mathrm{Bern}(1/2 + \Delta v_j/2), \mathrm{Bern}(1/2 - \Delta v_j/2)}\bigg\} \\
        &= \Delta^2 \sum_{j=1}^k \bigg\{1 - \sqrt{nT\Delta v_j\log\{(1 + \Delta v_j)/(1 - \Delta v_j)\}} \bigg\} \\
        &\geq \Delta^2 \sum_{j=1}^k \bigg\{1 - \sqrt{3nT\Delta^2} \bigg\}
        \gtrsim \frac{d}{nT},
    \end{align*}
    where the second line is by Pinsker's inequality \citep[e.g.][Lemma~2.5]{Tsybakov:2009}; the fourth by the joint convexity of Kullback--Leibler divergence which follows from the joint convexity of $f$-divergences \citep[e.g.][Proposition~6.1]{Goldfield:2020}; and the seventh by the bound $x\log\{(1+x)/(1-x)\}$ for $|x| < 1/2$ the value of $\Delta$. This completes the proof for the construction \eqref{app:eq:MeanFamily2} corresponding to the family $\mathcal{P}_{d}$, and the result for $\mathcal{P}_{d}'$ follows similarly noting the $\Delta^2/d$-separation.
\end{proof}
    
\begin{lemma} \label{app:lem:sparsebinningevent}
    For the event $A$ defined in \eqref{app:eq:sparsebinningevent}, it holds that $\mathbb{P}(A^c) \leq 1/T^\ast$.
\end{lemma}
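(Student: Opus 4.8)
The plan is to bound $\mathbb{P}(A^c)$ by a union bound over the co-ordinates in $S_0\cup S_1$, after decomposing each hashed vote-count $I_j$ in \eqref{app:eq:sparseselectionvoteagg} into a deterministic ``signal'' $\mu_j:=\mathbb{E}[r_{i,j}Z_i]$ (the same for every $i\in[n/2]$ by the i.i.d.\ structure of the users and the auxiliary randomness) and a mean-zero fluctuation $I_j-\mu_j=(n/2)^{-1}\sum_{i=1}^{n/2}(r_{i,j}Z_i-\mu_j)$, which is an average of $n/2$ i.i.d.\ terms. I would show (i) $\mu_j\ge 3/4$ for $j\in S_1$ and $\mu_j\le 1/4$ for $j\in S_0$, and (ii) $\mathbb{P}(|I_j-\mu_j|>1/4)\le 1/(dT^\ast)$; since $\{I_j<1/2\}\subseteq\{|I_j-\mu_j|>1/4\}$ for $j\in S_1$ and $\{I_j\ge1/2\}\subseteq\{|I_j-\mu_j|>1/4\}$ for $j\in S_0$, and $|S_0\cup S_1|\le d$, this gives $\mathbb{P}(A^c)\le 1/T^\ast$.

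For the signal bound I would first record the guarantees of the local selection step \eqref{sec5:eq:HTestimator}: since $X_{t,j}^{(i)}\in[-1,1]$ and $\varepsilon$ is chosen as in \eqref{sec5:eq:thresholdboundary}, Hoeffding's inequality \citep[e.g.][Proposition~2.5]{Wainwright:2019} gives $\mathbb{P}(\hat\omega_j^{(i)}=0)\le 2/(dnT^\ast\alpha^2)$ when $j\in S_1$ (where $|\theta_j|>2\varepsilon$) and $\mathbb{P}(\hat\omega_j^{(i)}=1)\le 2/(dnT^\ast\alpha^2)$ when $j\in S_0$. Hence the number of false positives $\hat N_0^{(i)}=\sum_{k\in S_0}\hat\omega_k^{(i)}$ has $\mathbb{E}[\hat N_0^{(i)}]\le 2/(nT^\ast\alpha^2)$, so by Markov $\mathbb{P}(\hat N_0^{(i)}\ge1)\le 2/(nT^\ast\alpha^2)$; on the complementary event $\hat N^{(i)}=\|\hat\omega^{(i)}\|_0\le\|\theta\|_0\le s$. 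Writing $W_i=\sum_k r_{i,k}\hat\omega_k^{(i)}$ and $W_i^{-j}=W_i-r_{i,j}\hat\omega_j^{(i)}$ (which, given $X^{(i)}$, is independent of $r_{i,j}$ and of $\ell_i$), a direct computation using $\mathbb{E}[r_{i,j}]=0$ and the monotonicity and $1$-Lipschitzness of $\Pi_{[-\eta,\eta]}$ yields
\[
    \mathbb{E}\bigl[r_{i,j}Z_i \mid X^{(i)}\bigr] = \hat\omega_j^{(i)}\, q_i ,
\]
where $q_i:=\frac12\mathbb{E}\bigl[\Pi_{[-\eta,\eta]}(1+W_i^{-j})-\Pi_{[-\eta,\eta]}(W_i^{-j}-1)\mid X^{(i)}\bigr]$ lies in $[0,1]$ and equals $1$ on the event $\{|W_i^{-j}|\le\eta-1\}$. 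On $\{\hat\omega_j^{(i)}=1,\ \hat N_0^{(i)}=0\}$ the conditional variance of $W_i^{-j}$ is at most $\hat N^{(i)}\le s$, so Chebyshev together with $\eta=(30s)^{1/2}$ (hence $(\eta-1)^2\ge 15s$) gives $q_i\ge 14/15$ there. Taking expectations and using the probability bounds above and the standing assumption $n\alpha^2>\widetilde Cs\log(ed)$ (with $T^\ast\ge1$), I obtain $\mu_j\ge \frac{14}{15}\,\mathbb{P}(\hat\omega_j^{(i)}=1,\ \hat N_0^{(i)}=0)\ge 3/4$ for $j\in S_1$, while $\mu_j\le\mathbb{P}(\hat\omega_j^{(i)}=1)\le 2/(dnT^\ast\alpha^2)<1/4$ for $j\in S_0$.

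For the fluctuation bound, $r_{i,j}Z_i$ is the sum of the bounded term $r_{i,j}\Pi_{[-\eta,\eta]}(W_i)\in[-\eta,\eta]$ and the term $\tfrac{2\eta}{\alpha}r_{i,j}\ell_i$, which has the same law as $\tfrac{2\eta}{\alpha}\ell_i$ and is therefore sub-exponential with parameters of order $\eta/\alpha$; since $\alpha\le1$, the centred variable $r_{i,j}Z_i-\mu_j$ is sub-exponential with parameters of order $\sqrt{s}/\alpha$. Bernstein's inequality \citep[e.g.][Proposition~2.14]{Wainwright:2019} applied to the average of the $n/2$ i.i.d.\ copies then gives $\mathbb{P}(|I_j-\mu_j|>1/4)\le 2e^{-c'n\alpha^2/s}$ for an absolute constant $c'>0$. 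Using $n\alpha^2>\widetilde Cs\log(ed)$ and $T^\ast\le e^{n\alpha^2/(Ks)}$, and choosing the absolute constants $\widetilde C$ and $K$ large enough relative to $c'$, this is at most $1/(dT^\ast)$. The union bound over the at most $d$ co-ordinates in $S_0\cup S_1$ then completes the argument.

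The main obstacle is the analysis of the signal $\mu_j$, and in particular controlling the truncation $\Pi_{[-\eta,\eta]}$: one must simultaneously ensure that the local selection step almost never flags a null co-ordinate (so $\hat N^{(i)}\lesssim s$) and that, on that event, the Rademacher sum $W_i^{-j}$ stays inside $[-\eta,\eta]$ with high conditional probability, so that the projection does not erase the $\pm1$ contribution of $r_{i,j}$ carrying the vote. Making $\eta=(30s)^{1/2}$ large enough for this while keeping the noise scale $2\eta/\alpha$ small enough that the Bernstein estimate still beats $dT^\ast$ is exactly where the hypothesis $n\alpha^2\gtrsim s\log(ed)$ (with a sufficiently large implied constant) enters.
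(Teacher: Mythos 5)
Your proposal is correct and follows essentially the same route as the paper's proof: a separation of the expected hashed vote $\mathbb{E}[r_{i,j}Z_i]$ between $S_0$ and $S_1$ co-ordinates (using Hoeffding for the local thresholding step and a Chebyshev/Markov control, via $\eta=(30s)^{1/2}$, of the truncation $\Pi_{[-\eta,\eta]}$), followed by concentration of the averaged privatised votes and a union bound absorbed into $1/T^\ast$ through $n\alpha^2\gtrsim s\log(ed)$ and $T^\ast\leq e^{n\alpha^2/(Ks)}$. Your presentation differs only cosmetically — the clean identity $\mathbb{E}[r_{i,j}Z_i\mid X^{(i)}]=\hat\omega_j^{(i)}q_i$ in place of the paper's explicit truncation-bias computation, and a single Bernstein bound for the combined bounded-plus-Laplace fluctuation where the paper splits the deviation into a Hoeffding part and a sub-exponential part — so no gap.
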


\begin{proof}[Proof of \Cref{app:lem:sparsebinningevent}]
        Recall the definition of $\hat{\omega}_j^{(i)}$  in \eqref{sec5:eq:HTestimator}. We denote
        \begin{align*}
            U_i = \sum_{j=1}^d r_{i,j} \hat{\omega}_j^{(i)}, \qquad
            \bar{U}_i = \Pi_{[-\eta, \eta]}(U_i),
        \end{align*}
        noting that with this notation, we have that $I_j$ as in \eqref{app:eq:sparseselectionvoteagg} satisfies $I_j = (n/2)^{-1} \sum_{i=1}^{n/2} r_{i,j} \{\bar{U}_i  + (2\eta/\alpha)\ell_i\}$, and that $\bar{U}_i + (2\eta/\alpha)\ell_i$ is an $\alpha$-LDP view of $U_i$. We also recall that the Rademacher random variables $\{r_{i,j}\}_{i \in [n/2], j \in [d]}$ are mutually independent and independent of the collection $\{\hat{\omega}_j^{(i)}\}_{i \in [n/2], j \in [d]}$.

        We now focus on $I_j$ in the two cases where $j \in S_0$ and $j \in S_1$.
        \begin{enumerate}[{Case} 1.]
            \item We start by considering the expectation of $r_{i,j} \bar{U}_i$ for $j \in S_0$. Define the event $B_{i,j} = \{\hat{\omega}_j^{(i)} = 0 \}$ where by the construction \eqref{sec5:eq:HTestimator}, we have that
            \begin{equation}
                \mathbb{P}(B_{i,j}^c) = \mathbb{P}(\hat{\omega}_j^{(i)} = 1) = \mathbb{P}\left(\left| \frac{1}{T^\ast}\sum_{t=1}^{T^*} X_{t,j}^{(i)} \right| \geq \varepsilon\right) \leq 2e^{-\frac{T^\ast\varepsilon^2}{2}} \leq \frac{2}{dnT^\ast\alpha^2} \leq 1/5, \label{app:eq:sparsecase1eventbound}
            \end{equation}
            where the first inequality is by Hoeffding's inequality; the second by the definition of $\varepsilon$ as in \eqref{sec5:eq:thresholdboundary}; and the last by the assumption $n\alpha^2 \geq \widetilde{C}s\log(ed)$ for some sufficiently large $\widetilde{C}$.
            
            We also have that
            \begin{equation}
                \mathbb{E}[r_{i,j} \bar{U}_i] = \mathbb{E}[r_{i,j} \bar{U}_i \mathbbm{1}(B_{i,j}^c)], \label{app:eq:sparsecase1expectationiden}
            \end{equation}
            as on $B_{i,j}$ it holds that $r_{i,j}$ and $\bar{U}_i$ are independent and so the expectation of their product is zero. Hence, we have that
            \begin{align}
                \mathbb{E}[r_{i,j} \bar{U}_i \mathbbm{1}(B_{i,j}^c)] &= \mathbb{E}[r_{i,j} (\bar{U}_i - U_i)\mathbbm{1}(B_{i,j}^c)] + \mathbb{E}[r_{i,j}U_i\mathbbm{1}(B_{i,j}^c)] \nonumber \\
                &= \mathbb{E}[r_{i,j} (\bar{U}_i - U_i)\mathbbm{1}(B_{i,j}^c)] + \mathbb{E}\left[\left(1 + \sum_{k \neq j} (r_{i,j}r_{i,k}\hat{\omega}_k^{(i)})\right)\mathbbm{1}(B_{i,j}^c)\right] \nonumber \\
                &= \mathbb{E}[r_{i,j} (\bar{U}_i - U_i)\mathbbm{1}(B_{i,j}^c)] + \mathbb{P}(B_{i,j}^c), \label{app:eq:hashingtrunccase1}
            \end{align}
            where the final line comes from the fact that $\mathbb{E}(r_{i,j}r_{i,k}) = 0$ for $j \neq k$. Then, writing $\tilde{U} = U_1 - r_{1, j}$, we have that
            \begin{align}
                \mathbb{E}[&r_{1,j} (\bar{U}_1 - U_1)\mathbbm{1}(B_{1,j}^c)] \nonumber \\
                &= \mathbb{E}[r_{1,j} (\eta - U_1) \mathbbm{1}(\{U_1 \geq \eta\} \cap B_{1,j}^c)] + \mathbb{E}[r_{1,j} (-\eta - U_1)\mathbbm{1}(\{U_1 \leq - \eta\} \cap B_{1,j}^c)] \nonumber \\
                &= 2\mathbb{E}[r_{1,j} (\eta - U_1) \mathbbm{1}(\{U_1 \geq \eta\} \cap B_{1,j}^c)] \nonumber \\
                &= \mathbb{E}[(\eta - 1 - \tilde{U}) \mathbbm{1}(\{\tilde{U} \geq \eta - 1\} \cap B_{1,j}^c)] - \mathbb{E}[(\eta + 1 - \tilde{U}) \mathbbm{1}(\{\tilde{U} \geq \eta + 1\} \cap B_{1,j}^c)] \nonumber \\
                &= -2 \mathbb{P}(\{\tilde{U} \geq \eta + 1\} \cap B_{1,j}^c) + \mathbb{E}[(\eta - 1 - \tilde{U})\mathbbm{1}(\{\eta - 1 \leq \tilde{U} < \eta + 1\} \cap B_{1,j}^c)] \nonumber \\
                &\leq 0. \label{app:eq:rademacherseries}
            \end{align}

            Combining \eqref{app:eq:sparsecase1eventbound}, \eqref{app:eq:sparsecase1expectationiden}, \eqref{app:eq:hashingtrunccase1} and \eqref{app:eq:rademacherseries} we obtain
            \begin{equation}
                \mathbb{E}[r_{i,j} \bar{U}_i] \leq \mathbb{P}(B_{i,j}^c) \leq 1/5. \label{app:eq:sparsecase1probbound}
            \end{equation}

            We also note for any fixed $j \in [d]$ that $r_{i,j}\bar{U}_i \in [-\eta, \eta]$ for $i \in [n/2]$ and that $\{r_{i,j}\bar{U}_i\}_{i \in [n/2]}$ are mutually independent. Hence, by Hoeffding's inequality, we have for $x > 0$ that
            \begin{equation}
                \mathbb{P}\left(\frac{1}{n/2} \sum_{i=1}^{n/2} r_{i,j}\bar{U}_i - \mathbb{E}[r_{i,j}\bar{U}_i] \geq x \right) \leq e^{-nx^2/(4\eta^2)}. \label{app:eq:sparsecase1probcontroll}
            \end{equation}

            This gives that
            \begin{align}
                \mathbb{P}(&I_j \geq 1/2) \nonumber \\
                &= \mathbb{P}\left( \frac{1}{n/2} \sum_{i=1}^{n/2} r_{i,j} (\bar{U}_i  + (2\eta/\alpha)\ell_i) \geq 1/2 \right) \nonumber \\
                &= \mathbb{P}\left( \frac{1}{n/2} \sum_{i=1}^{n/2} r_{i,j} \bar{U}_i - \mathbb{E}[r_{i,j} \bar{U}_i] + (2\eta/\alpha)\ell_i \geq 1/2 - \mathbb{E}[r_{1,j} \bar{U}_1] \right) \nonumber \\
                &\leq \mathbb{P}\left( \frac{1}{n/2} \sum_{i=1}^{n/2} r_{i,j} \bar{U}_i - \mathbb{E}[r_{i,j} \bar{U}_i] + (2\eta/\alpha)\ell_i \geq 3/10 \right) \nonumber \\
                &\leq \mathbb{P}\left(  \frac{1}{n/2} \sum_{i=1}^{n/2} r_{i,j}\bar{U}_i - \mathbb{E}[r_{i,j} \bar{U}_i] \geq 3/20 \right) + \mathbb{P}\left( \frac{1}{n/2} \sum_{i=1}^{n/2} (2\eta/\alpha)\ell_i \geq 3/20 \right) \nonumber \\
                &\leq \exp\left( -\frac{9n}{1600\eta^2} \right) + \exp\left( -\frac{9n\alpha^2}{25600\eta^2} \right) \leq 2 \exp\left( -\frac{9n\alpha^2}{25600\eta^2} \right), \label{app:eq:sparseselectioncase1bound}
            \end{align}
            where the second equality uses the independence and symmetry of $r_{i,j}$ and $\ell_i$ for any fixed $i$ and $j$; the second equality uses \eqref{app:eq:sparsecase1probbound}; and in the third inequality the first term is via \eqref{app:eq:sparsecase1probcontroll}, and the second term by e.g.~\citet[][Equation~2.18]{Wainwright:2019} where it is easy to verify that the Laplace random variables are sub-Exponential with parameters $(2,2)$, \citep[e.g.][~Definition~2.7]{Wainwright:2019}.
            
            Hence, by \eqref{app:eq:sparseselectioncase1bound} and the union bound, we have that
            \begin{equation}
                \mathbb{P}\left( \bigcup_{j \in S_0} \{ I_j \geq 1/2 \} \right) \leq 2d \exp\left( -\frac{9n\alpha^2}{25600\eta^2} \right), \label{app:eq:S0votesbound}
            \end{equation}
            where we use the fact that $|S_0| \leq d$.

            \item We start by considering the expectation of $r_{i,j} \bar{U}_i$ for $j \in S_1$. Define the event $D_{i,j} = \{\hat{\omega}_j^{(i)} = 1 \}$ where by the construction \eqref{sec5:eq:HTestimator}, we have that
            \begin{align}
                \mathbb{P}(D_{i,j}^c) &= \mathbb{P}(\hat{\omega}_j^{(i)} = 0) = \mathbb{P}\left(\left| \frac{1}{T^\ast}\sum_{t=1}^T X_{t,j}^{(i)} \right| < \varepsilon\right) \nonumber \\
                &\leq \mathbb{P}\left(\left| \frac{1}{T^\ast}\sum_{t=1}^T (X_{t,j}^{(i)} - \theta_j) \right| \geq \varepsilon\right) \leq 2e^{-\frac{T^\ast\varepsilon^2}{2}} \leq \frac{2}{nT^\ast\alpha^2} \leq 1/10, \label{app:eq:sparsecase2eventbound}
            \end{align}
            where the first inequality uses the fact that for $j \in S_1$, $|\theta_j| > 2\varepsilon$; the second inequality is by Hoeffding's inequality; the third by the definition of $\varepsilon$ as in \eqref{sec5:eq:thresholdboundary}; and the last by the assumption $n\alpha^2 \geq \widetilde{C}s\log(ed)$ for some sufficiently large $\widetilde{C}$. We then have
            \begin{equation}
                \mathbb{E}[r_{i,j} \bar{U}_i] = \mathbb{E}[r_{i,j} \bar{U}_i \mathbbm{1}(D_{i,j})] \label{app:eq:sparsecase2expectationiden}
            \end{equation}
            as on $D_{i,j}^c$ it holds that $r_{i,j}$ and $\bar{U}_i$ are independent and so the expectation of their product is zero. Hence, we have that 
            \begin{align}
                \mathbb{E}[r_{i,j} \bar{U}_i \mathbbm{1}(D_{i,j})] 
                = \mathbb{E}[r_{i,j} (\bar{U}_i - U_i)\mathbbm{1}(D_{i,j})] + \mathbb{P}(D_{i,j}) \label{app:eq:hashingtrunccase2}
            \end{align}
            by the same argument as \eqref{app:eq:hashingtrunccase1}. Then, writing $\tilde{U} = U_1 - r_{1, j}$, we have that
            \begin{align}
                \mathbb{E}[&r_{1,j} (\bar{U}_1 - U_1)\mathbbm{1}(D_{1,j})] \nonumber \\
                &= -2 \mathbb{P}(\{\tilde{U} \geq \eta + 1\} \cap D_{1,j}) + \mathbb{E}[(\eta - 1 - \tilde{U})\mathbbm{1}(\{\eta - 1 \leq \tilde{U} < \eta + 1\} \cap D_{1,j})] \nonumber \\
                &\geq -2 \mathbb{P}(\{\tilde{U} \geq \eta + 1\} \cap D_{1,j}) - 2\mathbb{P}(\{\eta - 1 \leq \tilde{U} < \eta + 1\} \cap D_{1,j}) \nonumber \\
                &= -2 \mathbb{P}(\{\tilde{U} \geq \eta - 1\} \cap D_{1,j}) \geq -\frac{2}{(\eta-1)^2}\mathbb{E}\left[\left(\sum_{k \neq j} r_{1,k} \hat{\omega}_k^{(1)} \right)^2 \right] \nonumber \\
                &\geq -\frac{2}{(\eta - 1)^2}\left(\sum_{k \in S_0} \mathbb{P}(\hat{\omega}_k^{(1)} = 1) + \sum_{k \in S_1 \cup S_2} \mathbb{P}(\hat{\omega}_k^{(1)} = 1)\right) \nonumber \\
                &\geq -\frac{2}{(\eta - 1)^2}\left(\frac{2}{nT\alpha^2} + s\right)
                =: \zeta > -1/10, \label{app:eq:sparsecase2zetaval}
            \end{align}
            where first equality is by the same calculations leading to \eqref{app:eq:rademacherseries}; the second inequality is by Markov's inequality; the fourth inequality is by the same argument as \eqref{app:eq:sparsecase1eventbound} and the bounds $|S_0| \leq d$ and $|S_1 \cup S_2| \leq s$; and the final inequality is by the value of $\eta$ and the assumption $n\alpha^2 \geq \widetilde{C}s\log(ed)$ for some sufficiently large $\widetilde{C}$. Combining \eqref{app:eq:sparsecase2eventbound}, \eqref{app:eq:sparsecase2expectationiden} \eqref{app:eq:hashingtrunccase2} and the value $\zeta$ in \eqref{app:eq:sparsecase2zetaval}, we obtain
            \begin{equation}
                \mathbb{E}[r_{i,j} \bar{U}_i \mathbbm{1}(D_{i,j})] \geq \mathbb{P}(D_{i,j}) + \zeta > 1- 1/10 - 1/10 \geq 4/5. \label{app:eq:sparsecase2probbound}
            \end{equation}

            We then calculate
            \begin{align}
                \mathbb{P}(I_j < 1/2) &= \mathbb{P}\left( \frac{1}{n/2} \sum_{i=1}^{n/2} r_{i,j} (\bar{U}_i  + (2\eta/\alpha)\ell_i) < 1/2 \right) \nonumber \\
                &= \mathbb{P}\left( \frac{1}{n/2} \sum_{i=1}^{n/2} r_{i,j} \bar{U}_i - \mathbb{E}[r_{i,j} \bar{U}_i] + (2\eta/\alpha)\ell_i < 1/2 - \mathbb{E}[r_{1,j} \bar{U}_1] \right) \nonumber \\
                &\leq \mathbb{P}\left( \frac{1}{n/2} \sum_{i=1}^{n/2} r_{i,j} \bar{U}_i - \mathbb{E}[r_{i,j} \bar{U}_i] + (2\eta/\alpha)\ell_i < -3/10 \right) \nonumber \\
                &\leq \mathbb{P}\left(  \frac{1}{n/2} \sum_{i=1}^{n/2} r_{i,j}\bar{U}_i - \mathbb{E}[r_{i,j} \bar{U}_i] < -3/20 \right) + \mathbb{P}\left( \frac{1}{n/2} \sum_{i=1}^{n/2} (2\eta/\alpha)\ell_i < -3/20 \right) \nonumber \\
                &\leq \exp\left( -\frac{9n}{1600\eta^2} \right) + \exp\left( -\frac{9n\alpha^2}{25600\eta^2} \right) \leq 2 \exp\left( -\frac{9n\alpha^2}{25600\eta^2} \right), \label{app:eq:sparseselectioncase2bound}
            \end{align}
            where the second equality uses the independence and symmetry of $r_{i,j}$ and $\ell_i$ for fixed $i$ and $j$; the first inequality uses \eqref{app:eq:sparsecase2probbound}; and in the third inequality the first term is via \eqref{app:eq:sparsecase1probcontroll} noting Hoeffding's inequality still holds for the negative $-r_{i,j}\bar{U}_i$, and the second term by e.g.~\citet[][Equation~2.18]{Wainwright:2019} where it is easy to verify that the Laplace random variables are sub-Exponential with parameters $(2,2)$, \citep[e.g.][~Definition~2.7]{Wainwright:2019}.
            
            Hence, by \eqref{app:eq:sparseselectioncase2bound} and the union bound, we have that
            \begin{equation}
                \mathbb{P}\left( \bigcup_{j \in S_1} \left\{ I_j < 1/2 \right\} \right) \leq 2s \exp\left( -\frac{9n\alpha^2}{25600\eta^2} \right),\label{app:eq:S1votesbound}
            \end{equation}
            where we use the fact that $|S_1| \leq s$ by the sparsity assumption.
        \end{enumerate}
        Combining \eqref{app:eq:S0votesbound} and \eqref{app:eq:S1votesbound} gives
        \begin{align*}
            \mathbb{P}(A^c)
            &= \mathbb{P}\left( \left\{ \bigcup_{j \in S_0} \left\{ I_j \geq 1/2 \right\}\right\} \cup \left\{ \bigcup_{j \in S_1} \left\{ I_j < 1/2 \right\}\right\} \right) \\
            &\leq  4d \exp\left( -\frac{9n\alpha^2}{25600\eta^2} \right) = 4d \exp\left( -\frac{cn\alpha^2}{s} \right),
        \end{align*}
        for $c > 0$ an absolute constant.
        Hence, we have that
        \begin{align*}
            \mathbb{P}(A^c)
            &\leq 4d \exp\left( -\frac{cn\alpha^2}{s} \right)
            \leq e^{-n\alpha^2/(Ks)} = 1/T^\ast,
        \end{align*}
        where the second inequality comes from the fact that $n\alpha^2 \geq \widetilde{C}s\log(ed)$ for a sufficiently large constant $\widetilde{C}$ and taking $K$ sufficiently large, and the equality comes from the value of $T^\ast$. Hence, the proof is completed.
    \end{proof}

\begin{lemma}[Corollary 7 in \citealt{Jin:2019}] \label{app:lem:SGnorm} 
            Let $X_i$ for $i \in [n]$ be a collection of i.i.d.~mean-zero random variables taking values in $\mathbb{R}^d$ such that, for some $\sigma^2 > 0$ and for all $i \in [n]$ and any $\varepsilon > 0$,
            \begin{align*}
                \mathbb{P}(\|X_i\|_2 \geq \varepsilon) \leq 2e^{-\varepsilon^2/(2\sigma^2)}.
            \end{align*}
            Then, writing $\overline{X} = n^{-1}\sum_{i=1}^n X_i$, there exists an absolute constant $c>0$ such that for any $\gamma \in (0,1)$,
            \begin{align*}
                \|\overline{X}\|_2 \leq c\left\{\frac{\sigma^2\log(2d/\gamma)}{n}\right\}^{1/2}
            \end{align*}
            holds with probability at least $1 - \gamma$. We equivalently have that
            \begin{align*}
                \mathbb{P}(\|\overline{X}\|_2 \geq \varepsilon) \leq 2de^{-n\varepsilon^2/(c'\sigma^2)}.
            \end{align*}
            where $c'>0$ is some absolute constant.
        \end{lemma}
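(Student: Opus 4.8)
The plan is to obtain this as an immediate consequence of the concentration inequality for sums of random vectors with sub-Gaussian norm due to \citet{Jin:2019}; the displayed statement is precisely their Corollary~7 specialised to $n$ i.i.d.\ summands with a common parameter $\sigma$ and rescaled by $n^{-1}$. Before invoking it I would first dispose of the claimed equivalence of the two displays, which is pure bookkeeping: given the high-probability bound $\|\overline X\|_2 \leq c\{\sigma^2\log(2d/\gamma)/n\}^{1/2}$ valid for every $\gamma \in (0,1)$, fix $\varepsilon > 0$, set $\gamma = 2d\exp(-n\varepsilon^2/(c^2\sigma^2))$, and read off $\mathbb{P}(\|\overline X\|_2 \geq \varepsilon) \leq \gamma = 2d\exp(-n\varepsilon^2/(c'\sigma^2))$ with $c' = c^2$; when $\varepsilon$ is small enough that the right-hand side exceeds $1$ the tail bound is trivial, and the reverse implication is identical.

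For the substance I would follow the two-step route of \citet{Jin:2019}. Step one is a one-dimensional fact: for any fixed $v \in \mathbb{R}^d$ the variable $\langle v, X_i\rangle$ is mean zero (since $\mathbb{E} X_i = 0$) and, because $|\langle v, X_i\rangle| \leq \|v\|_2\|X_i\|_2$, it inherits the sub-Gaussian tail $\mathbb{P}(|\langle v, X_i\rangle| \geq t) \leq 2\exp(-t^2/(2\sigma^2\|v\|_2^2))$; by the standard equivalence between sub-Gaussian tail bounds and moment generating function control for mean-zero random variables (see, e.g., \citealp{Vershynin:2018}) this yields an absolute constant $c_1$ with $\mathbb{E}\exp(\langle v, X_i\rangle) \leq \exp(c_1\sigma^2\|v\|_2^2)$ for all $v$. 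Independence of $X_1,\dots,X_n$ and the normalisation $\overline X = n^{-1}\sum_i X_i$ then give $\mathbb{E}\exp(\langle v,\overline X\rangle) \leq \exp(c_1\sigma^2\|v\|_2^2/n)$, so that in particular each coordinate $\overline X_j$ is mean zero and sub-Gaussian with proxy variance $O(\sigma^2/n)$; a second ingredient, obtained by integrating the tail hypothesis, is the second-moment bound $\mathbb{E}\|\overline X\|_2^2 = n^{-1}\mathbb{E}\|X_1\|_2^2 \leq 4\sigma^2/n$, which records that $\overline X$ concentrates near a ball of radius $O(\sigma/\sqrt n)$ rather than being spread over a sphere of radius $O(\sigma\sqrt{d/n})$.

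Step two — combining these to control $\|\overline X\|_2$ itself while paying only a logarithmic, rather than linear, factor of the dimension — is the technical heart and the step I expect to be the main obstacle to a fully self-contained write-up: a crude covering argument over the sphere $S^{d-1}$ would cost a spurious factor of $d$, and avoiding it requires the structural input of the norm-sub-Gaussian hypothesis beyond what the linear-functional moment generating function bound alone provides. I would therefore not reprove it but cite \citet[Corollary~7]{Jin:2019} directly, applied with $\sigma_i \equiv \sigma$ and $\sum_{i=1}^n\sigma_i^2 = n\sigma^2$, and divide the resulting bound on $\|\sum_i X_i\|_2$ by $n$; this yields $\|\overline X\|_2 \leq c\{\sigma^2\log(2d/\gamma)/n\}^{1/2}$ with probability at least $1-\gamma$, which together with the first paragraph completes the proof.
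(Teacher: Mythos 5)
Your proposal is correct and matches the paper's treatment: the paper gives no proof of this lemma beyond the attribution to \citet[Corollary~7]{Jin:2019}, and your argument amounts to exactly that citation together with the routine $n^{-1}$ rescaling and the elementary conversion between the high-probability form and the tail-bound form (with $c' = c^2$, the bound being trivial when the right-hand side exceeds $1$). Your sketch of Jin's one-dimensional reduction is accurate but, as you note, not needed once the corollary is invoked.
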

        
    \begin{lemma} \label{app:lem:RR} 
        For $d \in \mathbb{N}$, let $X \in \{0, 1\}^d$ be a binary vector with at most $m \in \mathbb{N}$ non-zero entries, i.e.~$\|X\|_0 \leq m$. For any $x > 0$, write $\pi_x := e^x/(1+e^x)$ and let $\{U_j\}_{j \in [d]}$ be a set of i.i.d.~$\mathrm{Unif}(0, 1)$ independent of $X$.  The privatised vector $Z \in \{0, 1\}^d$ with the co-ordinates
        \begin{align}\label{ap:eq:RR}
            Z_j = \begin{cases}
                X_j, & U_j \leq \pi_{\alpha / (2m)}, \\
                1 - X_j, &\text{otherwise}, 
            \end{cases} \quad j \in [d],
        \end{align}
        is an $\alpha$-LDP view of the vector $X$.
    \end{lemma}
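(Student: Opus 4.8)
\textbf{Proof proposal for \Cref{app:lem:RR}.} The plan is to exploit the product structure of the mechanism together with the fact that two $m$-sparse binary vectors can disagree in at most $2m$ coordinates. First I would record that, because each $Z_j$ in \eqref{ap:eq:RR} is a function of $X_j$ and the independent uniform $U_j$ only, the conditional law factorises across coordinates: for any admissible input $x \in \{0,1\}^d$ with $\|x\|_0 \leq m$ and any $z \in \{0,1\}^d$,
\[
    Q(Z = z \mid X = x) = \prod_{j=1}^d q(z_j \mid x_j), \qquad q(b \mid a) = \begin{cases} \pi_{\alpha/(2m)}, & b = a, \\ 1 - \pi_{\alpha/(2m)}, & b \neq a. \end{cases}
\]
Since the output space $\mathcal{Z} = \{0,1\}^d$ is finite, it suffices to control ratios of such point masses.

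Next I would bound the single-coordinate likelihood ratio. For fixed $a, a' \in \{0,1\}$ and any $b \in \{0,1\}$, the ratio $q(b \mid a)/q(b \mid a')$ equals $1$ when $a = a'$; when $a \neq a'$ it equals either $\pi_{\alpha/(2m)}/(1 - \pi_{\alpha/(2m)})$ or its reciprocal, and since $\pi_x = e^x/(1+e^x)$ gives $\pi_x/(1-\pi_x) = e^x$, this ratio lies in $[e^{-\alpha/(2m)}, e^{\alpha/(2m)}]$ in all cases.

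Then I would assemble the bound for the whole vector. Given two admissible inputs $x, x'$, each with at most $m$ nonzero entries, the coordinate set $J = \{j \in [d] : x_j \neq x'_j\}$ is contained in the symmetric difference of their supports, so $|J| \leq \|x\|_0 + \|x'\|_0 \leq 2m$. For any $S \subseteq \{0,1\}^d$, the mediant (ratio-of-sums) inequality yields
\[
    \frac{Q(Z \in S \mid X = x)}{Q(Z \in S \mid X = x')} = \frac{\sum_{z \in S} \prod_{j=1}^d q(z_j \mid x_j)}{\sum_{z \in S} \prod_{j=1}^d q(z_j \mid x'_j)} \leq \max_{z \in S} \prod_{j \in J} \frac{q(z_j \mid x_j)}{q(z_j \mid x'_j)} \leq \bigl(e^{\alpha/(2m)}\bigr)^{|J|} \leq e^{\alpha},
\]
where the displayed equality cancels the trivial factors over $j \notin J$, and the last two inequalities use the single-coordinate bound and $|J| \leq 2m$. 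This is precisely the (non-interactive) $\alpha$-LDP condition of \eqref{sec1:eq:alphaLDP}, so the claim follows.

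The argument is essentially routine; the only point that genuinely needs care is the factor of two in the privacy parameter — two $m$-sparse binary vectors can disagree in up to $2m$ coordinates rather than $m$, which is exactly why the per-coordinate flip is calibrated at $\alpha/(2m)$. A secondary, minor technical point is the reduction from arbitrary measurable $S$ to individual outputs, which is immediate here because $\mathcal{Z}$ is discrete and the mediant inequality applies to finite sums.
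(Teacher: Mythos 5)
Your proposal is correct and follows essentially the same route as the paper's proof: exploit the product form of the mechanism, note that two $m$-sparse binary vectors differ in at most $2m$ coordinates so the pointwise likelihood ratio is at most $e^{(\alpha/(2m))\cdot 2m}=e^{\alpha}$, and then pass from single outputs to arbitrary events. The only cosmetic difference is that you cancel factors coordinate-wise and invoke the mediant inequality explicitly, whereas the paper computes the exact point-mass ratio $e^{(n_x-n_y)\alpha/(2m)}$ and leaves the extension to sets implicit.
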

    \begin{proof}[Proof of \Cref{app:lem:RR}]
        Let $v \in \{0, 1\}^d$ be an arbitrary binary vector.  For $x,y \in \{0, 1\}^d$ satisfying that $\|x\|_0, \|y\|_0 \leq m$, the privatised vector given by the mechanism \eqref{ap:eq:RR} satisfies
        \begin{align*}
            & \mathbb{P}(Z = v \mid X = x)
            = \prod_{j=1}^d \pi_{\alpha/(2m)}^{\mathbbm{1}\{ v_j = x_j \} } (1-\pi_{\alpha/(2m)})^{\mathbbm{1}\{ v_j \neq x_j \} } = \pi_{\alpha/(2m)}^{\# \{j : v_j = x_j \} } (1 - \pi_{\alpha/(2m)})^{\# \{j : v_j \neq x_j \} } \\
            = & \left( \frac{\pi_{\alpha/(2m)}}{1-\pi_{\alpha/(2m)}} \right)^{n_x} (1 - \pi_{\alpha/(2m)})^{d} = e^{\alpha n_x/(2m)} (1 - \pi_{\alpha/(2m)})^{d}.
        \end{align*}
        We then have that 
        \begin{align}
            \frac{\mathbb{P}(Z = v \mid X = x)}{\mathbb{P}(Z = v \mid X = y)}
            = e^{(n_x - n_y)\alpha/(2m)}. \label{ap:eq:RRbound}
        \end{align}
        Since $x$ and $y$ are non-zero on at most $m$ co-ordinates each, they can differ from each other on at most $2m$ co-ordinates which implies $|n_x - n_y| \leq 2m$. Recalling that for a set $A$, $\sigma(A)$ denotes the sigma-algebra generated by $A$, we have that (\ref{ap:eq:RRbound}) holds for arbitrary $v$ and so
        \begin{align*}
            \sup_{V \in \sigma(\{0, 1\}^d)} \frac{\mathbb{P}(Z \in V \mid X = x)}{\mathbb{P}(Z \in V \mid X = y)} \leq e^{\alpha},
        \end{align*}
        which shows the $\alpha$-LDP condition as in \eqref{sec1:eq:alphaLDP} is satisfied.
    \end{proof}

\begin{lemma} \label{app:lem:densitycoverlower}  
    Given $\beta \in \mathbb{N}$ and $r > 0$, for $\Delta > 0$ sufficiently small, the covering number $N(\Delta)$ of the metric space $(\mathcal{F}_{\beta, r}, \|\cdot\|_2)$ with $\mathcal{F}_{\beta, r}$ defined \eqref{sec2:eq:SobolevDensity}, is lower bounded as,
        \begin{equation*}
            N(\Delta) \geq \exp\left(c_\beta (r/\Delta)^{1/\beta}\right),
        \end{equation*}
    for some constant $c_\beta > 0$ depending only on $\beta$.
\end{lemma}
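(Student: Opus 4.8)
The plan is to lower bound the covering number by a packing number and to exhibit a large well-separated family of densities by reusing the bump-function construction from the proof of \Cref{sec4:thm:main}. Concretely, I would invoke the inequality $N(\delta) \geq M(2\delta)$ relating the $\delta$-covering number to the $2\delta$-packing number of $(\mathcal{F}_{\beta,r}, \|\cdot\|_2)$ \citep[e.g.][Lemma~4.2.8]{Vershynin:2018}; it therefore suffices to produce a subset of $\mathcal{F}_{\beta,r}$ of cardinality $\exp\{c_\beta(r/\delta)^{1/\beta}\}$ whose elements are pairwise at $L^2$-distance at least $2\delta$. For an integer $m$ to be chosen, I would partition $[0,1]$ into $m$ subintervals of length $1/m$ and, exactly as in the proof of \Cref{sec4:thm:main}, set $g_{\beta,j}(x) = \tfrac{r\pi^\beta}{2m^\beta} g_\beta\bigl(m(x-(j-1)/m)\bigr)$ for $j \in [m]$, where $g_\beta$ is the rescaled bump function used there, and define $f_\omega = 1 + \sum_{j=1}^m \omega_j g_{\beta,j}$ for each sign pattern $\omega \in \{-1,1\}^m$. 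As verified in that proof, every $f_\omega$ is a bona fide probability density lying in $\mathcal{F}_{\beta,r}$: positivity holds because $\sup_x|g_{\beta,j}(x)| \leq 1/2$, the normalisation because $\int g_\beta = 0$, and membership in the Sobolev ellipsoid because a sum of disjointly supported functions in $\mathcal{H}_{\beta, r\pi^\beta/2}$ lies in $\mathcal{H}_{\beta, r\pi^\beta} \subseteq \mathcal{S}_{\beta,r}$. Moreover, the same computation that produced \eqref{app:eq:densityseparation} (with $m$ in place of $k$ there) gives $\|f_\omega - f_{\omega'}\|_2^2 \geq \tfrac{c_\beta' r^2\pi^{2\beta}}{2m^{2\beta+1}} H(\omega,\omega')$ with $c_\beta' > 0$ depending only on $\beta$ and $H$ the Hamming distance.

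The key step would then be to pass from the full hypercube to a well-separated subcube. By the Varshamov--Gilbert bound \citep[e.g.][Lemma~2.9]{Tsybakov:2009}, there is a set $\Omega \subseteq \{-1,1\}^m$ with $|\Omega| \geq 2^{m/8}$ and $H(\omega,\omega') \geq m/8$ for all distinct $\omega,\omega' \in \Omega$; combined with the separation estimate above, distinct members of $\{f_\omega : \omega \in \Omega\}$ are then at $L^2$-distance at least $\bigl(c_\beta' r^2\pi^{2\beta}/(16m^{2\beta})\bigr)^{1/2}$. I would fix $m$ to be the integer for which this quantity equals $2\delta$, that is $m$ of order $(r/\delta)^{1/\beta}$ up to a constant depending only on $\beta$; then $\{f_\omega : \omega \in \Omega\}$ is a $2\delta$-packing of $\mathcal{F}_{\beta,r}$ of size at least $2^{m/8} = \exp\{(\log 2/8)m\} = \exp\{c_\beta(r/\delta)^{1/\beta}\}$, and $N(\delta) \geq M(2\delta) \geq \exp\{c_\beta(r/\delta)^{1/\beta}\}$ follows.

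I do not anticipate a real obstacle, since this is a textbook packing argument; the only delicate point is the bookkeeping hidden in the phrase ``$\delta$ sufficiently small''. One needs $\delta$ small enough that the integer $m$ selected above is at least a fixed absolute constant (so the Varshamov--Gilbert bound is non-vacuous), and, as elsewhere in the paper, the floor in the definition of $m$ is suppressed. The density and Sobolev-membership properties of the $f_\omega$, on the other hand, hold for every $m \geq 1$ and impose no further constraint on $\delta$.
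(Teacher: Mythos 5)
Your proposal is correct and follows essentially the same route as the paper's own proof: the bump-function family \eqref{app:eq:DensityFamily} with the separation bound \eqref{app:eq:densityseparation}, the Varshamov--Gilbert lemma to extract a subcube of size $2^{m/8}$ with Hamming separation $m/8$, and a choice of $m \asymp (r/\delta)^{1/\beta}$ to convert the packing bound into the stated covering-number bound (the paper phrases the final step via the $\delta$-packing bounding the $\delta/2$-covering number, which is the same relation you use up to rescaling $\delta$ and adjusting $c_\beta$).
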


\begin{proof}[Proof of \Cref{app:lem:densitycoverlower}]
    To construct an explicit packing, we consider the construction in \Cref{app:sec:densityLB} to obtain the family $\{f_v\}_{v \in \mathcal{V}}$ defined in \eqref{app:eq:DensityFamily} which is indexed by $\mathcal{V} = \{\pm 1\}^k$ for some $k \in \mathbb{N}$.

    For $v, v' \in \mathcal{V}$, it follows from \eqref{app:eq:densityseparation} that
    \begin{equation*}
        \|f_v - f_{v'}\|_2^2 \geq \frac{c_\beta' r^2\pi^{2\beta}}{2k^{2\beta + 1}} \sum_{j=1}^k \mathbbm{1}\{v_j \neq v'_j \},
    \end{equation*}
    where $c_\beta' > 0$ is some constant depending on $\beta$.
        
    By the Varshamov--Gilbert Lemma \citep[e.g.][Lemma~2.9]{Tsybakov:2009}, there exists a subset $\mathcal{V}' \subseteq \mathcal{V}$ such that $|\mathcal{V}'| \geq 2^{k/8}$ and $\mathrm{Hamm}(u, u') \geq k/8$ for $u, u' \in \mathcal{V}'$, where $\mathrm{Hamm}(\cdot, \,\cdot)$ denotes the Hamming distance.  For $u,u' \in \mathcal{V}'$, we therefore have that
        \begin{equation*}
            \|f_u - f_{u'}\|_2^2
            \geq \frac{c_\beta' r^2\pi^{2\beta}}{16k^{2\beta}},
        \end{equation*}        
        and hence we have $2^{k/8}$-many distributions that are $\Delta = (c_\beta')^{1/2}r\pi^\beta / (16k^{\beta})$-separated.

        Rearranging to get $k$ in terms of $\Delta$, we have
        \begin{equation*}
            2^{k/8} = \exp\left(\frac{\log(2)}{8}\left( \frac{(c_\beta')^{1/2} r \pi^\beta}{16\Delta}\right)^{1/\beta}\right) \geq e^{c_\beta'' (r/\Delta)^{1/\beta}},
        \end{equation*}
        for some absolute constant $c_\beta'' > 0$. This provides a lower bound on the $\Delta$-packing number whence a lower bound on the $\Delta/2$-covering number can be obtained, completing the proof.
    \end{proof}

\begin{lemma} \label{app:lem:sparsecoverlower}  
    Given $r > 0$ and $d \in \mathbb{N}$, for any integer $s \in [d]$ and $0 < \Delta < r/4$, the covering number $N(\Delta)$ of the metric space $(\{ \theta \in [-r, r]^d :\, \|\theta\|_0 \leq s \}, \|\cdot\|_2)$ is lower bounded as,
    \begin{equation*}
        N(\Delta) \geq \left\{\frac{d}{s \wedge (d-s)} \right\}^{s \wedge (d-s)} \left\{\frac{s(r-2\Delta)^2}{20\Delta^2}\right\}^{\frac{s}{2}}.
    \end{equation*}
\end{lemma}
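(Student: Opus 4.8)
\textbf{Proof proposal for Lemma \ref{app:lem:sparsecoverlower}.}

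The plan is to construct an explicit $2\delta$-separated packing of the set $\Theta_{d,s} = \{\theta \in [-r,r]^d : \|\theta\|_0 \le s\}$ and then invoke the standard inequality relating packing and covering numbers ($N(\delta) \ge M(2\delta)$, the $2\delta$-packing number). The packing will be built in two independent pieces: a combinatorial piece encoding \emph{which} coordinates are non-zero, and a metric piece encoding \emph{what values} those coordinates take.

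First I would fix a choice of support. Let $t = s \wedge (d-s)$. The number of distinct $s$-subsets of $[d]$ is $\binom{d}{s} \ge (d/s)^s$ when $s \le d/2$, and $\binom{d}{s} = \binom{d}{d-s} \ge (d/(d-s))^{d-s}$ when $s > d/2$; in either case $\binom{d}{s} \ge (d/t)^t$. Given a fixed support set $S$ with $|S| = s$, I restrict to vectors supported on $S$, which is an isometric copy of $[-r,r]^s$ inside $(\mathbb{R}^s, \|\cdot\|_2)$. Here I would reuse the covering/packing bounds already established in the paper: by \eqref{app:eq:linfcover} (with $s$ in place of $d$ and after rescaling the ball $\mathbb{B}_\infty(1)$ to $\mathbb{B}_\infty(r)$, which rescales the packing radius accordingly), the maximal number of points in $[-r,r]^s$ that are pairwise more than $2\delta$ apart in $\ell_2$ is at least the $2\delta$-packing number of $\mathbb{B}_\infty(r) \subset \mathbb{R}^s$, which is bounded below by $\{s r^2 / (20 \delta^2)\}^{s/2}$ — but I need the sharper constant involving $(r - 2\delta)$, so instead I would argue directly: take a $2\delta$-net argument on the slightly shrunk cube $[-(r-2\delta), r-2\delta]^s$ and use a volumetric lower bound, $M_{2\delta}(\mathbb{B}_\infty(r)) \ge \mathrm{Vol}([-(r-2\delta),r-2\delta]^s)/\mathrm{Vol}(\mathbb{B}_2(2\delta)) = \{(r-2\delta)/\delta\}^s \cdot 2^s \Gamma(1+s/2)/(2^s \pi^{s/2}) \ge \{s(r-2\delta)^2/(20\delta^2)\}^{s/2}$, using Stirling's approximation $\Gamma(1+x) \ge (x/3)^x$ exactly as in \eqref{eq-proof-prop-4-2}. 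The condition $\delta < r/4$ guarantees $r - 2\delta > r/2 > 0$ so this is non-vacuous.

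Next I would combine the two pieces. For each of the $\binom{d}{s} \ge (d/t)^t$ support choices, and for each of the $\ge \{s(r-2\delta)^2/(20\delta^2)\}^{s/2}$ well-separated value assignments on that support, I get one vector in $\Theta_{d,s}$. Two vectors with the \emph{same} support are $> 2\delta$ apart by construction of the metric packing; two vectors with \emph{different} supports: this is the step requiring a small amount of care — a priori their $\ell_2$ distance could be small if the non-overlapping coordinates happen to carry tiny values. The clean fix is to force every non-zero coordinate to have magnitude close to $r$, say to take the value packing inside $\{x \in [-r,r]^s : |x_j| \ge r - 2\delta \text{ for } \lceil s/2\rceil \text{ coordinates}\}$, or more simply to observe that the bound claimed is already a product of the two counts, so it suffices that the \emph{total} family has the stated cardinality and that \emph{restricted to each fixed support} it is $2\delta$-separated; then a $2\delta$-covering must use a distinct ball for each fixed-support packing point, so $N(\delta) \ge \binom{d}{s} \cdot \{s(r-2\delta)^2/(20\delta^2)\}^{s/2}$ provided one also checks cross-support points are not all absorbed — actually the cleanest route is: the covering number is at least the covering number of any subset, and the subset of vectors supported on a \emph{single fixed} $S$ has covering number $\ge \{s(r-2\delta)^2/(20\delta^2)\}^{s/2}$; to get the extra $\binom{d}{s}$ factor one uses that a $\delta$-ball centred anywhere can intersect the union of these fixed-support slices in a controlled way. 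I expect this cross-support bookkeeping to be the main obstacle, and I would resolve it by the standard device (cf.\ the proof of \eqref{app:eq:SparseCoveringBoundUpper}--\eqref{app:eq:SparseCoveringBoundLower} and \Cref{app:lem:sparsecoverlower}'s companion \Cref{app:lem:densitycoverlower}) of choosing the value-packing points with all coordinates bounded below in magnitude, e.g.\ on the shell where each active coordinate lies in $[r-2\delta, r] \cup [-r, -(r-2\delta)]$: then any two vectors with different supports differ in $\ell_2$ by at least $r - 2\delta \ge 2\delta$, so the entire family of $\binom{d}{s} \cdot \{\cdots\}^{s/2}$ vectors is $2\delta$-separated, and $N(\delta) \ge M(2\delta)$ gives exactly the stated bound after substituting $\binom{d}{s} \ge (d/t)^t$ with $t = s \wedge (d-s)$.

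Finally I would collect constants: Stirling's bound $\Gamma(1+s/2) \ge (s/6)^{s/2}$ turns $2^s\Gamma(1+s/2)/\pi^{s/2}$ into $\ge (s/(20))^{s/2}$ (absorbing $\pi$ and the factor $6$ into the constant $20$, which is generous), yielding $M_{2\delta} \ge \{s(r-2\delta)^2/(20\delta^2)\}^{s/2}$ on each slice. Multiplying by the support count and writing $s \wedge (d-s)$ for the exponent gives precisely
\[
    N(\delta) \ge \left\{\frac{d}{s \wedge (d-s)}\right\}^{s \wedge (d-s)} \left\{\frac{s(r-2\delta)^2}{20\delta^2}\right\}^{s/2},
\]
as claimed. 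The only genuine subtlety is ensuring the value-packing lives on the magnitude shell so that different-support vectors are automatically separated; everything else is a routine volumetric/Stirling computation already carried out elsewhere in the appendix.
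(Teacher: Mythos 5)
Your overall skeleton is the right one and matches the paper's in spirit: build a $2\delta$-separated family as (choice of support) $\times$ (per-support packing), use $N(\delta) \geq M(2\delta)$, the volumetric/Stirling count for the per-support piece, and $\binom{d}{s} \geq \{d/(s\wedge(d-s))\}^{s\wedge(d-s)}$. The genuine gap is exactly the step you flagged and then resolved incorrectly: your device for cross-support separation---forcing every active coordinate into the shell $[r-2\delta,r]\cup[-r,-(r-2\delta)]$---is quantitatively impossible. Per coordinate that shell has length $4\delta$, so on a fixed support the admissible region has volume $(4\delta)^s$; any $2\delta$-separated subset of it has cardinality at most $\mathrm{Vol}(\text{shell}\oplus \mathbb{B}_2(\delta))/\mathrm{Vol}(\mathbb{B}_2(\delta)) \leq (Cs)^{s/2}$ for an absolute constant $C$, \emph{independently of} $r/\delta$. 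But your per-support count $\{s(r-2\delta)^2/(20\delta^2)\}^{s/2}$ was obtained by packing the whole (shrunk) cube and diverges as $r/\delta \to \infty$, so for small $\delta$ you cannot have both the claimed cardinality and all magnitudes within $2\delta$ of $r$. Your fallback remarks do not close this: a single $\delta$-ball can absorb packing points from many different supports, so you cannot simply multiply by $\binom{d}{s}$; and lower-bounding $N(\delta)$ by the covering number of one fixed-support slice loses the binomial factor entirely.

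The correct fix---and the one the paper's proof uses---is to bound the active coordinates away from zero only at the \emph{packing scale}, not near $r$: restrict to vectors with $|\theta_j| > 2\delta$ on the support (the paper further splits each support by the sign vector of its entries, so each piece is a translate of the cube $[2\delta,r]^s$). Then if $x$ and $y$ have different supports, any coordinate in the support difference already contributes more than $2\delta$ to $\|x-y\|_2$, so $2\delta$-packings over distinct supports (and sign patterns) concatenate into a $2\delta$-packing of the whole sparse class, while the region retained on each piece still has volume $(r-2\delta)^s$---large enough for exactly the volumetric/Stirling computation you already carried out, which is how the factor $(r-2\delta)^2$ genuinely enters (via the side length of the thresholded cube, not via a boundary shell). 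Note also that the $2^s$ count of sign patterns is what compensates the $(2\delta)^2$ in $\mathrm{Vol}(\mathbb{B}_2(2\delta))$ and brings the constant down to the stated $20$. With the shell replaced by this $2\delta$-threshold decomposition, your argument goes through and reproduces the lemma.
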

\begin{proof}[Proof of \Cref{app:lem:sparsecoverlower}]
    Note that when $s = d$, we can use the lower bound in \eqref{app:eq:linfcover} which is larger than, and thus implies, the lower bound \eqref{app:eq:SparseCoveringBoundLower} with $s$ set to $d$ therein.  We hence assume $s < d$ in what follows.
    
    Writing $\Theta = \{ \theta \in [-r, r]^d : \|\theta\|_0 \leq s \}$, we proceed by considering the packing number of the space $\Theta' = \{ \theta \in [-r, r]^d : \|\theta\|_0 = s \}$, noting that since $\Theta' \subseteq \Theta$, this will give a lower bound on the packing number of $\Theta$, and hence its covering number. We now decompose $\Theta'$ into a union of $\binom{d}{s}$-many $s$-dimensional subspaces of $[-r, r]^d$. For a given indexing vector $\iota \in \{ 0 ,1\}^d$, we denote the subspace
    \begin{equation*}
        A_\iota = \{ x \in [-r, r]^d : x_j = 0 \text{ for } j \in [d] \text{ such that } \iota_j = 0\}.
    \end{equation*}
    We thus have that
    \begin{equation*}
        \Theta' = \bigcup_{\iota \in \{0, 1\}^d : \, \|\iota\|_0 = s} A_\iota.
    \end{equation*}
    For any $\iota \in \{ 0 ,1\}^d$, defining 
    \begin{equation*}
        B_\iota = \{ x \in A_\iota: \, |x_j| > \Delta \text{ for } j \in [d] \text{ such that } \iota_j = 1\} \subseteq A_\iota,
    \end{equation*}
    we construct
    \begin{equation*}
        \Theta'' = \bigcup_{\iota \in \{0, 1\}^d : \, \|\iota\|_0 = s} B_\iota.
    \end{equation*}
    Denoting the $\Delta$-packing numbers of $\Theta$ and $\Theta''$ as $M(\Delta)$ and $M''(\Delta)$ respectively, we have the key observation that $M''(\Delta)$ is precisely equal to the sum of the $\Delta$-packing numbers of the $B_\iota$.  This is because that, for two distinct index vectors $\iota^{(1)}$ and $\iota^{(2)}$, two points $x \in B_{\iota^{(1)}}$ and $y \in B_{\iota^{(2)}}$ necessarily satisfy $\| x - y \|_2 > 2^{1/2}\Delta$.

    For each $\iota \in \{ 0 ,1\}^d$ such that $\|\iota\|_0 = s$, define the set of $s$-sparse sign vectors as $S_\iota = \{ \varsigma \in \{-1,0,1\}^d : \varsigma_j = 0 \text{ for } j \in [d] \text{ such that } \iota_j = 0\}$. For $\iota  \in \{ 0 ,1\}^d$ and $\varsigma \in S_{\iota}$, we consider 
    \begin{equation*}
        C_{\iota, \varsigma} = \{ x \in A_\iota : \varsigma_j x_j >  \Delta \text{ for } j \in [d] \text{ such that } \iota_j = 1\} \subseteq B_\iota,
    \end{equation*}
    which leads to that 
    \begin{equation*}
        B_\iota = \bigcup_{\varsigma \in S_\iota} C_{\iota, \varsigma}.
    \end{equation*}
    For any $\iota \in \{ 0 ,1\}^d$, $\varsigma^{(1)}, \varsigma^{(2)} \in S_\iota$ with $\varsigma^{(1)} \neq \varsigma^{(2)}$, $x \in C_{\iota, \varsigma^{(1)}}$ and $y \in C_{\iota, \varsigma^{(2)}}$, we have that $\| x - y \|_2 > 2\Delta$. Hence, the $\Delta$-packing number of $B_\iota$ is exactly equal to the sum of the $\Delta$-packing numbers of the $C_{\iota, \varsigma}$.

    It remains to lower bound the packing number of $C_{\iota, \varsigma}$ for arbitrary $\iota$ and $\varsigma$. By symmetry, we can assume without loss of generality that $\iota = \varsigma = \{1, \hdots, 1, 0 \hdots, 0\}$, that is, both the index vector $\iota$ and the sign vector $\varsigma$ have the first $s$ entries as $1$ and the rest as $0$.  It then holds that $C_{\iota, \varsigma} = \{x \in [-r,r]^d : (x_1, \hdots, x_s)^{\top} \in [\Delta, r]^s,~ x_j = 0, \, s < j \leq d\}$. Restricted to the first $s$ co-ordinates, we see that $C_{\iota, \varsigma}$ is a shifted $s$-dimensional $\ell_\infty$-ball of radius $(r-\Delta)/2$ and hence we can bound the packing number of $C_{\iota, \varsigma}$ by treating it as an $s$-dimensional $\ell_\infty$-ball and considering its covering number.
        
    We can lower bound $N_{\mathbb{B}_\infty(r)}(\Delta)$, the $\Delta$-covering number of an $s$-dimensional $\ell_\infty$-ball of radius $r$ with respect to the metric $\|\cdot\|_2$, by a standard volumetric argument. Letting $\{\theta^{(j)}, \, j \in [N_{\mathbb{B}_\infty(r)}(\Delta)]\}$ denote a cover for $\mathbb{B}_\infty(r)$, we have that
    \begin{equation*}
        \mathbb{B}_\infty(r) \subseteq \bigcup_{j = 1}^{N_{\mathbb{B}_\infty(r)}(\Delta)} \{\theta^{(j)} + \mathbb{B}_2(\Delta)\},
    \end{equation*}
    which implies that
    \begin{equation*}
        \mathrm{Vol}\left( \mathbb{B}_\infty(r) \right) \leq \mathrm{Vol}\left( \bigcup_{j = 1}^{N_{\mathbb{B}_\infty(r)}(\Delta)} \{\theta^{(j)} + \mathbb{B}_2(\Delta)\} \right) \leq N_{\mathbb{B}_\infty(r)}(\Delta) \mathrm{Vol}(\mathbb{B}_2(\Delta)),
    \end{equation*}
    giving that
    \begin{equation}
        N_{\mathbb{B}_\infty(r)}(\Delta)
            \geq \frac{\mathrm{Vol}(\mathbb{B}_\infty(r))}{\mathrm{Vol}(\mathbb{B}_2(\Delta))}
            = \left( \frac{r}{\Delta} \right)^s \frac{2^s \Gamma(1 +s/2)}{\pi^{s/2}}
            \geq \left( \frac{dr^2}{5 \Delta^2}\right)^{\frac{s}{2}}. \label{app:eq:generallinflbcover}
    \end{equation}

    Writing $M_{C_{\iota, \varsigma}}(\Delta)$ and $N_{C_{\iota, \varsigma}}(\Delta)$ for the $\Delta$-packing and $\Delta$-covering numbers of $C_{\iota, \varsigma}$ respectively, this gives the bound
    \begin{equation}
        M_{C_{\iota, \varsigma}}(\Delta)
            \geq N_{\mathbb{B}_\infty\{(r-\Delta)/2\}}(\Delta)
            \geq \left\{\frac{s(r-\Delta)^2}{20\Delta^2}\right\}^{\frac{s}{2}}, \label{app:eq:packinglb}
    \end{equation}
    the first inequality follows from the fact that the packing number of $C_{\iota, \varsigma}$ is lower bounded by its covering number and that $C_{\iota, \varsigma}$ can be viewed as an $s$-dimensional $\ell_\infty$-ball of radius $(r-\Delta)/2$, and the second inequality by \eqref{app:eq:generallinflbcover}.
        
        Finally, writing $M_{B_{\iota}}(\Delta)$ for the packing number of $B_{\iota}$, we have
        \begin{equation*}
            M''(\Delta)
            = \sum_{\iota \in \{0, 1\}^d: \,\|\iota\|_0 = s} M_{B_\iota}(\Delta)
            = \sum_{\iota \in \{0, 1\}^d: \, \|\iota\|_0 = s} \sum_{\varsigma \in S_\iota} M_{C_{\iota, \varsigma}}(\Delta)
            \geq \binom{d}{s} 2^s \left\{\frac{s(r-\Delta)^2}{20\Delta^2}\right\}^{\frac{s}{2}},
        \end{equation*}
        where in the inequality we use the lower bound \eqref{app:eq:packinglb} and the fact that $| \{\iota : \|\iota\|_0 = s\} | = \binom{d}{s}$ and $|I_\iota| = 2^s$. This gives a lower bound on $N(\Delta)$ of
        \begin{equation*}
            N(\Delta)
            \geq M(2\Delta)
            \geq M''(2\Delta)
            \geq \left\{\frac{d}{s \wedge (d-s)} \right\}^{s \wedge (d-s)} \left\{\frac{s(r-2\Delta)^2}{20\Delta^2}\right\}^{\frac{s}{2}},
        \end{equation*}
        where in the first inequality we use the fact that $N(\Delta) \geq M(2\Delta)$, in the second the fact that $\Theta'' \subseteq \Theta$, and the bound $\binom{d}{s} \geq (d/s)^s$ for $1 \leq s \leq d/2$ for the final inequality.
    \end{proof}
\end{document}